\numberwithin{equation}{section}
\newtheorem{theorem}{Theorem}[section]
\newtheorem{proposition}[theorem]{Proposition}
\newtheorem{lemma}[theorem]{Lemma}
\theoremstyle{definition}
\newtheorem{definition}[theorem]{Definition}
\theoremstyle{remark}
\newtheorem{remark}[theorem]{Remark}
\newcommand{\al}{\alpha}
\newcommand{\dal}{{\dot{\al}}}
\newcommand{\fkJ}{J}
\newcommand{\fkI}{I}
\newcommand{\bN}{{\mathbb{N}}}
\newcommand{\bZ}{{\mathbb{Z}}}
\newcommand{\bZgeqo}{\bZ_{\geq 0}}
\newcommand{\bZleqo}{\bZ_{\leq 0}}
\newcommand{\bQ}{{\mathbb{Q}}}
\newcommand{\bR}{{\mathbb{R}}}
\newcommand{\bRgeqo}{\bR_{\geq 0}}
\newcommand{\bRgneqo}{\bR_{>0}}
\newcommand{\bC}{{\mathbb{C}}}
\newcommand{\bCt}{\bC^\times}
\newcommand{\bK}{{\mathbb{K}}}
\newcommand{\bKt}{\bK^\times}
\newcommand{\fca}{{\mathfrak{a}}}
\newcommand{\Pfca}{{\mathfrak{P}}(\fca)}
\newcommand{\Bfca}{{\mathfrak{B}}(\fca)}
\newcommand{\rR}{R}
\newcommand{\rmSpan}{{\mathrm{Span}}}
\newcommand{\bB}{{\mathbb{B}}}
\newcommand{\bBR}{\bB(\rR)}
\newcommand{\bBRsim}{{\bBR/\!\!\sim}}
\newcommand{\tausim}{\tau^\sim}
\newcommand{\tausimi}{{\tausim_i}}
\newcommand{\tausimj}{{\tausim_j}}
\newcommand{\frg}{{\mathfrak{g}}}
\newcommand{\hatC}{{\hat C}}
\newcommand{\hatc}{{\hat c}}
\newcommand{\FzeroI}{{{\mathcal{F}}_0(\fkI)}}
\newcommand{\FoneI}{{{\mathcal{F}}_1(\fkI)}}
\newcommand{\FtwoI}{{{\mathcal{F}}_2(\fkI)}}
\newcommand{\trxi}{\xi}
\newcommand{\mcA}{{\mathcal A}}
\newcommand{\mcC}{{\mathcal C}}
\newcommand{\mcR}{{\mathcal R}}
\newcommand{\hatfca}{{\hat{\fca}}}
\newcommand{\hatal}{{\hat{\al}}}
\newcommand{\hats}{{\hat{s}}}
\newcommand{\hatR}{{\hat{\rR}}}
\newcommand{\hatW}{{\hat{W}}}
\newcommand{\hattrxi}{{\hat{\trxi}}}
\newcommand{\hatm}{{\hat{m}}}
\newcommand{\rmid}{{\mathrm{id}}}
\newcommand{\hate}{{\hat{e}}}
\newcommand{\hatell}{{\hat{\ell}}}
\newcommand{\hacun}{{\acute{n}}}
\newcommand{\ddotsim}{{\,{\ddot{\sim}}\,}}
\newcommand{\wealsolet}{
\setlength{\unitlength}{1mm}
\begin{picture}(10,10)(1.5,1)
\put(0,8){\mbox{We also let}}
\put(22.5,8){\circle{2}}\put(23.5,8){\line(1,0){5.5}}
	\put(29.5,7.75){\dots}\put(34.5,8){\line(1,0){5.5}}\put(41,8){\circle{2}}
\put(29.5,3.5){\mbox{$(k)$}}
\put(45,8){\mbox{mean}}
\put(60,8){\circle{2}}
\put(65,8){\mbox{(if $k=1$), }}
\put(87.5,8){\circle{2}}\put(88.5,8){\line(1,0){5.5}}
	\put(94.5,7.75){\dots}\put(99.5,8){\line(1,0){5.5}}\put(106,8){\circle{2}}
\put(91.5,3.5){\mbox{$(k-1)$}}\put(113.5,8){\circle{2}}
\put(107,8){\line(1,0){5.5}}
\put(122,8){\mbox{(if $k\geq 2$).}}
\end{picture}}
\newcommand{\graydot}{
\setlength{\unitlength}{1mm}
\begin{picture}(10,10)(1.5,1)
\put(1.5,1){\circle{2}}\put(0.75,0.25){\rotatebox{45}{\line(1,0){2}}}\put(0.75,0.25){\rotatebox{45}{\line(0,1){2}}}
\end{picture}}
\newcommand{\DynkinAn}{
\setlength{\unitlength}{1mm}
\begin{picture}(40,20)(5,-5)
	\put(0,7){$A_n$}
      \put(0,3){$(n\geq 1)$}
	\put(30,8){\circle{2}}\put(31,8){\line(1,0){5.5}}
	\put(37,7.75){\dots}\put(37,3.5){\mbox{$(n)$}}
       \put(42,8){\line(1,0){5.5}}\put(48.5,8){\circle{2}}

\end{picture}}
\newcommand{\DynkinBn}{
\setlength{\unitlength}{1mm}
\begin{picture}(40,20)(5,-5)
	\put(0,7){$B_n$}
      \put(0,3){$(n\geq 3)$}
	\put(30,8){\circle{2}}\put(31,8){\line(1,0){5.5}}
	\put(37,7.75){\dots}\put(34,3.5){\mbox{$(n-1)$}}
       \put(42,8){\line(1,0){5.5}}\put(48.5,8){\circle{2}}
\put(49.25,7){$\Longrightarrow$}\put(56.25,8){\circle{2}}
	
\end{picture}}
\newcommand{\DynkinCn}{
\setlength{\unitlength}{1mm}
\begin{picture}(40,20)(5,-5)
	\put(0,7){$C_n$}
      \put(0,3){$(n\geq 2)$}
	\put(30,8){\circle{2}}\put(31,8){\line(1,0){5.5}}
	\put(37,7.75){\dots}\put(34,3.5){\mbox{$(n-1)$}}
       \put(42,8){\line(1,0){5.5}}\put(48.5,8){\circle{2}}
\put(49.25,7){$\Longleftarrow$}\put(56.25,8){\circle{2}}
	
\end{picture}}
\newcommand{\DynkinDn}{
\setlength{\unitlength}{1mm}
\begin{picture}(40,20)(5,-5)
	\put(0,7){$D_n$}
      \put(0,3){$(n\geq 4)$}
	\put(30,8){\circle{2}}\put(31,8){\line(1,0){5.5}}
	\put(37,7.75){\dots}\put(34,3.5){\mbox{$(n-2)$}}
       \put(42,8){\line(1,0){5.5}}\put(48.5,8){\circle{2}}
	\put(48.5,9){\line(0,1){5.5}}\put(48.5,15.5){\circle{2}}
      \put(49.5,8){\line(1,0){5.5}}
     \put(56,8){\circle{2}}
	
\end{picture}}
\newcommand{\DynkinEn}{
\setlength{\unitlength}{1mm}
\begin{picture}(40,20)(5,-5)
	\put(0,7){$E_n$}
      \put(0,3){$(n= 6,7,8)$}
	\put(30,8){\circle{2}}\put(31,8){\line(1,0){5.5}}
	\put(37,7.75){\dots}\put(34,3.5){\mbox{$(n-3)$}}
       \put(42,8){\line(1,0){5.5}}\put(48.5,8){\circle{2}}
	\put(48.5,9){\line(0,1){5.5}}\put(48.5,15.5){\circle{2}}
      \put(49.5,8){\line(1,0){5.5}}
     \put(56,8){\circle{2}}
\put(57,8){\line(1,0){5.5}}
     \put(63.5,8){\circle{2}}
	
\end{picture}}
\newcommand{\DynkinFfour}{
\setlength{\unitlength}{1mm}
\begin{picture}(40,20)(5,-5)
	\put(0,7){$\mbox{$F_4$}$}
	\put(30,8){\circle{2}}
\put(31,8){\line(1,0){5.5}}
\put(37.5,8){\circle{2}}
	\put(38.35,7){$\Longleftarrow$}
      \put(45.25,8){\circle{2}}
\put(46.25,8){\line(1,0){5.5}}
\put(52.75,8){\circle{2}}

\end{picture}}
\newcommand{\DynkinGtwo}{
\setlength{\unitlength}{1mm}
\begin{picture}(40,20)(5,-5)
	\put(0,7){$G_2$}

\put(30,8){\circle{2}}

\put(31,8){\rotatebox{45}{\line(1,0){3}}}
\put(31,6){\rotatebox{45}{\line(0,1){3}}}

\put(32,9){\line(1,0){5.3}}
\put(31,8){\line(1,0){6}}
\put(32,7){\line(1,0){5.3}}

\put(38,8){\circle{2}}

\end{picture}}
\newcommand{\Dynkinsloneone}{
\setlength{\unitlength}{1mm}
\begin{picture}(40,20)(5,-5)
\put(0,7){$sl(1,1)$}
\put(20,8){$\graydot$}

\put(50,7){${\rm{Heisenberg}}$}
\put(80,7){\circle{2}}
\put(79,7){\line(1,0){2}}
\end{picture}}
\newcommand{\DynkinAmzero}{
\setlength{\unitlength}{1mm}
\begin{picture}(40,20)(5,-5)
	\put(0,7){$A(m,0)$}
      \put(0,3){$(m\geq 1)$}
	\put(30,8){\circle{2}}\put(31,8){\line(1,0){5.5}}
	\put(37,7.75){\dots}\put(37,3.5){\mbox{$(m)$}}
       \put(42,8){\line(1,0){5.5}}\put(48.5,8){\circle{2}}
	\put(49.5,8){\line(1,0){5.5}}
       \put(56,8){$\graydot$}

\end{picture}}
\newcommand{\DynkinAmn}{
\setlength{\unitlength}{1mm}
\begin{picture}(40,20)(5,-5)
	\put(0,7){$A(m,n)$}
      \put(0,3){$(m\geq n\geq 1)$}
	\put(30,8){\circle{2}}\put(31,8){\line(1,0){5.5}}
	\put(37,7.75){\dots}\put(37,3.5){\mbox{$(m)$}}
       \put(42,8){\line(1,0){5.5}}\put(48.5,8){\circle{2}}
	\put(49.5,8){\line(1,0){5.5}}\put(51,9){$x$}
     \put(56,8){$\graydot$}
      \put(57,8){\line(1,0){5.5}}\put(57.5,9){$-x$}
     \put(63.5,8){\circle{2}}
	\put(64.5,8){\line(1,0){5.5}}
     \put(70.5,7.75){\dots}\put(70.5,3.5){\mbox{$(n)$}}
      \put(75.5,8){\line(1,0){5.5}}
	\put(82,8){\circle{2}}
     
\put(86,7){$(x\ne 0)$}

\end{picture}}
\newcommand{\DynkinBzeron}{
\setlength{\unitlength}{1mm}
\begin{picture}(40,20)(5,-5)
	\put(0,7){$B(0,n)$}
      \put(0,3){$(n\geq 2)$}
     \put(30,8){\circle{2}}\put(31,8){\line(1,0){5.5}}
	\put(37,7.75){\dots}\put(34,3.5){\mbox{$(n-1)$}}
       \put(42,8){\line(1,0){5.5}}\put(48.5,8){\circle{2}}
\put(49.25,7){$\Longrightarrow$}\put(56.25,8){\circle*{2}}

\put(95,7){$B(0,1)$}
\put(113.5,8){\circle*{2}}

\end{picture}}
\newcommand{\DynkinBonem}{
\setlength{\unitlength}{1mm}
\begin{picture}(40,20)(5,-5)
	\put(0,7){$B(1,n)$}
      \put(0,3){$(n\geq 2)$}
     \put(30,8){\circle{2}}\put(31,8){\line(1,0){5.5}}
	\put(37,7.75){\dots}\put(34,3.5){\mbox{$(n-1)$}}
       \put(42,8){\line(1,0){5.5}}\put(48.5,8){\circle{2}}
	\put(49.5,8){\line(1,0){5.5}}\put(50,9){$-x$}
      \put(56,8){$\graydot$}
      \put(56.75,7){$\Longrightarrow$}\put(58.5,9.5){$x$}
      \put(63.75,8){\circle{2}}

\put(68,7){$(x\ne 0)$}

\put(95,7){$B(1,1)$}

\put(113.5,8){$\graydot$}
\put(114.25,7){$\Longrightarrow$}
      \put(121.25,8){\circle{2}}

\end{picture}}
\newcommand{\DynkinBmone}{
\setlength{\unitlength}{1mm}
\begin{picture}(40,20)(5,-5)
	\put(0,7){$B(m,1)$}
      \put(0,3){$(m\geq 2)$}
	\put(30,8){$\graydot$}\put(31,8){\line(1,0){5.5}}
	\put(37.5,8){\circle{2}}\put(38.5,8){\line(1,0){5.5}}
	\put(44.5,7.75){\dots}\put(40.5,3.5){\mbox{$(m-1)$}}
     \put(49.5,8){\line(1,0){5.5}}\put(56,8){\circle{2}}
	\put(56.75,7){$\Longrightarrow$}
      \put(63.75,8){\circle{2}}

\end{picture}}
\newcommand{\DynkinBmn}{
\setlength{\unitlength}{1mm}
\begin{picture}(40,20)(5,-5)
	\put(0,7){$B(m,n)$}
      \put(0,3){$(m\geq 2, n\geq 2)$}
	\put(30,8){\circle{2}}\put(31,8){\line(1,0){5.5}}
	\put(37,7.75){\dots}\put(34,3.5){\mbox{$(n-1)$}}
       \put(42,8){\line(1,0){5.5}}\put(48.5,8){\circle{2}}
	\put(49.5,8){\line(1,0){5.5}}\put(50,9){$-x$}
     \put(56,8){$\graydot$}
      \put(57,8){\line(1,0){5.5}}\put(58.5,9){$x$}
     \put(63.5,8){\circle{2}}
	\put(64.5,8){\line(1,0){5.5}}
     \put(70.5,7.75){\dots}\put(66.5,3.5){\mbox{$(m-1)$}}
      \put(75.5,8){\line(1,0){5.5}}
	\put(82,8){\circle{2}}
\put(82.75,7){$\Longrightarrow$}
      \put(89.75,8){\circle{2}}

\put(94,7){$(x\ne 0)$}

\end{picture}}
\newcommand{\DynkinSCn}{
\setlength{\unitlength}{1mm}
\begin{picture}(40,20)(5,-5)
	\put(0,7){$C(n)$}
      \put(0,3){$(n\geq 3)$}
	\put(30,8){$\graydot$}\put(31,8){\line(1,0){5.5}}
	\put(37.5,8){\circle{2}}\put(38.5,8){\line(1,0){5.5}}
	\put(44.5,7.75){\dots}\put(41.5,3.5){\mbox{$(n-2)$}}
     \put(49.5,8){\line(1,0){5.5}}\put(56,8){\circle{2}}
	\put(56.75,7){$\Longleftarrow$}
      \put(63.75,8){\circle{2}}
	
\end{picture}}
\newcommand{\DynkinDtwon}{
\setlength{\unitlength}{1mm}
\begin{picture}(40,20)(5,-5)
	\put(0,7){$D(2,n)$}
      \put(0,3){$(n\geq 2)$}
	\put(30,8){\circle{2}}\put(31,8){\line(1,0){5.5}}
	\put(37,7.75){\dots}\put(34,3.5){\mbox{$(n-1)$}}
       \put(42,8){\line(1,0){5.5}}\put(48.5,8){\circle{2}}
	\put(49.5,8){\line(1,0){5.5}}\put(50,5){$-x$}
     \put(56,8){$\graydot$}\put(56,9){\line(0,1){5.5}}\put(56,15.5){\circle{2}}\put(57,11){$x$}
      \put(57,8){\line(1,0){5.5}}\put(58.5,5){$x$}
     \put(63.5,8){\circle{2}}
	
\put(68,7){$(x\ne 0)$}

\put(90,7){$D(2,1)$}

\put(108.5,8){\circle{2}}
	\put(109.5,8){\line(1,0){5.5}}\put(111,9){$x$}
     \put(116,8){$\graydot$}
      \put(117,8){\line(1,0){5.5}}\put(118.5,9){$x$}
     \put(123.5,8){\circle{2}}

\put(128,7){$(x\ne 0)$}

\end{picture}}
\newcommand{\DynkinDmone}{
\setlength{\unitlength}{1mm}
\begin{picture}(40,20)(5,-5)
	\put(0,7){$D(m,1)$}
      \put(0,3){$(m\geq 3)$}
	\put(30,8){$\graydot$}\put(31,8){\line(1,0){5.5}}
	\put(37.5,8){\circle{2}}\put(38.5,8){\line(1,0){5.5}}
	\put(44.5,7.75){\dots}\put(41,3.5){\mbox{$(m-2)$}}
     \put(49.5,8){\line(1,0){5.5}}\put(56,8){\circle{2}}
	\put(56,9){\line(0,1){5.5}}\put(56,15.5){\circle{2}}
      \put(57,8){\line(1,0){5.5}}
     \put(63.5,8){\circle{2}}
	
\end{picture}}
\newcommand{\DynkinDmn}{
\setlength{\unitlength}{1mm}
\begin{picture}(40,20)(5,-5)
	\put(0,7){$D(m,n)$}
      \put(0,3){$(m\geq 3, n\geq 2)$}
	\put(30,8){\circle{2}}\put(31,8){\line(1,0){5.5}}
	\put(37,7.75){\dots}\put(34,3.5){\mbox{$(n-1)$}}
       \put(42,8){\line(1,0){5.5}}\put(48.5,8){\circle{2}}
	\put(49.5,8){\line(1,0){5.5}}\put(50,9){$-x$}
     \put(56,8){$\graydot$}
      \put(57,8){\line(1,0){5.5}}\put(58.5,9){$x$}
     \put(63.5,8){\circle{2}}
	\put(64.5,8){\line(1,0){5.5}}
     \put(70.5,7.75){\dots}\put(66.5,3.5){\mbox{$(m-2)$}}
      \put(75.5,8){\line(1,0){5.5}}
	\put(82,8){\circle{2}}
\put(82,9){\line(0,1){5.5}}\put(82,15.5){\circle{2}}
      \put(83,8){\line(1,0){5.5}}
     \put(89.5,8){\circle{2}}

\put(94,7){$(x\ne 0)$}

\end{picture}}
\newcommand{\DynkinSFfour}{
\setlength{\unitlength}{1mm}
\begin{picture}(40,20)(5,-5)
	\put(0,7){$\mbox{$F(4)$}$}
	\put(30,8){$\graydot$}
\put(31,8){\line(1,0){5.5}}
\put(37.5,8){\circle{2}}
	\put(38.35,7){$\Longleftarrow$}
      \put(45.25,8){\circle{2}}
\put(46.25,8){\line(1,0){5.5}}
\put(52.75,8){\circle{2}}

\end{picture}}
\newcommand{\DynkinSGthree}{
\setlength{\unitlength}{1mm}
\begin{picture}(40,20)(5,-5)
	\put(0,7){$\mbox{$G(3)$}$}
	\put(30,8){$\graydot$}
\put(31,8){\line(1,0){5.5}}
\put(37.5,8){\circle{2}}
\put(38.5,8){\rotatebox{45}{\line(1,0){3}}}
\put(38.5,6){\rotatebox{45}{\line(0,1){3}}}

\put(39.5,9){\line(1,0){5.7}}
\put(38.5,8){\line(1,0){6}}
\put(39.5,7){\line(1,0){5.7}}
\put(45.5,8){\circle{2}}

\end{picture}}
\newcommand{\DynkinDtwooneal}{
\setlength{\unitlength}{1mm}
\begin{picture}(40,20)(5,-5)
	\put(0,7){$\mbox{$D(2,1;x)$}$}
	\put(30,8){\circle{2}}\put(33,9.5){$y$}
\put(31,8){\line(1,0){5.5}}
\put(37.5,8){$\graydot$}\put(39.7,9.5){$xy$}
\put(38.5,8){\line(1,0){5.5}}
\put(45,8){\circle{2}}

\put(50,7){$(x\notin\{0,-1\},\,y\ne 0)$}

\end{picture}}
\newcommand{\quantumDynkinDtwooneala}{
\setlength{\unitlength}{1mm}
\begin{picture}(110,30)(5,-5)
	\put(-5,7){$a:$}
\put(25,9){\oval(30,15)[t]}\put(24,18){${\hat z}$}
\put(10,8){\circle{2}}\put(17,9.5){${\hat x}$}
\put(11,8){\line(1,0){13}}
\put(25,8){\circle{2}}\put(32,9.5){${\hat y}$}
\put(26,8){\line(1,0){13}}
\put(40,8){\circle{2}}
\put(9,3){${\hat{\al}}^a_2$}\put(4,10){$-1$}
\put(24,3){${\hat{\al}}^a_1$}\put(23,10){$-1$}
\put(39,3){${\hat{\al}}^a_3$}\put(41,10){$-1$}
\end{picture}}
\newcommand{\quantumDynkinDtwoonealb}{
\setlength{\unitlength}{1mm}
\begin{picture}(110,30)(5,-5)
	\put(-5,7){$b:$}
\put(10,8){\circle{2}}\put(16,9.5){${\hat z}^{-1}$}
\put(11,8){\line(1,0){13}}
\put(25,8){\circle{2}}\put(31,9.5){${\hat y}^{-1}$}
\put(26,8){\line(1,0){13}}
\put(40,8){\circle{2}}
\put(9,3){${\hat{\al}}^b_3$}\put(9,11){${\hat z}$}
\put(24,3){${\hat{\al}}^b_2$}\put(23,11){$-1$}
\put(39,3){${\hat{\al}}^b_1$}\put(39,11){${\hat y}$}
\end{picture}}
\newcommand{\quantumDynkinDtwoonealc}{
\setlength{\unitlength}{1mm}
\begin{picture}(110,30)(5,-5)
	\put(-5,7){$c:$}
\put(10,8){\circle{2}}\put(16,9.5){${\hat x}^{-1}$}
\put(11,8){\line(1,0){13}}
\put(25,8){\circle{2}}\put(31,9.5){${\hat y}^{-1}$}
\put(26,8){\line(1,0){13}}
\put(40,8){\circle{2}}
\put(9,3){${\hat{\al}}^c_2$}\put(9,11){${\hat x}$}
\put(24,3){${\hat{\al}}^c_1$}\put(23,11){$-1$}
\put(39,3){${\hat{\al}}^c_3$}\put(39,11){${\hat y}$}
\end{picture}}
\newcommand{\quantumDynkinDtwooneald}{
\setlength{\unitlength}{1mm}
\begin{picture}(110,30)(5,-5)
	\put(-5,7){$d:$}
\put(10,8){\circle{2}}\put(16,9.5){${\hat y}^{-1}$}
\put(11,8){\line(1,0){13}}
\put(25,8){\circle{2}}\put(31,9.5){${\hat z}^{-1}$}
\put(26,8){\line(1,0){13}}
\put(40,8){\circle{2}}
\put(9,3){${\hat{\al}}^d_1$}\put(9,11){${\hat y}$}
\put(24,3){${\hat{\al}}^d_3$}\put(23,11){$-1$}
\put(39,3){${\hat{\al}}^d_2$}\put(39,11){${\hat z}$}
\end{picture}}
\newcommand{\quantumDynkinDtwooneal}{
\setlength{\unitlength}{1mm}
\begin{picture}(110,55)(0,-5)
\put(0,20){$\quantumDynkinDtwooneala$}
\put(70,20){$\quantumDynkinDtwoonealb$}

\put(0,00){$\quantumDynkinDtwoonealc$}
\put(70,00){$\quantumDynkinDtwooneald$}

\end{picture}}
\newcommand{\whitecircleNortha}{
\setlength{\unitlength}{1mm}
\begin{picture}(10,10)(2.74,0)
\put(0,0){\circle{2}}
\put(-0.9,1.5){$a$}
\end{picture}
}
\newcommand{\whitecircleSoutha}{
\setlength{\unitlength}{1mm}
\begin{picture}(10,10)(2.74,0)
\put(0,0){\circle{2}}
\put(-0.9,-3.4){$a$}
\end{picture}
}
\newcommand{\whitecircleEastb}{
\setlength{\unitlength}{1mm}
\begin{picture}(10,10)(2.74,0)
\put(0,0){\circle{2}}
\put(1.5,-0.7){$b$}
\end{picture}
}
\newcommand{\whitecircleWestb}{
\setlength{\unitlength}{1mm}
\begin{picture}(10,10)(2.74,0)
\put(0,0){\circle{2}}
\put(-3.8,-0.7){$b$}
\end{picture}
}
\newcommand{\whitecircleNorthb}{
\setlength{\unitlength}{1mm}
\begin{picture}(10,10)(2.74,0)
\put(0,0){\circle{2}}
\put(-0.9,1.5){$b$}
\end{picture}
}
\newcommand{\whitecircleSouthb}{
\setlength{\unitlength}{1mm}
\begin{picture}(10,10)(2.74,0)
\put(0,0){\circle{2}}
\put(-0.9,-4.8){$b$}
\end{picture}
}
\newcommand{\whitecircleNorthc}{
\setlength{\unitlength}{1mm}
\begin{picture}(10,10)(2.74,0)
\put(0,0){\circle{2}}
\put(-0.9,1.5){$c$}
\end{picture}
}
\newcommand{\whitecircleSouthc}{
\setlength{\unitlength}{1mm}
\begin{picture}(10,10)(2.74,0)
\put(0,0){\circle{2}}
\put(-0.9,-3.4){$c$}
\end{picture}
}
\newcommand{\whitecircleNorthd}{
\setlength{\unitlength}{1mm}
\begin{picture}(10,10)(2.74,0)
\put(0,0){\circle{2}}
\put(-0.9,1.5){$d$}
\end{picture}
}
\newcommand{\whitecircleSouthd}{
\setlength{\unitlength}{1mm}
\begin{picture}(10,10)(2.74,0)
\put(0,0){\circle{2}}
\put(-0.9,-4.8){$d$}
\end{picture}
}
\newcommand{\holizontallineAboveoneTHICK}{
\setlength{\unitlength}{1mm}
\begin{picture}(10,10)(2.74,0)
\put(1,0){\line(1,0){8}}\multiput(1,0)(0,0.05){4}{\line(1,0){8}}\multiput(1,0)(0,-0.05){4}{\line(1,0){8}}
\put(4.5,0.5){{\tiny{1}}}
\end{picture}
}
\newcommand{\EastUplineaboveoneTHICK}{
\setlength{\unitlength}{1mm}
\begin{picture}(10,10)(2.74,0)
\put(0.8,0.8){\line(1,1){8.4}}\multiput(0.8,0.8)(0.03,-0.03){4}{\line(1,1){8.4}}\multiput(0.8,0.8)(-0.03,0.03){4}{\line(1,1){8.4}}
\put(4,5.5){{\tiny{1}}}
\end{picture}
}
\newcommand{\EastDownlineaboveoneTHICK}{
\setlength{\unitlength}{1mm}
\begin{picture}(10,10)(2.74,0)
\put(0.8,-0.8){\line(1,-1){8.4}}\multiput(0.8,-0.8)(0.03,0.03){4}{\line(1,-1){8.4}}\multiput(0.8,-0.8)(-0.03,-0.03){4}{\line(1,-1){8.4}}
\put(5.5,-5){{\tiny{1}}}
\end{picture}
}
\newcommand{\holizontallineAbovetwo}{
\setlength{\unitlength}{1mm}
\begin{picture}(10,10)(2.74,0)
\put(1,0){\line(1,0){8}}
\put(4.5,0.5){{\tiny{2}}}
\end{picture}
}
\newcommand{\holizontallineAbovetwoTHICK}{
\setlength{\unitlength}{1mm}
\begin{picture}(10,10)(2.74,0)
\put(1,0){\line(1,0){8}}\multiput(1,0)(0,0.05){4}{\line(1,0){8}}\multiput(1,0)(0,-0.05){4}{\line(1,0){8}}
\put(4.5,0.5){{\tiny{2}}}
\end{picture}
}
\newcommand{\verticallineRighttwo}{
\setlength{\unitlength}{1mm}
\begin{picture}(10,10)(2.74,0)
\put(0,1){\line(0,1){8}}
\put(0.5,4.5){{\tiny{2}}}
\end{picture}
}
\newcommand{\verticallineRighttwoTHICK}{
\setlength{\unitlength}{1mm}
\begin{picture}(10,10)(2.74,0)
\put(0,1){\line(0,1){8}}\multiput(0,1)(0.05,0){4}{\line(0,1){8}}\multiput(0,1)(-0.05,0){4}{\line(0,1){8}}
\put(0.5,4.5){{\tiny{2}}}
\end{picture}
}
\newcommand{\EastUplineabovetwo}{
\setlength{\unitlength}{1mm}
\begin{picture}(10,10)(2.74,0)
\put(0.8,0.8){\line(1,1){8.4}}
\put(4,5.5){{\tiny{2}}}
\end{picture}
}
\newcommand{\EastUplineabovetwoTHICK}{
\setlength{\unitlength}{1mm}
\begin{picture}(10,10)(2.74,0)
\put(0.8,0.8){\line(1,1){8.4}}\multiput(0.8,0.8)(0.03,-0.03){4}{\line(1,1){8.4}}\multiput(0.8,0.8)(-0.03,0.03){4}{\line(1,1){8.4}}
\put(4,5.5){{\tiny{2}}}
\end{picture}
}
\newcommand{\EastDownlineabovetwo}{
\setlength{\unitlength}{1mm}
\begin{picture}(10,10)(2.74,0)
\put(0.8,-0.8){\line(1,-1){8.4}}
\put(5.5,-5){{\tiny{2}}}
\end{picture}
}
\newcommand{\EastDownlineabovetwoTHICK}{
\setlength{\unitlength}{1mm}
\begin{picture}(10,10)(2.74,0)
\put(0.8,-0.8){\line(1,-1){8.4}}\multiput(0.8,-0.8)(0.03,0.03){4}{\line(1,-1){8.4}}\multiput(0.8,-0.8)(-0.03,-0.03){4}{\line(1,-1){8.4}}
\put(5.5,-5){{\tiny{2}}}
\end{picture}
}
\newcommand{\verticallineRightthree}{
\setlength{\unitlength}{1mm}
\begin{picture}(10,10)(2.74,0)
\put(0,1){\line(0,1){8}}
\put(0.5,4.5){{\tiny{3}}}
\end{picture}
}
\newcommand{\verticallineRightthreeTHICK}{
\setlength{\unitlength}{1mm}
\begin{picture}(10,10)(2.74,0)
\put(0,9){\line(0,-1){8}}\multiput(0,9)(0.05,0){4}{\line(0,-1){8}}\multiput(0,9)(-0.05,0){4}{\line(0,-1){8}}
\put(0.75,4.5){{\tiny{3}}}
\end{picture}
}
\newcommand{\EastUplineabovethree}{
\setlength{\unitlength}{1mm}
\begin{picture}(10,10)(2.74,0)
\put(0.8,0.8){\line(1,1){8.4}}
\put(4,5.5){{\tiny{3}}}
\end{picture}
}
\newcommand{\EastUplineabovethreeTHICK}{
\setlength{\unitlength}{1mm}
\begin{picture}(10,10)(2.74,0)
\put(0.8,0.8){\line(1,1){8.4}}\multiput(0.8,0.8)(0.03,-0.03){4}{\line(1,1){8.4}}\multiput(0.8,0.8)(-0.03,0.03){4}{\line(1,1){8.4}}
\put(4,5.5){{\tiny{3}}}
\end{picture}
}
\newcommand{\EastDownlineabovethree}{
\setlength{\unitlength}{1mm}
\begin{picture}(10,10)(2.74,0)
\put(0.8,-0.8){\line(1,-1){8.4}}
\put(5.5,-5){{\tiny{3}}}
\end{picture}
}
\newcommand{\EastDownlineabovethreeTHICK}{
\setlength{\unitlength}{1mm}
\begin{picture}(10,10)(2.74,0)
\put(0.8,-0.8){\line(1,-1){8.4}}\multiput(0.8,-0.8)(0.03,0.03){4}{\line(1,-1){8.4}}\multiput(0.8,-0.8)(-0.03,-0.03){4}{\line(1,-1){8.4}}
\put(5.5,-5){{\tiny{3}}}
\end{picture}
}
\newcommand{\curveNorthWestTHICK}{
\setlength{\unitlength}{1mm}
\begin{picture}(10,10)(2.74,0)
\put(0,0){\oval(20.20,20.40)[tl]}
\put(0,0){\oval(20.15,20.30)[tl]}
\put(0,0){\oval(20.10,20.20)[tl]}
\put(0,0){\oval(20.05,20.10)[tl]}
\put(0,0){\oval(20,20)[tl]}
\put(0,0){\oval(19.90,19.90)[tl]}
\put(0,0){\oval(19.80,19.80)[tl]}
\put(0,0){\oval(19.70,19.70)[tl]}
\put(0,0){\oval(19.70,19.60)[tl]}
\end{picture}
}
\newcommand{\curveNorthEastTHICK}{
\setlength{\unitlength}{1mm}
\begin{picture}(10,10)(2.74,0)
\put(0,0){\oval(20.40,20.40)[tr]}
\put(0,0){\oval(20.30,20.30)[tr]}
\put(0,0){\oval(20.20,20.20)[tr]}
\put(0,0){\oval(20.10,20.10)[tr]}
\put(0,0){\oval(20,20)[tr]}
\put(0,0){\oval(19.90,19.90)[tr]}
\put(0,0){\oval(19.80,19.80)[tr]}
\put(0,0){\oval(19.70,19.70)[tr]}
\put(0,0){\oval(19.70,19.60)[tr]}
\end{picture}
}
\newcommand{\curveSouthWestTHICK}{
\setlength{\unitlength}{1mm}
\begin{picture}(10,10)(2.74,0)

\put(0,0){\oval(20.20,20.40)[bl]}
\put(0,0){\oval(20.15,20.30)[bl]}
\put(0,0){\oval(20.10,20.20)[bl]}
\put(0,0){\oval(20.05,20.10)[bl]}
\put(0,0){\oval(20,20)[bl]}
\put(0,0){\oval(19.90,19.90)[bl]}
\put(0,0){\oval(19.80,19.80)[bl]}
\put(0,0){\oval(19.70,19.70)[bl]}
\put(0,0){\oval(19.70,19.60)[bl]}
\end{picture}
}
\newcommand{\curveSouthEastTHICK}{
\setlength{\unitlength}{1mm}
\begin{picture}(10,10)(2.74,0)
\put(0,0){\oval(20.40,20.40)[br]}
\put(0,0){\oval(20.30,20.30)[br]}
\put(0,0){\oval(20.20,20.20)[br]}
\put(0,0){\oval(20.10,20.10)[br]}
\put(0,0){\oval(20,20)[br]}
\put(0,0){\oval(19.90,19.90)[br]}
\put(0,0){\oval(19.80,19.80)[br]}
\put(0,0){\oval(19.70,19.70)[br]}
\put(0,0){\oval(19.70,19.60)[br]}
\end{picture}
}
\newcommand{\CayleyNine}{
\setlength{\unitlength}{1mm}
\begin{picture}(60,100)(25,-5)

\put(34,64){\rotatebox{-45}{$\rightarrow$}}

\put(50,10){\whitecircleSouthc}
\put(50,10){\holizontallineAbovetwoTHICK}
\put(60,10){\whitecircleSouthc}
\put(50,10){\verticallineRightthree}
\put(60,10){\verticallineRightthree}

\put(50,20){\whitecircleNorthc}
\put(50,20){\holizontallineAbovetwoTHICK}
\put(60,20){\whitecircleNorthc}
\put(60,20){\EastUplineaboveoneTHICK}

\put(10,30){\whitecircleNortha}
\put(10,30){\EastUplineabovethree}
\put(40,30){\whitecircleSoutha}
\put(40,30){\EastUplineabovetwo}
\put(40,30){\EastDownlineaboveoneTHICK}
\put(70,30){\whitecircleSoutha}
\put(70,30){\EastUplineabovethree}
\put(100,30){\whitecircleNortha}
\put(100,30){\EastUplineabovetwo}

\put(0,40){\whitecircleWestb}
\put(0,40){\verticallineRightthree}
\put(0,40){\EastDownlineabovetwoTHICK}
\put(20,40){\whitecircleSouthd}
\put(20,40){\verticallineRighttwoTHICK}
\put(20,40){\holizontallineAboveoneTHICK}
\put(30,40){\whitecircleSouthd}
\put(30,40){\verticallineRighttwo}
\put(30,40){\EastDownlineabovethreeTHICK}
\put(50,40){\whitecircleSouthb}
\put(50,40){\verticallineRightthreeTHICK}
\put(50,40){\holizontallineAboveoneTHICK}
\put(60,40){\whitecircleSouthb}
\put(60,40){\verticallineRightthree}
\put(60,40){\EastDownlineabovetwoTHICK}
\put(80,40){\whitecircleSouthd}
\put(80,40){\verticallineRighttwoTHICK}
\put(80,40){\holizontallineAboveoneTHICK}
\put(90,40){\whitecircleSouthd}
\put(90,40){\verticallineRighttwo}
\put(90,40){\EastDownlineabovethreeTHICK}
\put(110,40){\whitecircleEastb}
\put(110,40){\verticallineRightthreeTHICK}

\put(0,50){\whitecircleWestb}
\put(0,50){\EastUplineabovetwoTHICK}
\put(20,50){\whitecircleNorthd}
\put(20,50){\holizontallineAboveoneTHICK}
\put(30,50){\whitecircleNorthd}
\put(30,50){\EastUplineabovethreeTHICK}
\put(50,50){\whitecircleNorthb}
\put(50,50){\holizontallineAboveoneTHICK}
\put(60,50){\whitecircleNorthb}
\put(60,50){\EastUplineabovetwoTHICK}
\put(80,50){\whitecircleNorthd}
\put(80,50){\holizontallineAboveoneTHICK}
\put(90,50){\whitecircleNorthd}
\put(90,50){\EastUplineabovethreeTHICK}
\put(110,50){\whitecircleEastb}

\put(10,60){\whitecircleSoutha}
\put(10,60){\EastDownlineabovethree}
\put(40,60){\whitecircleNortha}
\put(40,60){\EastUplineaboveoneTHICK}
\put(40,60){\EastDownlineabovetwo}
\put(70,60){\whitecircleSoutha}
\put(70,60){\EastDownlineabovethree}
\put(100,60){\whitecircleSoutha}
\put(100,60){\EastDownlineabovetwo}

\put(50,70){\whitecircleSouthc}
\put(50,70){\verticallineRightthreeTHICK}
\put(50,70){\holizontallineAbovetwo}
\put(60,70){\whitecircleSouthc}
\put(60,70){\verticallineRightthreeTHICK}
\put(60,70){\EastDownlineaboveoneTHICK}

\put(50,80){\whitecircleNorthc}
\put(50,80){\holizontallineAbovetwo}
\put(60,80){\whitecircleNorthc}

\put(9.40,0){\line(1,0){90}}\multiput(9.40,0)(0,0.05){4}{\line(1,0){90}}\multiput(9.40,0)(0,-0.05){4}{\line(1,0){90}}

\put(10,10){\curveSouthWestTHICK}
\put(-0.6,10){\line(0,1){29}}\multiput(-0.6,10)(0.05,0){4}{\line(0,1){29}}\multiput(-0.6,10)(-0.05,0){4}{\line(0,1){29}}
\put(100,10){\curveSouthEastTHICK}
\put(109.40,10){\line(0,1){29}}\multiput(109.40,10)(0.05,0){4}{\line(0,1){29}}\multiput(109.40,10)(-0.05,0){4}{\line(0,1){29}}
\put(19.40,10){\line(1,0){29}}\multiput(19.40,10)(0,0.05){4}{\line(1,0){29}}\multiput(19.40,10)(0,-0.05){4}{\line(1,0){29}}
\put(60.40,10){\line(1,0){29}}\multiput(60.40,10)(0,0.05){4}{\line(1,0){29}}\multiput(60.40,10)(0,-0.05){4}{\line(1,0){29}}

\put(20,20){\curveSouthWestTHICK}
\put(9.40,20){\line(0,1){9}}\multiput(9.40,20)(0.05,0){4}{\line(0,1){9}}\multiput(9.40,20)(-0.05,0){4}{\line(0,1){9}}
\put(90,20){\curveSouthEastTHICK}
\put(99.40,20){\line(0,1){9}}\multiput(99.40,20)(0.05,0){4}{\line(0,1){9}}\multiput(99.40,20)(-0.05,0){4}{\line(0,1){9}}

\put(-0.60,80){\line(0,-1){29}}\multiput(-0.60,80)(0.05,0){4}{\line(0,-1){29}}\multiput(-0.60,80)(-0.05,0){4}{\line(0,-1){29}}
\put(109.40,51){\line(0,1){29}}\multiput(109.40,51)(0.05,0){4}{\line(0,1){29}}\multiput(109.40,51)(-0.05,0){4}{\line(0,1){29}}

\put(9.40,61){\line(0,1){9}}\multiput(9.40,61)(0.05,0){4}{\line(0,1){9}}\multiput(9.40,61)(-0.05,0){4}{\line(0,1){9}}
\put(99.40,70){\line(0,-1){9}}\multiput(99.40,70)(0.05,0){4}{\line(0,-1){9}}\multiput(99.40,70)(-0.05,0){4}{\line(0,-1){9}}

\put(20,70){\curveNorthWestTHICK}
\put(90,70){\curveNorthEastTHICK}

\put(10,80){\curveNorthWestTHICK}
\put(19.40,80){\line(1,0){29}}\multiput(19.40,80)(0,0.05){4}{\line(1,0){29}}\multiput(19.40,80)(0,-0.05){4}{\line(1,0){29}}
\put(60.40,80){\line(1,0){29}}\multiput(60.40,80)(0,0,05){4}{\line(1,0){29}}\multiput(60.40,80)(0,-0,05){4}{\line(1,0){29}}
\put(100,80){\curveNorthEastTHICK}

\put(9.40,90){\line(1,0){90}}\multiput(9.40,90)(0,0,05){4}{\line(1,0){90}}\multiput(9.40,90)(0,-0,05){4}{\line(1,0){90}}

\put(1,70){{\tiny{$1$}}}
\put(11,70){{\tiny{$1$}}}
\put(101,70){{\tiny{$1$}}}

\put(1,15){{\tiny{$1$}}}
\put(11,15){{\tiny{$1$}}}
\put(101,15){{\tiny{$1$}}}

\end{picture}
}
\newcommand{\CayleyAltFourDash}{
\setlength{\unitlength}{1mm}
\begin{picture}(80,120)(-3,-20)

\put(39,82.5){$a_1$}
\put(-1,62.5){$a_{12}$}\put(17.5,62.5){$a_2$}\put(34.5,59){$a_5$}\put(62,59){$a_6$}

\put(22,39){$a_{10}$}\put(42,39){$a_9$}\put(62,40){$a_{11}$}
\put(19,16){$a_7$}\put(39,16){$a_4$}\put(59,16){$a_3$}
\put(82,19){$a_8$}

\put(40,80){\circle{2}}
\put(00,60){\circle{2}}\put(20,60){\circle{2}}\put(40,60){\circle{2}}\put(60,60){\circle{2}}
\put(20,40){\circle{2}}\put(40,40){\circle{2}}\put(60,40){\circle{2}}
\put(20,20){\circle{2}}\put(40,20){\circle{2}}\put(60,20){\circle{2}}\put(80,20){\circle{2}}

\multiput(40.7,79.3)(0.05,0.05){5}{\line(1,-1){18.6}}\multiput(40.7,79.3)(-0.05,-0.05){5}{\line(1,-1){18.6}}

\multiput(40,79)(0.05,0.00){7}{\line(0,-1){18}}\multiput(40,79)(-0.05,0.00){7}{\line(0,-1){18}}

\put(39.3,79.3){\line(-1,-1){18.6}}

\multiput(01,60)(0.00,0.05){7}{\line(1,0){18}}\multiput(01,60)(0.00,-0.05){7}{\line(1,0){18}}

\put(41,60){\line(1,0){18}}

\put(00.7,59.3){\line(1,-1){18.6}}

\multiput(20,59)(0.05,0.00){7}{\line(0,-1){18}}\multiput(20,59)(-0.05,0.00){7}{\line(0,-1){18}}
\multiput(40,59)(0.05,0.00){7}{\line(0,-1){18}}\multiput(40,59)(-0.05,0.00){7}{\line(0,-1){18}}
\multiput(60,59)(0.05,0.00){7}{\line(0,-1){18}}\multiput(60,59)(-0.05,0.00){7}{\line(0,-1){18}}

\multiput(20,39)(0.05,0.00){7}{\line(0,-1){18}}\multiput(20,39)(-0.05,0.00){7}{\line(0,-1){18}}
\multiput(40,39)(0.05,0.00){7}{\line(0,-1){18}}\multiput(40,39)(-0.05,0.00){7}{\line(0,-1){18}}
\multiput(60,39)(0.05,0.00){7}{\line(0,-1){18}}\multiput(60,39)(-0.05,0.00){7}{\line(0,-1){18}}

\put(39.3,39.3){\line(-1,-1){18.6}}\put(60.7,39.3){\line(1,-1){18.6}}

\multiput(21,20)(0.00,0.05){7}{\line(1,0){18}}\multiput(21,20)(0.00,-0.05){7}{\line(1,0){18}}
\multiput(61,20)(0.00,0.05){7}{\line(1,0){18}}\multiput(61,20)(0.00,-0.05){7}{\line(1,0){18}}
\put(41,20){\line(1,0){18}}

\multiput(20,00)(0.00,0.05){7}{\line(1,0){40}}\multiput(20,00)(0.00,-0.05){7}{\line(1,0){40}}

\multiput(00,59)(0.05,0.00){7}{\line(0,-1){39}}\multiput(00,59)(-0.05,0.00){7}{\line(0,-1){39}}

\put(60,19){\oval(39.4,37.4)[br]}
\put(60,19){\oval(39.5,37.5)[br]}
\put(60,19){\oval(39.6,37.6)[br]}
\put(60,19){\oval(39.7,37.7)[br]}
\put(60,19){\oval(39.8,37.8)[br]}
\put(60,19){\oval(39.9,37.9)[br]}
\put(60,19){\oval(40,38)[br]}
\put(60,19){\oval(40.1,38.1)[br]}
\put(60,19){\oval(40.2,38.2)[br]}
\put(60,19){\oval(40.3,38.3)[br]}
\put(60,19){\oval(40.4,38.4)[br]}
\put(60,19){\oval(40.5,38.5)[br]}
\put(60,19){\oval(40.6,38.6)[br]}

\put(20,20){\oval(39.4,39.4)[bl]}
\put(20,20){\oval(39.5,39.5)[bl]}
\put(20,20){\oval(39.6,39.6)[bl]}
\put(20,20){\oval(39.7,39.7)[bl]}
\put(20,20){\oval(39.8,39.8)[bl]}
\put(20,20){\oval(39.9,39.9)[bl]}
\put(20,20){\oval(40,40)[bl]}
\put(20,20){\oval(40.1,40.1)[bl]}
\put(20,20){\oval(40.2,40.2)[bl]}
\put(20,20){\oval(40.3,40.3)[bl]}
\put(20,20){\oval(40.4,40.4)[bl]}
\put(20,20){\oval(40.5,40.5)[bl]}
\put(20,20){\oval(40.6,40.6)[bl]}

\put(-10,-15){{\rm{Fig.~1. Cayley graph of $\Gamma({\mathrm{Alt}}(n))$ and its Hamiltonian cycle}}}

\end{picture}
}
\newcommand{\suai}[2]{s^{{\overline{#2}}}_{#1}}
\newcommand{\bsuai}[2]{{\mathbf{s^{{\overline{#2}}}_{#1}}}}
\begin{document}

\begingroup
\renewcommand{\arraystretch}{1.1}
\begin{center}
{\Huge{
\begin{tabular}{c}
Hamiltonian Cycles \\ for Finite Weyl Groupoids 
\end{tabular}}}
\end{center}
\endgroup

\begin{center}
{\large{Takato Inoue,\quad Hiroyuki Yamane}}
\end{center}

\vspace{1cm}

\begin{abstract} 
Let $\Gamma({\mathcal{W}})$ be the Cayley graph of a finite Weyl groupoid ${\mathcal{W}}$.
In this paper, we show an existence of a Hamitonian cycle of $\Gamma({\mathcal{W}})$ for any ${\mathcal{W}}$.
We exatctly draw a Hamiltonian cycle of $\Gamma({\mathcal{W}})$
for any (resp. some) irreducible ${\mathcal{W}}$ of 
rank three (resp. four). 
Moreover for the irreducible ${\mathcal{W}}$ of rank three, we give a second largest eigenvalue
of the adjacency matrix of $\Gamma({\mathcal{W}})$,   
and know if $\Gamma({\mathcal{W}})$ is a bipartite Ramanujan graph or not.
\end{abstract}

\section{Introduction} 
This paper is a continuation of \cite{Y22}.
See \cite{PR06}, \cite[Introduction]{Y22} for motivation of studying Hamiltonian cycles.
The Weyl groupoids appeared in study of Lie superalgebras 
\cite{LSS85}, \cite{S96},
and Nichols algebras \cite{Hec09}, \cite{HS20}.
The axiomatic definition of the Weyl groupoids
and a study of some of their basic properties 
such as Coxeter-like presentation and Matsumoto-lile theorem 
were achieved in \cite{HY08}.
The Weyl groupoids (see Definition~\ref{definition:CayGofW}~(1)) are defined for the generalized root systems
(see Definitions~\ref{definition:DefOfFGRS} and \ref{definition:DefCFGRS}~(3)).
Finite generalized root systems
were classified by \cite{CH15}
for study of the crystallographic hyperplane arrangements \cite{Cu11},
see also Theorem~\ref{theorem:EqvHpyAndFGRS}.
In this paper, we show in Theorem~\ref{theorem:Main} that
\begin{equation}\label{eqn:MainInt}
\begin{array}{l}
\mbox{there exists
a Hamitonian cycle of
the Cayley graph} \\
\mbox{of the finite Weyl groupoid defined for every finite generalized root system.}
\end{array}
\end{equation}
For all rank three cases and some four cases of \eqref{eqn:MainInt}, 
we exactly draw a Hamitonian cycle 
by using a computer program of \cite{Mathe23},
see Subsections~\ref{subsection:rankthree}
and \ref{subsection:rankfour}.

The Nichols algebras of diagonal-type with the finite generalized root systems
(NADFs for short)
were classified by \cite{Hec09} using finite Weyl groupoids. 
The same claim as \eqref{eqn:MainInt} 
for a generalized root system associated with an NADF
was proved by \cite[Theorem~6.3]{Y22}.
However the celebrated result of \cite{CH15} tells that there are so many 
finite generalized root systems being not associated with any NADF.

For $x$, $y\in\bR\cup\{\pm\infty\}$,
let $\fkJ_{x,y}:=\{z\in\bZ|x\leq z\leq y\}$.
Then $\bN=\fkJ_{1,\infty}$ and $\bZgeqo=\fkJ_{0,\infty}$.
For a set $X$, let $|X|$ (resp. ${\mathfrak{P}}(X)$) mean the cardinal number of $X$,
(resp. the power set of $X$). 
That is, ${\mathfrak{P}}(X)$ is the set of all the subsets of $X$.
For a set $X$ and $n\in\bZgeqo$,
let ${\mathfrak{P}}_n(X):=\{Y\in {\mathfrak{P}}(X)||Y|=n\}$.

\begin{definition}\label{definition:HamMap}
{\rm{(1)}} Let $V$ be a non-empty set.
Let $\Gamma(V,E)$ be the pair $(V,E)$ of $V$ and a subset $E$ of ${\mathfrak{P}}_2(V)$. 
We call such $\Gamma(V,E)$ a {\it{graph}}.
We call $V$ (resp. $E$) the  {\it{set of vertices}} (resp. {\it{edges}}) of 
$\Gamma(V,E)$.
If $|V|<\infty$, we say that $\Gamma(V,E)$ is a {\it{finite graph}}.
\newline
{\rm{(2)}} 
Let $\Gamma:=\Gamma(V,E)$ be a finite graph,
and let $n:=|V|$.
We say that $\Gamma$ is {\it{disconnected}}
if there exist $m\in\fkJ_{1,n-1}$ and a bijection
$f:\fkJ_{1,n}\to V$ such that $E\subset\{\{f(u_1),f(u_2)\}|u_1,u_2\in\fkJ_{1,m}\}
\cup\{\{f(v_1),f(v_2)\}|v_1,v_2\in\fkJ_{m+1,n}\}$.
We say that $\Gamma$ is {\it{connected}} if $\Gamma$ is not disconnected.
Notice that if $|V|=1$, then $\Gamma$ is connected.
\newline
{\rm{(3)}} 
Let $\Gamma(V,E)$ be a finite connected graph.
Let $n:=|V|\,(\in\bN)$. We say that a map $\varphi:\fkJ_{1,n+1}\to V$ 
is a {\it{Hamiltonian cycle}} if $\varphi(\fkJ_{1,n})=V$, $\varphi(n+1)=\varphi(1)$ 
and $\{\varphi(i),\varphi(i+1)\}\in E$
$(i\in\fkJ_{1,n})$.
\end{definition}

Although an efficient way for finding a Hamiltonian cycle of a finite graph
(if it exists) has not been known at the present,
the widely-known argument of the proof of the following theorem can be said to be one of
main ways for finding it for many Cayley graphs of finite groups.
See Definition~\ref{definition:defofCayley} for the definition of the Cayley graph of a finite group.

\begin{theorem}\label{theorem:one} {\rm{(Rapaport-Strasser~(1959)~\cite{SR59}, {\it{see also}} \cite[Lemma~1]{PR06})}} Let $G$ be a finite group
generated by three elements $a$, $b$, $c$ of $G\setminus\{e\}$ with
$a^2=b^2=c^2=abab=e$.
Let $Z:=\{\{x,xy\}|x\in G, y\in\{a,b,c\}\}$. 
Then there exists a Hamiltonian cycle for $\Gamma(G,Z)$.
\end{theorem}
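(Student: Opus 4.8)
The plan is to organize $G$ around the subgroup $H := \langle a,b\rangle$ and to exploit that $a,b$ are commuting involutions. First I would record the structure. The relation $abab=e$ says precisely that $a$ and $b$ commute, so in the nondegenerate case $a\ne b$ the subgroup $H=\{e,a,b,ab\}$ is a Klein four-group, and $G$ is partitioned into its $|G|/4$ left cosets $gH$. Right multiplication by $a$, by $b$, and by $c$ each gives a perfect matching of the vertex set $G$ (each generator is an involution $\ne e$, hence fixes no edge), and $\Gamma(G,Z)$ is the $3$-regular graph that is their union. The key point is that $M_a\cup M_b$ is a disjoint union of $4$-cycles: from any $g$ one has $g\xrightarrow{a}ga\xrightarrow{b}gab\xrightarrow{a}gb\xrightarrow{b}g$, using $aba=b$, so each left coset $gH$ spans a single $4$-cycle, which I will call a \emph{square}. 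The degenerate cases ($a=b$, or $c\in H$, or $\langle a,b\rangle=G$) I would settle directly; for instance $\langle a,b\rangle=G$ forces $|G|=4$ and $\Gamma(G,Z)$ to be a $4$-cycle or $K_4$, both Hamiltonian.

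Next I would reformulate the goal. Since $\Gamma(G,Z)$ is cubic, a Hamiltonian cycle is the same as a perfect matching $M$ whose removal leaves a single spanning cycle; removing $M_c$ leaves exactly the disjoint squares, so the task is to perturb this $2$-factor $F=M_a\cup M_b$ into one cycle using the $c$-edges. The basic tool is the standard cycle-merging move: if a current cycle $C$ and a not-yet-absorbed square $S'$ admit two $c$-edges joining an $a/b$-edge $\{v,v'\}$ of $S'$ to two \emph{consecutive} vertices $vc,v'c$ of $C$, then cutting the edge $\{v,v'\}$ of $S'$ and the edge $\{vc,v'c\}$ of $C$ and re-closing through the two $c$-edges splices all four vertices of $S'$ into $C$. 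To keep track of reachability I would introduce the auxiliary graph $\mathcal{Q}$ whose vertices are the squares and whose edges are the $c$-edges between them; because $a,b,c$ generate $G$, this $\mathcal{Q}$ is connected, so no square is cut off from a growing cycle.

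The hard part, which is the genuine content of the cited Rapaport--Strasser argument, is that these merges cannot be carried out one neighbouring square at a time. Because $c$ does not commute with $a$ and $b$, the four $c$-edges leaving a square generically land in four \emph{different} squares: if $v'=va$ then $v'c=vc\,(cac)$, and $cac\notin H$ in general, so the two would-be splice points $vc$ and $vc\,(cac)$ lie in distinct squares and two given squares are usually joined by only a single $c$-edge. A single $c$-edge can never merge two cycles into one, so the splicing is intrinsically non-local: one must decide, compatibly across all squares at once, which $a/b$-edge of each square to break and which $c$-edges to use, so that the resulting $2$-regular spanning subgraph is connected.

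The main obstacle is therefore to prove that such a globally consistent choice exists for every finite $G$ of this type. I would attack it by an induction that absorbs squares in the order of a spanning traversal of $\mathcal{Q}$, at each step splicing a new square back to \emph{already-absorbed} squares by means of two of its $c$-edges (not to a single neighbour), and verifying that the re-closing always produces one cycle rather than several; equivalently, by exhibiting a ``terrace''-type linear ordering of the cosets in $G/H$ along which consecutive squares carry compatible $c$-edges. Establishing that this compatibility can always be met---and that the bookkeeping of the partial products never forces a premature short closure, as the naive periodic pattern $a,b,a,c,a,b,a,c,\dots$ does---is where the essential work lies.
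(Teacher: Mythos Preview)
Your approach has a genuine gap, and you have essentially diagnosed it yourself: once you take $H=\langle a,b\rangle$ and use the $c$-edges to splice, the fact that $c$ commutes with neither $a$ nor $b$ means that the four $c$-edges leaving a square land in four different squares in general, and the local two-edge splice is unavailable. You then propose to globalize, but offer no mechanism for doing so; the ``terrace'' ordering you allude to is exactly the missing content, and there is no reason to expect it to exist in this formulation.

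The paper's proof avoids this obstacle by a different choice of subgroup. It takes $H=\langle b,c\rangle$, the dihedral group of order $p=2\cdot\mathrm{ord}(bc)$. Each coset $g_tH$ carries a Hamiltonian cycle alternating $b$- and $c$-edges, and one then uses the \emph{$a$-edges} to splice cosets together. The point is that $a$ commutes with $b$: if $\{x,xb\}$ is a $b$-edge in the current big cycle, then $\{xa,xab\}=\{xa,xba\}$ is again a $b$-edge, lying in whatever coset contains $xa$. Hence every $b$-edge of the current cycle is matched, via a pair of $a$-edges, to a $b$-edge of \emph{one} other coset, and the two-edge splice goes through locally. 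Connectivity of the coset graph (since $a,b,c$ generate $G$) then lets one absorb the cosets one at a time by a straightforward induction. The moral: put the non-commuting pair $b,c$ \emph{inside} $H$, and reserve the generator $a$ that commutes with $b$ for the splicing step.
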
 
\noindent
The proof is given in Subsection~\ref{subsection:wellknownpf}.
\newline\par
Let $r\in\bN$ and let $\fkI:=\fkJ_{1,l}$. 
Let $m:\fkI\times \fkI\to\bN\cup\{\infty\}$ be a map satisfying the condition that
$m(i,i)=1$ $(i\in\fkI)$ and $m(j,k)=m(k,j)\geq 2$ $(j,k\in\fkI, j\ne k)$.
Let $W$ be the group presented by generators
$s_i$ $(i\in\fkI)$ and relations $(s_is_j)^{m(i,j)}=e$
$(i,j\in\fkI,m(i,j)\ne\infty)$, that is, $W$ is a Coxeter group. 
Let $S:=\{s_i|i\in\fkI\}$, that is, $(W,S)$ is a Coxeter system.
Let ${\mathcal{E}}_m:=\{\{w,ws\}|w\in W, s\in S\}$.
Note that $m(\fkI\times \fkI)\in\bN$ if $|W|<\infty$.
In a way similar to that for Theorem~\ref{theorem:one}, 
we can see the following, where see also \cite[Theorem~2.3]{Y22} for its proof.

\begin{theorem} \label{theorem:two}
{\rm{(J.H.~Conway, N.J.A.~Sloane, Allan~R.~Wilks~(1989) \cite{CSW89})}}
If $|W|<\infty$, then there exists a Hamiltonian cycle for $\Gamma(W,{\mathcal{E}}_m)$.
\end{theorem}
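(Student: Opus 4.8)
The plan is to prove, by induction on $l=|\fkI|$, the following slightly stronger statement: if $2\le l$ and $|W|<\infty$, then $\Gamma(W,{\mathcal{E}}_m)$ admits a Hamiltonian cycle traversing at least two edges of each label $s_i$ $(i\in\fkI)$, where an edge has label $s_i$ if its endpoints are $\{x,xs_i\}$. The degenerate case $l=1$ (where $W\cong\bZ/2\bZ$ and $\Gamma(W,{\mathcal{E}}_m)$ is a single edge, which is a Hamiltonian cycle in the sense of Definition~\ref{definition:HamMap}) is handled separately. For the base case $l=2$ the group $W$ is dihedral of order $2m(1,2)$, and the alternating word $s_1s_2s_1s_2\cdots$ traces a single cycle through all $2m(1,2)$ vertices, using $m(1,2)\ge 2$ edges of each label; this is exactly the analogue of the map $\psi$ in the proof of Theorem~\ref{theorem:one}.

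For the inductive step assume $l\ge 3$. First I would locate a commuting pair of generators. Were no two of the $s_i$ to commute, then any three of them, say $s_i,s_j,s_k$, would satisfy $m(i,j),m(i,k),m(j,k)\ge 3$, so $\tfrac{1}{m(i,j)}+\tfrac{1}{m(i,k)}+\tfrac{1}{m(j,k)}\le 1$ and the rank-three parabolic subgroup $\langle s_i,s_j,s_k\rangle$ would be infinite, contradicting the finiteness of $W$ (parabolic subgroups of a finite Coxeter group are finite). Hence there exist $\lambda,\mu\in\fkI$ with $\lambda\ne\mu$ and $m(\lambda,\mu)=2$, i.e. $s_\lambda s_\mu=s_\mu s_\lambda$. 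Put $a:=s_\lambda$, let $H:=\langle s_j\mid j\in\fkI\setminus\{\lambda\}\rangle$, a finite Coxeter group of rank $l-1$ having $s_\mu$ among its generators and satisfying $a\notin H$, so that $u:=[W:H]\ge 2$. By the induction hypothesis there is a bijection $\psi_H:\bZ/p\bZ\to H$ $(p:=|H|)$ realizing a Hamiltonian cycle of the Cayley graph of $H$ (with respect to the generators $s_j$, $j\ne\lambda$) that uses at least two $s_\mu$-edges.

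Now I would copy the coset-splicing construction of Theorem~\ref{theorem:one} verbatim, with $\psi_H$ in the role of $\psi$, the generator $s_\mu$ in the role of $b$, and $a=s_\lambda$ in the role of the commuting generator. Writing $W=\coprod_{t=1}^{u}g_tH$, one builds bijections $\varphi_v:\bZ/pv\bZ\to\coprod_{t=1}^{v}g_{s(t)}H$ by, at each stage, selecting an $s_\mu$-edge $\{x,xs_\mu\}$ of the cycle already built on the first $v-1$ cosets whose $a$-translate $\{xa,xs_\mu a\}$ lands in a not-yet-used coset, deleting both $s_\mu$-edges and inserting the two $a$-edges $\{x,xa\}$ and $\{xs_\mu,xs_\mu a\}$. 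The relation $s_\lambda s_\mu=s_\mu s_\lambda$ is precisely what guarantees that $\{xa,xs_\mu a\}=\{xa,(xa)s_\mu\}$ is again an $s_\mu$-edge, so the four edges form the required ladder and merge the two cycles into one. A bookkeeping check then reproduces the invariant at rank $l$: each splice removes two $s_\mu$-edges and adds two $a$-edges, so the final cycle retains at least $2u-2(u-1)=2$ edges of label $\mu$, exactly $2(u-1)\ge 2$ edges of label $\lambda$, and all of the at least $2u$ edges of every other label untouched.

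The step I expect to be the main obstacle is the one silently assumed in the proof of Theorem~\ref{theorem:one}: showing that this greedy choice never stalls, i.e. that at every stage some $s_\mu$-edge in the current region has its $a$-translate reaching a fresh coset. I would reduce this to connectivity. The cosets reachable from $H$ by the moves ``move inside a coset along $s_\mu$-edges, then right-multiply by $a$'' correspond to the subgroup generated by $H$ and $a=s_\lambda$, which is all of $W$; hence the auxiliary graph on the $u$ cosets in which $gH$ is joined to $g'H$ whenever some $x\in gH$ has $xa\in g'H$ is connected, and choosing the ordering $s$ to follow a spanning tree of it lets each successive coset be attached. Combining this reachability with the two-edges-per-label invariant (which ensures a usable $s_\mu$-edge always survives in the growing region) closes the induction, the remainder being the routine transcription of the construction in Theorem~\ref{theorem:one}.
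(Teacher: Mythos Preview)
Your outline follows the paper's own pointer to Theorem~\ref{theorem:one} (the paper gives no proof beyond that sentence, so you are already supplying far more than the original). The existence of a commuting pair via the rank-three parabolic argument is correct, and the inductive scaffolding is the right one.

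The genuine gap is in the splicing step, and it stems from a difference between your $\psi_H$ and the dihedral $\psi$ of Theorem~\ref{theorem:one}. There $\psi$ traverses \emph{every} $b$-edge of $H$; consequently, for any $x$ in the visited region with $xa$ in a fresh coset, the edge $\{x,xb\}$ is automatically present in the current cycle (removed $b$-edges always come in $a$-paired visited/visited couples, so an edge whose $a$-partner is fresh was never removed), and $\{xa,xab\}$ is automatically present in the fresh coset's copy of $\psi$. Your $\psi_H$ contains only \emph{some} $s_\mu$-edges, and both halves of this reasoning break. The second half is easy to repair: since $\psi_H$ contains at least one $s_\mu$-edge, a suitable left translate of $\psi_H$ on the fresh coset will contain the required edge $\{xa,xas_\mu\}$. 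But the first half is not repaired by your argument. Connectivity of the coset graph hands you \emph{some} boundary vertex $x$ with $xa$ fresh; your two-edges-per-label invariant hands you \emph{some} surviving $s_\mu$-edge in the cycle; nothing forces these to coincide. The $s_\mu$-edges originally present in the visited cosets' copies of $\psi_H$ are a sparse subset of all $s_\mu$-edges there, and after the earlier splices the surviving ones may all have their $a$-partners inside already-visited cosets, stalling the procedure. Closing this requires more than the invariant you state---for instance, tracking for each visited coset an $s_\mu$-edge of the current cycle whose $a$-partner is still fresh, and choosing the translate of $\psi_H$ on each newly attached coset so as to create such a witness for the next step.
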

\noindent
A rough proof is given in Subsection~\ref{subsection:wellknownpf}. 
\newline\par
Let ${\mathfrak{g}}$ be a finite dimensional complex simple Lie superalgebra.
Assume that (a Dynkin diagram of) ${\mathfrak{g}}$
is $A(m,n)$, $B(m,n)$, $C(n)$, $D(m,n)$, $F(4)$, $G(3)$, or $D(2,1;x)$.
Such  ${\mathfrak{g}}$ is called a {\it{basic classical Lie superalgebra}}.
If it is not $B(0,n)$, it has several Dynkin diagrams, which are not transformed to one another
by an action of the Weyl group defined for ${\mathfrak{g}}$.
However different two of those are transformed to each other
by repetition of operations of the odd reflections of ${\mathfrak{g}}$ (\cite[Appendix~II]{LSS85}, see also \cite[Subsection~2.3]{FSS89},  \cite{S96}).
The mathematical object obtained by adding the odd reflections to the Weyl group is not a group but a groupoid, which 
is called a {\it{Weyl groupoid}}.
Similarly to the Coxeter groups, the Weyl groupoids are also presented by 
Coxeter relations \cite{HY08}.
Every Coxeter group has a root system, which is obtained by the Tits geometric representation.
On the other hand,
we have not yet known if
every Weyl groupoid has (an extended version of) a generalized root system \cite{HY08} or not.
At present we need a generalized root system at first in order to define a Weyl groupoid
(having appropriate properties).
If a generalized root system (resp. a Weyl groupoid) is composed of finite elements, it is called
a {\it{finite generalized root system}}
(resp. a {\it{finite Weyl groupoid}}).
It is known that a Weyl groupoid is finite if and only if its generalized root system is so. There are non-isomorphic two finite generalized root systems
for which the corresonding finite Weyl groupoids are isomorphic.
An example is Nr.~13 and Nr.~14 of Subsection~\ref{subsection:rankthree},
see \cite[Fig.~35]{Y22} and \cite[Table~2]{CH12}, \cite[Remark~A.1]{CH15}.
The finite generalized root systems have been classified by \cite{CH15}.
In this paper, we show that the Cayley graph  
of the Weyl groupoid for every finite generalized root system
has a Hamiltonian cycle, see Theorem~\ref{theorem:Main}.

This paper is organized as follows.
Section~\ref{section:Basic} collects basic facts.
Section~\ref{section:LieS} treats 
the finite generalized root systems of simple Lie algebras and superalgebras
of type $A$-$G$,
since for rank $\geq 9$,
those form all
the irreducible finite generalized root systems
(IFGRSs for short).
Section~\ref{section:HamCy}
gives an explicit form of a Hamiltonian cycle of 
the Cayley graph
for
every (resp. some) IFRGS
of rank three (resp. rank four),
and gives the main theorem Theorem~\ref{theorem:Main} of this paper.
In Appendix, we give approximate values of 
the second largest eigenvalue of the adjacency matrix
of  
the Cayley graph of
an IFRGS
of rank three,
collect basic facts of crystallographic hyperplane arrangements,
and treat an IFRGS
for the Lie superalgebra $D(2,1;x)$ as an example.

\section{Basic facts} \label{section:Basic}
\subsection{Cayley graphs of finite groups}

Although a Cayley graph of a finite group is not a main subject of this paper,
we begin with recalling the definition of it.
The facts written in this subsection is rather independent from those in the other parts of this paper.

\begin{definition} \label{definition:defofCayley}
Let $G$ be a finite group.
Let $e$ denote the unit of $G$.
Let $X$ be a subset of $G$
such that $e\notin X$, $\{x^{-1}|x\in X\}=X$,
and $G=\{x_1x_2\cdots x_k|k\in\bN, x_t\in X(t\in\fkJ_{1,k})\}\cup\{e\}$.
Let $E_{G,X}:=\{\{y,yx\}|y\in G, x\in X\}\,(\subset{\mathfrak{P}}_2(G))$.
We call the finite graph $\Gamma(G,E_{G,X})$
the {\it{Cayley graph}} of $G$ and $X$.
\end{definition}

For $n\in\bN$, 
let ${\mathrm{Alt}}(n)$ be a the alternating group of degree $n$.
Notice $|{\mathrm{Alt}}(n)|={\frac {n!} 2}$.
Assume $n\geq 3$.
The set $X_n:=\{x_1:=(123)$, $x_2:=(132)$,
$x_i=(12)(i,i+1)$ $(i\in\fkJ_{3,n-1})\}$ generates ${\mathrm{Alt}}(n)$
as a group, 
and the defining relations for $X_n$ are
\begin{equation*}
\begin{array}{l}
x_1^3=e, x_1^2=x_2, x_i^2=e \, (i\in\fkJ_{3,n-1}),
(x_1x_3)^3=e, 
(x_1x_i)^2=e \, (i\in\fkJ_{4,n-1}), \\
(x_ix_{i+1})^3=e \, (i\in\fkJ_{3,n-2}), 
(x_ix_j)^2=e \, (i,j\in\fkJ_{3,n-1}, i+2\leq j). 
\end{array}
\end{equation*}
Let $\Gamma({\mathrm{Alt}}(n)):=\Gamma({\mathrm{Alt}}(n),E_{{\mathrm{Alt}}(n),X_n})$.

Since ${\mathrm{Alt}}(3)$ is a cyclic group of order three,
$\Gamma({\mathrm{Alt}}(3))$ clearly has a Hamiltonian cycle.

Denote the elements of ${\mathrm{Alt}}(4)$ as follows: 
$a_1:=1, a_2:=(12)(34), a_3:=(13)(24)$, $a_4:=(14)(23)$, $a_5:=(123), 
a_6:=(132), a_7:=(124), 
a_8:=(142), a_9:=(134), a_{10}:=(143), a_{11}:=(234), a_{12}:=(243)$.
A Hamiltonian cycle
$f:\fkJ_{1,13}\to{\mathrm{Alt}}(4)$ of $\Gamma({\mathrm{Alt}}(4))$ is 
defined by
$f(1):=a_1$, $f(2):=a_6$, $f(3):=a_{11}$, $f(4):=a_3$,
$f(5):=a_8$, $f(6):=a_{12}$, $f(7):=a_2$, $f(8):=a_{10}$,
$f(9):=a_7$, $f(10):=a_4$, $f(11):=a_9$, $f(12):=a_5$,
$f(13)=a_1$.
We draw $\Gamma({\mathrm{Alt}}(4))$ in Fig.~1,
where $f$ is indicated by the thick lines. 


\begin{equation*}
\CayleyAltFourDash
\end{equation*}

Let $1\cdot x_{i_1}x_{i_2}\cdots x_{i_r}$ with
$r:={\frac {n!} 2}$ mean a Hamiltonian cycle $h$
of $\Gamma({\mathrm{Alt}}(n))$
defined by $h(1):=e$ and $h(t+1):=x_{i_1}x_{i_2}\cdots x_{i_t}$
$(t\in\fkJ_{2,r})$. 
Then the above $f$ is
$1\cdot x_2x_3x_1x_1x_3x_2x_2x_3x_1x_1x_3x_2$.

Let $n\in\fkJ_{5,\infty}$. 
Since $x_1x_{n-1}=x_{n-1}x_2$, $x_2x_{n-1}=x_{n-1}x_1$, $x_ix_{n-1}=x_{n-1}x_i$
$(i\in\fkJ_{3,n-3})$,
using a standard way for Theorems~\ref{theorem:one} and \ref{theorem:two}
and an induction on $n$,
we see that $\Gamma({\mathrm{Alt}}(n))$ has a Hamitonian cycle.
For example, a Hamiltonian cycle of $\Gamma({\mathrm{Alt}}(5))$ is:
\newline\newline
{\tiny{{\small{$1\cdot$}}
$x_4 x_3 x_1 x_4 x_1 x_3 x_2 x_2 x_3 x_1 
x_4 x_1 x_3 x_2 x_2 x_3 x_1 x_1 x_3 x_2 
x_4 x_3 x_1 x_1 x_3 x_2 x_2 x_3 x_1 x_1 
x_3 x_2 x_4 x_3 x_4 x_3 x_2 x_2 x_3 x_4 
x_3 x_2 x_2 x_3 x_1 x_1 x_3 x_2 x_4 x_3 
x_1 x_1 x_3 x_2 x_2 x_3 x_1 x_1 x_3 x_2$}}\,,
\newline\newline
and that of $\Gamma({\mathrm{Alt}}(6))$ is:
\newline
\begin{spacing}{0.4}
\noindent
{\tiny{{\small{$1\cdot$}} \newline
$x_4x_3x_5x_4x_1x_5x_4x_2x_3x_1x_1x_3x_2x_2x_3x_1x_1x_3x_4x_2x_3x_1x_1x_
3x_2x_2x_3x_4x_3x_5x_2x_3x_1x_4x_1x_5x_4x_2x_3x_1x_1x_3x_2x_2x_3x_1x_
1x_3x_4x_2x_3x_1x_1x_3x_2x_2x_3x_4x_3x_5$ 
\newline 
$\cdot x_2x_3x_1x_4x_1x_3x_2x_2x_3x_1x_1x_3x_2x_4x_3x_1x_1x_3x_2x_2x_3x_1x_1x_
3x_2x_4x_3x_4x_3x_2x_2x_3x_4x_3x_2x_2x_3x_1x_1x_3x_2x_4x_3x_1x_1x_3x_
2x_2x_3x_1x_1x_3x_2x_4x_3x_1x_4x_1x_3x_5$
\newline 
$\cdot x_2x_3x_4x_3x_4x_2x_3x_1x_1x_3x_2x_2x_3x_1x_1x_3x_4x_2x_3x_1x_1x_3x_2x_
2x_3x_1x_4x_1x_3x_2x_2x_3x_1x_4x_1x_5x_2x_2x_3x_1x_1x_3x_2x_4x_3x_1x_
1x_3x_2x_2x_3x_1x_1x_3x_2x_4x_3x_4x_3x_2$
\newline 
$\cdot x_2x_3x_4x_3x_2x_2x_3x_1x_1x_3x_2x_4x_3x_1x_1x_3x_2x_2x_3x_1x_1x_3x_2x_
4x_3x_1x_4x_1x_3x_5x_2x_3x_4x_3x_4x_2x_3x_1x_1x_3x_2x_2x_3x_1x_1x_3x_
4x_2x_3x_1x_1x_3x_2x_2x_3x_1x_4x_1x_3x_2$
\newline 
$\cdot x_2x_3x_1x_4x_1x_5x_2x_2x_3x_1x_4x_1x_3x_2x_2x_3x_1x_1x_3x_2x_4x_3x_1x_
1x_3x_2x_2x_3x_1x_1x_3x_2x_4x_3x_4x_3x_2x_2x_3x_4x_3x_2x_2x_3x_1x_1x_
3x_2x_4x_3x_1x_1x_3x_2x_2x_3x_1x_1x_3x_2$
\newline 
$\cdot x_4x_3x_5x_4x_1x_3x_2x_2x_3x_1x_4x_1x_3x_2x_2x_3x_1x_1x_3x_2x_4x_3x_1x_
1x_3x_2x_2x_3x_1x_1x_3x_2x_4x_3x_4x_3x_2x_2x_3x_4x_3x_2x_2x_3x_1x_1x_
3x_2x_4x_3x_1x_1x_3x_2x_2x_3x_1x_1x_3x_2$
}}\,.
\end{spacing}

\vspace{1cm}

\noindent
These can be found by using Mathematica~13.3 \cite{Mathe23}.

\subsection{Finite generalized root systems}
Let $\fca$ be a free $\bZ$-module of finite rank.
Assume $\fca\ne\{0\}$.
Let $\Pfca$ be the power set of $\fca$.
(the notation $\Pfca$ is compatible with
the one ${\mathfrak{P}}(X)$ of Introduction).
Let $\Bfca$ be the subset of $\Pfca$
formed by all the $\bZ$-bases of $\fca$.

\begin{definition}\label{definition:DefOfFGRS}
Let $\rR\in\Pfca$. 
Let $\rR^+_B:=\rR\cap\rmSpan_{\bZgeqo}B$
and $\rR^-_B:=\rR\cap\rmSpan_{\bZleqo}B$.
Let
\begin{equation*}
\bBR:=\{B\in\Bfca|B\subset\rR, \rR=\rR^+_B\cup \rR^-_B\}.
\end{equation*}
We say that $\rR$ is a {\it{finite generalized root system 
{\rm{(}}FGRS for short{\rm{)}}
over $\fca$}}
if  
the following axioms $(\rR 0)$-$(\rR 3)$ are fulfilled.
\newline\newline
$(\rR 0)$ \quad $\rR$ is a non-empty finite set. \newline
$(\rR 1)$ \quad $\bBR\ne\emptyset$. \newline
$(\rR 2)$ \quad $\forall \al\in\rR$,
$\rR\cap\bZ\al=\{\al,-\al\}$. \newline
$(\rR 3)$ \quad $\forall B\in\bBR$, $\forall  \al\in B$,
$\exists B^{(\al)}\in \bBR$,
$\rR^+_{B^{(\al)}}\cap\rR^-_B=\{-\al\}$.
\end{definition}

\begin{definition}
Let $\rR^\prime$ {\rm{(}}resp. $\rR^{\prime\prime}${\rm{)}}
be an FGRS over $\fca^\prime$
{\rm{(}}resp. $\fca^{\prime\prime}${\rm{)}}.
We say that $\rR^\prime$
is {\it{isomorphic}} to
$\rR^{\prime\prime}$
if there exists a 
$\bZ$-module isomorphism 
$f:\fca^\prime\to\fca^{\prime\prime}$
such that $f(\rR^\prime)=\rR^{\prime\prime}$.
\end{definition}

Let $\rR$ be an FGRS over $\fca$.
Then there exist $N^{\rR,B}_{\al,\beta}\in\bZ$
$(B\in\bBR,\al,\beta\in B)$ such that 
\begin{equation*}
N^{\rR,B}_{\al,\al}=-2,\,\,N^{\rR,B}_{\al,\beta}\in\bZgeqo\,(\al\ne\beta)
\,\,\mbox{and}\,\,B^{(\al)}=\{\,\beta+N^{\rR,B}_{\al,\beta}\al\,|\,\beta\in B\,\}.
\end{equation*}
We have $(B^{(\al)})^{(-\al)}=B$.

\begin{lemma}\label{lemma:bBprime}
{\rm{({\it{see}} \cite{Y16})}}
{\rm{(1)}}
Let $B,B^\prime\in \bBR$.
Then{\rm{:}}
\begin{equation*}
\begin{array}{lcl}
|\rR^-_B\cap\rR^+_{B^\prime}|
& = &\min\{r\in\bZgeqo|
\exists t\in\fkJ_{0,r},
\exists \beta_t\in B_t (t\in\fkJ_{0,r-1}),
\\
& & \quad\quad\quad\quad
B_0=B, B_r=B^\prime, 
B_{t+1}=B_t^{(\beta_t)}
(t\in\fkJ_{0,r-1})\}.
\end{array}
\end{equation*}
\newline
{\rm{(2)}} Let $\bB^\prime$ be a non-empty subset of $\bBR$
such  that the same condition as $(\rR 3)$ with $\bB^\prime$ in place of $\bBR$
is fulfilled. Then $\bB^\prime=\bBR$. 
\newline
{\rm{(3)}}
Let $\fca_\bR:=\fca\otimes_\bZ\bR$, where notice
that $\fca_\bR$ is an $|\fkI|$-dimesional $\bR$-linear space,
$\fca$ can be idetified with the $\bZ$-submodule of
$\fca_\bR$
in a natural way. Let $\fca_\bR^*$ be the dual $\bR$-linear space of $\fca_\bR$.
Let  $f\in\fca_\bR^*\setminus\{0\}$ be such that $0\notin f(\rR)$.
Then there exists $B\in\bBR$
such that $\rR^+_B=\rR\cap f^{-1}(\bRgneqo)$.
\newline
{\rm{(4)}} We have
\begin{equation*}
\rR=\cup_{B\in\bBR}B.
\end{equation*} In particular, we have $\rR^-_B=-\rR^+_B$ $(B\in\bBR)$.
 \newline
{\rm{(5)}} We have $|\bBR|\in 2\bN$.
Moreover there exist two subsets $Y_1$, $Y_2$ of $\bBR$
such that $Y_1\cup Y_2=\bBR$,
$Y_1 \cap Y_2=\emptyset$,
$|Y_1|=|Y_2|={\frac 1 2}|\bBR|$,
and 
$Y_2=\{B^{(\al)}|B\in 
Y_1,\al \in B\}$, 
$Y_1=\{(B^\prime)^{(\beta)}|B^\prime\in 
Y_2,\beta \in B^\prime\}$.
\end{lemma}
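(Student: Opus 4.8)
The plan is to extract two structural facts about a single base and its reflections, prove part~(3) and part~(1) as the technical core, and then read off parts~(2), (4), (5).

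\textbf{Preliminaries.} First I would record that, for $B\in\bBR$, the sets $\rR^+_B$ and $\rR^-_B$ are disjoint (their intersection lies in $\rmSpan_{\bZgeqo}B\cap\rmSpan_{\bZleqo}B=\{0\}$) and partition $\rR$, and that $\rR^-_B=-\rR^+_B$ by $(\rR2)$; this already yields the ``in particular'' clause of~(4), and shows $|\rR^+_B|=\tfrac12|\rR|$ for every $B$. Next, every element of a $\bZ$-basis is primitive, and the elements of $B$ are exactly the primitive generators of the extremal rays of the simplicial cone $\rmSpan_{\bRgeqo}B$; since this cone equals the nonnegative hull of $\rR^+_B$, the positive system $\rR^+_B$ \emph{determines} $B$. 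Finally, from $(\rR3)$ together with the cardinality count $|\rR^+_{B^{(\al)}}|=|\rR^+_B|$ and $\rR^+_{B^{(\al)}}\cap\rR^-_B=\{-\al\}$, one obtains the single-step description $\rR^+_{B^{(\al)}}=(\rR^+_B\setminus\{\al\})\cup\{-\al\}$; so a reflection alters the positive system by the single transposition $\al\leftrightarrow-\al$. In particular $B^{(\al)}$ is uniquely determined by $(B,\al)$, and $\al$ is recovered as the unique element of $\rR^+_B\setminus\rR^+_{B^{(\al)}}$.

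\textbf{Parts (3) and (1).} For~(3), put $\rR^+:=\rR\cap f^{-1}(\bRgneqo)$ and $\rR^-:=\rR\setminus\rR^+$, and straighten: starting from any $B\in\bBR$, if $\rR^+_B\ne\rR^+$ then some simple root $\al\in B$ lies in $\rR^-$ (otherwise $B\subseteq\rR^+$ forces $\rR^+_B\subseteq\rR^+$, hence equality by equal cardinality), and reflecting at such $\al$ replaces the ``wrong'' root $\al$ by the ``right'' root $-\al$, strictly lowering $|\rR^+_B\cap\rR^-|$, so the procedure terminates at a base with $\rR^+_B=\rR^+$. For~(1), write $m:=|\rR^-_B\cap\rR^+_{B'}|=|\rR^+_{B'}\setminus\rR^+_B|$. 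The lower bound is immediate from the single-step description, since a chain of length $r$ changes $\rR^+_B$ by at most $r$ transpositions, giving $|\rR^+_{B'}\setminus\rR^+_B|\le r$. For the matching upper bound I induct on $m$: if $m=0$ then $\rR^+_{B'}=\rR^+_B$ and the Preliminaries give $B'=B$; if $m>0$ then, as in~(3), there is $\al\in B$ with $\al\in\rR^-_{B'}$, and reflecting at $\al$ lowers $m$ by exactly one, assembling a chain of length $m$.

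\textbf{Parts (2) and (4).} Part~(2) is connectivity: $(\rR1)$ gives a base in $\bB'$, and the hypothesis makes $\bB'$ closed under the (unique) operation $B\mapsto B^{(\al)}$; since~(1) shows any two bases of $\bBR$ are joined by a reflection chain, following such a chain from an element of $\bB'$ stays inside $\bB'$, whence $\bB'=\bBR$. For the main assertion of~(4), given $\al\in\rR$ I would choose $f_0\in\fca_\bR^*$ with $f_0(\al)=0$ but $f_0(\beta)\ne0$ for every root $\beta\notin\bR\al$ (possible, as these finitely many conditions exclude only proper subspaces of $\ker f_0$'s ambient hyperplane), then set $f:=f_0+\epsilon g$ with $g(\al)>0$ and $\epsilon>0$ small and generic so that $0\notin f(\rR)$. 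By~(3) this $f$ produces $B\in\bBR$ with $\rR^+_B=\rR\cap f^{-1}(\bRgneqo)$ and $\al\in\rR^+_B$. If $\al\notin B$, write $\al=\sum_{\beta\in B}c_\beta\beta$ with $c_\beta\in\bZgeqo$ not all zero; since $\pm\al\notin B$, each $\beta\in B$ satisfies $\beta\notin\bR\al$ and $f(\beta)>0$, forcing $f_0(\beta)>0$ for small $\epsilon$, hence $0=f_0(\al)=\sum c_\beta f_0(\beta)>0$, a contradiction. Thus $\al\in B$, proving $\rR\subseteq\bigcup_{B}B$, and the reverse inclusion is trivial.

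\textbf{Part (5) and the main difficulty.} Fix $B_0\in\bBR$ and set $\ell(B):=|\rR^-_{B_0}\cap\rR^+_B|$, which is finite and well defined by~(1). The single-step description shows each reflection flips the parity of $\ell$, because exactly one of $\al,-\al$ lies in $\rR^-_{B_0}$. Hence the reflection graph on $\bBR$ with edges $\{B,B^{(\al)}\}$ is bipartite, with $Y_1$ and $Y_2$ the bases of even and odd $\ell$; both are nonempty ($B_0\in Y_1$, any $B_0^{(\al)}\in Y_2$). This graph has no multiple edges and is $|\fkI|$-regular (distinct $\al\in B$ give distinct neighbours), so counting edges from each side gives $|\fkI|\,|Y_1|=|\fkI|\,|Y_2|$, whence $|Y_1|=|Y_2|=\tfrac12|\bBR|$ and $|\bBR|\in2\bN$. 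The stated image relations follow from $(B^{(\al)})^{(-\al)}=B$: any $B'\in Y_2$ equals $C^{(-\beta)}$ for its even-parity neighbour $C:=B'^{(\beta)}\in Y_1$ (with $-\beta\in C$), and symmetrically for $Y_1$. The main obstacle is the Preliminaries-plus-(3)/(1) core: deriving the single-step description from $(\rR3)$ and proving termination of the straightening procedure, after which the remaining parts are bookkeeping once bases are known to be determined by their positive systems and connected by reflections.
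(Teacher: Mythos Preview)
The paper does not prove this lemma; it simply cites \cite{Y16}. So there is no in-paper argument to compare against, and your proposal must be judged on its own merits.

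Your approach is the standard one and is essentially correct. The core engine---deriving the single-step description $\rR^+_{B^{(\al)}}=(\rR^+_B\setminus\{\al\})\cup\{-\al\}$ from $(\rR3)$ plus the cardinality count, showing that positive systems determine bases via extremal rays, and then running the straightening/exchange argument for~(3) and~(1)---is exactly how these statements are proved in the Heckenberger--Yamane framework. Parts~(2), (4), (5) do reduce to bookkeeping as you say.

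Two small points you should tighten. In~(4), the phrase ``forcing $f_0(\beta)>0$ for small $\epsilon$'' is circular as written, because $B$ depends on $\epsilon$. The fix is to choose $\epsilon>0$ once, uniformly, so that $|\epsilon g(\gamma)|<|f_0(\gamma)|$ for every $\gamma\in\rR\setminus\{\pm\al\}$ (a finite set of conditions); then $f$ and $f_0$ have the same sign on $\rR\setminus\{\pm\al\}$, and your contradiction goes through for whatever $B$ part~(3) produces. In~(5), you invoke $(B^{(\al)})^{(-\al)}=B$ and implicitly $-\al\in B^{(\al)}$; these are stated in the paper only \emph{after} the lemma, so you should justify them from your Preliminaries: if $-\al\notin B^{(\al)}$ then $B^{(\al)}\subset\rR^+_B$, whence $\rR^+_{B^{(\al)}}\subset\rmSpan_{\bZgeqo}B$, contradicting $-\al\in\rR^+_{B^{(\al)}}$; the involution then follows from the single-step description.
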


Until the end of this section,
let $\rR$ be an FGRS over $\fca$,
and let $\fkI$ be a finite set with $|\fkI|=|B|$ for all $B\in\bBR$.

\begin{proposition}{\rm{({\it{see}} \cite{Y16})}}\label{proposition:ordering}
There exist $\al^B_i\in\rR$
$(B\in\bBR,i\in\fkI)$
such that
$B=\{\al^B_i|i\in\fkI\}$ $(B\in\bBR)$, and 
$\al^{B^\prime}_j=\al^B_j+N^{\rR,B}_{\al^B_i,\al^B_j}\al^B_i$
$(B\in\bB,i, j\in\fkI)$,
where $B^\prime:=B^{(\al^B_i)}$.
\end{proposition}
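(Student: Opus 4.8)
The plan is to read the formula $\beta\mapsto\beta+N^{\rR,B}_{\al,\beta}\al$ as a linear involution, to propagate one arbitrary labelling of a single base along chains of reflections, and to prove that the resulting labelling is independent of the chain. Accordingly, for $B\in\bBR$ and $\al\in B$, let $s^B_\al\colon\fca\to\fca$ be the unique $\bZ$-linear map with $s^B_\al(\beta)=\beta+N^{\rR,B}_{\al,\beta}\al$ for every $\beta\in B$; this is well defined since $B$ is a $\bZ$-basis of $\fca$. From $N^{\rR,B}_{\al,\al}=-2$ we get $s^B_\al(\al)=-\al$, and a direct check gives $(s^B_\al)^2=\rmid$, so $s^B_\al$ is a linear involution restricting to a bijection $B\to B^{(\al)}$. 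In this language the assertion is that the bijections $i\mapsto\al^B_i$ can be chosen compatibly, namely so that $\al^{B^\prime}_j=s^B_{\al^B_i}(\al^B_j)$ whenever $B^\prime=B^{(\al^B_i)}$.

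I would next recast this as a covering problem. Let $\widetilde{\bBR}$ be the set of pairs $(B,\sigma)$ with $\sigma\colon\fkI\to B$ a bijection, and for each $i\in\fkI$ introduce the operation $(B,\sigma)\mapsto(B^{(\sigma(i))},\,s^B_{\sigma(i)}\circ\sigma)$. Using $(B^{(\al)})^{(-\al)}=B$ and that $B$ is a basis, one verifies cheaply that this operation is an involution: the composite bijection $B\to B^{(\al)}\to B$ fixes $\al$ and sends every other $\beta$ to an element of $B$ of the shape $\beta+c\al$, whence $c=0$; this simultaneously yields the coefficient symmetry $N^{\rR,B}_{\al,\beta}=N^{\rR,B^{(\al)}}_{-\al,\,s^B_\al\beta}$. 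Thus $\widetilde{\bBR}$ is a graph whose edge operations cover those of $\bBR$ under the projection $\pi(B,\sigma)=B$, and the Proposition is equivalent to producing a connected component $\mcC$ of $\widetilde{\bBR}$ on which $\pi$ is bijective; such a $\mcC$ is exactly the graph of a compatible family $B\mapsto(\al^B_i)_{i\in\fkI}$.

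Fix any $(B,\sigma)$ and let $\mcC$ be its connected component. Surjectivity of $\pi|_{\mcC}$ is immediate from Lemma~\ref{lemma:bBprime}(2): the image $\pi(\mcC)$ is non-empty, and since every element of a base occurs as some $\sigma(i)$ it satisfies the closure condition of $(\rR 3)$, so it equals $\bBR$. For injectivity one must show that transporting a labelling around any closed loop $B=B_0\to B_1\to\cdots\to B_r=B$ of reflections returns $\sigma$ unchanged. Writing $\al_t\in B_t$ for the root reflected at step $t$, the transported labelling is $g\circ\sigma$ with $g:=s^{B_{r-1}}_{\al_{r-1}}\cdots s^{B_0}_{\al_0}\in\mathrm{GL}(\fca)$, and because every node of $\widetilde{\bBR}$ is a genuine labelled base we have $g(B)=B_r=B$ for free; thus $g$ merely permutes $B$, and we must rule out a nontrivial permutation, i.e. show $g=\rmid$.

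This last step is the real obstacle. It is the Weyl-groupoid analogue of the simple transitivity of a Coxeter group on its chambers: a reflection word from $B$ back to $B$ that stabilises the positive system $\rR^+_B$ must be trivial. I expect to obtain it from the length/exchange (Matsumoto-type) theory of Weyl groupoids in \cite{HY08}: one assigns to a reflection word the number of positive roots it turns negative, shows this drops by one under a suitable right reflection, and concludes that a positivity-preserving word has length $0$ and hence is the identity morphism; equivalently, Lemma~\ref{lemma:bBprime}(1) identifies the minimal loop length with $|\rR^-_B\cap\rR^+_B|=0$ and reduces the verification of the elementary relations to the rank-two sub-configurations spanned by two simple roots. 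Granting this rigidity, $g=\rmid$, so $\pi|_{\mcC}$ is injective, $\mcC$ is the graph of the desired labelling, and setting $\al^B_i:=\sigma(i)$ for the unique $(B,\sigma)\in\mcC$ completes the proof.
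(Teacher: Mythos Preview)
The paper supplies no proof of this proposition; it only cites \cite{Y16}. Your covering construction is correct and is the standard way to establish the result.

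Two refinements on the monodromy step. First, Lemma~\ref{lemma:bBprime}(1) by itself only tells you that the \emph{shortest} loop at $B$ has length $0$; it does not immediately force an arbitrary closed word to have $g=\rmid$. What closes the gap is Matsumoto's theorem for Weyl groupoids \cite{HY08}: every word is braid-equivalent to a reduced one, so a closed word is braid-equivalent to the empty word. That theorem rests on (a) the exchange condition, which does come from the length formula in Lemma~\ref{lemma:bBprime}(1), and (b) the fact that each rank-two braid loop of length $2m$, with $m=|(\bZgeqo\sigma(i)+\bZgeqo\sigma(j))\cap\rR^+_B|$, already returns to the same labelled pair $(B,\sigma)$ in $\widetilde{\bBR}$. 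Point (b) is a direct two-variable computation (the one underlying Lemma~\ref{lemma:tausimij}) and does not presuppose a global labelling, since the labelling travels with the path in $\widetilde{\bBR}$; you should state and check it explicitly rather than fold it into the appeal to Lemma~\ref{lemma:bBprime}(1). Second, be careful with circularity: the Matsumoto-type statements recorded later in this paper are phrased for CFGRS, and the passage from FGRS to CFGRS in Lemma~\ref{lemma:RtBt} already uses the labelled bases $\al^B_i$. So cite \cite{HY08} directly, or run the exchange/Matsumoto argument entirely inside your covering $\widetilde{\bBR}$.
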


For $B\in\bBR$, let $B^{(i)}:=B^{(\al^B_i)}$
and $N^{\rR,B}_{ij}:=N^{\rR,B}_{\al^B_i,\al^B_j}$.
We have 
\begin{equation}\label{eqn:NRBij}
N^{\rR,B^{(i)}}_{ij}=N^{\rR,B}_{ij}\quad (i,j\in\fkI).
\end{equation}

For $B\in\bBR$, define the $|\fkI|\times |\fkI|$-matrix $G^B_i=[g^B_{kr}]_{k,r\in\fkI}$
over $\bZ$ by $\al^{B^{(i)}}_r=\sum_{k\in\fkI}g^B_{kr}\al^B_k$,
where precisely we have
\begin{equation*}
\mbox{$g^B_{ii}=-1$, $g^B_{jj}=1$, $g^B_{ij}=N^{\rR,B}_{ij}$, $g^B_{ji}=0$ $(i\ne j)$, 
and $g^B_{xy}=0$ $(i\notin\{x,y\})$}.
\end{equation*}
Notice that $G^B_i$ is the change of basis matrix from $B$ to $B^{(i)}$,
which means that viewing $B$ as the $1\times |\fkI|$-matrix,
we have
\begin{equation*}
B^{(i)}=BG^B_i.
\end{equation*} 
By \eqref{eqn:NRBij}, we have
\begin{equation}
(B^{(i)})^{(i)}=B,\,\,
G^B_i=(G^B_i)^{-1}=G^{B^{(i)}}_i
\quad (B\in\bBR,\,i,j\in\fkI).
\end{equation}

For $B\in\bBR$ and $i,j\in\fkI$,
let $m^B_{ij}:=|(\bZgeqo\al^B_i+\bZgeqo\al^B_j)\cap\rR^+_B|\,
(\in\bN)$, whence $m^B_{ii}=1$.

\begin{lemma}
Let $i,j\in\fkI$ be such that $i\ne j$.
Let $B\in\bBR$, and let $m:=m^B_{ij}$.
Define $B_k\in\bBR$ $(k\in\bZgeqo)$ by
$B_0:=B$, and $B_{2t-1}:=B_{2(t-1)}^{(i)}$,
$B_{2t}:=B_{2t-1}^{(j)}$ $(t\in\bN)$.
Then $B_{2m}=B$,
and $B_x\ne B_y$ for $x,y\in\fkJ_{0,2m-1}$ with $x\ne y$.
\end{lemma}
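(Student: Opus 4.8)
The plan is to reduce everything to the plane $P:=\rmSpan_\bR\{\al^B_i,\al^B_j\}\subseteq\fca_\bR$ and to track how the positive system evolves along the alternating sequence. First I would record two consequences of the axioms. By $(\rR 3)$ together with $\rR^-_{B'}=-\rR^+_{B'}$ (Lemma~\ref{lemma:bBprime}(4)) and a count of cardinalities, each single reflection changes the positive system by exactly one root: for any base $B'$ and any $r\in\fkI$,
\begin{equation*}
\rR^+_{(B')^{(r)}}=\bigl(\rR^+_{B'}\setminus\{\al^{B'}_r\}\bigr)\cup\{-\al^{B'}_r\}.
\end{equation*}
Moreover, by Lemma~\ref{lemma:bBprime}(1) two bases with the same positive system are at distance $0$, hence equal, so it suffices to control the sets $\rR^+_{B_k}$. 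Next, an induction on $k$ using $\al^{(B')^{(r)}}_s=\al^{B'}_s+N^{\rR,B'}_{rs}\al^{B'}_r$ shows $\al^{B_k}_i,\al^{B_k}_j\in P$ for all $k$; thus every flipped root lies in $\Phi:=\rR\cap P$, and $\rR^+_{B_k}$ agrees with $\rR^+_B$ outside $P$. Since $\{\al^B_i,\al^B_j\}$ is part of a $\bZ$-basis, any root in $P$ has integer coordinates in $\al^B_i,\al^B_j$, so $\Phi=(\rR^+_B\cap P)\sqcup(-(\rR^+_B\cap P))$ with $|\rR^+_B\cap P|=m^B_{ij}=m$; hence $|\Phi|=2m$.

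Then I would set up the rank-two picture. By the reducedness encoded in $(\rR 2)$ the $2m$ roots of $\Phi$ point in $2m$ pairwise distinct directions, so, fixing an inner product on $P$, I list them as $\rho^{(1)},\dots,\rho^{(2m)}$ by increasing angle, with $\rho^{(t+m)}=-\rho^{(t)}$ (indices mod $2m$). By Lemma~\ref{lemma:bBprime}(3) each positive system $\rR^+_{B_k}\cap P$ is cut out of $\Phi$ by a nonzero linear functional, hence contains exactly one root of each antipodal pair lying in an open half-plane; these are exactly $m$ cyclically consecutive roots, an ``arc'' $\{\rho^{(s_k)},\dots,\rho^{(s_k+m-1)}\}$. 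Because every element of the arc is a $\bZgeqo$-combination of the linearly independent roots $\al^{B_k}_i,\al^{B_k}_j\in\Phi$, these two roots must be the extreme rays of the arc, i.e.\ its endpoints $\rho^{(s_k)}$ and $\rho^{(s_k+m-1)}$.

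The heart of the argument is that each reflection slides the arc by one step, always in the same direction. Reflecting an endpoint-root sends it to its antipode $\rho^{(s_k+m)}$ or $\rho^{(s_k-1)}$, giving $s_{k+1}=s_k+1$ or $s_k-1$; moreover the flip formula $\al^{(B_k)^{(r)}}_r=-\al^{B_k}_r$ shows that the reflected direction lands on the opposite endpoint of the new arc, so the \emph{other} direction becomes the new leading endpoint $\rho^{(s_{k+1})}$. Thus, labelling at each stage which of $i,j$ occupies the leading endpoint $\rho^{(s_k)}$, every reflection interchanges the two labels. Since the sequence alternates $i,j,i,j,\dots$, this means every reflection acts on the same end throughout (the leading end if $\al^B_i$ starts as the leading endpoint, the trailing end otherwise); in either case $s_k=s_0\pm k$ for all $k$ with one fixed sign. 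Consequently the arcs for $k=0,\dots,2m-1$ are pairwise distinct, so the $B_k$ are distinct, while $s_{2m}\equiv s_0\pmod{2m}$ forces $\rR^+_{B_{2m}}\cap P=\rR^+_B\cap P$; since nothing changes outside $P$ we get $\rR^+_{B_{2m}}=\rR^+_B$, hence $B_{2m}=B$.

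The main obstacle is the consistency claim in the last step: verifying that the alternation of reflection indices $i,j,i,j,\dots$ exactly matches the alternation of ``which direction sits at the leading endpoint,'' so that the arc never reverses before completing a full cycle. This is precisely what rules out an early return and pins the period at exactly $2m$ rather than a proper divisor. A secondary point to nail down is the identification of the simple roots $\al^{B_k}_i,\al^{B_k}_j$ with the arc endpoints, which rests on the distinct-directions (reducedness) property extracted from $(\rR 2)$ together with the extreme-ray characterization used above.
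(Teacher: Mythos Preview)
Your argument is correct and follows essentially the same strategy as the paper: reduce to the rank-two subsystem $\Phi=\rR\cap P$ and track how the positive cone $\rR^+_{B_k}\cap P$ evolves under the alternating reflections. The paper phrases this combinatorially via the overlap count
\[
x(r,l):=\bigl|(\bZ_{\geq 0}\al^{B_r}_i\oplus\bZ_{\geq 0}\al^{B_r}_j)\cap(\bZ_{\geq 0}\al^{B_l}_i\oplus\bZ_{\geq 0}\al^{B_l}_j)\cap\rR^+_{B_r}\bigr|,
\]
showing $x(r,r+t)=m-t$ for $t\in\fkJ_{0,m}$, then that the cone at step $r+m$ is the negative of that at step $r$, and finally $x(r,r+m+t)=t$ so $B_{r+2m}=B_r$. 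Your ``sliding arc'' picture encodes exactly the same information: the statement $s_{k+1}=s_k\pm1$ with fixed sign is equivalent to $x(0,t)=m-t$, and your label-swapping observation is precisely what justifies that $x$ decreases monotonically rather than oscillating. The paper's proof is terser but leaves ``we can see $x(r,r+t)=m-t$'' unexplained; your version supplies that explanation.

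One small correction: your citation of Lemma~\ref{lemma:bBprime}(3) is misplaced. That lemma produces a base from a functional, not the other way around. What you actually need---that $\rR^+_{B_k}\cap P$ equals $\Phi\cap(\bZ_{\geq 0}\al^{B_k}_i+\bZ_{\geq 0}\al^{B_k}_j)$ and hence is a contiguous arc with $\al^{B_k}_i,\al^{B_k}_j$ as endpoints---follows directly from the definition of $\rR^+_{B_k}$ together with linear independence of the simple roots, without invoking that lemma at all.
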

\begin{proof}
For $r,l\in\bZgeqo$,
let $x(r,l):=|(\bZgeqo\al^{B_r}_i\oplus\bZgeqo\al^{B_r}_j)\cap(\bZgeqo\al^{B_l}_i\oplus\bZgeqo\al^{B_l}_j)\cap\rR^+_{B_r}|$.
Let $r\in\bZgeqo$. We can see $x(r,r+t)=m-t$ $(t\in\fkJ_{0,m})$
and $(\bZgeqo\al^{B_{r+m}}_i\oplus\bZgeqo\al^{B_{r+m}}_j)\cap\rR^+_{B_r}=
(\bZleqo\al^{B_r}_i\oplus\bZleqo\al^{B_r}_j)\cap\rR^+_{B_r}$.
Hence $x(r,r+m+t)=t$ $(t\in\fkJ_{0,m})$ and $B_{r+2m}=B_r$.
\end{proof}

\begin{definition} \label{definition:finiteWG}
Let $\rR$ be an FGRS over $\fca$. 
\newline
{\rm{(1)}} For $B,B^\prime\in\bBR$,
define the $\bZ$-isomorphism $\trxi_{B^\prime,B}:\fca\to\fca$ by 
$\trxi_{B^\prime,B}(\al^B_i)=\al^{B^\prime}_i$ $(i\in\fkI)$.
Let $\sim$ be an equivalence relation on $\bBR$.
We say that $\sim$ is a {\it{groupoid equivalence relation}}
if for $B,B^\prime\in\bBR$ with $B\sim B^\prime$,
we have $\trxi_{B^\prime,B}(\rR^B)=\rR^{B^\prime}$
and $B^{(i)}\sim(B^\prime)^{(i)}$
$(i\in\fkI)$.
\newline
{\rm{(2)}} Let $\sim$ be a groupoid equivalence relation on $\bBR$.
Let $\bBRsim$ be the quotient set of $\bBR$ by $\sim$.
Let $[B]^\sim:=\{B^\prime\in\bBR|B^\prime\sim B\}(\in\Pfca)$ for $B\in\bBR$,
i.e., $\bBRsim=\{[B]^\sim|B\in\bBR\}$.
We call $\sim$ {\it{smallest}} (resp. {\it{largest}}) if 
$[B]^\sim=\{B^\prime\in\bBR|\trxi_{B^\prime,B}(\rR^B)=\rR^{B^\prime}\}$
(resp. $[B]^\sim=\{B\}$)
for all $B\in\bBR$.
Define the $\bZ$-module automorphism $\tausimi:\bBRsim\to\bBRsim$
by $\tausimi([B]^\sim):=[B^{(i)}]^\sim$
$(B\in\bBR)$.
Notice that if $\sim$ is largest, then the canonical map 
$f:\bBR\to\bBRsim$ $(f(B):=[B]^\sim)$ is a bijection.
\end{definition}

\begin{lemma} {\rm{({\it{see}} \cite{Y16})}} \label{lemma:tausimij}
{\rm{(1)}} $(\tausimi)^2=\rmid_{\bBRsim}$.
\newline
{\rm{(2)}} For $B\in\bBR$ and $i,j\in\fkI$ with $i\ne j$,
letting $m:=m^B_{ij}$, we have
$(\tausimi\tausimj)^m([B]^\sim)=[B]^\sim$.
\end{lemma}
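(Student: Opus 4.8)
The plan is to deduce both parts directly from identities already available at the level of $\bBR$ and then transport them through the quotient map $B\mapsto[B]^\sim$; no new structural input is needed beyond the unnamed lemma preceding Definition~\ref{definition:finiteWG}.

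For part (1), I would simply unwind the definition of $\tausimi$. By construction $(\tausimi)^2([B]^\sim)=\tausimi([B^{(i)}]^\sim)=[(B^{(i)})^{(i)}]^\sim$, and since it has already been recorded that $(B^{(i)})^{(i)}=B$, the right-hand side is $[B]^\sim$. The only point to keep in mind is that $\tausimi$ is well-defined on $\bBRsim$, which is guaranteed by the groupoid-equivalence axiom $B^{(i)}\sim(B^\prime)^{(i)}$, so no extra verification is required.

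For part (2), the key is the unnamed lemma just before Definition~\ref{definition:finiteWG}, which, for fixed $B$ and $i\ne j$, builds an alternating chain $B_0,B_1,B_2,\dots$ with $B_0=B$, $B_{2t-1}=B_{2(t-1)}^{(i)}$, $B_{2t}=B_{2t-1}^{(j)}$ and concludes $B_{2m}=B$ for $m=m^B_{ij}$. First I would observe that $\tausimi\tausimj$ (with $\tausimj$ applied first) sends $[C]^\sim$ to $[(C^{(j)})^{(i)}]^\sim$, so iterating it traces out exactly the chain constructed in the same way but with the roles of $i$ and $j$ interchanged: setting $C_0:=B$, $C_{2t-1}:=C_{2(t-1)}^{(j)}$, $C_{2t}:=C_{2t-1}^{(i)}$, an easy induction on $t$ gives $(\tausimi\tausimj)^t([B]^\sim)=[C_{2t}]^\sim$. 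I would then invoke the unnamed lemma with $i$ and $j$ swapped applied to the chain $C_\bullet$, obtaining $C_{2m'}=B$ with $m'=m^B_{ji}$. Since $m^B_{ij}=|(\bZgeqo\al^B_i+\bZgeqo\al^B_j)\cap\rR^+_B|$ is manifestly symmetric in $i$ and $j$, we have $m'=m$, hence $C_{2m}=B$ and therefore $(\tausimi\tausimj)^m([B]^\sim)=[C_{2m}]^\sim=[B]^\sim$.

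The computations are entirely routine; the only points requiring care are the bookkeeping of the order in which $\tausimi$ and $\tausimj$ are applied, so that the induction correctly reproduces the swapped chain, and the symmetry $m^B_{ij}=m^B_{ji}$, which lets me reuse the preceding lemma verbatim rather than reproving it. I do not anticipate any genuine obstacle beyond this bookkeeping.
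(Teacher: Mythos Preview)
Your proposal is correct and matches the paper's intent: the paper states Lemma~\ref{lemma:tausimij} without proof, treating it as an immediate consequence of the identity $(B^{(i)})^{(i)}=B$ and the unnamed lemma establishing $B_{2m}=B$, which is precisely what you unwind. Your care about the order of application of $\tausimi$ and $\tausimj$ and the use of the symmetry $m^B_{ij}=m^B_{ji}$ are the only nontrivial bookkeeping points, and you handle them correctly.
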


\begin{definition} \label{definition:CayleyofFgrt}
Let $R$ be an FGRS over $\fca$. 
\newline
{\rm{(1)}}
Let $V(\rR):=\bBR$, and let
$E(\rR):=\{\{B, B^{(i)}\}|B\in V(\rR), i\in\fkI\}$.
Let $\Gamma(\rR)$ denote the finite graph $\Gamma(V(\rR),E(\rR))$. 
We call $\Gamma(\rR)$ the {\it{Cayley graph}} of $\rR$.
\newline
{\rm{(2)}} Let $B\in V(\rR)$. Let $k\in\bN$ and $i_t\in\fkI$ $(t\in\fkJ_{1,k})$.
Then let the symbol
\begin{equation*}
1^B\cdot s_{i_1}s_{i_2}\cdots s_{i_k}
\end{equation*} mean 
the map $f:\fkJ_{1,k+1}\to V(\rR)$
defined by $f(1):=B$
and $f(t+1):=f(t)^{(i_t)}$
$(t\in\fkJ_{1,k})$.
By Definition~\ref{definition:HamMap}~(3),
we say that $1^B\cdot s_{i_1}s_{i_2}\cdots s_{i_k}$ a {\it{Hamiltonian cycle}} of $\Gamma(\rR)$
if $k=|V(\rR)|$ and the above $f$ satisfies $f(k+1)=B$
and $f(\fkJ_{1,k})=V(\rR)$.
\newline
{\rm{(3)}} We say that $\rR$ is {\it{reducible}} 
if there exist $B\in\bB$
and $\fkI^\prime\subset\fkI$
such that $\fkI^\prime\ne\emptyset$,
$\fkI\setminus\fkI^\prime\ne\emptyset$,
and $m^B_{ij}=2$ for all $i\in\fkI^\prime$
and all $j\in\fkI\setminus\fkI^\prime$.
We say that $\rR$ is {\it{irreducible}} 
if $\rR$ is not reducible.
\end{definition}

The following lemma is clear from Lemma~\ref{lemma:bBprime}~(1).

\begin{lemma}\label{lemma:rkonetwo}
Keep the notation of Definition~{\rm{\ref{definition:CayleyofFgrt}}}. 
\newline
{\rm{(1)}}
Assume ${\mathrm{rank}}_\bZ\fca=1$.
Let $\{\al\}$ be a $\bZ$-base of $\fca$, i.e., $\fca=\bZ\al$.
Then $\rR=\{\al,-\al\}$, and $\Gamma(\rR)=(V(\rR),E(\rR))$
with $V(\rR)=\{\{\al\},\{-\al\}\}$ and $E(\rR)=\{\{\al\},\{-\al\}\}$, i.e.,
$|V(\rR)|=2$ and $|E(\rR)|=1$. 
Then $1^{\{\al\}}\cdot s_1s_1$ is the Hamiltonian cycle of $\Gamma(\rR)$,
where we let $\fkI=\{1\}$.
\newline
{\rm{(2)}}
Assume ${\mathrm{rank}}_\bZ\fca=2$.
Let $k:={\frac 1 2}|V(\rR)|\,(\in\bN)$, where see 
Lemma~{\rm{\ref{lemma:bBprime}~(4)}}.
Let $\fkI=\{1,2\}$, and let $i_{2t-1}:=1$, $i_{2t}:=2$
$(t\in\fkJ_{1,k})$.
Then $1^B\cdot s_{i_1}s_{i_2}\cdots s_{i_{2k}}$ 
is the Hamiltonian cycle of $\Gamma(\rR)$
for every $B\in\bBR$.
\end{lemma}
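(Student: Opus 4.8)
The plan is to settle part (1) directly from the axioms and to obtain part (2) from a ``connectivity forces the alternating walk to fill the whole graph'' argument. For part (1), since $\fca=\bZ\al$ its only $\bZ$-bases are $\{\al\}$ and $\{-\al\}$, and axiom $(\rR 2)$ gives $\rR=\rR\cap\bZ\al=\{\al,-\al\}$. Both $\{\al\}$ and $\{-\al\}$ visibly satisfy the defining condition of $\bBR$, so $V(\rR)=\bBR=\{\{\al\},\{-\al\}\}$. Using $N^{\rR,\{\al\}}_{\al,\al}=-2$ we get $\{\al\}^{(1)}=\{\al-2\al\}=\{-\al\}$, so $E(\rR)$ consists of the single edge $\{\{\al\},\{-\al\}\}$. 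The walk $1^{\{\al\}}\cdot s_1s_1$ then runs $B_0=\{\al\}$, $B_1=\{-\al\}$, $B_2=\{\al\}=B_0$, which is a Hamiltonian cycle in the sense of Definition~\ref{definition:CayleyofFgrt}~(2).

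For part (2), I would fix $B\in\bBR$ and set $m:=m^B_{12}$. With $i=1$ and $j=2$, the sequence $B_0:=B$, $B_{2t-1}:=B_{2(t-1)}^{(1)}$, $B_{2t}:=B_{2t-1}^{(2)}$ is exactly the walk traced by $1^B\cdot s_{i_1}s_{i_2}\cdots$ with the prescribed alternating labels, and the Lemma immediately preceding Definition~\ref{definition:finiteWG} already guarantees that $B_{2m}=B$ while $B_0,\dots,B_{2m-1}$ are pairwise distinct. Thus the walk is a closed, repetition-free cycle of length $2m$ in $\Gamma(\rR)$, and it remains only to prove $2m=|\bBR|$, equivalently $m=k$.

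The key step is to show that the vertex set $O:=\{B_t\mid t\in\fkJ_{0,2m-1}\}$ of this cycle is closed under both operations $B\mapsto B^{(1)}$ and $B\mapsto B^{(2)}$. Reading indices modulo $2m$, each $B_t$ has $B_{t-1}=B_t^{(i_t)}$ and $B_{t+1}=B_t^{(i_{t+1})}$ as neighbours, and since the labels alternate, $\{i_t,i_{t+1}\}=\{1,2\}$; hence $\{B_t^{(1)},B_t^{(2)}\}=\{B_{t-1},B_{t+1}\}\subset O$. Therefore $O$ is a union of connected components of $\Gamma(\rR)$. Because Lemma~\ref{lemma:bBprime}~(1) expresses $|\rR^-_B\cap\rR^+_{B'}|$ as a finite number of reflection steps from $B$ to $B'$, the graph $\Gamma(\rR)$ is connected, so $O=\bBR$ and $2m=|\bBR|=2k$. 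The displayed walk $1^B\cdot s_{i_1}\cdots s_{i_{2k}}$ then visits every vertex exactly once and returns to $B$, i.e.\ it is a Hamiltonian cycle for every $B$. I expect the only real subtlety to be the closure claim, which rests on the walk being non-backtracking ($B_t^{(1)}\ne B_t^{(2)}$, immediate from $(\rR 3)$ since $\rR^+_{B_t^{(i)}}\cap\rR^-_{B_t}=\{-\al^{B_t}_i\}$ differs for $i=1,2$); everything else is bookkeeping around the two lemmas just cited.
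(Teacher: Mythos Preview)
Your proof is correct and matches the paper's approach: the paper simply states the lemma is ``clear from Lemma~\ref{lemma:bBprime}~(1)'', and you have supplied the natural details, using that lemma for connectivity together with the unnamed cycle lemma preceding Definition~\ref{definition:finiteWG} for the $2m$-cycle. The closure-of-$O$ step you flag is exactly the point where $2$-regularity plus connectivity forces the alternating walk to exhaust $\bBR$, which is the intended mechanism.
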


\begin{remark}
The irreducible FGRSs were classified by \cite{CH15}. 
For the rank three cases, they had been classified by \cite{CH12}.  
For the rank two cases, see \cite{CH09b}.
See also Theorem~\ref{theorem:classGHlist} below, which was obtained by \cite{CH15}. 
\end{remark}

By Lemma~\ref{lemma:bBprime}, we can easily see{\rm{:}}

\begin{lemma}\label{lemma:redHam}
Let $\rR$ be an FGRS over $\fca$.
Assume that $\rR$ is reducible.
Let $n:={\mathrm{rank}}_\bZ\fca$.
Let $\fkI^\prime$, $\fkI^{\prime\prime}$
be non-empty proper subsets of
$\fkI$ such that
$\fkI=\fkI^\prime\cup\fkI^{\prime\prime}$,
$\fkI^\prime\cap\fkI^{\prime\prime}=\emptyset$,
and 
$m^B_{ij}=2$
for all $B\in\bBR$,
all $i\in\fkI^\prime$ and all $j\in\fkI^{\prime\prime}$.
Let $\fca^\prime:=\oplus_{i\in\fkI^\prime}\bZ\al^B_i$
and $\fca^{\prime\prime}:=\oplus_{j\in\fkI^{\prime\prime}}\bZ\al^B_j$
for some $B\in\bBR$. {\rm{(}}We have 
$\fca=\fca^\prime\oplus\fca^{\prime\prime}$.{\rm{)}}
Let $\rR^\prime:=\rR\cap\fca^\prime$
and  $\rR^{\prime\prime}:=\rR\cap\fca^{\prime\prime}$.
\newline
{\rm{(1)}} $\rR^\prime$ and $\rR^{\prime\prime}$
are FGRSs over $\fca^\prime$ and $\fca^{\prime\prime}$
respectively.
Moreover $\rR=\rR^\prime\cup\rR^{\prime\prime}$.
Furthermore
the map $f:\bB(\rR^\prime)\times\bB(\rR^{\prime\prime})\to\bBR$
defined by $f(X,Y):=X\cup Y$ is bijective.
\newline
{\rm{(2)}}
Let $k:=|\bB(\rR^\prime)|$ and
$l:=|\bB(\rR^{\prime\prime})|$.
Let $r:={\frac l 2}$,
where we see $r\in\bN$ by Lemma~{\rm{\ref{lemma:bBprime}~(4)}}. 
Let $z:=kl$, where $z=|\bBR|$ by {\rm{(1)}}.
Let $X\in\bB(\rR^\prime)$,
$Y\in\bB(\rR^{\prime\prime})$,
and $B:=X\cup Y$, where $B\in\bBR$ by {\rm{(1)}}.
Assume that there exist Hamiltonian cycles
$1^X\cdot s_{i_1}s_{i_2}\cdots s_{i_k}$
and $1^Y\cdot s_{j_1}s_{j_2}\cdots s_{j_l}$
of $\Gamma(\rR^\prime)$
and $\Gamma(\rR^{\prime\prime})$ respectively.
Define the map $h:\fkJ_{1,z}\to\fkI$
by
\begin{equation*}
\left\{\begin{array}{lcl}
h(t+2xk)&:=&i_t \quad
(t\in\fkJ_{1,k-1}, x\in\fkJ_{0,r-1}), \\
h(ku)&:=& j_u \quad
(u\in\fkJ_{1,l}), \\
h(t+(2x-1)k)&:=&i_{k-t} \quad
(t\in\fkJ_{1,k-1}, x\in\fkJ_{1,r}).
\end{array}\right.
\end{equation*}
Then $1^B\cdot s_{h(1)}s_{h(2)}\cdots s_{h(z)}$
is a Hamiltonian cycle of $\Gamma(\rR)$.
In particular, $\Gamma(\rR)$ has a Hamiltonian cycle.
\end{lemma}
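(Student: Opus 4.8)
The plan is to exploit the reducibility hypothesis to identify $\Gamma(\rR)$ with the Cartesian (box) product of $\Gamma(\rR^\prime)$ and $\Gamma(\rR^{\prime\prime})$, and then to read the displayed word $h$ as the standard ``boustrophedon'' Hamiltonian cycle of such a product.

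First I would record the numerical fact that, for $i\in\fkI^\prime$ and $j\in\fkI^{\prime\prime}$, the equality $m^B_{ij}=2$ forces $N^{\rR,B}_{ij}=N^{\rR,B}_{ji}=0$: the only positive roots in $\bZgeqo\al^B_i+\bZgeqo\al^B_j$ are $\al^B_i$ and $\al^B_j$ themselves, so the rank-two subsystem they span is of type $A_1\times A_1$. Hence the reflection $\beta\mapsto\beta+N^{\rR,B}_{i\beta}\al^B_i$ fixes each $\al^B_j$ with $j\in\fkI^{\prime\prime}$ and alters only the $\fkI^\prime$-part; in particular it preserves the decomposition $\fca=\fca^\prime\oplus\fca^{\prime\prime}$. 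Starting from the fixed base $B$ and propagating this along an arbitrary chain of reflections, and using Lemma~\ref{lemma:bBprime}~(1),(4) (any two bases are joined by such a chain and $\rR=\cup_{B^\prime}B^\prime$), I would prove by induction on the graph distance that every $B^\prime\in\bBR$ satisfies $B^\prime\cap\fca^\prime\in\bB(\rR^\prime)$, $B^\prime\cap\fca^{\prime\prime}\in\bB(\rR^{\prime\prime})$, and $B^\prime=(B^\prime\cap\fca^\prime)\cup(B^\prime\cap\fca^{\prime\prime})$. This yields at once $\rR=\cup_{B^\prime}B^\prime\subseteq\rR^\prime\cup\rR^{\prime\prime}$ (the reverse inclusion being trivial), the axioms $(\rR 1)$-$(\rR 3)$ for $\rR^\prime$ and $\rR^{\prime\prime}$ (axiom $(\rR 3)$ being transported from $\rR$, since an $i$-reflection with $i\in\fkI^\prime$ only moves the first factor and its relevant Cartan entries coincide with those computed inside $\rR^\prime$), and the bijectivity of $(X,Y)\mapsto X\cup Y$ --- that is, statement (1).

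Granting (1), statement (2) is a product construction. Writing a vertex as a pair $(X_p,Y_q)$ with $p\in\fkJ_{0,k-1}$, $q\in\fkJ_{0,l-1}$, an $i\in\fkI^\prime$ edge changes only $X_p$ (and agrees there with the $\rR^\prime$-reflection) while a $j\in\fkI^{\prime\prime}$ edge changes only $Y_q$, so $\Gamma(\rR)\cong\Gamma(\rR^\prime)\,\square\,\Gamma(\rR^{\prime\prime})$. I would then trace the word $h$ directly. Let $X_0,\dots,X_{k-1}$ and $Y_0,\dots,Y_{l-1}$ be the vertices visited by the two given cycles, with $X_0=X$, $Y_0=Y$. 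The letters sit in $l$ blocks of length $k$: each block performs $k-1$ first-factor moves followed by a single second-factor letter $s_{j_u}$. Odd blocks use $s_{i_1}\cdots s_{i_{k-1}}$ and sweep forward $X_0\to\cdots\to X_{k-1}$ at a fixed $Y$-level; even blocks use $s_{i_{k-1}}\cdots s_{i_1}$ and sweep backward $X_{k-1}\to\cdots\to X_0$, which is legitimate because $(B^{(i)})^{(i)}=B$. Since $l=2r$ is even, block $u$ leaves us at $Y$-level $u$, the last block is even and returns the first coordinate to $X_0$, and the final letter $s_{j_l}$ lands at $(X_0,Y_l)=(X_0,Y_0)=B$, closing the cycle; a count confirms that each of the $kl=z$ pairs is visited exactly once.

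The one genuinely fiddly point is the index bookkeeping for $h$: one checks that the three defining clauses partition $\fkJ_{1,z}$ (the $l$ multiples of $k$ carrying $j_1,\dots,j_l$, the intervening length-$(k-1)$ stretches alternating forward and backward), and that within each block the vertices $X_0,\dots,X_{k-1}$ are distinct, which is exactly the Hamiltonicity of the $\rR^\prime$-cycle. The only conceptual obstacle is establishing the clean splitting $B^\prime=(B^\prime\cap\fca^\prime)\cup(B^\prime\cap\fca^{\prime\prime})$ of every base in (1); once this is in place, (2) is the textbook fact that the box product of two Hamiltonian graphs, one of them of even order to provide the parity needed to close the snake, is again Hamiltonian.
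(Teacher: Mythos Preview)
The paper does not actually prove this lemma: it is introduced with the single phrase ``By Lemma~\ref{lemma:bBprime}, we can easily see'' and then stated without further argument. Your proposal supplies precisely the details the paper omits, and it does so along the route the paper implicitly suggests --- namely, using Lemma~\ref{lemma:bBprime}~(1),(2),(4) to control the bases and then reading off the product structure.

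Your argument for part~(1) is correct. The key observation that $m^B_{ij}=2$ forces $N^{\rR,B}_{ij}=N^{\rR,B}_{ji}=0$, so that an $i$-reflection with $i\in\fkI'$ leaves the $\fca''$-component of a base untouched, is exactly what is needed; propagating this along chains via Lemma~\ref{lemma:bBprime}~(1) gives the splitting of every base, and Lemma~\ref{lemma:bBprime}~(4) then yields $\rR=\rR'\cup\rR''$. One small point worth making explicit is the surjectivity of $f$ (equivalently, well-definedness of $(X,Y)\mapsto X\cup Y$ as a map into $\bBR$): given $X\in\bB(\rR')$ and $Y\in\bB(\rR'')$, you reach $X$ from $B\cap\fca'$ by $\fkI'$-reflections and then $Y$ from the current $\fca''$-component by $\fkI''$-reflections, each step staying inside $\bBR$. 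You also implicitly need Lemma~\ref{lemma:bBprime}~(2) to identify $\{B'\cap\fca':B'\in\bBR\}$ with all of $\bB(\rR')$.

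Your treatment of part~(2) --- recognising $\Gamma(\rR)$ as the Cartesian product $\Gamma(\rR')\,\square\,\Gamma(\rR'')$ and tracing the displayed word $h$ as the boustrophedon path --- is correct, and your block-by-block trace (forward on odd blocks, backward on even blocks, with the parity $l=2r$ ensuring the final block returns the first coordinate to $X_0$) matches the indices in the statement. Nothing is missing.
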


\subsection{Categorical finite generalized root systems}

The reader can skip this subsection.

\begin{definition} \label{definition:DefCFGRS}
Let $\fkI$ be a non-empty finite set.
\newline
{\rm{(1)}} Let $\hatC=[\hatc_{ij}]_{i,j\in\fkI}$ be an $|\fkI|\times|\fkI|$-matrix over $\bZ$.
We call $\hatC$
a {\it{generalized Cartan matrix}} if 
$\hatc_{ii}=2$ $(i\in\fkI)$
and $\delta_{\hatc_{jk},0}=\delta_{\hatc_{kj},0}$
$(j,k\in\fkI,j\ne k)$,
where
$\delta_{xy}$ means the Kronecker's delta.
\newline
{\rm{(2)}}
Let $\FtwoI$ be the group presented by the generating set $\fkI$
and the relations $i^2 =e$, 
where $e$ is the unit of $\fkI$.
Let $\mcA$ be a non-empty finite set.
Assume that $\FtwoI$ transitively acts on $\mcA$.
Let $\hatC^a=[\hatc^a_{ij}]_{i,j\in\fkI}$
$(a\in\mcA)$ be generarized Cartan matrices
such that 
\begin{equation*}
\forall a\in\mcA, \forall i,\forall j\in\fkI, \hatc^{i\cdot a}_{ij}=\hatc^a_{ij}.
\end{equation*}
We call the date
$\mcC=\mcC(\fkI,\mcA,(\hatC^a)_{a\in\mcA})$
the {\it{Cartan scheme}}.
\newline
{\rm{(3)}} 
Let $\mcC=\mcC(\fkI,\mcA,(\hatC^a)_{a\in\mcA})$ be a Cartan scheme.
For $a\in\mcA$, let $\hatfca^a$ be a free $\bZ$-module 
with a $\bZ$-base ${\hat{B}}_a=\{\hatal^a_i|i\in\fkI\}$,
i.e., $\hatfca^a=\oplus_{i\in\fkI}\bZ\hatal^a_i$.
For $a\in\mcA$, define 
the $\bZ$-module isomorpshism
$\hats^a_i:\hatfca^a\to\hatfca^{i\cdot a}$
by $\hats^a_i(\hatal^a_j):=\hatal^{i\cdot a}_j-\hatc^a_{ij}\hatal^{i\cdot a}_i$.
Notice 
\begin{equation}\label{eqn:sseqid}
\hats^{i\cdot a}_i\hats^a_i=\rmid_{\hatfca^a}.
\end{equation}
For $a\in\mcA$, let $\hatR^a$ be a non-empty subset of $\hatfca^a$,
and let $\hatR^{a,+}:=\hatR^a\cap(\oplus_{i\in\fkI}\bZgeqo\hatal^a_i)$.
$\hatR^{a,-}:=\hatR^a\cap(\oplus_{i\in\fkI}\bZleqo\hatal^a_i)$.
We call the data
\begin{equation*}
\mcR=\mcR(\mcC,(\hatR^a,{\hat{B}}^a)_{a\in\mcA})
\end{equation*} a {\it{categigorical finite 
generalized root system}} (CFGRS for sort) 
if the following conditions $(\hatR0)$-$(\hatR4)$
are fulfilled.
\newline\newline
$(\hatR0)$ \quad $\hatR^a$ is a non-empty finite subset of $\hatfca^a$. \newline
$(\hatR1)$ \quad $\hatR^a=\hatR^{a,+}\cup\hatR^{a,-}$. \newline
$(\hatR2)$ \quad $\hatR^a\cap\bZ\hatal^a_i=\{\hatal^a_i,-\hatal^a_i\}$. \newline
$(\hatR3)$ \quad $\hats^a_i(\hatR^a)=\hatR^{i\cdot a}$. \newline
$(\hatR4)$ \quad Let $a\in\mcA$, and $t\in\bN$, $i_u\in\fkI$
$(u\in\fkJ_{1,t})$. If
\begin{equation*}
\forall j\in\fkI,\,\,
\hats^{(i_2\cdots i_{t-1}i_t)\cdot a}_{i_1}
\cdots
\hats^{i_t\cdot a}_{i_{t-1}}\hats^a_{i_t}(\hatal^a_j)
=\hatal^{(i_1i_2\cdots i_t)\cdot a}_j,
\end{equation*} then $(i_1i_2\cdots i_t)\cdot a=a$.
\end{definition}

\begin{definition}\label{definition:BasicDefCFGRS}
Let $\mcR(\mcC,(\hatR^a,{\hat{B}}^a)_{a\in\mcA})$ be a CFGRS.
\newline
{\rm{(1)}}
For $a\in\mcA$ and $i,j\in\fkI$. 
let
$\hatm^a_{ij}:=|\hatR^{a,+}\cap(\bZgeqo\hatal^a_i
+\bZgeqo\hatal^a_j)|$,
whence $\hatm^a_{ii}=1$.
\newline
{\rm{(2)}}
Let $a$, $b\in\mcA$.
Let ${\mathrm{Iso}}(a,b)$
be the set formed by all the 
$\bZ$-module isomorphisms 
from 
$\hatfca^b$ to $\hatfca^a$.
Define $\hattrxi^{a,b}\in{\mathrm{Iso}}(a,b)$
by $\hattrxi^{a,b}(\hatal^b_i):=\hatal^a_i$ $(i\in\fkI)$.
{\rm{(}}Notice $\rmid_{\hatfca^a}=\hattrxi^{a,a}$
and $\hats^{i\cdot a}_i=\hattrxi^{i\cdot a,a}\hats^a_i\hattrxi^{a,i\cdot a}$.{\rm{)}}
Define the subset $\hatW(a,b)$ of ${\mathrm{Iso}}(a,b)$
by
\begin{equation*}
\hatW(a,b):=\{\hats^{i_1\cdot a}_{i_1}\hats^{i_2i_1\cdot a}_{i_2}
\cdots \hats^{i_k\cdots i_2i_1\cdot a}_{i_k}\, 
|\,k\in\bZgeqo, i_t\in\fkI\,(t\in\fkJ_{1,k}), b=i_k\cdots i_2i_1\cdot a)\}. 
\end{equation*}
By \eqref{eqn:sseqid} and $(\hatR4)$,
we see that
$\hattrxi^{a,b}\in\hatW(a,b)\Leftrightarrow a=b$.

\end{definition}

\begin{lemma}{\rm{(\cite[Lemma~1.5]{AYY15})}} \label{lemma:LemAAY}
Let $\mcR(\mcC,(\hatR^a,{\hat{B}}^a)_{a\in\mcA})$ be a CFGRS.
Let $a\in\mcA$, and $i,j\in\fkI$.
Let $m:=\hatm^a_{ij}$.
Let $i_{2k-1}:=j$ and $i_{2k}:=i$ $(k\in\fkJ_{1,m})$.
Then
\begin{equation*}
\mbox{$(\hatR4)^\prime$}\quad\quad
(ij)^m\cdot a=a\quad\mbox{and}\quad
\hats^{(i_{2m-1}\cdots i_2i_1)\cdot a}_i
\hats^{(i_{2m-2}\cdots i_2i_1)\cdot a}_j
\cdots
\hats^{i_1\cdot a}_i\hats^a_j=\rmid_{{\hatfca^a}}.
\end{equation*}
\end{lemma}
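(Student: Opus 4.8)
The plan is to reduce everything to the rank-two situation determined by the pair $i,j$ and to prove the two assertions in tandem, using the faithfulness axiom $(\hatR4)$ to pass from statements about roots to the object-level statement $(ij)^m\cdot a=a$. First I would dispose of the degenerate case $i=j$: then $m=\hatm^a_{ii}=1$, the composition reduces to $\hats^{i\cdot a}_i\hats^a_i$, and the claim is exactly \eqref{eqn:sseqid}. So assume $i\ne j$ henceforth.

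For the main case, set $c_0:=a$ and $c_l:=i_l\cdot c_{l-1}$ for $l\in\fkJ_{1,2m}$, so the objects alternate $c_1=j\cdot a$, $c_2=i\cdot c_1,\dots$, and write $\phi_l:=\hats^{c_{l-1}}_{i_l}\cdots\hats^{c_1}_{i_2}\hats^a_{i_1}$ for the partial composition, an isomorphism $\hatfca^a\to\hatfca^{c_l}$, with $\phi:=\phi_{2m}$ the map in question. The first observation is that each $\hats^{c_{l-1}}_{i_l}$ carries the rank-two plane $V_{l-1}:=\bZ\hatal^{c_{l-1}}_i\oplus\bZ\hatal^{c_{l-1}}_j$ onto $V_l$, since for $k\in\{i,j\}$ the root $\hats^b_{i_l}(\hatal^b_k)$ is a $\bZ$-combination of $\hatal^{i_l\cdot b}_i,\hatal^{i_l\cdot b}_j$. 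Hence $\phi$ restricts to a plane isomorphism $V_0\to V_{2m}$, and the essential combinatorics is confined to these planes.

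The combinatorial heart is to enumerate the $m=\hatm^a_{ij}$ positive roots in $V_0\cap\hatR^{a,+}$ and to track them along the sequence. Mirroring the overlap-counting argument used above for finite generalized root systems (the auxiliary count $x(r,l)$ that yields $B_{2m}=B$, whose quotient form is Lemma~\ref{lemma:tausimij}(2)), I would introduce the number of positive roots common to the transported $(i,j)$-cones at $c_r$ and $c_l$ and show it drops by exactly one at each reflection, so that after $m$ steps the positive $(i,j)$-cone of $c_m$ becomes the negative cone of $a$. Feeding this back gives that the plane map sends $\hatal^a_i\mapsto\hatal^{c_{2m}}_i$ and $\hatal^a_j\mapsto\hatal^{c_{2m}}_j$. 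For an index $k\notin\{i,j\}$ the identity $\hats^{c_{l-1}}_{i_l}(\hatal^{c_{l-1}}_k)=\hatal^{c_l}_k-\hatc^{c_{l-1}}_{i_l k}\hatal^{c_l}_{i_l}$ shows that, modulo the plane, the index-$k$ simple root is preserved; an easy induction on $l$ then gives $\phi(\hatal^a_k)=\hatal^{c_{2m}}_k+v_k$ with $v_k\in V_{2m}$, the coefficient of $\hatal_k$ staying $1$ and every correction living in the plane.

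The final step, and the part I expect to be the real obstacle, is to kill the in-plane corrections $v_k$ and simultaneously conclude $c_{2m}=a$. The clean route is to establish $c_{2m}=a$ first from the cone count together with $(\hatR4)$, and only then exploit that $\phi$ is an automorphism of $\hatfca^a$ which restricts to the identity on $V:=V_0=V_{2m}$ and which, by $(\hatR3)$, permutes the finite set $\hatR^a\ni\hatal^a_k$: since $\phi$ fixes $V$ pointwise one has $\phi^N(\hatal^a_k)=\hatal^a_k+Nv_k$, and finiteness of the $\phi$-orbit forces $v_k=0$, whence $\phi=\rmid_{\hatfca^a}$. The delicacy is the interlocking of the two conclusions: the finiteness argument needs $c_{2m}=a$, while the quickest proof of $c_{2m}=a$ via $(\hatR4)$ wants the simple-root images matched, which for $k\notin\{i,j\}$ presupposes $v_k=0$. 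Breaking this circularity — either by giving a self-contained object-level proof of $(ij)^m\cdot a=a$ from the cone count and running the finiteness argument afterwards, or by first proving the length-$m$ ``longest element'' matching and squaring it — is the crux, and the bookkeeping across the distinct modules $\hatfca^{c_l}$ is where most of the technical care will go.
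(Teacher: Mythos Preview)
The paper does not prove this lemma; it simply cites \cite[Lemma~1.5]{AYY15}. So there is no in-paper argument to compare against, and your proposal has to stand on its own.

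Your setup is correct: the reduction to the $(i,j)$-plane, the overlap count giving $\phi(\hatal^a_i)=\hatal^{c_{2m}}_i$ and $\phi(\hatal^a_j)=\hatal^{c_{2m}}_j$, and the observation that $\phi(\hatal^a_k)=\hatal^{c_{2m}}_k+v_k$ with $v_k\in V_{2m}$ for $k\notin\{i,j\}$ are all sound. You are also right that the circularity between ``$c_{2m}=a$'' and ``$v_k=0$'' is the crux. The gap is that neither of your two proposed escapes actually closes it. The ``squaring the longest element'' route only controls the plane part and says nothing about the off-plane corrections $v_k$. The finiteness-of-orbit argument $\phi^N(\hatal^a_k)=\hatal^a_k+Nv_k$ presupposes $c_{2m}=a$ (otherwise $\phi$ is not an endomorphism of $\hatfca^a$ and cannot be iterated), so it cannot be used to \emph{prove} $c_{2m}=a$.

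The clean way to break the circle uses positivity and the inverse, not orbit-finiteness. Since $\phi(\hatal^a_k)\in\hatR^{c_{2m}}$ by $(\hatR3)$ and its $\hatal^{c_{2m}}_k$-coefficient equals $1>0$, axiom $(\hatR1)$ forces $\phi(\hatal^a_k)\in\hatR^{c_{2m},+}$, so $v_k\in\bZ_{\ge0}\hatal^{c_{2m}}_i\oplus\bZ_{\ge0}\hatal^{c_{2m}}_j$. Applying the same reasoning to the reversed word (which realises $\phi^{-1}:\hatfca^{c_{2m}}\to\hatfca^a$) gives $\phi^{-1}(\hatal^{c_{2m}}_k)=\hatal^a_k+u_k$ with $u_k\in\bZ_{\ge0}\hatal^a_i\oplus\bZ_{\ge0}\hatal^a_j$. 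Now apply $\phi^{-1}$ to $\phi(\hatal^a_k)=\hatal^{c_{2m}}_k+v_k$; since $\phi^{-1}$ restricted to the plane is the identity (being the inverse of $\phi|_{V_0}$), one gets $0=u_k+\phi^{-1}(v_k)$ with both summands in $\bZ_{\ge0}\hatal^a_i\oplus\bZ_{\ge0}\hatal^a_j$, hence $v_k=0$. Now $(\hatR4)$ applies directly to give $(ij)^m\cdot a=a$, and $\phi=\rmid_{\hatfca^a}$ follows because it fixes every simple root.
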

As for the definition of a CFGRS, the condition $(\hatR4)$ can be
replaced by 
$(\hatR4)^\prime$, see \cite[Remark~1.6]{AYY15}.
\newline\par
Let $\FzeroI$ (resp. $\FoneI$) be the free semigroup (resp. the free monoid) with free generating set $\fkI$.
Let $\hate$ denote the unit of $\FoneI$.
We regard  $\FzeroI$ as the subset $\FoneI\setminus\{\hate\}$
of $\FoneI$.
Define the map $\varsigma:\FoneI\to\bZgeqo$
by $\varsigma(\hate):=0$ and $\varsigma(ui):=\varsigma(u)+1$
$(i\in\fkI, u\in\FoneI)$.
Namely $\varsigma(i_1i_2\cdots i_k)=k$
for $k\in\bN$ and $i_t\in\fkI$ $(t\in\fkJ_{1,k})$.
\begin{definition} 
Let $\mcR(\mcC,(\hatR^a,{\hat{B}}^a)_{a\in\mcA})$ be a CFGRS.
Let $a\in\mcA$. \newline
{\rm{(1)}} Define the map $\varpi_a:\FoneI\to\mcA$ by
$\varpi_a(\hate):=a$ and $\varpi_a(iu):=\varpi_{i\cdot a}(u)$
$(i\in\fkI, u\in\FoneI)$. Namely we have
\begin{equation*}
\mbox{$\varpi_a(i_1i_2\cdots i_k)=i_k\cdots i_2i_1\cdot a$
for $k\in\bN$ and $i_t\in\fkI$ $(t\in\fkJ_{1,k})$,}
\end{equation*}
where regard $i_k\cdots i_2i_1$ of the RHS 
as the element of $\FtwoI$.
\newline
{\rm{(2)}}
For $u$, $v\in\FoneI$, write  $u ({\overset{a}{\equiv}})^\prime v$
if there exist $u_1$, $u_2\in\FoneI$ and $i,j\in\fkI$ such that
$u=u_1u_2$ and $v=u_1(ij)^mu_2$, 
where $m:=\hatm^{\varpi_a(u_1)}_{ij}$.
For $u$, $v\in\FoneI$, write  $u ({\overset{a}{\equiv}}) v$
if $u=v$, $u ({\overset{a}{\equiv}})^\prime v$ or $v ({\overset{a}{\equiv}})^\prime u$.
For $u$, $v\in\FoneI$, write  $u {\overset{a}{\equiv}} v$
if there exist $k\in\bN$ and $y_t\in\FoneI$ $(t\in\fkI_{1,k})$
such that $y_1=u$, $y_k=v$
and $y_t({\overset{a}{\equiv}})y_{t+1}$ $(t\in\fkJ_{1,k-1})$.
Then ${\overset{a}{\equiv}}$ is an equivalence relation on $\FoneI$. \newline
{\rm{(3)}} Define $\hats^{\varpi_a(u)}_u\in\hatW(a,\varpi_a(u))$ $(u\in\FoneI)$
by $\hats^a_\hate:=\rmid_{\hatfca^a}$
and $\hats^{\varpi_a(iu)}_{iu}:=\hats^{i\cdot a}_i\hats^{\varpi_{i\cdot a}(u)}_u$
$(i\in\fkI, u\in\FoneI)$.
Let ${^a}\hatW:=\{w\hattrxi^{b,a}|b\in\mcA,w\in\hatW(a,b)\}(\subset{\mathrm{Iso}}(a,a))$.
Define the map 
$\hatell^a:{^a}\hatW\to\bZgeqo$ by
\begin{equation}\label{eqn:hatellaw}
\hatell^a(y):=\min\{\varsigma(u)|u\in\FoneI,
\hats^{\varpi_a(u)}_u\hattrxi^{\varpi_a(u),a}=y\}.
\end{equation} It follows from $(\hatR4)$ and \eqref{eqn:sseqid} that 
\begin{equation} \label{eqn:xiauxaiav}
\forall u, \forall v\in\FoneI, 
\quad 
\hats^{\varpi_a(u)}_u\hattrxi^{\varpi_a(u),a}
=\hats^{\varpi_a(v)}_u\hattrxi^{\varpi_a(v),a}
\,\,\Rightarrow\,\,
\varpi_a(u)=\varpi_a(v).
\end{equation}
\end{definition}

\begin{proposition}{\rm{(c.f. \cite{HY08})}}
Let $\mcR(\mcC,(\hatR^a,{\hat{B}}^a)_{a\in\mcA})$ be a CFGRS. 
\newline
{\rm{(1)}} Let $a\in\mcA$, and $u,v\in\FoneI$. Then{\rm{:}}
\begin{equation}\label{eqn:sxiua}
\hats^{\varpi_a(u)}_u\hattrxi^{\varpi_a(u),a}
=\hats^{\varpi_a(v)}_v\hattrxi^{\varpi_a(v),a}
\quad\Leftrightarrow\quad u{\overset{a}{\equiv}}v.
\end{equation}
If this is the case, we have $\varpi_a(u)=\varpi_a(v)$.
\newline
{\rm{(2)}} Let $a,b\in\mcA$, and $w\in\hatW(a,b)$. Then{\rm{:}}
\begin{equation*}
\hatell^a(w)=|\hatR^{a,+}\cap w(\hatR^{b,-})|.
\end{equation*}
\newline
{\rm{(3)}} Let $a\in\mcA$.
Then $\hatR^a$ is an FGRS over $\hatfca^a$,
and the map ${\hat{\psi}}^a:{^a}\hatW\to\bB(\hatR^a)$
defined by ${\hat{\psi}}^a(y):=y({\hat{B}}^a)$ is bijective.
Moreover, for $u\in\FoneI$ and $i\in\fkI$, we have
${\hat{\psi}}^a(\hats^{\varpi_a(u)}_u\hattrxi^{\varpi_a(u),a})^{(i)}={\hat{\psi}}^a(\hats^{\varpi_a(ui)}_{ui}\hattrxi^{\varpi_a(ui),a})$.
\newline
{\rm{(4)}} Keep the notation of {\rm{(3)}}.
Define the equivalence relation $\sim$ on $\bB(\hatR^a)$ as follows.
For $B_1$, $B_2\in\bB(\hatR^a)$, 
we define $B_1\sim B_2$
if there exists $b\in\mcA$
such that $B_1={\hat{\psi}}^a(w_1\hattrxi^{b,a})$
and $B_2={\hat{\psi}}^a(w_2\hattrxi^{b,a})$
for some $w_1$, $w_2\in\hatW(a,b)$.
Then  $\sim$ is a groupoid equivalence relation on $\bB(\hatR^a)$.
\newline
{\rm{(5)}} 
Keep the notation of Lemma~{\rm{\ref{lemma:LemAAY}}}.
Let $u_t:=i_1i_2\cdots i_t$ $(t\in\fkJ_{1,2m})$.
Then we have $\hats^{\varpi_a(u_t)}_{u_t}\hattrxi^{\varpi_a(u_t),a}({\hat{B}}^a)\ne {\hat{B}}^a$
for $t\in\fkJ_{1,2m-1}$.
Moreover $\varpi_a(u_{2m})=a$ and $\hats^{\varpi_a(u_{2m})}_{u_{2m}}=\rmid_{\hatfca^a}$.
\end{proposition}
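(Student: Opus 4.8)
The plan is to treat (2) and (1) as the analytic core --- the length-equals-inversions formula and the Matsumoto/Tits-type coherence theorem for the groupoid --- and to deduce (3)--(5) as structural corollaries, with the bijectivity in (3) serving as the bridge between ${^a}\hatW$ and $\bB(\hatR^a)$. First I would record two preliminaries. Since $\hats^a_i(\hatal^a_j)=\hatal^{i\cdot a}_j-\hatc^a_{ij}\hatal^{i\cdot a}_i$ lies in $\hatR^{i\cdot a}$ by $(\hatR3)$ and has positive $\hatal^{i\cdot a}_j$-coefficient, $(\hatR1)$ forces it into $\hatR^{i\cdot a,+}$, whence $\hatc^a_{ij}\leq 0$ for $i\ne j$. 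From this I would prove the \emph{simple-reflection lemma}: $\hats^a_i(\hatal^a_i)=-\hatal^{i\cdot a}_i$, while $\hats^a_i$ maps $\hatR^{a,+}\setminus\{\hatal^a_i\}$ into $\hatR^{i\cdot a,+}$ (a positive root other than $\hatal^a_i$ has a strictly positive coefficient on some $\hatal^a_j$ with $j\ne i$, a coefficient $\hats^a_i$ leaves untouched, so by $(\hatR1)$ the image is positive). Equivalently $|\hatR^{i\cdot a,+}\cap\hats^a_i(\hatR^{a,-})|=1$.

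\textbf{Part (2).} Writing $N^a(w):=|\hatR^{a,+}\cap w(\hatR^{b,-})|$ for $w\in\hatW(a,b)$ and $\hatell^a(w):=\min\{\varsigma(u)\mid \hats^{\varpi_a(u)}_u=w\}$, I would prove $\hatell^a(w)=N^a(w)$ by induction on $\hatell^a(w)$. Factoring a shortest word as $w=\hats^{i\cdot a}_i w'$ with $w'\in\hatW(i\cdot a,b)$, the simple-reflection lemma shows that passing from $w'$ to $w$ adds exactly one inversion, namely the root corresponding to the leading letter, so $N^a(w)=N^{i\cdot a}(w')+1=\hatell^{i\cdot a}(w')+1=\hatell^a(w)$. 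The nontrivial content is that this single-inversion increment genuinely occurs for a \emph{reduced} word (equivalently, that a non-reduced word can always be shortened); this is the exchange property, and is exactly where $(\hatR4)'$ and the rank-two relation of Lemma~\ref{lemma:LemAAY} enter. For this deletion argument I would follow \cite{HY08}.

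\textbf{Part (1).} The implication $\Leftarrow$ is direct: if $u\,\overset{a}{\equiv}\,v$ then $v$ is obtained from $u$ by insertions of factors $(ij)^m$ at a point where the current object is $c:=\varpi_a(u_1)$ and $m=\hatm^c_{ij}$; by Lemma~\ref{lemma:LemAAY} such a block satisfies $(ij)^m\cdot c=c$ and contributes the alternating reflection product $\rmid_{\hatfca^c}$, so it changes neither $\varpi_a$ nor the accumulated map $\hats^{\varpi_a(\cdot)}_{\cdot}\hattrxi^{\varpi_a(\cdot),a}$. The implication $\Rightarrow$ is the coherence theorem: I would induct on the common length, using the exchange condition from (2) to show that two reduced words for the same $w$ whose leading letters $i\ne j$ differ force a rank-two situation, whence both can be driven to the common alternating word via Lemma~\ref{lemma:LemAAY} and one descends. \textbf{This step --- establishing the strong exchange condition and running the deletion/coherence argument --- is the main obstacle}, and is the substance of \cite{HY08}. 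The closing clause $\varpi_a(u)=\varpi_a(v)$ is \eqref{eqn:xiauxaiav}.

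\textbf{Parts (3)--(5).} For (3), $(\rR1)$ holds as ${\hat B}^a\in\bB(\hatR^a)$ (a $\bZ$-basis lying in $\hatR^a$ by $(\hatR2)$, with decomposition $(\hatR1)$); $(\rR2)$ follows once every root is seen to be an ${^a}\hatW$-image of a simple root, which I would get by lowering height via the simple-reflection lemma and finiteness, and then transporting $(\hatR2)$; and $(\rR3)$ at ${\hat B}^a,\hatal^a_i$ is the simple-reflection lemma, which identifies $({\hat B}^a)^{(i)}$ with $\hats^{i\cdot a}_i\hattrxi^{i\cdot a,a}({\hat B}^a)$ and yields $(\hatR^a)^+_{({\hat B}^a)^{(i)}}\cap(\hatR^a)^-_{{\hat B}^a}=\{-\hatal^a_i\}$; transporting along ${^a}\hatW$ (which preserves $\hatR^a$ by $(\hatR3)$) gives $(\rR3)$ on the whole ${^a}\hatW$-orbit of ${\hat B}^a$, and Lemma~\ref{lemma:bBprime}(2) forces this orbit to be all of $\bB(\hatR^a)$, proving surjectivity of ${\hat\psi}^a$; injectivity is \eqref{eqn:xiauxaiav} with (1); and the ``moreover'' is the defining compatibility $\hats^{\varpi_a(ui)}_{ui}=\hats^{\varpi_a(u)}_u\hats^{\varpi_{\varpi_a(u)}(i)}_i$, checked on $u=\hate$ by the simple-reflection lemma and propagated. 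For (4), ${\hat\psi}^a$ being a bijection makes $\sim$ an equivalence relation (transitivity uses that $w\hattrxi^{b,a}$ determines $b$, via \eqref{eqn:xiauxaiav}); writing $B_1=w_1({\hat B}^b)$, $B_2=w_2({\hat B}^b)$, the map $\trxi_{B_2,B_1}=w_2w_1^{-1}$ preserves $\hatR^a$ since $w_1^{-1}(\hatR^a)=\hatR^b=w_2^{-1}(\hatR^a)$, and $B_1^{(i)}\sim B_2^{(i)}$ follows from the ``moreover'' of (3), which realizes both as ${\hat\psi}^a$-images of elements of $\hatW(a,i\cdot b)$. Finally (5) is immediate from Lemma~\ref{lemma:LemAAY} (giving $\varpi_a(u_{2m})=a$ and $\hats^{\varpi_a(u_{2m})}_{u_{2m}}=\rmid_{\hatfca^a}$), while the distinctness for $1\leq t\leq 2m-1$ is the rank-two statement --- the subsystem $\hatR^a\cap(\bZ\hatal^a_i+\bZ\hatal^a_j)$ has exactly $2m=2\hatm^a_{ij}$ bases forming a single mutation $2m$-cycle --- namely the rank-two lemma preceding Definition~\ref{definition:finiteWG}, transported to $\bB(\hatR^a)$ through the bijection of (3).
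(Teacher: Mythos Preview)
The paper itself offers no proof of this proposition, deferring entirely to \cite{HY08}. Your sketch is a faithful and essentially complete reconstruction of that argument: the simple-reflection lemma, the length-equals-inversions formula via the exchange condition, and the Matsumoto-type coherence theorem are exactly the ingredients of \cite{HY08}, and you correctly identify the coherence step in (1) as the hard part. Your derivation of (3)--(5) as structural corollaries is also the standard route.

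Two small points of polish. First, your injectivity argument for ${\hat{\psi}}^a$ in (3) should invoke (2) rather than ``\eqref{eqn:xiauxaiav} with (1)'': if $y_1({\hat{B}}^a)=y_2({\hat{B}}^a)$ as sets, then the underlying groupoid element $w_2^{-1}w_1$ sends ${\hat{B}}^{b_1}$ onto ${\hat{B}}^{b_2}$, hence maps positive roots to positive roots, so by (2) it has length zero and is the identity; \eqref{eqn:xiauxaiav} alone does not bridge the gap from equal image sets to equal maps. Second, when you invoke Lemma~\ref{lemma:bBprime}(2) to show the ${^a}\hatW$-orbit of ${\hat{B}}^a$ exhausts $\bB(\hatR^a)$, that lemma is stated for an FGRS, which is precisely what you are in the middle of establishing; the resolution is that its proof (in \cite{Y16}) requires only $(\rR1)$, $(\rR2)$, and closure of the candidate subset under $B\mapsto B^{(\alpha)}$, all of which you have at that stage. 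Neither point is a genuine gap.
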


We can easily see the following.

\begin{lemma}\label{lemma:RtBt}
Keep the notation of Definition~{\rm{\ref{definition:finiteWG}}}.
Let $k:=|\bBRsim|$. Let $\{B_t|t\in\fkJ_{1,k}\}$
be a complete set of representatives of $\bBRsim$.
Let ${\tilde{\mcA}}:=\fkJ_{1,k}$.
Define the action of $\FtwoI$ on ${\tilde{\mcA}}$
by $[B_{i\cdot t}]^\sim=\tausimi([B_t]^\sim)$ $(t\in{\tilde{\mcA}})$.
For $t\in{\tilde{\mcA}}$, let $\hatC(t):=(-N^{\rR,B_t}_{ij})_{i,j\in\fkI}$
and $\rR_t:=\rR$.
Then ${\tilde{\mcC}}={\tilde{\mcC}}(\fkI,\mcA,(\hatC(t))_{t\in{\tilde{\mcA}}})$ is a Cartan scheme,
and  $\mcR({\tilde{\mcC}},(\rR_t,B_t)_{t\in{\tilde{\mcA}}})$ is a CFGRS.
\end{lemma}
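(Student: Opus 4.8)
The plan is to check each clause directly against the definitions, the key observation being that the abstract reflection $\hats^t_i$ of the proposed CFGRS is just a transition isomorphism $\trxi$ between chosen representatives, which the groupoid equivalence relation forces to preserve $\rR$. First I would verify that $\FtwoI$ acts transitively on $\tilde{\mcA}$. For $i\in\fkI$ and $t\in\tilde{\mcA}$, since $\{B_t\}$ is a complete set of representatives there is a unique $s\in\tilde{\mcA}$ with $[B_s]^\sim=\tausimi([B_t]^\sim)$; set $i\cdot t:=s$. Lemma~\ref{lemma:tausimij}(1) gives $(\tausimi)^2=\rmid_{\bBRsim}$, so $i\cdot(i\cdot t)=t$ and the relations $i^2=e$ of $\FtwoI$ are respected, yielding a genuine action. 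Transitivity follows from Lemma~\ref{lemma:bBprime}(1): any two bases are joined by a chain $B_0,\dots,B_r$ with $B_{l+1}=B_l^{(\beta_l)}$, and writing $\beta_l=\al^{B_l}_{i_l}$ shows $[B_r]^\sim=\tausim_{i_{r-1}}\cdots\tausim_{i_0}([B_0]^\sim)$, so the $\tausim$-action on $\bBRsim$, hence the $\FtwoI$-action on $\tilde{\mcA}$, is transitive.

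Next I would check that the $\hatC(t)=(-N^{\rR,B_t}_{ij})_{i,j}$ are generalized Cartan matrices with $\hatc^{i\cdot t}_{ij}=\hatc^t_{ij}$. The diagonal is $-N^{\rR,B_t}_{ii}=2$. The symmetry $N^{\rR,B_t}_{jk}=0\Leftrightarrow N^{\rR,B_t}_{kj}=0$ is the rank-two fact that either vanishing is equivalent to $m^{B_t}_{jk}=2$, i.e.\ no root is a positive combination of both $\al^{B_t}_j$ and $\al^{B_t}_k$, which I would quote from the rank-two theory of \cite{Y16}. For compatibility, note that by the definition of the action $[B_{i\cdot t}]^\sim=\tausimi([B_t]^\sim)=[B_t^{(i)}]^\sim$, so $B_{i\cdot t}\sim B_t^{(i)}$. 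As $\sim$ is a groupoid equivalence relation, $\trxi_{B_{i\cdot t},B_t^{(i)}}$ carries $\rR$ onto $\rR$ while fixing the index of each simple root, hence sends the $\al_i$-root-string through $\al_j$ to the corresponding string and preserves the structure constants: $N^{\rR,B_{i\cdot t}}_{ij}=N^{\rR,B_t^{(i)}}_{ij}$. Combined with \eqref{eqn:NRBij}, which gives $N^{\rR,B_t^{(i)}}_{ij}=N^{\rR,B_t}_{ij}$, this yields $\hatc^{i\cdot t}_{ij}=\hatc^t_{ij}$, so $\tilde{\mcC}$ is a Cartan scheme.

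Finally, taking $\hatfca^t=\fca$, $\hat{B}^t=B_t$ and $\hatR^t=\rR$, I would verify the root-system axioms. Conditions $(\hatR1)$ and $(\hatR2)$ are immediate from $B_t\in\bBR$ and axiom $(\rR 2)$. The crux is $(\hatR3)$: a short computation in the bases $B_t$ and $B_t^{(i)}$, using $\al^{B_t^{(i)}}_i=-\al^{B_t}_i$ together with Proposition~\ref{proposition:ordering}, identifies the abstract reflection as $\hats^t_i=\trxi_{B_{i\cdot t},B_t^{(i)}}$; since $B_{i\cdot t}\sim B_t^{(i)}$, this map sends $\rR$ to $\rR$, so $\hats^t_i(\rR)=\rR=\hatR^{i\cdot t}$. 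For $(\hatR4)'$, I observe $\hatm^t_{ij}=m^{B_t}_{ij}=:m$, and by the iterated action relation $[B_{(ij)^m\cdot t}]^\sim=(\tausimi\tausimj)^m([B_t]^\sim)$, which equals $[B_t]^\sim$ by Lemma~\ref{lemma:tausimij}(2); uniqueness of representatives then gives $(ij)^m\cdot t=t$. This establishes $(\hatR1)$–$(\hatR4)'$ and hence the CFGRS.

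The main obstacle is the clean identification $\hats^t_i=\trxi_{B_{i\cdot t},B_t^{(i)}}$, and the recognition that $B_{i\cdot t}\sim B_t^{(i)}$ is exactly the input the groupoid equivalence relation needs in order to conclude that this transition map preserves $\rR$; once that is in place, the compatibility of the Cartan entries and all four axioms reduce to bookkeeping against Lemmas~\ref{lemma:bBprime} and \ref{lemma:tausimij} and equation~\eqref{eqn:NRBij}.
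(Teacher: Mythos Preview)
The paper does not actually prove this lemma: it is introduced by ``We can easily see the following'' and left to the reader. Your proposal supplies exactly the verification one would expect---transitivity from Lemma~\ref{lemma:bBprime}(1), the Cartan-scheme axioms via \eqref{eqn:NRBij} and the defining property of a groupoid equivalence relation, and $(\hatR1)$--$(\hatR4)'$ by direct inspection together with Lemma~\ref{lemma:tausimij}---and is correct. The identification $\hats^t_i=\trxi_{B_{i\cdot t},\,B_t^{(i)}}$ is the right observation and your check of it is sound; the only step you leave slightly informal is the zero-symmetry $N^{\rR,B_t}_{jk}=0\Leftrightarrow N^{\rR,B_t}_{kj}=0$, but this is indeed the rank-two statement $m^{B_t}_{jk}=2$ and is standard in the references you cite.
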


\begin{definition}\label{definition:CayGofW}
Let $\mcR=\mcR(\mcC,(\hatR^a,{\hat{B}}^a)_{a\in\mcA})$ be a CFGRS.
\newline
{\rm{(1)}} We define the semigroup ${\mathcal{W}}(\mcR)$ as follow.
As a set, let  
\begin{equation*}
{\mathcal{W}}(\mcR):=\{0\}\coprod(\coprod_{a,b\in\mcA}\hatW(a,b))),
\end{equation*}
where $\coprod$ means a disjoint union. The multiplication $\cdot$ of ${\mathcal{W}}(\mcR)$
is defined as follows.
If $x\in\hatW(a,b)$, $y\in\hatW(b,c)$
for some $a,b,c\in\mcA$, define $x\cdot y:=xy\,(\in\hatW(a,c))$.
Otherwise let $x\cdot y:=0$.
We call ${\mathcal{W}}(\mcR)$ the {\it{Weyl groupoid}} of $\mcR$.
We call ${\mathcal{W}}(\mcR)$ {\it{irreducible}} if
the FGRS $\hatR^a$ for some $a\in\mcA$ is irreducible.
\newline
{\rm{(2)}} Let $a\in\mcA$. Let $V:={^a}\hatW\,(=\{\hats^{\varpi_a(u)}_u\hattrxi^{\varpi_a(u),a}|
u\in\FoneI\})$. Let 
\begin{equation*}
E:=\{\{\hats^{\varpi_a(u)}_u\hattrxi^{\varpi_a(u),a},\hats^{\varpi_a(ui)}_{ui}\hattrxi^{\varpi_a(ui),a}\}|u\in\FoneI,i\in\fkI\}
\,(\subset{\mathfrak{P}}_2(V)).
\end{equation*}
Let $\Gamma_a({\mathcal{W}}(\mcR)):=\Gamma(V,E).$
We call the finite graph $\Gamma_a({\mathcal{W}}(\mcR))$
is the {\it{Cayley graph at $a$ of  ${\mathcal{W}}(\mcR)$}}.
We see that as a graph, 
$\Gamma_a({\mathcal{W}}(\mcR))$ is isomorphic to 
the Cayley graph $\Gamma(\hatR^{a_1})$ of the FGRS $\hatR^{a_1}$
$(a_1\in\mcA)$.
\end{definition}

We will not use Definition~\ref{definition:DefIsoCFGRS} below in this paper.

\begin{definition}\label{definition:DefIsoCFGRS}
For $t\in\fkJ_{1,2}$, let $\mcC_t=\mcC(\fkI,\mcA_t,(\hatC^{a_t})_{a_t\in\mcA_t})$ $(t\in\fkJ_{1,2})$ be a Cartan scheme.
We say that $\mcC_1$ and $\mcC_2$ are {\it{isomorphic}} if 
there exists a bijection $\varphi:\mcA_1\to\mcA_2$ such that
$\hatC^{\varphi(a_1)}=\hatC^{a_1}$ for all $a_1\in\mcA_1$.
For $t\in\fkJ_{1,2}$, let $\mcR_t=\mcR(\mcC_t,(\hatR^{a_t},{\hat{B}}^{a_t})_{a_t\in\mcA_t})$ be a CFGRS.
We can see that if $\mcC_1$ and $\mcC_2$ are isomorphic, then
we have $f^{a_1}(\hatR^{a_1})=\hatR^{\varphi(a_1)}$ for all $a_1\in\mcA_1$, where 
let $\varphi:\mcA_1\to\mcA_2$ be as above and define 
the $\bZ$-isomorphisms $f^{a_1}:\rmSpan_\bZ({\hat{B}}^{a_1})
\to\rmSpan_\bZ({\hat{B}}^{\varphi(a_1)})$ $(a_1\in\mcA_1)$
by $f^{a_1}(\hatal^{a_1}_i):=\hatal^{\varphi(a_1)}_i$ $(i\in\fkI)$.
We say that $\mcR_1$ and $\mcR_2$ are {\it{quasi-isomorphic}} if 
there exist $a_1\in\mcA_1$ and $a_2\in\mcA_2$
such that $g(\hatR^{a_1})=\hatR^{a_2}$, where 
define 
the $\bZ$-isomorphism $g:\rmSpan_\bZ({\hat{B}}^{a_1})
\to\rmSpan_\bZ({\hat{B}}^{a_2})$ 
by $g(\hatal^{a_1}_i):=\hatal^{a_2}_i$ $(i\in\fkI)$.
\end{definition}

\subsection{FGRSs of generalized quantum groups}
Let $\bK$ be a field. Let $\bKt:=\bK\setminus\{0\}$.
For $m\in\bZgeqo$ and $x$, $y\in\bK$,
let
$(m)_x:=\sum_{t=1}^mx^{t-1}$, $(m)_x!:=\prod_{t=1}^m(t)_x$, 
$(m;x,y):=1-x^{m-1}y$ and $(m;x,y)!:=\prod_{t=1}^m(t;x,y)$.

Let $\fca$ be a free $\bZ$-module of finite rank.
We say that a map $\chi:\fca\times\fca\to\bKt$ is a {\it{bicharacter}} 
if
\begin{equation*}
\chi(\lambda+\mu,\nu)=\chi(\lambda,\nu)\chi(\mu,\nu),\,\,
\chi(\lambda,\mu+\nu)=\chi(\lambda,\mu)\chi(\lambda,\nu)
\quad (\lambda,\mu,\nu\in\fca).
\end{equation*}

Let $\rR$ be an FGRS over $\fca$.
We call $\rR$ {\it{$\chi$-associated}} if
there  exists a bicharacter $\chi:\fca\times\fca\to\bKt$ such that
\begin{equation*}
\begin{array}{l}
\fkJ_{0,N^{\rR,B}_{\al,\beta}}
=\{m\in\bZgeqo|(m)_{\chi(\al,\al)}!(m;\chi(\al,\al),
\chi(\al,\beta)\chi(\beta,\al))!\ne 0\} \\
\quad 
(B\in\bBR,\al,\beta\in B, \al\ne \beta).
\end{array}
\end{equation*}
We call $\rR$ {\it{$\bK$-associated}} if
$\rR$ is $\chi$-associated for some  bicharacter $\chi:\fca\times\fca\to\bKt$.

Let $\rR$ be a $\chi$-associated FGRS over $\fca$.
Let $B\in\bBR$. For such $\chi$ and $B$, let $D_\bK(\chi,B)$ 
be the finite graph defined by $D_\bK(\chi,B)=\Gamma(V,E)$
with $V:=B$ and $E:=\{\{\al,\beta\}|\al,\beta\in B, 
\al\ne\beta, \chi(\al,\beta)\chi(\beta,\al)\ne 1\}$,
respectively and, moreover, each vertex $\al\in V=B$
is labeled by $\chi(\al,\al)$ and 
each edge $\{\al,\beta\}\in E$
is labeled by $\chi(\al,\beta)\chi(\beta,\al)$.
We call $D_\bK(\chi,B)$ the {\it{generalized Dynkin diagram
associated with $\chi$ and $B$}}.
Examples of  $D_\bK(\chi,B)$ are drawn by Fig.~6.

\begin{remark}
Let $\bK$ be a field of characteristic zero.
All the $\bK$-associated irreducibe FGRSs were classified by 
\cite{Hec09}, which means that all the $D_\bK(\chi,B)$s
for all $\chi$-associated irreducible FGRSs $\rR$ with $B\in\bBR$
are listed by \cite{Hec09},
and the concrete forms of such $\rR$ are described by \cite{AA17}. 
See see \cite[Theorem~4.9]{AY15}, \cite[Theorem~4.9]{HY10} 
(see also \cite[Section~6]{BY18}) for example
to know
how the $\bK$-associated FGRSs interact with 
the generalized quantum groups,
or the Drinfeld doubles of
the Nichols algebras of diagonal type.
\end{remark}

\begin{theorem}\label{theorem:YamMain}
{\rm{(\cite[Theorem~6.3]{Y22})}}
Let $\bK$ be a field of characteristic zero.
Let $\rR$ be a $\bK$-associated FGRS.
Then $\Gamma(\rR)$ has a Hamiltonian cycle. 
\end{theorem}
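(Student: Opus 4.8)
The plan is to prove the theorem by combining a structural reduction with Heckenberger's classification of $\bK$-associated irreducible FGRSs. Since $\bK$ has characteristic zero, the generalized Dynkin diagrams $D_\bK(\chi,B)$ attached to the $\chi$-associated irreducible FGRSs are precisely those in the list of \cite{Hec09}, the corresponding root systems being described concretely in \cite{AA17}. At the top level I would argue in three stages: first reduce the possibly reducible $\rR$ to the irreducible case; then dispose of ranks one and two; and finally run through the classification in rank $\geq 3$, treating the infinite families by a uniform induction and the finitely many sporadic diagrams by exhibiting explicit cycles.

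First I would reduce to irreducible $\rR$. If $\rR$ is reducible, Lemma~\ref{lemma:redHam}(1) writes $\rR=\rR'\cup\rR''$ with $\rR'$, $\rR''$ FGRSs of strictly smaller rank over the orthogonal summands $\fca'$, $\fca''$ of $\fca$; restricting the bicharacter $\chi$ to $\fca'\times\fca'$ and to $\fca''\times\fca''$ shows, using that the cross $m^B_{ij}$ all equal $2$ and hence the relevant $N^{\rR',B'}_{\al,\beta}$ are unchanged, that $\rR'$ and $\rR''$ are again $\bK$-associated. Lemma~\ref{lemma:redHam}(2) then manufactures an explicit Hamiltonian cycle of $\Gamma(\rR)$ from Hamiltonian cycles of $\Gamma(\rR')$ and $\Gamma(\rR'')$ through the zigzag map $h$. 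So an induction on $\mathrm{rank}_\bZ\fca$ reduces the whole statement to irreducible $\rR$, and the base ranks $1$ and $2$ are settled at once by Lemma~\ref{lemma:rkonetwo}, where $\Gamma(\rR)$ is a single edge or an even cycle.

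For irreducible $\rR$ of rank $\geq 3$ I would consult \cite{Hec09}. The list divides into infinite families together with finitely many sporadic diagrams of bounded rank. Among the families, those whose Weyl groupoid collapses to an honest finite Weyl group — the Cartan (Lie algebra) types — require nothing new: there $\bBR$ is a torsor under the Coxeter group $(W,S)$ and $\Gamma(\rR)$ coincides with the Cayley graph $\Gamma(W,{\mathcal{E}}_m)$, so Theorem~\ref{theorem:two} applies verbatim. For the genuinely groupoidal families (the Lie superalgebra series and their relatives) I would set up an induction on the rank in the spirit of Theorems~\ref{theorem:one} and~\ref{theorem:two}: peel off one end node $i$, so that $\Gamma(\rR)$ becomes a bundle of rank-$(n-1)$ Cayley graphs, one for each coset of the parabolic sub-groupoid obtained by deleting $i$, each Hamiltonian by the inductive hypothesis and stitched to the others by the edges labelled $i$. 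The gallery-distance formula of Lemma~\ref{lemma:bBprime}(1) guarantees that this bundle is connected, and one threads the inductive cycles through the copies in boustrophedon fashion. The finitely many sporadic diagrams are then closed out one by one by writing an explicit word $1^B\cdot s_{i_1}\cdots s_{i_k}$, located with computer assistance exactly as in the $\mathrm{Alt}(n)$ examples of Section~\ref{section:Basic}.

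The hard part will be the inductive stitching for the superalgebra families. In contrast to the group case of Theorem~\ref{theorem:two}, here $\bBR$ is not a single $W$-orbit — the odd reflections link several orbits — so the bundle over the deleted-node structure is not a direct product: its pieces are the chamber graphs of parabolic sub-root-systems based at different points, and they need not be mutually isomorphic nor of equal size. The real work is to verify, uniformly along each family rather than diagram by diagram, that the $i$-labelled edges pair up the exit and entry vertices of consecutive pieces with the parity needed for the zigzag to close into one cycle visiting every base exactly once; this is precisely where the commutation relations $m^B_{ij}=2$ of the end node with the interior nodes get used, just as $(ab)^2=e$ was used in Theorem~\ref{theorem:one}. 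A secondary, more mechanical obstacle is the verification of the sporadic diagrams of \cite{Hec09}, where the explicit computations take over.
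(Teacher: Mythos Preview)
The paper does not prove this theorem at all: it is quoted verbatim from \cite[Theorem~6.3]{Y22} and used as a black box in the assembly of Theorem~\ref{theorem:Main}. So there is no ``paper's own proof'' to compare against; any assessment has to be against the argument in \cite{Y22}.

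Your outline is broadly in the spirit of what \cite{Y22} does --- reduce to the irreducible case via Lemma~\ref{lemma:redHam}, dispose of ranks $1$ and $2$ by Lemma~\ref{lemma:rkonetwo}, and then run through Heckenberger's list \cite{Hec09}, handling infinite series by an induction that peels off an end node and the remaining finitely many diagrams by explicit computer-found cycles --- and in that sense it is the right plan. But it is only a plan: you yourself identify the substantive gap, namely the stitching step for the genuinely groupoidal families. In the group case of Theorem~\ref{theorem:two} the cosets of the parabolic are all isomorphic and of the same size, and the $(ab)^2=e$ relation lets one thread them in a single zigzag. In the groupoid case the parabolic sub-root-systems based at different $B\in\bBR$ are in general non-isomorphic, the pieces have different sizes, and the $i$-edges between consecutive pieces need not match entry and exit vertices with the correct parity. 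Asserting that ``the commutation relations $m^B_{ij}=2$ of the end node with the interior nodes'' suffice is not enough: one has to exhibit, family by family, a choice of end node for which the required commutations actually hold at \emph{every} $B$ in the orbit (this is exactly the kind of verification carried out in Lemma~\ref{lemma:HamRFiveFive} for ${\acute\rR}(5;5)$), and then check that the resulting pieces can indeed be threaded. Until that is done you have a strategy, not a proof.
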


\section{Finite generalized root systems associated with finite dimensional Lie (super)algebras
of type ${\mathrm{A}}$-${\mathrm{G}}$}\label{section:LieS}
Let $\frg$ be a $\bC$-linear space.
Let $\frg(t)$ $(t\in\fkJ_{0,1})$
be two linear $\bC$-subspaces of $\frg$
such that
$\frg=\frg(0)\oplus\frg(1)$.
For $t\in\bZ\setminus\fkJ_{0,1}$,  let $\frg(t)$ be such that $\frg(t)=\frg(t+2)=\frg(t-2)$.
Let $[\,,\,]:\frg\times\frg\to\frg$ be a bilinear map with
$[\frg(t_1),\frg(t_2)]\subset\frg(t_1+t_2)$ $(t_1,t_2\in\bZ)$.
We say that $\frg=(\frg,[\,,\,])$ is a {\it{Lie superalgebra}}
if 
\begin{equation*}
\begin{array}{l}
[y,x]=-(-1)^{t_1t_2}[x,y] \quad (x\in\frg(t_1), y\in\frg(t_2)\,(t_1,t_2\in\fkJ_{0,1})), \\
\mbox{$[[x,y],z]$}=[x,[y,z]]+(-1)^{t_1t_2}[y,[x,z]]
\quad (x\in\frg(t_1), y\in\frg(t_2)\,(t_1,t_2\in\fkJ_{0,1}),\,z\in\frg).
\end{array}
\end{equation*}
If $\frg(1)=\{0\}$, we call $\frg=(\frg,[\,,\,])$ a {\it{Lie algebra}}.
For Lie superalgebras $\frg$ and $\frg^\prime$, 
 we say that a $\bC$-linear isomorphism $f:\frg\to\frg^\prime$
 is a {\it{Lie superalgebra isomorphism}}
 if $f(\frg(t))=\frg^\prime(t)$ $(t\in\fkJ_{0,1})$
 and $f([X,Y])=[f(X),f(Y)]$ $(X,Y\in\frg)$.

Let $\fkI$ be a non-empty finite set. 
Let $\fca$ be the free $\bZ$-module
with a $\bZ$-base $\{\dal_i|i\in\fkI\}$,
i.e., $\fca=\oplus_{i\in\fkI}\bZ\dal_i$.
Let $\Pi:=\{\dal_i|i\in\fkI\}$.
Let $\fkI_{\mathrm{odd}}$ be a subset of $\fkI$.
Let $\fkI_{\mathrm{even}}:=\fkI\setminus \fkI_{\mathrm{odd}}$.
\begin{equation*}
\begin{array}{l}
\mbox{Let $A:=[a_{ij}]_{i,j\in\fkI}$ be an $|\fkI|\times|\fkI|$ matrix over $\bC$.} 
\end{array}
\end{equation*}
Then we have a Lie superalgebra $\frg=\frg(A,\fkI_{\mathrm{odd}})$ 
satisfying the following conditions $(LS1)$-$(LS4)$.
\newline\newline
$(LS1)$ As a Lie superalgebra, $\frg$ is generated by
$H_i$, $E_i$, $F_i$ $(i\in\fkI)$ with
$H_i\in \frg(0)$, 
$E_j$, $F_j\in \frg(0)$ $(j\in\fkI_{\mathrm{even}})$, 
$E_k$, $F_k\in \frg(1)$ $(k\in\fkI_{\mathrm{odd}})$. 
\newline
$(LS2)$ We have:
\begin{equation*}
[H_i,H_j]=0,\,\,[H_i,E_j]=a_{ij}E_j,\,\,[H_i,F_j]=-a_{ij}F_j,\,\,
[E_i,F_j]=\delta_{ij}H_i\quad (i,j\in\fkI). 
\end{equation*}
$(LS3)$ There exist the linear subspaces $\frg_\lambda$
$(\lambda\in\fca)$ such that $\frg=\oplus_{\lambda\in\fca}\frg_\lambda$,
$H_i\in\frg_0$, $E_i\in\frg_{\dal_i}$, $F_i\in\frg_{-\dal_j}$, and
$[\frg_\lambda,\frg_\mu]\subset\frg_{\lambda+\mu}$ $(\lambda, \mu\in\fca)$.
\newline
$(LS4)$ We have $\dim_\bC \frg_0=|\fkI|$ and
$\dim_\bC \frg_{\dal_i}=\dim_\bC \frg_{-\dal_i}=1$ $(i\in\fkI)$.
\newline
$(LS5)$ For $\lambda\in{\mathrm{Span}}_{\bZgeqo}(\Pi)\setminus\{0\}$,
we have $\{X\in \frg_\lambda|[X,F_i]=0\,(i\in\fkI)\}=\{0\}$
and $\{Y\in \frg_{-\lambda}|[E_i,Y]=0\,(i\in\fkI)\}=\{0\}$.
\newline\newline
Define the $\bZ$-module homomorphism $p:\fca\to\bZ$ by
$p(\dal_i):=0$ $(i\in\fkI_{\mathrm{odd}})$
and $p(\dal_j):=1$ $(j\in\fkI_{\mathrm{odd}})$. 
We have the Lie superalgebra automorphism
$\Omega:\frg\to\frg$
defined by $\Omega(H_i):=-H_i$,
$\Omega(E_i):=(-1)^{p(\dal_i)}F_i$,
and $\Omega(F_i):=E_i$. 
In particular, we have
\begin{equation*}
\dim \frg_\lambda=\dim \frg_{-\lambda}.
\end{equation*}
Let ${\dot{\rR}}(A,\fkI_{\mathrm{odd}}):=\{\lambda\in\fca\setminus\{0\}|\dim_\bC\frg_\lambda\geq 1\}$.
Let ${\dot{\rR}}^\pm(A,\fkI_{\mathrm{odd}}):={\dot{\rR}}(A,\fkI_{\mathrm{odd}})\cap{\mathrm{Span}}_{\pm\bZgeqo}\Pi$.
By $(LS2)$, we have 
\begin{equation} \label{eqn:dotRpm}
{\dot{\rR}}(A,\fkI_{\mathrm{odd}})={\dot{\rR}}^+(A,\fkI_{\mathrm{odd}})\cup {\dot{\rR}}^-(A,\fkI_{\mathrm{odd}}).
\end{equation}
We also have ${\dot{\rR}}^-(A,\fkI_{\mathrm{odd}})=-{\dot{\rR}}^+(A,\fkI_{\mathrm{odd}})$.
For $i\in\fkI$, we have
\begin{equation*}
\begin{array}{l}
\mbox{$[[E_i,E_i],F_i]=(-1+(-1)^{p(\dal_i)})a_{ii}E_i$, \,\, $[[E_i,[E_i,E_i]],F_i]=(1+(-1)^{p(\dal_i)})a_{ii}[E_i,E_i]$,} \\
\mbox{$[E_i,[F_i,F_i]]=(-1+(-1)^{p(\dal_i)})a_{ii}E_i$, \,\, $[E_i,[F_i,[F_i,F_i]]]=(1+(-1)^{p(\dal_i)})a_{ii}[F_i,F_i]$,}
\end{array}
\end{equation*}
which implies
\begin{equation*}
{\dot{\rR}}(A,\fkI_{\mathrm{odd}})\cap\bZ{\dal_i}=
\left\{\begin{array}{ll}
\{\dal_i,-\dal_i\} & \quad\mbox{if $a_{ii}=0$ or $i\in\fkI_{\mathrm{even}}$}, \\
\{\dal_i,2\dal_i,-\dal_i,-2\dal_i\} & \quad\mbox{if $a_{ii}\ne 0$ and $i\in\fkI_{\mathrm{odd}}$}. 
\end{array}\right.
\end{equation*}

The following lemma is directly proved.

\begin{lemma}
Let $i,j\in\fkI$ with $i\ne j$. 
Define $b_m\in\bZ$ 
$(m\in\bZgeqo)$ by 
\begingroup
\begin{equation*}
\renewcommand{\arraystretch}{1.1}
\begin{array}{lcl}
b_{2n}&:=& n(((n-1)+n(-1)^{p(\dal_i)})a_{ii}+(1+(-1)^{p(\dal_i)})a_{ij}), \\
b_{2n+1}&:=& ((n+1)+n(-1)^{p(\dal_i)})(na_{ii}+a_{ij}), 
\end{array}
\end{equation*}where $n\in\bZgeqo$.
We have{\rm{:}}
\begin{equation}\label{eqn:recAm}
b_{m+1}=(-1)^{m p(\dal_i)}(m a_{ii}+a_{ij})+b_m\quad (m\in\bZgeqo).
\end{equation}
\endgroup 
\newline
{\rm{(1)}}
Let $m\in\bZgeqo$, and let
$E_{m;i,j}:=({\mathrm{ad}}E_i)^m(E_j)$, $F_{m;i,j}:=({\mathrm{ad}}F_i)^m(F_j)$,
and
${\bar b}_m:=\prod_{k=2}^m b_k$. Then we have the following equations.
\begin{equation}\label{eqn:inpfml}
\renewcommand{\arraystretch}{1.2}
\begin{array}{l}
\mbox{$[E_{m;i,j}, F_i]=(-1)^{p(\dal_i)p(\dal_j)}b_m E_{m-1;i,j}$\quad $(m\geq 1)$}, \\
\mbox{$[E_i, F_{m;i,j}]=(-1)(-1)^{(m-1)p(\dal_i)}b_m F_{m-1;i,j}$\quad $(m\geq 1)$}, \\
\mbox{$[E_{m;i,j}, F_{(m-1);i,j}]=(-1)(-1)^{(m-1)(p(\dal_i)+p(\dal_i)p(\dal_j))}a_{ji}{\bar b}_m E_i$\quad $(m\geq 1)$}, \\
\mbox{$[E_{(m-1);i,j}, F_{m;i,j}]=(-1)^{mp(\dal_i)p(\dal_j)}a_{ji}{\bar b}_m F_i$\quad $(m\geq 1)$}, \\
\mbox{$[E_{m;i,j}, F_{m;i,j}]=
\left\{\begin{array}{l}
H_j \quad (m=0), \\
(-1)^{mp(\dal_i)p(\dal_j)}{\bar b}_m(ma_{ji}H_i+a_{ij}H_j)\quad (m\geq 1).
\end{array}\right. $}
\end{array}
\end{equation} We also have
\begin{equation}\label{eqn:inpfmld}
\begin{array}{l}
\mbox{$[E_{m;i,j},F_j]=-a_{ji}(ad E_i)^{m-1}(E_i)\quad (m\geq 1)$,} \\
\mbox{$[E_j,F_{m;i,j}]=(-1)^{mp(\dal_i)p(\dal_j)}a_{ji}(ad F_i)^{m-1}(F_i)\quad (m\geq 1)$.}
\end{array}
\end{equation}
{\rm{(2)}}  $\dal_i+\dal_j\in{\dot{\rR}}^+(A,\fkI_{\mathrm{odd}})$
if and only if 
$a_{ij}\ne 0$ or $a_{ji}\ne 0$.
\newline
{\rm{(3)}}
Let $m\in\fkJ_{2,\infty}$. Then
$m\dal_i+\dal_j\in{\dot{\rR}}^+(A,\fkI_{\mathrm{odd}})$
if and only if 
$\prod_{k=1}^m b_k\ne 0$.
\end{lemma}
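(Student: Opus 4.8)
The plan is to handle \eqref{eqn:recAm} and part~(1) by explicit computation and induction, and then to read off parts~(2) and (3) from the $m=1$ and general-$m$ ladder relations together with axiom $(LS5)$. First I would verify the recurrence \eqref{eqn:recAm} directly from the closed formulas for $b_{2n}$ and $b_{2n+1}$, splitting into the cases $m=2n$ and $m=2n+1$. Writing $\varepsilon:=(-1)^{p(\dal_i)}$, in the even case one checks $b_{2n+1}-b_{2n}=2na_{ii}+a_{ij}=\varepsilon^{2n}(2n\,a_{ii}+a_{ij})$, and in the odd case $b_{2n+2}-b_{2n+1}=\varepsilon((2n+1)a_{ii}+a_{ij})$; both coincide with \eqref{eqn:recAm} upon matching the coefficients of $a_{ii}$ and of $a_{ij}$ separately. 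This step is pure algebra.

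Next, for part~(1) I would prove the first identity in \eqref{eqn:inpfml}, namely $[E_{m;i,j},F_i]=(-1)^{p(\dal_i)p(\dal_j)}b_mE_{m-1;i,j}$, by induction on $m$. Writing $E_{m;i,j}=[E_i,E_{m-1;i,j}]$ and applying the super Jacobi identity together with $[E_i,F_i]=H_i$ from $(LS2)$, the bracket splits into a term $[E_i,[E_{m-1;i,j},F_i]]$, handled by the induction hypothesis, and a term proportional to $[E_{m-1;i,j},H_i]=-((m-1)a_{ii}+a_{ij})E_{m-1;i,j}$ coming from the $H_i$-weight $(m-1)\dal_i+\dal_j$ of $E_{m-1;i,j}$. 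Collecting the two coefficients, the combination that appears is exactly the right-hand side of \eqref{eqn:recAm}, which is why $b_m$ emerges; the super signs are the routine bookkeeping. The second identity in \eqref{eqn:inpfml} is then the image of the first under the automorphism $\Omega$ (which interchanges the $E$'s and $F$'s up to the sign $(-1)^{p(\dal_i)}$), and the third, fourth and fifth follow by feeding the first two into short inductions on $m$, using $[E_i,F_i]=H_i$ and the $H_i$-eigenvalues of the relevant weight spaces. The two formulas in \eqref{eqn:inpfmld} are a separate one-line induction resting on $[E_i,F_j]=0$ and $[E_j,F_i]=0$ for $i\ne j$.

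For part~(2) I would use that $\frg_{\dal_i+\dal_j}$ is spanned by $E_{1;i,j}=[E_i,E_j]$, since the positive part of $\frg$ is generated by the $E_k$ and this weight leaves only one left-normed bracket after the super Jacobi identity. The $m=1$ cases of \eqref{eqn:inpfml} and \eqref{eqn:inpfmld} give $[E_{1;i,j},F_i]=\pm a_{ij}E_j$ and $[E_{1;i,j},F_j]=-a_{ji}E_i$, while $[E_{1;i,j},F_k]=0$ for $k\notin\{i,j\}$. By $(LS5)$ the element $E_{1;i,j}$ vanishes iff all of these brackets vanish, i.e.\ iff $a_{ij}=a_{ji}=0$; this is exactly the assertion that $\dal_i+\dal_j\in{\dot{\rR}}^+(A,\fkI_{\mathrm{odd}})$ iff $a_{ij}\ne0$ or $a_{ji}\ne0$.

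Finally, for part~(3) I would argue by induction on $m\ge2$, again using that $\frg_{m\dal_i+\dal_j}=\bC E_{m;i,j}$ and $(LS5)$. Among the lowering brackets, $[E_{m;i,j},F_k]=0$ for $k\notin\{i,j\}$, $[E_{m;i,j},F_i]=\pm b_mE_{m-1;i,j}$ by \eqref{eqn:inpfml}, and $[E_{m;i,j},F_j]=-a_{ji}({\mathrm{ad}}E_i)^{m-1}(E_i)$ by \eqref{eqn:inpfmld}. For $m\ge3$ the last bracket lies in $\frg_{m\dal_i}=\{0\}$ (the only roots in $\bZ\dal_i$ are $\pm\dal_i,\pm2\dal_i$), so $(LS5)$ gives $E_{m;i,j}\ne0\Leftrightarrow b_mE_{m-1;i,j}\ne0$, and the induction hypothesis turns this into $\prod_{k=1}^mb_k\ne0$. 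The hard part is the case $m=2$ with $i\in\fkI_{\mathrm{odd}}$, where $[E_{2;i,j},F_j]=-a_{ji}[E_i,E_i]$ can be nonzero (precisely when $a_{ii}\ne0$ and $a_{ji}\ne0$); here one must invoke the symmetry $a_{ij}=0\Leftrightarrow a_{ji}=0$ of the Cartan matrices of the basic classical Lie superalgebras under consideration (equivalently the finite-dimensionality of $\frg$, which forbids a singular vector inside the $\dal_i$-string through $\dal_j$) so that this situation cannot coexist with $b_1=a_{ij}=0$, and the equivalence with $\prod_{k=1}^mb_k\ne0$ is preserved.
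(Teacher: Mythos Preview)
Your approach is correct and is exactly what the paper means by ``directly proved'' (the paper supplies no argument beyond that phrase). The verification of \eqref{eqn:recAm}, the inductions for \eqref{eqn:inpfml} and \eqref{eqn:inpfmld}, and the use of $(LS5)$ for parts~(2) and~(3) are all the intended steps.

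The only point that needs tightening is your handling of the $m=2$ edge case in part~(3). You correctly observe that when $i\in\fkI_{\mathrm{odd}}$ and $a_{ii}\ne 0$ the bracket $[E_{2;i,j},F_j]=-a_{ji}[E_i,E_i]$ need not vanish, so that $E_{2;i,j}\ne 0$ is possible even when $b_1=a_{ij}=0$, and some extra hypothesis is required. However, your proposed justification --- finite-dimensionality of $\frg$, or restriction to basic classical types --- is not the right hook here: neither is assumed at this point, and finite-dimensionality alone does not force $a_{ij}=0\Leftrightarrow a_{ji}=0$. The paper's actual fix is the standing assumption \eqref{eqn:symCond} that $A$ is symmetric, stated immediately after this lemma; with $a_{ij}=a_{ji}$ the problematic situation ($a_{ij}=0$, $a_{ji}\ne 0$) cannot occur, and your induction closes. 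So replace the parenthetical about finite-dimensionality by a direct appeal to $a_{ij}=a_{ji}$, noting that part~(3) is only used in the paper after \eqref{eqn:symCond} is in force.
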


\begin{lemma}
Let $x\in\bCt$.
Then we have the Lie superalgebra isomorphism
$f:\frg(xA,\fkI_{\mathrm{odd}})\to\frg(A,\fkI_{\mathrm{odd}})$ defined by
$f(H_i):=xH_i$, $f(E_i):=xE_i$, and $f(F_i):=F_i$
$(i\in\fkI)$.
In particular, we have ${\dot{\rR}}^+(xA,\fkI_{\mathrm{odd}})
={\dot{\rR}}^+(A,\fkI_{\mathrm{odd}})$.
\end{lemma}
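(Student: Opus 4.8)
The plan is to realize $f$ as the canonical identification of two contragredient Lie superalgebras obtained by rescaling the Chevalley generators. Write $H_i'$, $E_i'$, $F_i'$ $(i\in\fkI)$ for the standard generators of $\frg(xA,\fkI_{\mathrm{odd}})$, so that $(LS2)$ for the matrix $xA$ reads $[H_i',H_j']=0$, $[H_i',E_j']=xa_{ij}E_j'$, $[H_i',F_j']=-xa_{ij}F_j'$, $[E_i',F_j']=\delta_{ij}H_i'$. First I would check that the elements $xH_i$, $xE_i$, $F_i$ of $\frg(A,\fkI_{\mathrm{odd}})$ satisfy precisely these relations. Since the bracket is $\bC$-bilinear, scalars factor out and one gets $[xH_i,xH_j]=0$, $[xH_i,xE_j]=x^2a_{ij}E_j=(xa_{ij})(xE_j)$, $[xH_i,F_j]=-xa_{ij}F_j=-(xa_{ij})F_j$, and $[xE_i,F_j]=x\delta_{ij}H_i=\delta_{ij}(xH_i)$, which match the $xA$-relations. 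The parity of each element is unchanged because $x\in\bCt$ is a scalar, so the parity constraints of $(LS1)$ hold as well.

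Next I would promote this to an isomorphism. The cleanest route is to observe that the triple $(\frg(A,\fkI_{\mathrm{odd}});\,xH_i,\,xE_i,\,F_i)$ itself satisfies all of $(LS1)$--$(LS5)$ for the matrix $xA$: the weight decomposition $(LS3)$ and the dimension counts $(LS4)$ are intrinsic to the underlying algebra and are untouched by rescaling (note $xH_i\in\frg_0$, $xE_i\in\frg_{\dal_i}$, $F_i\in\frg_{-\dal_i}$, and $xH_i$ still span $\frg_0$ as $x\neq 0$), while $(LS5)$ transfers verbatim because $[xE_i,Y]=0\Leftrightarrow[E_i,Y]=0$ and the $F_i$ are literally unchanged, so the two annihilator conditions coincide with those for $A$. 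Hence $\frg(A,\fkI_{\mathrm{odd}})$ with these rescaled generators is a realization of the datum $(xA,\fkI_{\mathrm{odd}})$, and the assignment $H_i'\mapsto xH_i$, $E_i'\mapsto xE_i$, $F_i'\mapsto F_i$ extends to the homomorphism $f$. Building the reverse assignment $H_i\mapsto x^{-1}H_i'$, $E_i\mapsto x^{-1}E_i'$, $F_i\mapsto F_i'$ in the same way yields a homomorphism $\frg(A,\fkI_{\mathrm{odd}})\to\frg(xA,\fkI_{\mathrm{odd}})$; since both composites fix all generators, $f$ is an isomorphism.

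For the ``in particular'' clause I would use grading preservation. Because $f$ sends $E_i'\in\frg_{\dal_i}$ to $xE_i\in\frg_{\dal_i}$ and $F_i'\in\frg_{-\dal_i}$ to $F_i\in\frg_{-\dal_i}$, and these generate, $f$ carries the weight space of $\frg(xA,\fkI_{\mathrm{odd}})$ of weight $\lambda$ isomorphically onto the weight space of $\frg(A,\fkI_{\mathrm{odd}})$ of the same weight $\lambda$. Thus $\dim_\bC$ of corresponding weight spaces agree for every $\lambda\in\fca$, so $\{\lambda\mid\dim_\bC\frg_\lambda\geq 1\}$ is identical for the two, i.e. ${\dot{\rR}}(xA,\fkI_{\mathrm{odd}})={\dot{\rR}}(A,\fkI_{\mathrm{odd}})$; intersecting with $\rmSpan_{\bZgeqo}\Pi$ gives ${\dot{\rR}}^+(xA,\fkI_{\mathrm{odd}})={\dot{\rR}}^+(A,\fkI_{\mathrm{odd}})$.

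The main obstacle I anticipate is the step from ``the rescaled generators satisfy the relations'' to ``$f$ is a well-defined homomorphism of the specific algebras $\frg(\cdot,\fkI_{\mathrm{odd}})$'': these are not presented by $(LS2)$ alone but are the distinguished quotients singled out by $(LS3)$--$(LS5)$, so one must know that a map defined on generators descends through the maximal graded ideal meeting $\frg_0$ trivially. The transfer of $(LS5)$ recorded in the second paragraph is exactly what makes this descent automatic, and it is the only point that requires genuine care; everything else reduces to the routine bracket computations above.
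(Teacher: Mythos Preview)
The paper states this lemma without proof, treating it as a direct verification. Your argument is correct and supplies exactly the details one would expect: the bracket computations confirm that $(xH_i,xE_i,F_i)$ satisfy the $(LS2)$-relations for $xA$, and your careful check that $(LS3)$--$(LS5)$ transfer to the rescaled generators is precisely what guarantees that the assignment descends to a well-defined map on the distinguished quotient $\frg(\cdot,\fkI_{\mathrm{odd}})$ rather than merely on a free or tensor-algebra level. The grading argument for ${\dot{\rR}}^+$ is also correct. Your closing remark identifies the only genuinely nontrivial point, and you handle it properly; nothing is missing.
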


\begin{equation}\label{eqn:symCond}
\begin{array}{l}
\mbox{From now on, until the end of Section~\ref{section:LieS},} \\
\mbox{we assume that $A$ is symmetric {\rm{(}}i.e., $a_{ij}=a_{ji}$ $(i,j\in\fkI)${\rm{)}}.}
\end{array}
\end{equation}

\begin{definition}
(1) Let $\fca_\bC:=\fca\otimes_\bZ\bC$.
Define the $\bC$-bilinear map $(\,,\,)^{(A,\fkI_{\mathrm{odd}})}:\fca_\bC\times\fca_\bC\to\bC$
by $(\dal_i,\dal_j)^{(A,\fkI_{\mathrm{odd}})}:=a_{ij}$ $(i,j\in\fkI)$.
\newline 
(2) For $i,j\in\fkI$ with $i\ne j$,
let $N^{(A,\fkI_{\mathrm{odd}})}_{ij}:=|(\dal_j+\bN\dal_i)\cap
{\dot{\rR}}^+(A,\fkI_{\mathrm{odd}})|\,
(\in\bZgeqo\cup\{\infty\})$.
For $i\in\fkI$, let $N^{(A,\fkI_{\mathrm{odd}})}_{ii}:=-2$.
\newline
(3) Let $i\in\fkI$. 
If $N^{(A,\fkI_{\mathrm{odd}})}_{ij}\ne\infty$
for all $j\in\fkI$,
we say that $(A,\fkI_{\mathrm{odd}})$ is {\it{$i$-good}}.
\newline
(4) Let $i\in\fkI$ be such that $(A,\fkI_{\mathrm{odd}})$ is $i$-good.
Then we let
$\dal^{(i)}_j:=\dal_j+N^{(A,\fkI_{\mathrm{odd}})}_{ij}\dal_i\,(\in\fca)$
$(j\in\fkI)$, 
$\Pi^{(i)}:=\{\dal^{(i)}_j|j\in\fkI\}$
{\rm{(}}a $\bZ$-basis of $\fca${\rm{)}}, 
$a^{(i)}_{xy}:=(\dal^{(i)}_x,\dal^{(i)}_y)^{(A,\fkI_{\mathrm{odd}})}\,(\in\bC)$ 
$(x,y\in\fkI)$,
$A^{(i)}:=[a^{(i)}_{xy}]_{x,y\in\fkI}\,
(\in{\mathrm{M}}_{|\fkI|}(\bC))$,
and
$\fkI_{\mathrm{odd}}^{(i)}:=\{j\in\fkI|p(\dal^{(i)}_j)\in 2\bZ+1\}\,(\subset \fkI)$.
Moreover 
we define:
\begin{equation*}
(A,\fkI_{\mathrm{odd}})^{(i)}:=(A^{(i)},\fkI_{\mathrm{odd}}^{(i)}).
\end{equation*}
Notice that $(A,\fkI_{\mathrm{odd}})^{(i)}$ is also $i$-good, see Lemma~\ref{lemma:iigood}
below.
\newline
(5) Let $A^\prime\in{\mathrm{M}}_{|\fkI|}(\bC)$.
Let $\fkI^\prime_{\mathrm{odd}}$ be a subset 
of $\fkI$.  
We write
\begin{equation*}
(A^\prime,\fkI^\prime_{\mathrm{odd}}) \ddotsim (A,\fkI_{\mathrm{odd}})
\end{equation*}
if $(A^\prime,\fkI^\prime_{\mathrm{odd}})=(A,\fkI_{\mathrm{odd}})$
or there exist
$k\in\bN$ and $i_t\in\fkI$ $(t\in\fkJ_{1,k})$
such that $(A^\prime,\fkI^\prime_{\mathrm{odd}})
=(A[k],\fkI_{\mathrm{odd}}[k])$
and 
$(A[u-1],\fkI_{\mathrm{odd}}[u-1])$ are $i_u$-good for 
all $u\in\fkJ_{1,k}$,
where let us mean $(A[0],\fkI_{\mathrm{odd}}[0]):=(A,\fkI_{\mathrm{odd}})$
and
$(A[u],\fkI_{\mathrm{odd}}[u]):=(A[u-1],\fkI_{\mathrm{odd}}[u-1])^{(i_u)}$ 
$(u\in\fkJ_{1,k})$.
\end{definition}

We can directly prove the following lemmas and proposition.

\begin{lemma}\label{lemma:iigood}
Let $i\in\fkI$, and
assume that  $(A,\fkI_{\mathrm{odd}})$ is $i$-good.
\newline
{\rm{(1)}}
Assume that $p(\dal_i)=0$ and $a_{ii}=0$.
Then $a_{ij}=0$ for all $j\in\fkI\setminus\{i\}$.
In particular, $(A,\fkI_{\mathrm{odd}})^{(i)}=(A,\fkI_{\mathrm{odd}})$.
\newline
{\rm{(2)}}
Assume that $a_{ii}\ne 0$.
Then $(A,\fkI_{\mathrm{odd}})^{(i)}=(A,\fkI_{\mathrm{odd}})$. 
Moreover if $p(\dal_i)=1$,
we have $N^{(A,\fkI_{\mathrm{odd}})}_{ij}\in 2\bZgeqo$
for all $j\in\fkI\setminus\{i\}$.
\newline
{\rm{(3)}}
Assume that $p(\dal_i)=1$ and $a_{ii}=0$.
Then $N^{(A,\fkI_{\mathrm{odd}})}_{ij}\in\fkJ_{0,1}$ for all $j\in\fkI\setminus\{i\}$.
Moreover 
$(A,\fkI_{\mathrm{odd}})^{(i)}$ is $i$-good, and we have
$((A,\fkI_{\mathrm{odd}})^{(i)})^{(i)}=(A,\fkI_{\mathrm{odd}})$.
\end{lemma}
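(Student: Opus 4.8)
The plan is to reduce the entire statement to the explicit description of the integers $b_m$ in the lemma containing \eqref{eqn:recAm}, since those integers control exactly when $m\dal_i+\dal_j$ is a root. Combining parts (2) and (3) of that lemma with the value $b_1=a_{ij}$ (read off from the closed form), and using that $A$ is symmetric by \eqref{eqn:symCond}, one has for every $m\in\bN$ that $m\dal_i+\dal_j\in{\dot{\rR}}^+(A,\fkI_{\mathrm{odd}})$ if and only if $b_1b_2\cdots b_m\ne 0$. Because this is a prefix condition, the set of such $m$ is an initial segment $\fkJ_{1,N}$, so $N^{(A,\fkI_{\mathrm{odd}})}_{ij}=N$ equals one less than the least $m\ge 1$ with $b_m=0$; in particular $(A,\fkI_{\mathrm{odd}})$ is $i$-good precisely when the sequence $(b_m)$ eventually vanishes for each $j$. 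Thus each assertion becomes a finite computation with the closed forms for $b_{2n}$ and $b_{2n+1}$, specialised according to the parity $p(\dal_i)$ and whether $a_{ii}=0$.

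First I would dispose of (1) and (2), which are direct. For (1), with $p(\dal_i)=0$ and $a_{ii}=0$ the recursion \eqref{eqn:recAm} gives $b_m=m\,a_{ij}$; $i$-goodness forces some $b_m=0$ with $m\ge 1$, hence $a_{ij}=0$ for every $j\ne i$, whereupon $N^{(A,\fkI_{\mathrm{odd}})}_{ij}=0$, so $\dal^{(i)}_j=\dal_j$ ($j\ne i$) and $\dal^{(i)}_i=-\dal_i$; bilinearity of $(\,,\,)^{(A,\fkI_{\mathrm{odd}})}$ together with the vanishing of all $a_{ij}$ then yields $A^{(i)}=A$ and $\fkI_{\mathrm{odd}}^{(i)}=\fkI_{\mathrm{odd}}$. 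For (2), when $a_{ii}\ne 0$ the closed forms give $b_m=m\,a_{ij}+\binom{m}{2}a_{ii}$ if $p(\dal_i)=0$, and $b_{2n}=-n\,a_{ii}$, $b_{2n+1}=n\,a_{ii}+a_{ij}$ if $p(\dal_i)=1$. In both cases the first vanishing shows $N^{(A,\fkI_{\mathrm{odd}})}_{ij}=-2a_{ij}/a_{ii}$, so $\dal^{(i)}_j=\dal_j-\tfrac{2(\dal_j,\dal_i)}{(\dal_i,\dal_i)}\dal_i$ is the orthogonal reflection of $\dal_j$ in the non-degenerate direction $\dal_i$; invariance of $(\,,\,)^{(A,\fkI_{\mathrm{odd}})}$ under this reflection gives $a^{(i)}_{xy}=a_{xy}$, i.e. $A^{(i)}=A$. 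When moreover $p(\dal_i)=1$, the even-indexed terms $b_{2n}=-n\,a_{ii}$ never vanish, so the string must terminate at an odd index, forcing $N^{(A,\fkI_{\mathrm{odd}})}_{ij}$ to be even; this proves the ``moreover'' clause and, via $p(\dal^{(i)}_j)=p(\dal_j)+N^{(A,\fkI_{\mathrm{odd}})}_{ij}\,p(\dal_i)$ and this evenness, shows $\fkI_{\mathrm{odd}}^{(i)}=\fkI_{\mathrm{odd}}$.

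The substance of the lemma is (3). With $p(\dal_i)=1$ and $a_{ii}=0$ the closed forms collapse to $b_{2n}=0$ and $b_{2n+1}=a_{ij}$, so $b_2=0$ always and $N^{(A,\fkI_{\mathrm{odd}})}_{ij}\in\{0,1\}$, equal to $1$ precisely when $a_{ij}\ne 0$; this is the first claim. For the involutivity I would first record the reflected data: $\dal^{(i)}_i=-\dal_i$, $a^{(i)}_{ii}=(\dal_i,\dal_i)=0$, $a^{(i)}_{ij}=-a_{ij}$ for $j\ne i$, and $i\in\fkI_{\mathrm{odd}}^{(i)}$. Since the reflected pair again has an isotropic odd $i$-th node, re-running the computation of (3) for $(A,\fkI_{\mathrm{odd}})^{(i)}$ shows its exponents $N'_{ij}\in\{0,1\}$ are finite, so it is $i$-good. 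Finally, because $a^{(i)}_{ij}=-a_{ij}$ has the same vanishing as $a_{ij}$, the second-step exponents satisfy $N'_{ij}=N^{(A,\fkI_{\mathrm{odd}})}_{ij}$; using $\dal^{(i)}_i=-\dal_i$ and that $\{\dal^{(i)}_j\}$ is a $\bZ$-basis of $\fca$ whose Gram matrix is $A^{(i)}$, I would compute that the twice-reflected simple roots are $\dal_j+(N^{(A,\fkI_{\mathrm{odd}})}_{ij}-N'_{ij})\dal_i=\dal_j$ (and $\dal_i$ for the $i$-th node), giving $A''=A$, while the parity shift $p(\dal_j)+N^{(A,\fkI_{\mathrm{odd}})}_{ij}+N'_{ij}\equiv p(\dal_j)\pmod 2$ restores $\fkI_{\mathrm{odd}}$.

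The only genuinely delicate step is the last one in (3): one must keep the two root lattices straight, identify the reflected lattice $(\fca,\Pi^{(i)})$ with the standard one through its Gram matrix $A^{(i)}$, and track the parity through both reflections. Everything else is a mechanical substitution into \eqref{eqn:recAm} and the closed forms for $b_m$, so I expect the parity and lattice bookkeeping of the odd reflection's involutivity to be the main obstacle, while (1), (2), and the numerics of (3) are routine.
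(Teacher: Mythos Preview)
Your proof is correct and is precisely the ``direct'' verification the paper alludes to: the paper provides no argument beyond the sentence ``We can directly prove the following lemma,'' and your reduction to the closed forms for $b_m$ from \eqref{eqn:recAm}, together with the root criteria in parts (2) and (3) of that lemma, is exactly the intended computation. Your bookkeeping in each case (the value $N_{ij}=-2a_{ij}/a_{ii}$ via the reflection formula in part (2), the vanishing $b_2=0$ in the odd isotropic case (3), and the involutivity via $N'_{ij}=N_{ij}$) is accurate.
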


\begin{proposition} \label{theorem:oddref}
Let $i\in\fkI$, and
assume that  $(A,\fkI_{\mathrm{odd}})$ is $i$-good.
For $j\in\fkI\setminus\{i\}$,
let $m_j:=N^{(A,\fkI_{\mathrm{odd}})}_{ij}$,
and let $x_j,y_j\in\bC\setminus\{0\}$ be such that  
$[E_{m_j;i,j}, F_{m_j;i,j}]=x_jy_j(H_j+m_jH_i)$
{\rm{(}}See \eqref{eqn:inpfml}{\rm{)}}.
Then
there exists a unique 
Lie superalgebra isomorphism
\begin{equation*}
L=L^{(A,\fkI_{\mathrm{odd}})^{(i)}}_i:\frg((A,\fkI_{\mathrm{odd}})^{(i)})\to\frg(A,\fkI_{\mathrm{odd}})
\end{equation*}
such that
\begin{equation*}
L(H_j):=
\left\{\begin{array}{ll}
-H_i & \quad (j=i), \\
H_j+m_jH_i & \quad (j\ne i),
\end{array}
\right.
\end{equation*}
\begin{equation*}
L(E_j):=
\left\{\begin{array}{ll}
(-1)^{p(\dal_i)}F_i & \quad (j=i), \\
x_j^{-1}E_{m_j;i,j} & \quad (j\ne i),
\end{array}
\right.
\end{equation*}
\begin{equation*}
L(F_j):=
\left\{\begin{array}{ll}
E_i & \quad (j=i), \\
y_j^{-1}F_{m_j;i,j} & \quad (j\ne i).
\end{array}
\right.
\end{equation*}
In particular, we have the following {\rm{(a)}} and {\rm{(b)}}.
\newline
{\rm{(a)}} Define
the $\bZ$-module isomorophism 
$f:\fca\to\fca$
by $f(\dal_j):=\dal^{(i)}_j$ $(j\in\fkI)$.
Then we have
$f({\dot{\rR}}((A,\fkI_{\mathrm{odd}})^{(i)}))
={\dot{\rR}}(A,\fkI_{\mathrm{odd}})$.
\newline
{\rm{(b)}} We have:
\begin{equation*}
{\dot{\rR}}(A,\fkI_{\mathrm{odd}})
=({\dot{\rR}}(A,\fkI_{\mathrm{odd}})\cap{\mathrm{Span}}_{\bZgeqo}\Pi^{(i)})\cup
({\dot{\rR}}(A,\fkI_{\mathrm{odd}})\cap{\mathrm{Span}}_{\bZleqo}\Pi^{(i)}).
\end{equation*}
\end{proposition}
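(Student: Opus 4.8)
The plan is to realize the target $\frg(A,\fkI_{\mathrm{odd}})$ as a contragredient Lie superalgebra for the reflected datum $(A,\fkI_{\mathrm{odd}})^{(i)}$ and to appeal to the uniqueness of such an algebra. Uniqueness of $L$ is immediate, since by $(LS1)$ (applied to the reflected datum) the source is generated by its Chevalley generators, so a homomorphism is pinned down by their images; the content is existence. I will use that $\frg(A,\fkI_{\mathrm{odd}})$ is $\hat\frg/\mathfrak{r}$, where $\hat\frg$ is the Lie superalgebra presented by generators $H_j,E_j,F_j$ and the relations $(LS2)$ (with the parity prescription of $(LS1)$), and $\mathfrak{r}$ is the largest $\fca$-graded ideal with $\mathfrak{r}\cap\hat\frg_0=\{0\}$; condition $(LS5)$ says precisely that $\mathfrak{r}=\{0\}$ for the realized algebra, and the sum of two graded ideals meeting $\hat\frg_0$ trivially again does so, so $\mathfrak{r}$ contains every such ideal. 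Note also that in the realized $\frg(A,\fkI_{\mathrm{odd}})$ the $H_j$ are linearly independent, since $\frg_0={\mathrm{Span}}\{H_j\}$ has dimension $|\fkI|$ by $(LS4)$.

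First I would introduce the elements $\tilde H_j,\tilde E_j,\tilde F_j\in\frg(A,\fkI_{\mathrm{odd}})$ given by the right-hand sides of the displayed formulas, and record their weights: $\tilde H_j\in\frg_0$, $\tilde E_j\in\frg_{\dal^{(i)}_j}$, $\tilde F_j\in\frg_{-\dal^{(i)}_j}$ (for $j=i$ note $\dal^{(i)}_i=-\dal_i$), with parities matching $\fkI_{\mathrm{odd}}^{(i)}$ by $(LS3)$ and the definition of $p$. Then I would verify the relations $(LS2)$ for the matrix $A^{(i)}$. The identity $[\tilde H_j,\tilde H_k]=0$ holds since both lie in ${\mathrm{Span}}\{H_a\}$ and the $H_a$ commute. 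Because $\tilde H_j$ acts on $\frg_\mu$ by the scalar $(\mu,\dal^{(i)}_j)^{(A,\fkI_{\mathrm{odd}})}$ (clear from $\tilde H_j=H_j+m_jH_i$ for $j\ne i$, $\tilde H_i=-H_i$, and $(LS2)$), one gets $[\tilde H_j,\tilde E_k]=a^{(i)}_{jk}\tilde E_k$ and $[\tilde H_j,\tilde F_k]=-a^{(i)}_{jk}\tilde F_k$ using symmetry of the form.

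The Cartan relation $[\tilde E_j,\tilde F_k]=\delta_{jk}\tilde H_j$ is the computational heart and I expect it to be the main obstacle. For $j=k=i$ it follows from $[E_i,F_i]=H_i$ and the sign $(-1)^{p(\dal_i)}$. For $j=k\ne i$ it follows from the last line of \eqref{eqn:inpfml}: as $A$ is symmetric, $m_ja_{ji}H_i+a_{ij}H_j=a_{ij}(H_j+m_jH_i)$, and the prescribed normalization $[E_{m_j;i,j},F_{m_j;i,j}]=x_jy_j(H_j+m_jH_i)$ yields $x_j^{-1}y_j^{-1}[E_{m_j;i,j},F_{m_j;i,j}]=H_j+m_jH_i=\tilde H_j$. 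For $\{j,k\}\ni i$ with $j\ne k$ the bracket is a multiple of $E_{m_j+1;i,j}$ or of $F_{m_k+1;i,k}$, which vanishes because $m_j=N^{(A,\fkI_{\mathrm{odd}})}_{ij}$ is maximal. Finally, for $j\ne k$ with $j,k\ne i$ the bracket lies in $\frg_{\dal^{(i)}_j-\dal^{(i)}_k}$; the $\Pi$-coordinates of $\dal^{(i)}_j-\dal^{(i)}_k$ carry a $+1$ in the $j$-slot and a $-1$ in the $k$-slot, so this weight lies in neither ${\mathrm{Span}}_{\bZgeqo}\Pi$ nor ${\mathrm{Span}}_{\bZleqo}\Pi$, hence is not a root by \eqref{eqn:dotRpm}, forcing the bracket to be $0$.

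Having checked $(LS1)$ and $(LS2)$ for $A^{(i)}$, I obtain a homomorphism $\hat L:\hat\frg((A,\fkI_{\mathrm{odd}})^{(i)})\to\frg(A,\fkI_{\mathrm{odd}})$ sending the standard generators to the $\tilde{}$-elements. Since the $\tilde H_j$ are a unimodular transform of the linearly independent $H_a$, they are independent, so $\hat L$ is injective on the degree-zero part; thus $\ker\hat L$ is a graded ideal meeting $\hat\frg_0$ trivially, whence $\ker\hat L\subseteq\mathfrak{r}$. Therefore $\hat L$ descends to $L:\frg((A,\fkI_{\mathrm{odd}})^{(i)})\to\frg(A,\fkI_{\mathrm{odd}})$, and the inclusion $\ker\hat L\subseteq\mathfrak{r}$ makes $L$ injective. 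For surjectivity I note $\tilde F_i=E_i$ and $\tilde E_i=(-1)^{p(\dal_i)}F_i$ are in the image, and that bracketing $\tilde E_j=x_j^{-1}E_{m_j;i,j}$ repeatedly with $F_i$ recovers $E_{m_j-1;i,j},\dots,E_{0;i,j}=E_j$, since by \eqref{eqn:inpfml} each step multiplies by $(-1)^{p(\dal_i)p(\dal_j)}b_m$ with $b_m\ne0$ for $1\le m\le m_j$ (as $m\dal_i+\dal_j$ is then a root); likewise every $F_j$ and $H_j=[E_j,F_j]$ lie in the image, which by $(LS1)$ is all of $\frg(A,\fkI_{\mathrm{odd}})$. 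Hence $L$ is an isomorphism. Part (a) is then immediate because $L$ carries $\frg((A,\fkI_{\mathrm{odd}})^{(i)})_\mu$ onto $\frg(A,\fkI_{\mathrm{odd}})_{f(\mu)}$, and part (b) is \eqref{eqn:dotRpm} for the reflected datum transported through (a). The descent and injectivity step is purely formal once the relations are verified; the only genuine work is the sign bookkeeping in the Cartan relation and the root-theoretic vanishing of its off-diagonal cases.
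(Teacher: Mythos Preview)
The paper states this proposition without proof, so there is nothing to compare against; your argument is the standard one for odd/simple reflections on contragredient Lie superalgebras and the verification of the relations $(LS2)$ for $A^{(i)}$ is carried out correctly.

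There is, however, a genuine gap in the descent step. From ``$\ker\hat L$ is a graded ideal meeting $\hat\frg^{(i)}_0$ trivially'' you correctly deduce $\ker\hat L\subseteq\mathfrak{r}^{(i)}$ by maximality of $\mathfrak{r}^{(i)}$, but this inclusion does \emph{not} make $\hat L$ descend through $\hat\frg^{(i)}\to\hat\frg^{(i)}/\mathfrak{r}^{(i)}$: for that you need the opposite inclusion $\mathfrak{r}^{(i)}\subseteq\ker\hat L$. What your inclusion actually buys is injectivity of $L$ \emph{once $L$ exists}. As written, the existence of $L$ is unjustified.

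The repair uses only ingredients you already have, but in a different order. First run your surjectivity argument at the level of $\hat L$ (it works verbatim, since it only uses that $E_i,F_i$ and, via repeated bracketing with $F_i$ and $E_i$, all $E_j,F_j$ lie in the image). Then $\hat L(\mathfrak{r}^{(i)})$ is a graded ideal of $\frg(A,\fkI_{\mathrm{odd}})$ (gradedness via $f$, ideal because $\hat L$ is onto) whose degree-$0$ part is $\hat L(\mathfrak{r}^{(i)}_0)=\hat L(0)=0$. Now one needs the fact that $\frg(A,\fkI_{\mathrm{odd}})$ has no nonzero $\fca$-graded ideal with trivial degree-$0$ part; this is a consequence of $(LS5)$ together with \eqref{eqn:dotRpm} (take a nonzero homogeneous element of minimal height in the positive part of such an ideal and show it is killed by every $F_k$), and you should state it rather than leaving it implicit. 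Granting this, $\hat L(\mathfrak{r}^{(i)})=0$, so $\mathfrak{r}^{(i)}\subseteq\ker\hat L$; combined with your inclusion this gives $\ker\hat L=\mathfrak{r}^{(i)}$, whence $L$ exists and is bijective.
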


Let $\rR(A,\fkI_{\mathrm{odd}}):={\dot{\rR}}(A,\fkI_{\mathrm{odd}})
\setminus\{2\beta|\beta\in{\dot{\rR}}(A,\fkI_{\mathrm{odd}})\}$.
Let $\rR^+(A,\fkI_{\mathrm{odd}}):=\rR(A,\fkI_{\mathrm{odd}})\cap{\mathrm{Span}}_{\bZgeqo}\Pi)$.
It is clear that
\begin{equation*}
\dim\frg(A,\fkI_{\mathrm{odd}})<\infty
\Leftrightarrow
|\rR(A,\fkI_{\mathrm{odd}})|<\infty
\Leftrightarrow
|{\dot{\rR}}(A,\fkI_{\mathrm{odd}})|<\infty.
\end{equation*}

We can directly see the following lemma.

\begin{lemma}\label{lemma:LSasschi}
Assume $\dim\frg(A,\fkI_{\mathrm{odd}})<\infty$.
\newline
{\rm{(1)}} $R(A,\fkI_{\mathrm{odd}})$ is an FGRS over $\fca$.
\newline {\rm{(2)}} $\rR(A,\fkI_{\mathrm{odd}})^{(i)}=\rR((A,\fkI_{\mathrm{odd}})^{(i)})$ for $i\in\fkI$.
\newline
{\rm{(3)}} Assume that $a_{ij}\in\bZ$ for all $i,j\in\fkI$,
where recall $A=[a_{ij}]_{i, j,\in\fkI}$.
Let $\bK$ be a field of characteristic zero.
Let $q\in\bKt$ be such that $q^n\ne 1$ for all $n\in\bN$.
Let $\chi:\fca\times\fca\to\bKt$ be the bichbaracter 
defined by $\chi(\lambda,\mu):=(-1)^{p(\lambda)p(\mu)}q^{(\lambda,\mu)^{(A,\fkI_{\mathrm{odd}})}}$.
Then $\rR(A,\fkI_{\mathrm{odd}})$ is the $\chi$-associated FGRS
with $\Pi\in\bB(\rR(A,\fkI_{\mathrm{odd}}))$. 
\end{lemma}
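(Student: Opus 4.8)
The plan is to obtain all three parts from the reflection isomorphisms of Proposition~\ref{theorem:oddref}, exploiting that finiteness of $\dim\frg(A,\fkI_{\mathrm{odd}})$ makes $\rR:=\rR(A,\fkI_{\mathrm{odd}})$ finite, so that $(A,\fkI_{\mathrm{odd}})$ and every datum $\ddotsim$-related to it is $i$-good for all $i\in\fkI$ (all the integers $N^{(A,\fkI_{\mathrm{odd}})}_{ij}$ are finite). For part~(1) I would check the axioms $(\rR1)$--$(\rR3)$ of Definition~\ref{definition:DefOfFGRS}. Axiom $(\rR1)$ is immediate: $\Pi$ is a $\bZ$-basis of $\fca$ lying in $\rR$ (no $\dal_i$ is a doubled root, since $\tfrac12\dal_i\notin\fca$), and \eqref{eqn:dotRpm} together with the fact that deleting $\{2\beta\mid\beta\in\dot\rR\}$ preserves the sign decomposition yields $\rR=\rR^+_\Pi\cup\rR^-_\Pi$; hence $\Pi\in\bB(\rR)$.

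For $(\rR2)$ I would first use the simple-root case: the displayed computation of $\dot\rR(A,\fkI_{\mathrm{odd}})\cap\bZ\dal_i$ in Section~\ref{section:LieS}, after removing doubles, gives exactly $\rR\cap\bZ\dal_i=\{\dal_i,-\dal_i\}$. To promote this to an arbitrary $\al\in\rR$ I would use that every root occurs as a simple root of some base obtained by reflection: if $\al=f(\dal_j)$ with $f$ the $\bZ$-isomorphism attached to a reflection sequence (so that $f(\dot\rR((A,\fkI_{\mathrm{odd}})^{(\cdots)}))=\dot\rR(A,\fkI_{\mathrm{odd}})$ by iterating Proposition~\ref{theorem:oddref}(a)), then $\rR\cap\bZ\al=f(\rR^{\prime}\cap\bZ\dal_j)=\{\pm\al\}$. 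For $(\rR3)$, given $B\in\bB(\rR)$ and $\al\in B$, I would realize $B$ as $\Pi'$ for some $(A',\fkI'_{\mathrm{odd}})\ddotsim(A,\fkI_{\mathrm{odd}})$, take $B^{(\al)}:=(\Pi')^{(i)}$ with $\al=\dal'_i$, and verify the single-flip property $\rR^+_{B^{(\al)}}\cap\rR^-_B=\{-\al\}$ directly from the explicit root strings of \eqref{eqn:inpfml}: an isotropic odd reflection sends $\dal'_i\mapsto-\dal'_i$ and keeps all other positive roots positive, while an even or non-isotropic reflection flips only $\pm\al$ among reduced roots.

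The main obstacle is the input used twice above: that the reflection groupoid is rich enough, i.e.\ every $B\in\bB(\rR)$ is a reflection base and every root is a reflected simple root. I would prove this by descent on $n(B):=|\rR^+_B\cap\rR^-_\Pi|$. If $n(B)=0$ then $\rR^+_B\subseteq\rR^+_\Pi$, and equality of cardinalities (each is half of $\rR$) forces $\rR^+_B=\rR^+_\Pi$, whence $B=\Pi$ because a base is recovered from its positive system as the set of indecomposable roots. If $n(B)>0$, some $\dal_i\in\Pi$ satisfies $\dal_i\in\rR^-_B$ (otherwise $\rR^+_\Pi\subseteq\rR^+_B$, again forcing $n(B)=0$); replacing $\Pi$ by $\Pi^{(i)}$ strictly decreases $n$, and finiteness of $\rR$ terminates the induction. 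The same descent, applied to lower the $\mathrm{ht}$ of a chosen positive root, gives reality. I expect the care here to lie in checking that a single reflection changes $n$ by exactly $1$ (equivalently, flips exactly one reduced positive root), which is precisely where the odd-reflection root-string computations must be done cleanly.

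Finally, part~(2) follows from Proposition~\ref{theorem:oddref}(a) and Lemma~\ref{lemma:iigood}: the reflected datum is again $i$-good with $((A,\fkI_{\mathrm{odd}})^{(i)})^{(i)}=(A,\fkI_{\mathrm{odd}})$, and deleting doubled roots commutes with the change of coordinates $f$, so the reflected root system equals the root system of the reflected datum. For part~(3) I would compute $\chi(\dal_i,\dal_i)=(-1)^{p(\dal_i)}q^{a_{ii}}$ and $\chi(\dal_i,\dal_j)\chi(\dal_j,\dal_i)=q^{2a_{ij}}$ (using that $A$ is symmetric), and then verify the defining identity $\fkJ_{0,N^{\rR,\Pi}_{\dal_i,\dal_j}}=\{m\in\bZgeqo\mid (m)_{\chi(\dal_i,\dal_i)}!\,(m;\chi(\dal_i,\dal_i),\chi(\dal_i,\dal_j)\chi(\dal_j,\dal_i))!\neq0\}$, where $N^{\rR,\Pi}_{\dal_i,\dal_j}=N^{(A,\fkI_{\mathrm{odd}})}_{ij}$, by matching the $q$-factor vanishing against the recursion \eqref{eqn:recAm} for the coefficients $b_m$ that govern the root string $\dal_j+m\dal_i$; since $q^n\neq1$ for all $n\in\bN$, the factor $(m)_{\chi(\dal_i,\dal_i)}!$ vanishes only through the sign in the odd non-isotropic case, and the case analysis (even/odd, $a_{ii}=0$ or $\neq0$) is routine. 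Because the bicharacter restricted to a reflected base reproduces the same formula with $(A',\fkI'_{\mathrm{odd}})$ in place of $(A,\fkI_{\mathrm{odd}})$, the identity at a general $B\in\bB(\rR)$ reduces by part~(2) to the computation at $\Pi$; together with $\Pi\in\bB(\rR)$ from part~(1) this shows that $\rR$ is $\chi$-associated with $\Pi\in\bB(\rR)$.
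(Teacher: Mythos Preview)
The paper does not actually supply a proof of this lemma; it merely asserts ``We can prove the following proposition in a direct way'' and moves on. Your proposal is a perfectly reasonable way to fill in the omitted argument, and the overall strategy---verify the axioms at $\Pi$ using the explicit root-string formulas \eqref{eqn:inpfml}--\eqref{eqn:recAm}, then transport to an arbitrary $B\in\bB(\rR)$ via the reflection isomorphisms of Proposition~\ref{theorem:oddref}---is the natural one.

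Two remarks on execution. First, your descent on $n(B)=|\rR^+_B\cap\rR^-_\Pi|$ is exactly the argument the paper itself uses later in the proof of Lemma~\ref{lemma:stepbs}, so you are in good company; the key step, that a single reflection flips exactly one reduced positive root, is indeed the content of Proposition~\ref{theorem:oddref}(b) once you observe that in the odd non-isotropic case the pair $\{\dal_i,2\dal_i\}\subset\dot\rR$ contributes only $\dal_i$ to $\rR$. Second, for part~(3) your reduction to the base $\Pi$ is correct because the bicharacter $\chi$ is defined intrinsically on $\fca$ via $p$ and $(\,,\,)^{(A,\fkI_{\mathrm{odd}})}$, both of which transform to the corresponding data for $(A^{(i)},\fkI^{(i)}_{\mathrm{odd}})$ under the change of base; the case analysis matching the vanishing of $(m)_x!(m;x,y)!$ against the vanishing of $\prod_{k\le m}b_k$ is routine but does require separately treating the three cases of Lemma~\ref{lemma:iigood}.
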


We define the Dynkin diagram $D(A,\fkI_{\mathrm{odd}})$ for the pair $(A,\fkI_{\mathrm{odd}})$
in the following way.
(In fact, we only define it appearing in {\rm{Fig.~1}}~-~{\rm{Fig.~3}} below.)
The vertex of $D(A,\fkI_{\mathrm{odd}})$ corresponding to $i\in\fkI$ is:

\setlength{\unitlength}{1mm}
\begin{picture}(40,25)(0,-5)

\put(2.5,12){${\tiny{i}}$}\put(3,10){\circle{2}}
\put(10,09){$\mbox{(if $i\in\fkI_{\mathrm{even}}$ and $a_{ii}\ne 0$),}$}

\put(62.5,12){${\tiny{i}}$}
\put(63,10){\circle{2}}
\put(62.25,9.3){\rotatebox{45}{\line(1,0){2}}}
\put(62.25,9.3){\rotatebox{45}{\line(0,1){2}}}
\put(70,9){$\mbox{(if $i\in\fkI_{\mathrm{odd}}$ and $a_{ii}=0$),}$}

\put(2.5,2){${\tiny{i}}$}\put(3,0){\circle*{2}}
\put(10,-1){$\mbox{(if $i\in\fkI_{\mathrm{odd}}$ and $a_{ii}\ne 0$),}$}

\put(62.5,02){${\tiny{i}}$}
\put(63,00){\circle{2}}
\put(62,0){{\line(1,0){2}}}
\put(70,-1){$\mbox{(if $i\in\fkI_{\mathrm{even}}$ and $a_{ii}=0$)}$.}

\end{picture}

\noindent
Regarding the vertices of $D(A,\fkI_{\mathrm{odd}})$ corresponding to $i,j\in\fkI$
with $i\ne j$, 
and the edge between them,
we mean (recall \eqref{eqn:symCond}):

\setlength{\unitlength}{1mm}
\begin{picture}(40,80)(0,0)

\put(02.5,72){${\tiny{i}}$}
\put(02.25,69.3){\rotatebox{45}{\line(1,0){2}}}
\put(02.25,69.3){\rotatebox{45}{\line(0,1){2}}}

\put(12.5,72){${\tiny{j}}$}
\put(12.25,69.3){\rotatebox{45}{\line(1,0){2}}}
\put(12.25,69.3){\rotatebox{45}{\line(0,1){2}}}\put(12.00,69.00){\line(1,0){2}}
\put(20,69){$\mbox{(if $a_{ij}=a_{ji}=0$, where}$}

\put(62.25,69.3){\rotatebox{45}{\line(1,0){2}}}
\put(62.25,69.3){\rotatebox{45}{\line(0,1){2}}}

\put(66,69){$\mbox{,}$}

\put(72.25,69.3){\rotatebox{45}{\line(1,0){2}}}
\put(72.25,69.3){\rotatebox{45}{\line(0,1){2}}}\put(72.00,69.00){\line(1,0){2}}
\put(76,69){$\mbox{$\in\{$}$}
\put(83,70){\circle{2}}
\put(85,69){$\mbox{,}$}
\put(88,70){\circle*{2}}
\put(90,69){$\mbox{,}$}
\put(93,70){\circle{2}}
\put(92.25,69.3){\rotatebox{45}{\line(1,0){2}}}
\put(92.25,69.3){\rotatebox{45}{\line(0,1){2}}}

\put(95,69){$\mbox{$\}),$}$}


\put(02.5,62){${\tiny{i}}$}
\put(3,60){\circle{2}}

\put(12.5,63){${\tiny{j}}$}
\put(13,60){\circle{2}}
\put(12.25,59.3){\rotatebox{45}{\line(1,0){2}}}
\put(12.25,59.3){\rotatebox{45}{\line(0,1){2}}}

\put(4,60){\line(1,0){8}}

\put(20,59){$\mbox{(if ${\frac {a_{ii}} 2}=a_{ij}=a_{ji}\ne 0$ and $a_{jj}=0$),}$}


\put(02.5,52){${\tiny{i}}$}
\put(3,50){\circle{2}}

\put(12.5,53){${\tiny{j}}$}
\put(13,50){\circle{2}}
\put(12.25,49.3){\rotatebox{45}{\line(1,0){2}}}
\put(12.25,49.3){\rotatebox{45}{\line(0,1){2}}}

\put(7,51){${\tiny{x}}$}
\put(4,50){\line(1,0){8}}

\put(20,49){$\mbox{(if ${\frac {a_{ii}} 2}=a_{ij}=a_{ji}=x\ne 0$ and $a_{jj}=0$),}$}


\put(02.5,42){${\tiny{i}}$}
\put(3,40){\circle{2}}

\put(12.5,43){${\tiny{j}}$}
\put(13,40){\circle{2}}

\put(4,40){\line(1,0){8}}

\put(20,39){$\mbox{(if $-2a_{ij}=-2a_{ji}=a_{ii}=a_{jj}\ne 0$),}$}


\put(02.5,32){${\tiny{i}}$}
\put(3,30){\circle{2}}\put(3,30){\circle*{1}}

\put(12.5,33){${\tiny{j}}$}
\put(13,30){\circle{2}}

\put(5,30.75){\line(1,0){7}}
\put(5,29.25){\line(1,0){7}}

\put(4,30){\rotatebox{45}{\line(1,0){3}}}
\put(4,28){\rotatebox{45}{\line(0,1){3}}}

\put(20,29){$\mbox{(if $-2a_{ij}=-2a_{ji}=2a_{ii}=a_{jj}\ne 0$, where}$}

\put(93,30){\circle{2}}\put(93,30){\circle*{1}}
\put(96,29){$\mbox{$\in\{$}$}
\put(103,30){\circle{2}}
\put(105,29){$\mbox{,}$}
\put(108,30){\circle*{2}}
\put(110,29){$\mbox{$\}),$}$}


\put(02.5,22){${\tiny{i}}$}
\put(3,20){\circle{2}}

\put(12.5,23){${\tiny{j}}$}
\put(13,20){\circle{2}}
\put(12.25,19.3){\rotatebox{45}{\line(1,0){2}}}
\put(12.25,19.3){\rotatebox{45}{\line(0,1){2}}}

\put(5,20.75){\line(1,0){7}}
\put(5,19.25){\line(1,0){7}}

\put(7,22){${\tiny{x}}$}

\put(4,20){\rotatebox{45}{\line(1,0){3}}}
\put(4,18){\rotatebox{45}{\line(0,1){3}}}

\put(20,19){$\mbox{(if $x=a_{ij}=a_{ji}=-a_{ii}\ne 0$ and $a_{jj}=0$),}$}


\put(02.5,12){${\tiny{i}}$}
\put(3,10){\circle{2}}

\put(4,10){\rotatebox{45}{\line(1,0){3}}}
\put(4,8){\rotatebox{45}{\line(0,1){3}}}

\put(5,11){\line(1,0){7.3}}
\put(4,10){\line(1,0){8}}
\put(5,9){\line(1,0){7.3}}

\put(12.5,13){${\tiny{j}}$}
\put(13,10){\circle{2}}

\put(20,9){$\mbox{(if $a_{jj}=3a_{ii}=-2a_{ij}=-2a_{ji}\ne 0$).}$}

\end{picture}

\begin{picture}(60,20)(3,-5)	
\put(0,0){$\wealsolet$}
\end{picture}

The following theorem is well-known.

\begin{theorem} {\rm{(\cite{Kac77}, \cite[Theorem~5.3.2]{VdL86})}} \label{theorem:superDynkin}
{\rm{(}}Recall \eqref{eqn:symCond}.{\rm{)}}
The following conditions {\rm{(i)}}-{\rm{(ii)}} are equivalent.
\newline
{\rm{(i)}} 
$\dim\frg(A,\fkI_{\mathrm{odd}})<\infty$,
and $\rR(A,\fkI_{\mathrm{odd}})$ is irreducible.
\newline
{\rm{(ii)}}
There exists $(A^\prime, \fkI^\prime_{\mathrm{odd}})$
such that $(A,\fkI_{\mathrm{odd}})\ddotsim(A^\prime, \fkI^\prime_{\mathrm{odd}})$
and $D(A^\prime, \fkI^\prime_{\mathrm{odd}})$ is one
of the Dynkin diagrams in {\rm{Fig.~1}}~-~{\rm{Fig.~4}} below.
{\rm{(}}In {\rm{Fig.~1}}~-~{\rm{Fig.~4}}, we omit to attach $i\in\fkI$ to each vertex.{\rm{)}}
\end{theorem}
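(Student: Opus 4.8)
The plan is to prove the two implications separately: the direction (ii)$\Rightarrow$(i) by invariance under $\ddotsim$ plus a finite verification on the list, and the classification direction (i)$\Rightarrow$(ii) by reducing to the rank-two data prepared above.

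For (ii)$\Rightarrow$(i), I would first observe that the relation $\ddotsim$ preserves both assertions of (i). Each elementary step $(A,\fkI_{\mathrm{odd}})\to(A,\fkI_{\mathrm{odd}})^{(i)}$ is defined only when $(A,\fkI_{\mathrm{odd}})$ is $i$-good, and by Proposition~\ref{theorem:oddref}(a) the $\bZ$-module isomorphism $f$ carries ${\dot{\rR}}((A,\fkI_{\mathrm{odd}})^{(i)})$ onto ${\dot{\rR}}(A,\fkI_{\mathrm{odd}})$. Hence $|{\dot{\rR}}|$, and therefore $\dim\frg$, is unchanged, and since an FGRS isomorphism preserves the splitting condition defining reducibility, irreducibility of $\rR$ is preserved as well. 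Consequently it suffices to verify, for each single diagram in Fig.~1 and Fig.~2, that the corresponding $\frg$ is finite-dimensional with irreducible (connected) root system. This is a finite case check: each such diagram is that of some $A_n$--$G_2$ or of a basic classical Lie superalgebra $A(m,n)$, $B(m,n)$, $C(n)$, $D(m,n)$, $F(4)$, $G(3)$, $D(2,1;x)$, whose root systems are explicit from the matrix realizations ($\mathfrak{sl}$, $\mathfrak{osp}$, and the exceptional cases), so both properties can be read off directly.

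For (i)$\Rightarrow$(ii), the first stage is the local, rank-two analysis. Irreducibility of $\rR(A,\fkI_{\mathrm{odd}})$ is equivalent to connectedness of $D(A,\fkI_{\mathrm{odd}})$, while finiteness forces, for every pair $i\ne j$, the restriction to $\bZ\dal_i\oplus\bZ\dal_j$ to be a finite rank-two generalized root system. Using the recursion \eqref{eqn:recAm} together with the criterion that $m\dal_i+\dal_j\in{\dot{\rR}}^+$ iff $\prod_{k=1}^m b_k\ne 0$, I would compute $N^{(A,\fkI_{\mathrm{odd}})}_{ij}$ purely in terms of $a_{ii},a_{jj},a_{ij}=a_{ji}$ and the parities $p(\dal_i),p(\dal_j)$, and determine exactly which data yield a finite rank-two system. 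Up to the scaling freedom $\frg(xA,\fkI_{\mathrm{odd}})\cong\frg(A,\fkI_{\mathrm{odd}})$ ($x\in\bCt$), these admissible configurations are precisely the edge types enumerated in the edge figure preceding the theorem. The second stage assembles these admissible edges into a connected diagram: since $\frg$ is finite-dimensional, its even part is a finite-dimensional Lie algebra, so the subdiagram on the even non-isotropic nodes is a disjoint union of classical finite-type Dynkin diagrams, and the odd nodes can attach only in the constrained ways allowed by the first stage; a branching/degree count then limits the global shape to a finite list. Finally, by Lemma~\ref{lemma:iigood} the only elementary $\ddotsim$-moves that actually change the diagram are the odd reflections at isotropic (gray) nodes, and I would use these to transport any admissible connected diagram to a distinguished representative appearing in Fig.~1 or Fig.~2.

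The main obstacle is the second stage: converting the local rank-two constraints into the complete global list is exactly the combinatorial core of Kac's classification, and the delicate point is the bookkeeping of the isotropic odd nodes, whose reflections alter the diagram, so the enumeration must be carried out up to $\ddotsim$ while simultaneously guaranteeing completeness and that the distinguished representatives match the stated figures. For this step I would ultimately appeal to the classification of \cite{Kac77} and \cite{VdL86}, of which the argument above is a reorganization in the present language.
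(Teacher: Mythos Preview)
The paper does not supply its own proof of this theorem: it is stated with attribution to \cite{Kac77} and \cite{VdL86} and immediately followed by the figures and subsequent lemmas, with no argument given. Your proposal is therefore strictly more than what the paper does; in particular your treatment of (ii)$\Rightarrow$(i) via invariance under $\ddotsim$ (using Proposition~\ref{theorem:oddref}) together with the explicit realizations is a genuine argument, whereas the paper simply imports the result. For (i)$\Rightarrow$(ii) you correctly identify that the global assembly step is the substantive content of the Kac and Van~de~Leur classifications and that one must ultimately appeal to those references, which is exactly what the paper does wholesale. So your approach is compatible with, and more explicit than, the paper's, which offers no proof at all.
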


\begin{picture}(40,90)(0,-5)	

\put(0,60){$\DynkinAn$} \put(70,60){$\DynkinEn$}
	
\put(0,50){$\DynkinBn$} \put(70,50){$\DynkinFfour$}

\put(0,40){$\DynkinCn$} \put(70,40){$\DynkinGtwo$}

\put(0,25){$\DynkinDn$}

\put(00,25){{\rm{Fig.~2. Dynkin diagrams of finite dimensional complex simple Lie algebras}}}

\end{picture}

\begin{picture}(40,30)(0,-5)	
\put(20,25){$\Dynkinsloneone$}

\put(00,25){{\rm{Fig.~3. Dynkin diagrams of Heisenberg Lie algebra and Lie superalgebra $sl(1|1)$}}}

\end{picture}

\begin{picture}(40,150)(0,-5)

\put(0,135){$\DynkinAmzero$}
	
\put(0,125){$\DynkinAmn$}

\put(0,115){$\DynkinBzeron$}

\put(0,105){$\DynkinBonem$}

\put(0,95){$\DynkinBmone$}

\put(0,85){$\DynkinBmn$}

\put(0,75){$\DynkinSCn$}

\put(0,60){$\DynkinDtwon$}

\put(0,45){$\DynkinDmone$}

\put(0,30){$\DynkinDmn$}

\put(0,20){$\DynkinSFfour$}

\put(0,10){$\DynkinSGthree$}

\put(0,0){$\DynkinDtwooneal$}

\put(-6,0){
$\begin{array}{l}
{\mbox{Fig.~4. Dynkin diagrams of finite dimensional complex Lie superalgebras}} \\
\quad\quad\quad {\mbox{of type $A$-$G$}}
\end{array}$
}

\end{picture}

\begin{remark}
Let $D(A,\fkI_{\mathrm{odd}})$ be $A(m,m)$ with $m\geq 1$.
Then $\frg(A,\fkI_{\mathrm{odd}})$ is not simple since 
it has a one dimensional center $Z$.
And $\frg(A,\fkI_{\mathrm{odd}})/Z$ is simple.
\end{remark}

We directly see the following lemma.

\begin{lemma} As a subset of $\fca$, 
$\rR^+(A,\fkI_{\mathrm{odd}})$ with
$D(A,\fkI_{\mathrm{odd}})$ being $A(m,n)$ {\rm{(}}resp. $B(m,n)${\rm{)}}
equals the one being $A_{m+n}$ {\rm{(}}reap. $B_{m+n}${\rm{)}}.
\end{lemma}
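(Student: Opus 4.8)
The plan is to identify both positive systems explicitly inside $\fca$ and to check that they coincide. The essential point is that the super datum $(A,\fkI_{\mathrm{odd}})$ and its ordinary counterpart are carried by the \emph{same} chain of simple roots $\dal_1,\ldots,\dal_N\in\Pi$ (where $N$ is the common rank) with the \emph{same} adjacency (two nodes are joined exactly when the corresponding entry of $A$ is nonzero); the only differences are the parity of one isotropic node and some sign and normalization changes in the entries of $A$. Since $\rR^+(A,\fkI_{\mathrm{odd}})=(\dot{\rR}(A,\fkI_{\mathrm{odd}})\setminus\{2\beta\mid\beta\in\dot{\rR}(A,\fkI_{\mathrm{odd}})\})\cap\rmSpan_{\bZgeqo}\Pi$, it suffices to show that the $\bZgeqo$-combinations of the $\dal_i$ lying in $\dot{\rR}^+$ are the same for the two data.

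First I would settle type $A$. Realizing $\frg(A,\fkI_{\mathrm{odd}})$ as the special linear Lie superalgebra on combined weight coordinates $u_1,\ldots,u_{m+n+2}$ with $\dal_k=u_k-u_{k+1}$, one reads off $\dot{\rR}(A,\fkI_{\mathrm{odd}})=\{u_i-u_j\mid i\ne j\}$; every odd root here is isotropic, so there are no doublings and $\rR=\dot{\rR}$. Hence $\rR^+(A,\fkI_{\mathrm{odd}})=\{u_i-u_j\mid i<j\}=\{\dal_i+\dal_{i+1}+\cdots+\dal_j\mid 1\le i\le j\le N\}$, which is precisely the positive system of the ordinary type $A$ root system carried by the same chain $\Pi$. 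Staying inside the framework of the paper one may instead argue directly that every bond is single: for adjacent $i,j$ one gets $N^{(A,\fkI_{\mathrm{odd}})}_{ij}=1$ from the lemma determining when $\dal_i+\dal_j$ and $m\dal_i+\dal_j$ lie in $\dot{\rR}^+$ (when the added root is even) and from Lemma~\ref{lemma:iigood}~(3) (when it is the odd isotropic root), while $N^{(A,\fkI_{\mathrm{odd}})}_{ij}=0$ for non-adjacent $i,j$; a chain all of whose bonds are single has exactly the interval sums as its positive system.

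For type $B$ I would run the same chain argument and, in addition, account for the short end carried by the arrow. In the combined coordinates $u_1,\ldots,u_{m+n}$ the realization gives $\dot{\rR}(A,\fkI_{\mathrm{odd}})=\{\pm u_i\pm u_j\,(i\ne j),\ \pm u_k,\ \pm 2u_k\}$, where the doublings $\pm2u_k$ occur exactly for the non-isotropic odd roots $\pm u_k$. The passage $\rR=\dot{\rR}\setminus\{2\beta\}$ deletes precisely these doublings and leaves $\{\pm u_i\pm u_j\,(i\ne j),\ \pm u_k\}$, which is the root system of $B_{m+n}$ in the same coordinates; intersecting with $\rmSpan_{\bZgeqo}\Pi$ then yields $\rR^+(B_{m+n})$.

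The step I expect to be the crux is verifying that the sign and normalization changes separating the super Cartan matrix from the ordinary one neither create nor destroy a root -- concretely, that every chain bond stays single and that, for type $B$, the operation $\rR=\dot{\rR}\setminus\{2\beta\}$ removes exactly the doublings $\pm2u_k$ while preserving the genuine short roots $\pm u_k$. This bookkeeping is exactly what the bracket identities \eqref{eqn:inpfml} and the parity statements of Lemma~\ref{lemma:iigood} are built to control; once the adjacency and the integers $N^{(A,\fkI_{\mathrm{odd}})}_{ij}$ are pinned down, equality of the two positive systems as subsets of $\fca$ follows by a direct enumeration of the interval sums and their type-$B$ completions.
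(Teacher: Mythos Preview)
Your approach is correct and is precisely the direct verification the paper has in mind; the paper states this lemma without proof, and your realization via $\epsilon$-$\delta$ coordinates together with the check that passing from $\dot{\rR}$ to $\rR$ deletes exactly the long roots $\pm 2\delta_j$ in type $B$ is the standard way to see it.
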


\begin{definition} \label{definition:defPhiZ}
Let $r\in\fkJ_{3,\infty}$, and let $Z$ be a subset of $\fkJ_{1,r-1}$
(i.e., $Z\in{\mathfrak{P}}(\fkJ_{1,r-1})$).
Assume $\fkI=\fkJ_{1,r}$.
Define the subset $\Phi_{r,Z}$ (resp. $\Psi_{r,Z}$, resp. $\Psi^\prime_{r,Z}$) of $\fca$ in the way that
defining
$\epsilon_i\in\fca\otimes_\bZ\bQ$ $(i\in\fkI)$ 
by
$\dal_i=\epsilon_i-\epsilon_{i+1}$ $(i\in\fkJ_{1,r-2})$,
$\dal_{r-1}=\epsilon_{r-1}-\epsilon_r$
(resp. $=\epsilon_{r-1}-\epsilon_r$, resp. $=2\epsilon_r$),
$\dal_r=\epsilon_{r-1}+\epsilon_r$
(resp. $=2\epsilon_r$, resp. $=\epsilon_{r-1}-\epsilon_r$),
and letting $X^\prime:=\{\epsilon_i-\epsilon_j, \epsilon_i+\epsilon_j|i,j\in\fkI,i<j\}$ and $X^{\prime\prime}:=\{2\epsilon_j|j\in Z\}$
(resp $=\{2\epsilon_j|j\in Z\cup\{r\}\}$,
resp $=\{2\epsilon_j|j\in Z\cup\{r\}\}$),
it is defined by 
$X^\prime\cup X^{\prime\prime}$.
The sets $\Phi_{r,Z}$, $\Psi_{r,Z}$ and $\Psi^\prime_{r,Z}$ are virtually the same as those of \cite[Definitions~3.9 and 3.17]{CH15}.
\end{definition}

We can directly see the following lemma.

\begin{lemma}
Keep the notation of Definition~{\rm{\ref{definition:defPhiZ}}}.
Then we have{\rm{:}}
\newline
{\rm{(1)}} $\Phi_{r,\emptyset}$ {\rm{(}}resp.  $\Psi_{r,\fkJ_{1,r-1}}$, resp. $\Psi_{r,\fkJ_{2,r-1}}$, 
resp. $\Phi_{r,\fkJ_{1,k}}$ with $k\in\fkJ_{1,r-2}$, 
resp. $\Phi_{3,\fkJ_{1,2}}${\rm{)}}
equals $\rR^+(A,\fkI_{\mathrm{odd}})$ with
$D(A,\fkI_{\mathrm{odd}})$ being $D_r$ {\rm{(}}resp. $C_r$, resp. $C(r)$, resp. $D(r-k,k)$, resp. $D(2,1;x)${\rm{)}} of
{\rm{Fig.~1}}~-~{\rm{Fig.~3}},
  where we let $D_3$ mean $A_3$ if $r=3$. 
\newline
{\rm{(2)}} Let $Z_1, Z_2\in{\mathfrak{P}}(\fkJ_{1,r-1})$.
Then{\rm{:}} 
\begin{equation*}
|Z_1|=|Z_2|\,\Leftrightarrow\,
\Phi_{r,Z_1} \ddotsim \Phi_{r,Z_2}\,\Leftrightarrow\,
\Psi_{r,Z_1} \ddotsim \Psi_{r,Z_2}\,\Leftrightarrow\,
\Psi_{r,Z_1} \ddotsim \Psi^\prime_{r,Z_2},
\end{equation*}
where we write $\rR^+(A,\fkI_{\mathrm{odd}}) \ddotsim\rR^+(A^\prime,\fkI^\prime_{\mathrm{odd}})$
if $(A,\fkI_{\mathrm{odd}}) \ddotsim (A^\prime,\fkI^\prime_{\mathrm{odd}})$.
Moreover $\Phi_{r,Z_1}\ddotsim \Psi_{r,Z_2}$
if and only if $|Z_1|=|Z_2|+1$.
\end{lemma}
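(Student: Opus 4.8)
The plan is to prove (1) and (2) separately: (1) by matching simple systems, (2) by combining an isometry invariant (for necessity) with explicit odd reflections (for sufficiency). For \emph{Part (1)} I would, for each type in Fig.~1 and Fig.~2, write down the symmetric matrix $A$ and the subset $\fkI_{\mathrm{odd}}$ encoded by its Dynkin diagram and then read off the simple roots through the substitution of Definition~\ref{definition:defPhiZ}. The content is that this substitution is exactly the classical realization of each root system: $\dal_i=\epsilon_i-\epsilon_{i+1}$ together with $\dal_{r-1}=\epsilon_{r-1}-\epsilon_r$, $\dal_r=\epsilon_{r-1}+\epsilon_r$ is the standard base of $D_r$, so $\Phi_{r,\emptyset}=\{\epsilon_i\pm\epsilon_j\}={\dot{\rR}}^+$ for $D_r$ (with $D_3=A_3$); replacing $\dal_r$ by $2\epsilon_r$ and adjoining all $2\epsilon_j$ gives $C_r$; omitting $2\epsilon_1$ (i.e. $1\notin Z$) makes the first node odd isotropic and yields $C(r)$; declaring the initial block $\{1,\dots,k\}$ symplectic yields $D(r-k,k)$; and the rank-three set $\Phi_{3,\fkJ_{1,2}}$ is matched by hand against the diagram $D(2,1;x)$ of Fig.~3. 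For the $A$-type pieces I would invoke the preceding lemma that $\rR^+$ for $A(m,n)$ (resp. $B(m,n)$) coincides with that for $A_{m+n}$ (resp. $B_{m+n}$), so that no $A$-root computation is redone.

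\emph{Part (2), necessity.} The engine is that $\ddotsim$ preserves the ambient root system up to isometry. Each elementary move $(A,\fkI_{\mathrm{odd}})\mapsto(A,\fkI_{\mathrm{odd}})^{(i)}$ is realized on $\fca$ by the $\bZ$-module isomorphism $f(\dal_j)=\dal^{(i)}_j$, and since $a^{(i)}_{xy}=(\dal^{(i)}_x,\dal^{(i)}_y)^{(A,\fkI_{\mathrm{odd}})}$ this $f$ is an isometry with $f({\dot{\rR}}((A,\fkI_{\mathrm{odd}})^{(i)}))={\dot{\rR}}(A,\fkI_{\mathrm{odd}})$ by Proposition~\ref{theorem:oddref}(a). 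Hence the multiset $\{(\gamma,\gamma)^{(A,\fkI_{\mathrm{odd}})}:\gamma\in{\dot{\rR}}\}$ of squared lengths is constant along any $\ddotsim$-chain. A direct count from Definition~\ref{definition:defPhiZ} shows this spectrum determines the symplectic rank, namely the number of roots of the form $2\epsilon_j$ (equivalently, of the distinguished symplectic squared length), which equals $|Z|$ for $\Phi_{r,Z}$ and $|Z|+1$ for $\Psi_{r,Z}$ and $\Psi^\prime_{r,Z}$; the number of isotropic (odd) roots separates the purely even cases $D_r$, $C_r$ from the orthosymplectic ones. This gives all forward implications at once: $\Phi_{r,Z_1}\ddotsim\Phi_{r,Z_2}$ (and the $\Psi$-analogue) forces equal symplectic rank, hence $|Z_1|=|Z_2|$, while $\Phi_{r,Z_1}\ddotsim\Psi_{r,Z_2}$ forces $|Z_1|=|Z_2|+1$.

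\emph{Part (2), sufficiency.} Conversely I would connect any two data of equal invariant by explicit odd reflections. Using Lemma~\ref{lemma:iigood}(3), reflecting at the odd isotropic node joining an orthogonal direction to a symplectic one transposes those two $\epsilon$-directions; on the defining data this sends $\Phi_{r,Z}$ to $\Phi_{r,Z'}$ with $Z'$ obtained from $Z$ by swapping a neighbouring symplectic/orthogonal pair (and likewise for $\Psi$), leaving $|Z|$ unchanged. Since such adjacent transpositions generate all rearrangements of the orthogonal/symplectic pattern, any $Z_1,Z_2$ with $|Z_1|=|Z_2|$ become connected, giving $\Phi_{r,Z_1}\ddotsim\Phi_{r,Z_2}$ and $\Psi_{r,Z_1}\ddotsim\Psi_{r,Z_2}$; the relation $\Psi^\prime_{r,Z_2}\ddotsim\Psi_{r,Z_2}$ is the obvious symmetry interchanging $\dal_{r-1}$ and $\dal_r$, so the third equivalence reduces to the second; and the single reflection turning the $D$-type end into a $C$-type end realizes $\Phi_{r,Z_1}\ddotsim\Psi_{r,Z_2}$ when $|Z_1|=|Z_2|+1$ (the root-theoretic shadow of $D(1,n)\cong C(n+1)$).

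The step I expect to be the main obstacle is this single-step reflection bookkeeping in the sufficiency part: for each elementary move one must record the new matrix $A^{(i)}$, the new odd set $\fkI_{\mathrm{odd}}^{(i)}$, and the induced change of $\epsilon$-coordinates, and verify that the net effect is precisely an adjacent transposition with $|Z|$ fixed. Once this single-step effect is pinned down, transitivity of adjacent transpositions closes the sufficiency and the squared-length spectrum closes the necessity. A more economical alternative, with Part (1) in hand, is to quote Theorem~\ref{theorem:superDynkin} together with the classification of \cite{CH15}: every $\Phi_{r,Z}$ and $\Psi_{r,Z}$ is an irreducible finite generalized root system whose $\ddotsim$-class is its Lie-superalgebra type, and within the orthosymplectic series this type is pinned down by $r$ and the symplectic rank; but the explicit-reflection route keeps the argument self-contained.
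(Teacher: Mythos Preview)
The paper offers no argument beyond the sentence ``We can directly see the following,'' so your proposal is not competing with a paper proof but supplying one. Your strategy is the standard and correct one: Part~(1) by explicit matching of simple systems against the diagrams of Fig.~2 and Fig.~3; Part~(2) necessity via the isometry of Proposition~\ref{theorem:oddref}(a), which makes the squared-length spectrum (hence the symplectic rank) a $\ddotsim$-invariant; and Part~(2) sufficiency via the well-known fact that an odd reflection at the isotropic node joining an orthogonal $\epsilon$-direction to a symplectic one acts as an adjacent transposition on the pattern, so that adjacent transpositions generate all rearrangements with $|Z|$ fixed.

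One step needs more care. You write that $\Psi'_{r,Z_2}\ddotsim\Psi_{r,Z_2}$ is ``the obvious symmetry interchanging $\dal_{r-1}$ and $\dal_r$,'' but $\ddotsim$ is generated only by the moves $(A,\fkI_{\mathrm{odd}})\mapsto(A,\fkI_{\mathrm{odd}})^{(i)}$ of Definition~3.3(5), not by reindexing of $\fkI$; a permutation of labels changes the matrix $A$ without being an admissible step. In particular, when every node is even with $a_{ii}\neq 0$, Lemma~\ref{lemma:iigood}(2) makes every $(i)$-move the identity, so $\ddotsim$ degenerates to literal equality of pairs $(A,\fkI_{\mathrm{odd}})$, yet the data for $\Psi_{r,Z}$ and $\Psi'_{r,Z}$ differ by the swap of rows and columns $r-1,r$. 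To close this you must either exhibit an explicit chain of $(i)$-moves that realizes the swap (available whenever an odd isotropic node sits near the $C$-type end, which covers all $Z\ne\fkJ_{1,r-1}$), or handle the purely even boundary case by a separate argument; the reindexing shortcut is not itself a $\ddotsim$-move. The same caution applies to your final sentence on $\Phi_{r,Z_1}\ddotsim\Psi_{r,Z_2}$: the passage from a $D$-type end to a $C$-type end must be realized by an actual reflection at an odd node, not by an appeal to an abstract isomorphism.
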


\begin{definition}{\rm{(\cite{CH15})}}  \label{definition:DefSporadic}
Let $\fca$ be as above.
Let $n:={\mathrm{rank}}_\bZ\fca$.
Let $\rR$ be an irreducible 
FGRS over $\fca$. We call $\rR$ {\it{sporadic}}
if $\rR$ is not isomorphic to 
any irreducible FGRS
$\rR(A,\fkI_{\mathrm{odd}})$
over $\fca$ with $D(A,\fkI_{\mathrm{odd}})$
being $A_n$, 
$B_n$, $C_n$, $D_n$, $C(n)$ or $D(m,n-m)$. 
\end{definition}

As we write in the following theorem,
the classification of all FGRSs
has been achieved by \cite{CH15} and \cite{CH09b}.

\begin{theorem}{\rm{(\cite{CH15})}} \label{theorem:classGHlist}
There exists no 
sporadic irreducible FGRS
over $\fca$ if ${\mathrm{rank}}_\bZ\fca\geq 9$.
For $n\in\fkJ_{3,8}$,
the table {\rm{\cite[B.2.~Rank n]{CH15}}} 
is the classification list 
of all non-isomorphic sporadic irreducible FGRSs
over $\fca$ with ${\mathrm{rank}}_\bZ\fca=n$. {\rm{(}}See {\rm{\cite{CH09b}}}
for rank two cases.{\rm{)}}
\end{theorem}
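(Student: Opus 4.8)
The plan is to reduce the statement to the classification of crystallographic (simplicial) hyperplane arrangements via the equivalence of Theorem~\ref{theorem:EqvHpyAndFGRS}, so that at each fixed rank the problem becomes a finite combinatorial enumeration, while the passage between ranks is handled by a restriction/induction argument. The conceptual skeleton is a parabolic-restriction principle: for an FGRS $\rR$ over $\fca$, a base $B\in\bBR$, and a subset $\fkI^\prime$ with $\emptyset\ne\fkI^\prime\subsetneq\fkI$, the set $\rR\cap(\oplus_{i\in\fkI^\prime}\bZ\al^B_i)$ is again an FGRS, now of rank $|\fkI^\prime|$, over the corresponding sublattice. This is the non-orthogonal counterpart of Lemma~\ref{lemma:redHam}, and it follows from $(\rR1)$--$(\rR3)$ by restricting the mutated bases $B^{(\al)}$ and their orderings from Proposition~\ref{proposition:ordering}. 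Consequently every rank-three ``local piece'' of a higher-rank $\rR$ must already occur in the rank-three list.

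For the base cases I would enumerate directly. In rank two the entries $N^{\rR,B}_{ij}$ obey a $\bZ$-linear recursion of the shape recorded for the $b_m$ preceding Proposition~\ref{theorem:oddref}, and finiteness forces the cyclic sequence of the $m^B_{ij}$ produced by alternating mutations to close up; this recovers the rank-two list of \cite{CH09b}. In rank three one first bounds the number of roots---equivalently the number of chambers of the associated arrangement---using the positivity statements of Lemma~\ref{lemma:bBprime}~(3),(4), and then runs an exhaustive, provably terminating search over Cartan schemes, testing $(\rR1)$--$(\rR3)$ together with irreducibility in the sense of Definition~\ref{definition:CayleyofFgrt}~(3); this reproduces the table of \cite{CH12}. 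Ranks four through eight are treated the same way, the rank-three restrictions acting as strong local constraints that prune the search.

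For the principal assertion, that no sporadic system survives in rank $\geq 9$, I would induct on ${\mathrm{rank}}_\bZ\fca$. Assuming every irreducible FGRS of rank $n-1\geq 8$ is standard, i.e.\ isomorphic to some $\rR(A,\fkI_{\mathrm{odd}})$ of type $A$--$D$, $C(n)$, or $D(m,n-m)$ built in Section~\ref{section:LieS}, each corank-one parabolic restriction of a rank-$n$ system is standard. One must then show that a family of mutually compatible standard corank-one subsystems admits only the standard global extension. The leverage is that for large rank the admissible rank-three restrictions are confined to a short explicit list, which so rigidifies the entries $N^{\rR,B}_{ij}$ that, propagating along the mutations $B\mapsto B^{(i)}$ (the connectivity underlying $\Gamma(\rR)$), the global Dynkin-type datum is forced to be standard.

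The hard part will be twofold: the completeness of the finite enumeration in ranks three through eight, and the rigidity step of the induction. Proving that the root count is bounded so that the search provably terminates, and verifying that no exotic gluing of standard corank-one pieces escapes the list, are precisely the substance of \cite{CH12} and \cite{CH15}; both rest on the crystallographic-arrangement dictionary and a considerable computer-assisted case analysis, which I would cite rather than reproduce here.
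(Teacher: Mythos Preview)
The paper does not prove this theorem at all: it is stated purely as a citation of \cite{CH15} (and \cite{CH09b} for rank two), with no argument given. Your proposal is therefore not competing with any proof in the paper; rather, you have sketched the strategy of the cited references themselves --- the equivalence with crystallographic arrangements (Theorem~\ref{theorem:EqvHpyAndFGRS}), parabolic restriction to constrain local pieces, bounded enumeration in low ranks, and an inductive rigidity argument for rank $\geq 9$ --- and you correctly conclude that the substantive content (termination of the enumeration, the rigidity step) must be delegated to \cite{CH12} and \cite{CH15}. In that sense your final paragraph and the paper's treatment agree: both cite rather than reproduce.

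One caution: your sketch is plausible as a summary of \cite{CH15}, but it is not itself a proof, and some of the intermediate claims (e.g.\ that rank-three restrictions alone force the global system to be standard for $n\geq 9$) are the genuinely delicate points of that paper and should not be presented as routine. If you intend this as a proof, it is incomplete; if you intend it as a roadmap followed by a citation, it is more than the present paper offers, and appropriate.
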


\section{Hamiltonian cycles of finite generalized root systems} 
\label{section:HamCy}
\subsection{Rank three cases} \label{subsection:rankthree}
Let ${\hat{\fca}}$ be a free $\bZ$-module of rank three.
Let ${\hat{B}}=\{{\hat{\al}}_1,{\hat{\al}}_2,{\hat{\al}}_3\}$ be a $\bZ$-base of ${\hat{\fca}}$,
i.e., ${\hat{\fca}}=\bZ{\hat{\al}}_1\oplus\bZ{\hat{\al}}_2\oplus\bZ{\hat{\al}}_3$.
In this section, we let the symbol $1^x2^y3^z$ $(x,y,z\in\bN)$ of \cite[Appendix~A]{CH12} mean 
$x{\hat{\al}}_1+y{\hat{\al}}_2+z{\hat{\al}}_3$.
For $k\in\fkJ_{1,55}$, let ${\hat{\rR}}(k)$
be the irreducible FGRS over ${\hat{\fca}}$ defined by  
\cite[Nr.~$k$ of Appendix~A]{CH12}. 
For example, we have
\begin{equation*}
\begin{array}{lcl}
{\hat{\rR}}(1) & = & \{\pm{\hat{\al}}_1, \pm{\hat{\al}}_2, \pm{\hat{\al}}_3, 
\pm({\hat{\al}}_1+{\hat{\al}}_2), \pm({\hat{\al}}_1+{\hat{\al}}_3),
\pm({\hat{\al}}_1+{\hat{\al}}_2+{\hat{\al}}_3)\}, \\
{\hat{\rR}}(2) & = & \{\pm{\hat{\al}}_1, \pm{\hat{\al}}_2, \pm{\hat{\al}}_3, 
\pm({\hat{\al}}_1+{\hat{\al}}_2), \pm({\hat{\al}}_1+{\hat{\al}}_3), \pm({\hat{\al}}_2+{\hat{\al}}_3),
\pm({\hat{\al}}_1+{\hat{\al}}_2+{\hat{\al}}_3)\}.
\end{array}
\end{equation*}
If $k\in\fkJ_{6,55}$, ${\hat{\rR}}(k)$ is the same as the one of 
\cite[Nr.~$k-5$ of B.2.~Rank 3]{CH15},
see Theorem~\ref{theorem:classGHlist}.

In the following, we give one of the Hamltonian cycles of $\Gamma({\hat{\rR}}(k))$ for $k\in\fkJ_{1,55}$,
which we have found by the command \cite[FindHamiltonianCycle]{Mathe23} of Mathematica~13.3 (or its earlier version).


\newpage

\noindent
Nr.1\,(Length=24):
\begin{spacing}{0.5}
\noindent
{\tiny{\small{$1^{\mbox{${\hat{B}}$}}\cdot$}}\\
$s_3 s_1 s_3 s_1 s_2 s_1 s_3 s_2 s_3 s_1 s_3 s_1 s_3 s_2 s_3 s_1 s_3 s_1 s_3 s_2 s_3 s_1 s_2
  s_1$}
\end{spacing}
\vskip\baselineskip
\noindent
Nr.2\,(Length=32):
\begin{spacing}{0.5}
\noindent
{\tiny{\small{$1^{\mbox{${\hat{B}}$}}\cdot$}}\\
$s_3s_1s_2s_1s_3s_1s_2s_1s_2s_1s_3s_1s_2s_1s_3s_1s_3s_1s_2s_1s_3s_1s_2s_1s_2s_1s_3s_1s_2s_1s_3s_1$}
\end{spacing}
\vskip\baselineskip
\noindent
Nr.3\,(Length=40):
\begin{spacing}{0.5}
\noindent
{\tiny{\small{$1^{\mbox{${\hat{B}}$}}\cdot$}}\\
$s_1s_2s_1s_3s_2s_3s_1s_3s_1s_3s_2s_3s_1s_3s_2s_3s_1s_2s_1s_2
s_1s_3s_1s_2s_1s_2s_1s_3s_1s_2s_1s_3s_1s_3s_1s_2s_1s_2s_1s_3$}
\end{spacing}
\vskip\baselineskip
\noindent
Nr.4\,(Length=48):
\begin{spacing}{0.5}
\noindent
{\tiny{\small{$1^{\mbox{${\hat{B}}$}}\cdot$}}\\
$s_2s_1s_2s_1s_3s_1s_2s_3s_2s_1s_2s_1s_2s_1s_2s_3s_2s_1s_3s_1
s_2s_1s_2s_1s_2s_1s_3s_1s_2s_1s_2s_3s_2s_1s_2s_1s_3s_1s_3s_1 
s_2s_1s_2s_1s_2s_1s_3s_1$}
\end{spacing}
\vskip\baselineskip
\noindent
Nr.5\,(Length=48):
\begin{spacing}{0.5}
\noindent
{\tiny{\small{$1^{\mbox{${\hat{B}}$}}\cdot$}}\\
$s_2s_1s_3s_1s_2s_1s_2s_1s_2s_1s_3s_1s_2s_1s_2s_1s_2s_1s_3s_1
s_2s_1s_3s_1s_2s_1s_2s_1s_2s_1s_3s_1s_2s_3s_2s_1s_2s_1s_2s_1 
s_2s_3s_2s_1s_2s_1s_3s_1$}
\end{spacing}
\vskip\baselineskip
\noindent
Nr.6\,(Length=60):
\begin{spacing}{0.5}
\noindent
{\tiny{\small{$1^{\mbox{${\hat{B}}$}}\cdot$}}\\
$s_2s_1s_2s_1s_3s_1s_2s_1s_3s_1s_3s_1s_2s_1s_3s_1s_2s_1s_2s_1
s_3s_1s_3s_1s_2s_1s_2s_1s_3s_1s_3s_1s_3s_1s_2s_1s_3s_1s_2s_1 
s_2s_1s_3s_1s_2s_1s_3s_1s_3s_1$ \newline $s_2s_1s_2s_1s_3s_1s_3s_1s_2s_1$}
\end{spacing}
\vskip\baselineskip
\noindent
Nr.7\,(Length=60):
\begin{spacing}{0.5}
\noindent
{\tiny{\small{$1^{\mbox{${\hat{B}}$}}\cdot$}}\\
$s_3s_1s_2s_1s_3s_1s_3s_1s_2s_1s_2s_1s_3s_1s_2s_1s_2s_1s_3s_1
s_2s_1s_3s_1s_2s_1s_2s_3s_2s_1s_2s_3s_2s_1s_3s_1s_2s_1s_2s_1 
s_3s_1s_2s_1s_2s_1s_3s_2s_3s_1$ \newline $s_2s_3s_2s_1s_3s_2s_3s_1s_2s_1$}
\end{spacing}
\vskip\baselineskip
\noindent
Nr.8\,(Length=72):
\begin{spacing}{0.5}
\noindent
{\tiny{\small{$1^{\mbox{${\hat{B}}$}}\cdot$}}\\
$s_3s_1s_3s_1s_2s_1s_3s_1s_3s_1s_2s_1s_2s_1s_3s_1s_2s_1s_3s_1
s_3s_1s_2s_1s_2s_1s_2s_1s_3s_1s_2s_1s_3s_1s_3s_1s_2s_1s_2s_1 
s_2s_1s_3s_1s_2s_1s_3s_1s_3s_1$ \newline $s_2s_1s_3s_1s_2s_1s_2s_1s_3s_1
s_2s_1s_3s_1s_3s_1s_2s_1s_2s_1s_2s_1$}
\end{spacing}
\vskip\baselineskip
\noindent
Nr.9\,(Length=84):
\begin{spacing}{0.5}
\noindent
{\tiny{\small{$1^{\mbox{${\hat{B}}$}}\cdot$}}\\
$s_2s_1s_2s_1s_2s_1s_3s_1s_2s_1s_3s_1s_3s_1s_3s_1s_2s_1s_2s_1
s_3s_1s_3s_1s_3s_1s_2s_1s_3s_1s_2s_1s_2s_1s_3s_1s_2s_1s_3s_2 
s_3s_1s_3s_1s_3s_2s_3s_1s_3s_2$ \newline $s_3s_1s_3s_2s_3s_1s_3s_1s_3s_2
s_3s_1s_2s_1s_3s_1s_2s_1s_2s_1s_3s_2s_3s_1s_3s_1s_3s_1s_3s_2 
s_3s_1s_2s_1$}
\end{spacing}
\vskip\baselineskip
\noindent
Nr.10\,(Length=84):
\begin{spacing}{0.5}
\noindent
{\tiny{\small{$1^{\mbox{${\hat{B}}$}}\cdot$}}\\
$s_2s_1s_3s_1s_2s_1s_2s_1s_3s_1s_3s_1s_2s_1s_3s_1s_3s_1s_3s_1
s_2s_1s_3s_2s_3s_1s_3s_1s_3s_1s_3s_2s_3s_1s_3s_1s_2s_1s_3s_1 
s_2s_1s_3s_1s_2s_1s_2s_1s_3s_1$ \newline $s_2s_1s_3s_1s_3s_1s_2s_1s_2s_1
s_3s_1s_3s_1s_2s_1s_2s_1s_2s_1s_3s_1s_3s_1s_2s_1s_2s_1s_3s_1 
s_3s_1s_2s_1$}
\end{spacing}
\vskip\baselineskip
\noindent
Nr.11\,(Length=96):
\begin{spacing}{0.5}
\noindent
{\tiny{\small{$1^{\mbox{${\hat{B}}$}}\cdot$}}\\
$s_2s_1s_2s_1s_3s_1s_3s_1s_2s_1s_2s_1s_3s_1s_2s_1s_2s_1s_3s_1
s_2s_1s_2s_1s_3s_1s_2s_1s_3s_1s_2s_1s_2s_3s_2s_1s_3s_1s_2s_1 
s_2s_1s_3s_1s_2s_1s_2s_1s_3s_1$ \newline $s_2s_1s_2s_1s_3s_1s_2s_3s_1s_3
s_2s_3s_1s_3s_2s_3s_2s_1s_3s_1s_3s_1s_2s_3s_1s_3s_2s_1s_3s_1 
s_2s_1s_2s_1s_2s_1s_2s_1s_3s_1s_2s_3s_2s_1s_2s_3$}
\end{spacing}
\vskip\baselineskip
\noindent
Nr.12\,(Length=96):
\begin{spacing}{0.5}
\noindent
{\tiny{\small{$1^{\mbox{${\hat{B}}$}}\cdot$}}\\
$s_1s_2s_1s_3s_1s_3s_1s_2s_1s_2s_1s_3s_1s_2s_1s_2s_1s_2s_1s_3
s_1s_2s_1s_2s_1s_3s_1s_3s_1s_2s_1s_2s_3s_2s_1s_2s_1s_2s_3s_2 
s_1s_2s_1s_3s_1s_2s_1s_2s_1s_2$ \newline $s_1s_2s_1s_3s_1s_3s_1s_2s_1s_3
s_1s_2s_1s_2s_1s_3s_1s_2s_1s_2s_1s_2s_1s_3s_1s_2s_1s_3s_1s_2 
s_3s_2s_1s_2s_1s_2s_3s_2s_1s_2s_1s_3s_1s_2s_1s_2$}
\end{spacing}
\newpage
\noindent
Nr.13\,(Length=96):
\begin{spacing}{0.5}
\noindent
{\tiny{\small{$1^{\mbox{${\hat{B}}$}}\cdot$}}\\
$s_3s_1s_3s_1s_2s_1s_3s_1s_2s_1s_3s_1s_2s_1s_2s_1s_3s_1s_2s_1
s_2s_1s_3s_1s_3s_1s_3s_1s_2s_1s_3s_1s_3s_1s_2s_1s_3s_1s_2s_1 
s_2s_1s_3s_1s_2s_1s_3s_1s_3s_1$ \newline $s_3s_1s_2s_1s_3s_1s_2s_1s_3s_1
s_2s_1s_3s_1s_2s_1s_3s_1s_2s_1s_2s_1s_3s_1s_2s_1s_3s_1s_3s_1 
s_2s_1s_3s_1s_3s_1s_3s_1s_2s_1s_2s_1s_3s_1s_2s_1$}
\end{spacing}
\vskip\baselineskip
\noindent
Nr.14\,(Length=96):
\begin{spacing}{0.5}
\noindent
{\tiny{\small{$1^{\mbox{${\hat{B}}$}}\cdot$}}\\
$s_3s_1s_3s_1s_2s_1s_3s_1s_2s_1s_3s_1s_2s_1s_2s_1s_3s_1s_2s_1
s_2s_1s_3s_1s_3s_1s_3s_1s_2s_1s_3s_1s_3s_1s_2s_1s_3s_1s_2s_1 
s_2s_1s_3s_1s_2s_1s_3s_1s_3s_1$ \newline $s_3s_1s_2s_1s_3s_1s_2s_1s_3s_1
s_2s_1s_3s_1s_2s_1s_3s_1s_2s_1s_2s_1s_3s_1s_2s_1s_3s_1s_3s_1 
s_2s_1s_3s_1s_3s_1s_3s_1s_2s_1s_2s_1s_3s_1s_2s_1$}
\end{spacing}
\vskip\baselineskip
\noindent
Nr.15\,(Length=112):
\begin{spacing}{0.5}
\noindent
{\tiny{\small{$1^{\mbox{${\hat{B}}$}}\cdot$}}\\
$s_2s_1s_2s_1s_3s_1s_2s_1s_3s_1s_3s_1s_3s_1s_2s_1s_3s_1s_2s_1
s_3s_1s_2s_1s_2s_1s_2s_1s_2s_1s_3s_1s_3s_1s_2s_1s_3s_1s_3s_1 
s_2s_3s_2s_1s_2s_1s_2s_3s_2s_1$ \newline $s_2s_3s_1s_3s_2s_3s_1s_3s_2s_1
s_2s_1s_2s_3s_2s_1s_2s_1s_2s_3s_2s_1s_3s_1s_2s_1s_3s_1s_3s_1 
s_2s_1s_3s_1s_3s_1s_2s_1s_2s_1s_3s_1s_2s_3s_1s_3s_2s_1s_2s_1
$ \newline $s_3s_1s_2s_1s_2s_1s_3s_1s_3s_1s_2s_1$}
\end{spacing}
\vskip\baselineskip
\noindent
Nr.16\,(Length=128):
\begin{spacing}{0.5}
\noindent
{\tiny{\small{$1^{\mbox{${\hat{B}}$}}\cdot$}}\\
$s_1s_2s_1s_3s_1s_3s_1s_2s_1s_2s_1s_3s_1s_2s_1s_3s_1s_3s_1s_3
s_1s_2s_1s_2s_1s_3s_1s_2s_3s_2s_1s_2s_1s_2s_3s_2s_1s_2s_1s_3 
s_2s_3s_1s_3s_1s_3s_2s_3s_2s_3$ \newline $s_1s_3s_1s_3s_1s_3s_2s_3s_1s_3
s_2s_3s_1s_2s_1s_3s_1s_3s_1s_2s_1s_2s_1s_3s_1s_2s_1s_3s_1s_3 
s_1s_2s_1s_3s_1s_3s_1s_2s_1s_2s_1s_2s_1s_2s_1s_3s_1s_2s_1s_3
$ \newline $s_1s_2s_1s_3s_1s_3s_1s_3s_1s_2s_1s_2s_1s_3s_1s_2s_1s_3s_1s_2 
s_1s_3s_1s_2s_1s_2s_1s_2$}
\end{spacing}
\vskip\baselineskip
\noindent
Nr.17\,(Length=144):
\begin{spacing}{0.5}
\noindent
{\tiny{\small{$1^{\mbox{${\hat{B}}$}}\cdot$}}\\
$s_2s_3s_1s_3s_1s_3s_2s_3s_1s_3s_2s_1s_3s_1s_3s_1s_2s_1s_3s_1
s_3s_1s_3s_1s_2s_3s_1s_3s_2s_1s_3s_1s_2s_1s_3s_1s_2s_3s_2s_1 
s_2s_1s_2s_3s_2s_3s_1s_3s_2s_3$ \newline $s_1s_3s_2s_3s_2s_3s_1s_3s_1s_3
s_1s_3s_2s_1s_2s_3s_2s_1s_3s_1s_2s_1s_3s_1s_2s_1s_2s_3s_1s_3 
s_2s_1s_2s_1s_2s_3s_2s_1s_3s_1s_2s_3s_2s_1s_2s_1s_2s_3s_2s_1
$ \newline $s_2s_3s_1s_3s_2s_3s_1s_3s_2s_3s_1s_3s_2s_3s_1s_3s_2s_3s_1s_3 
s_1s_3s_1s_3s_2s_3s_2s_3s_1s_3s_2s_1s_2s_3s_2s_1s_2s_3s_1s_3
s_2s_3s_1s_3$}
\end{spacing}
\vskip\baselineskip
\noindent
Nr.18\,(Length=144):
\begin{spacing}{0.5}
\noindent
{\tiny{\small{$1^{\mbox{${\hat{B}}$}}\cdot$}}\\
$s_2s_1s_3s_1s_3s_1s_2s_1s_3s_1s_2s_1s_2s_1s_2s_1s_3s_1s_2s_3
s_1s_3s_2s_1s_3s_1s_2s_1s_2s_1s_3s_1s_3s_1s_2s_1s_2s_3s_1s_3 
s_2s_1s_3s_1s_2s_3s_1s_3s_2s_3$ \newline $s_2s_3s_1s_2s_1s_2s_1s_3s_2s_1
s_2s_3s_1s_2s_1s_2s_1s_3s_2s_1s_2s_3s_1s_2s_1s_2s_1s_3s_2s_3 
s_2s_3s_1s_3s_2s_1s_3s_1s_2s_3s_1s_3s_2s_1s_2s_1s_3s_1s_2s_1
$ \newline $s_2s_1s_2s_1s_2s_1s_3s_1s_3s_1s_2s_1s_3s_1s_2s_3s_2s_1s_3s_1 
s_2s_1s_2s_1s_2s_1s_3s_1s_2s_3s_2s_1s_3s_1s_2s_1s_3s_1s_3s_1
s_2s_1s_2s_1$}
\end{spacing}
\vskip\baselineskip
\noindent
Nr.19\,(Length=160):
\begin{spacing}{0.5}
\noindent
{\tiny{\small{$1^{\mbox{${\hat{B}}$}}\cdot$}}\\
$s_1s_2s_1s_3s_1s_3s_1s_2s_1s_2s_1s_3s_1s_2s_1s_2s_3s_2s_1s_2
s_1s_2s_1s_2s_3s_2s_1s_3s_1s_2s_3s_2s_1s_2s_1s_2s_1s_2s_3s_2 
s_1s_3s_1s_3s_1s_2s_1s_2s_1s_3$ \newline $s_1s_3s_1s_3s_1s_2s_1s_2s_1s_2
s_3s_2s_1s_2s_1s_2s_3s_2s_1s_2s_1s_3s_1s_2s_1s_2s_1s_3s_1s_3 
s_1s_2s_3s_2s_1s_2s_1s_2s_1s_2s_1s_2s_3s_2s_1s_2s_1s_2s_3s_1
$ \newline $s_3s_2s_3s_1s_3s_2s_1s_3s_1s_3s_1s_2s_3s_1s_3s_2s_1s_2s_1s_3 
s_1s_2s_3s_2s_1s_2s_1s_2s_3s_2s_1s_2s_1s_3s_1s_2s_1s_3s_1s_3
s_1s_2s_3s_2s_1s_2s_1s_2s_1s_2$ \newline $s_3s_2s_3s_1s_3s_2s_1s_3s_1s_2$}
\end{spacing}
\vskip\baselineskip
\noindent
Nr.20\,(Length=160):
\begin{spacing}{0.5}
\noindent
{\tiny{\small{$1^{\mbox{${\hat{B}}$}}\cdot$}}\\
$s_2s_1s_2s_1s_3s_1s_3s_1s_2s_1s_2s_1s_3s_1s_2s_1s_2s_1s_2s_1
s_3s_1s_2s_1s_2s_1s_3s_1s_3s_1s_2s_1s_2s_3s_2s_1s_2s_1s_2s_3 
s_2s_1s_3s_1s_2s_1s_2s_1s_3s_1$ \newline $s_3s_1s_2s_1s_2s_1s_3s_1s_3s_1
s_3s_1s_2s_1s_2s_3s_2s_1s_2s_1s_2s_3s_2s_1s_2s_1s_2s_3s_2s_1 
s_2s_1s_2s_1s_3s_1s_2s_1s_3s_1s_2s_3s_2s_1s_2s_1s_2s_3s_1s_3
$ \newline $s_2s_3s_1s_3s_2s_3s_1s_3s_2s_1s_2s_1s_2s_3s_2s_1s_2s_1s_3s_1 
s_3s_1s_3s_1s_2s_1s_2s_1s_3s_1s_3s_1s_2s_1s_3s_1s_2s_1s_2s_1
s_2s_1s_3s_1s_2s_1s_2s_1s_3s_1$ \newline $s_2s_1s_2s_1s_3s_1s_3s_1s_2s_1$}
\end{spacing}
\vskip\baselineskip
\noindent
Nr.21\,(Length=160):
\begin{spacing}{0.5}
\noindent
{\tiny{\small{$1^{\mbox{${\hat{B}}$}}\cdot$}}\\
$s_1s_3s_1s_2s_1s_3s_1s_2s_1s_3s_1s_2s_1s_2s_1s_2s_1s_3s_1s_2
s_1s_2s_1s_3s_1s_3s_1s_3s_1s_2s_1s_3s_1s_2s_1s_2s_1s_3s_1s_2 
s_3s_2s_1s_2s_1s_3s_1s_2s_1s_3$ \newline $s_1s_2s_1s_2s_1s_3s_1s_3s_1s_2
s_1s_2s_1s_3s_1s_2s_1s_3s_1s_3s_1s_2s_1s_3s_1s_2s_1s_3s_1s_2 
s_1s_2s_1s_3s_1s_2s_1s_2s_1s_3s_1s_2s_1s_3s_1s_2s_3s_2s_1s_3
$ \newline $s_1s_2s_1s_2s_1s_3s_1s_2s_1s_2s_1s_2s_1s_3s_1s_2s_1s_3s_1s_2 
s_3s_2s_1s_3s_1s_2s_1s_3s_1s_2s_1s_3s_1s_3s_1s_2s_1s_3s_1s_2
s_1s_2s_1s_3s_1s_3s_1s_2s_1s_2$ \newline $s_1s_3s_1s_2s_1s_3s_1s_2s_1s_2$}
\end{spacing}
\vskip\baselineskip
\noindent
Nr.22\,(Length=180):
\begin{spacing}{0.5}
\noindent
{\tiny{\small{$1^{\mbox{${\hat{B}}$}}\cdot$}}\\
$s_3s_1s_3s_2s_3s_1s_3s_2s_1s_2s_3s_2s_1s_2s_1s_2s_3s_2s_1s_2
s_3s_1s_3s_2s_3s_1s_3s_2s_3s_1s_3s_2s_1s_3s_1s_3s_1s_2s_1s_3 
s_1s_2s_1s_2s_1s_3s_1s_2s_1s_2$ \newline $s_1s_2s_1s_3s_1s_2s_3s_2s_1s_3
s_1s_2s_1s_2s_1s_3s_1s_3s_1s_2s_1s_2s_1s_3s_1s_2s_3s_2s_1s_3 
s_1s_2s_1s_2s_3s_2s_3s_2s_1s_2s_3s_2s_1s_2s_3s_2s_1s_2s_1s_2
$ \newline $s_3s_2s_1s_2s_3s_2s_1s_2s_1s_2s_3s_2s_1s_2s_1s_3s_1s_2s_1s_2 
s_1s_3s_1s_2s_3s_2s_1s_2s_1s_2s_3s_2s_1s_3s_1s_2s_1s_2s_1s_3
s_1s_2s_1s_2s_3s_2s_1s_2s_1s_2$ \newline $s_3s_2s_1s_2s_3s_2s_1s_2s_3s_2 
s_1s_3s_1s_2s_1s_2s_1s_3s_1s_2s_1s_3s_1s_3s_1s_2s_3s_1s_3s_2$}
\end{spacing}
\vskip\baselineskip
\noindent
Nr.23\,(Length=180):
\begin{spacing}{0.5}
\noindent
{\tiny{\small{$1^{\mbox{${\hat{B}}$}}\cdot$}}\\
$s_3s_2s_3s_2s_1s_2s_3s_2s_1s_2s_3s_2s_1s_2s_3s_2s_3s_1s_3s_2
s_3s_1s_3s_2s_1s_2s_3s_2s_1s_2s_1s_3s_1s_2s_1s_3s_1s_3s_2s_1 
s_2s_3s_2s_3s_2s_1s_2s_3s_1s_3$ \newline $s_1s_3s_2s_3s_1s_2s_1s_3s_1s_2
s_1s_3s_1s_3s_1s_3s_1s_2s_1s_3s_1s_2s_1s_3s_1s_3s_1s_3s_1s_3 
s_1s_2s_1s_3s_2s_3s_1s_3s_2s_1s_2s_3s_2s_1s_3s_1s_2s_1s_3s_1
$ \newline $s_2s_3s_1s_3s_2s_1s_3s_1s_3s_1s_2s_1s_3s_1s_2s_3s_1s_3s_1s_3 
s_2s_3s_1s_2s_1s_3s_1s_2s_1s_3s_1s_2s_3s_1s_3s_2s_1s_3s_1s_3
s_1s_2s_1s_3s_1s_2s_1s_3s_1s_3$ \newline $s_2s_1s_2s_3s_1s_2s_1s_3s_1s_2 
s_1s_3s_2s_1s_2s_3s_1s_2s_1s_3s_1s_3s_1s_3s_1s_2s_1s_3s_1s_2$}
\end{spacing}
\vskip\baselineskip
\noindent
Nr.24\,(Length=200):
\begin{spacing}{0.5}
\noindent
{\tiny{\small{$1^{\mbox{${\hat{B}}$}}\cdot$}}\\
$s_3s_2s_1s_3s_1s_2s_1s_3s_1s_3s_1s_2s_1s_3s_1s_3s_1s_3s_2s_3
s_2s_3s_1s_3s_1s_3s_2s_3s_1s_3s_1s_3s_2s_3s_1s_2s_1s_3s_1s_2 
s_1s_3s_1s_2s_3s_1s_3s_2s_1s_3$ \newline $s_1s_3s_1s_2s_1s_3s_2s_3s_1s_3
s_2s_3s_1s_3s_1s_3s_1s_3s_2s_3s_1s_2s_1s_3s_1s_2s_3s_1s_3s_2 
s_3s_1s_3s_2s_1s_2s_3s_2s_1s_2s_1s_3s_1s_2s_1s_3s_1s_2s_1s_3
$ \newline $s_1s_3s_1s_2s_1s_3s_2s_3s_2s_1s_2s_1s_2s_3s_2s_3s_1s_3s_1s_3 
s_1s_3s_2s_3s_1s_2s_1s_3s_1s_2s_1s_2s_1s_3s_1s_2s_1s_3s_1s_3
s_1s_3s_1s_2s_1s_3s_1s_2s_1s_3$ \newline $s_1s_3s_1s_3s_2s_1s_2s_3s_1s_2 
s_1s_3s_2s_1s_2s_1s_2s_3s_2s_1s_2s_3s_1s_3s_2s_3s_2s_3s_1s_2
s_1s_3s_1s_3s_1s_3s_1s_2s_1s_3s_1s_2s_1s_3s_1s_3s_1s_2s_1s_2$}
\end{spacing}
\vskip\baselineskip
\noindent
Nr.25\,(Length=192):
\begin{spacing}{0.5}
\noindent
{\tiny{\small{$1^{\mbox{${\hat{B}}$}}\cdot$}}\\
$s_1s_2s_1s_2s_1s_3s_1s_2s_1s_2s_1s_2s_1s_3s_1s_2s_1s_2s_3s_2
s_1s_3s_1s_2s_1s_2s_1s_2s_1s_3s_1s_3s_1s_2s_1s_2s_1s_3s_1s_2 
s_1s_2s_1s_2s_1s_3s_1s_2s_1s_2$ \newline $s_1s_3s_1s_3s_1s_2s_1s_2s_1s_2
s_1s_3s_1s_2s_1s_2s_3s_2s_1s_2s_1s_2s_1s_2s_3s_2s_1s_2s_1s_2 
s_1s_3s_1s_2s_1s_3s_1s_2s_1s_2s_1s_2s_3s_2s_1s_2s_1s_2s_1s_2
$ \newline $s_3s_2s_3s_2s_1s_2s_1s_2s_1s_3s_1s_2s_1s_3s_1s_2s_1s_2s_1s_2 
s_1s_3s_1s_3s_1s_2s_1s_2s_1s_3s_1s_2s_1s_2s_1s_2s_1s_3s_1s_2
s_1s_2s_1s_3s_1s_3s_1s_2s_1s_2$ \newline $s_1s_2s_1s_3s_1s_2s_1s_3s_1s_2 
s_1s_2s_1s_2s_1s_2s_1s_2s_1s_3s_1s_2s_1s_2s_1s_2s_1s_3s_1s_2
s_1s_2s_1s_2s_1s_3s_1s_2s_1s_3s_1s_2$}
\end{spacing}
\vskip\baselineskip
\noindent
Nr.26\,(Length=200):
\begin{spacing}{0.5}
\noindent
{\tiny{\small{$1^{\mbox{${\hat{B}}$}}\cdot$}}\\
$s_2s_3s_1s_3s_2s_3s_1s_3s_2s_3s_1s_3s_2s_1s_2s_3s_2s_1s_3s_1
s_2s_1s_3s_1s_3s_1s_2s_3s_2s_3s_2s_3s_2s_1s_2s_3s_1s_3s_2s_3 
s_1s_3s_1s_3s_2s_3s_1s_3s_1s_2$ \newline $s_1s_3s_2s_3s_1s_3s_2s_3s_1s_3
s_2s_3s_1s_3s_2s_1s_2s_3s_2s_1s_2s_3s_1s_3s_2s_3s_1s_3s_1s_3 
s_2s_3s_1s_2s_1s_3s_1s_2s_1s_3s_1s_3s_2s_1s_2s_3s_1s_2s_1s_3
$ \newline $s_2s_1s_2s_3s_2s_3s_2s_1s_3s_1s_2s_1s_3s_1s_2s_3s_2s_1s_2s_3 
s_1s_3s_2s_3s_1s_2s_1s_3s_1s_2s_1s_3s_1s_3s_1s_2s_1s_3s_2s_3
s_1s_3s_1s_3s_1s_3s_2s_3s_1s_2$ \newline $s_1s_3s_1s_2s_1s_2s_1s_3s_1s_2 
s_1s_3s_1s_3s_2s_1s_2s_3s_1s_3s_1s_2s_1s_3s_2s_3s_1s_3s_1s_2
s_1s_3s_1s_3s_1s_2s_1s_3s_2s_3s_1s_3s_1s_2s_1s_3s_1s_2s_1s_3$}
\end{spacing}
\vskip\baselineskip
\noindent
Nr.27\,(Length=200):
\begin{spacing}{0.5}
\noindent
{\tiny{\small{$1^{\mbox{${\hat{B}}$}}\cdot$}}\\
$s_2s_3s_2s_3s_2s_1s_2s_3s_2s_1s_2s_3s_2s_1s_2s_3s_2s_3s_1s_3
s_2s_3s_1s_3s_2s_3s_2s_1s_3s_1s_3s_1s_3s_1s_2s_3s_1s_3s_2s_1 
s_3s_1s_3s_1s_2s_1s_3s_1s_2s_1$ \newline $s_3s_2s_1s_2s_3s_1s_3s_1s_2s_1
s_3s_1s_2s_1s_3s_1s_3s_1s_2s_3s_2s_1s_2s_1s_2s_3s_2s_1s_3s_1 
s_2s_1s_3s_1s_2s_1s_3s_2s_3s_1s_3s_1s_3s_2s_1s_2s_3s_2s_1s_2
$ \newline $s_3s_1s_3s_2s_3s_2s_3s_1s_3s_2s_1s_2s_3s_2s_1s_2s_3s_2s_3s_2 
s_1s_2s_3s_2s_3s_1s_2s_1s_3s_1s_3s_1s_2s_3s_2s_1s_2s_3s_2s_1
s_3s_1s_2s_1s_3s_1s_3s_1s_3s_1$ \newline $s_2s_1s_3s_2s_3s_2s_3s_1s_3s_1 
s_3s_2s_3s_2s_3s_1s_3s_2s_3s_1s_2s_1s_3s_1s_2s_1s_2s_1s_3s_1
s_3s_1s_2s_1s_2s_1s_3s_1s_2s_1s_2s_1s_3s_1s_3s_1s_2s_1s_2s_1$}
\end{spacing}
\vskip\baselineskip
\noindent
Nr.28\,(Length=192):
\begin{spacing}{0.5}
\noindent
{\tiny{\small{$1^{\mbox{${\hat{B}}$}}\cdot$}}\\
$s_1s_3s_1s_2s_1s_3s_1s_2s_1s_2s_1s_2s_3s_2s_1s_2s_3s_2s_1s_2
s_1s_2s_1s_2s_1s_2s_3s_2s_1s_2s_3s_2s_1s_3s_1s_2s_1s_2s_1s_2 
s_1s_3s_1s_2s_3s_2s_1s_2s_3s_2$ \newline $s_1s_2s_1s_2s_1s_2s_1s_2s_3s_2
s_1s_2s_3s_2s_1s_3s_1s_2s_1s_2s_3s_2s_1s_2s_3s_2s_1s_2s_3s_2 
s_1s_2s_3s_2s_1s_2s_1s_2s_1s_2s_1s_2s_3s_2s_3s_2s_1s_2s_1s_2
$ \newline $s_1s_2s_1s_2s_3s_2s_1s_2s_3s_2s_1s_2s_3s_2s_1s_2s_3s_2s_1s_2 
s_1s_2s_1s_2s_1s_2s_3s_2s_1s_2s_3s_2s_3s_2s_1s_2s_1s_2s_1s_2
s_3s_2s_1s_2s_3s_2s_3s_2s_1s_2$ \newline $s_1s_2s_3s_2s_1s_2s_3s_2s_1s_2 
s_1s_3s_1s_2s_1s_3s_1s_2s_1s_2s_1s_2s_1s_2s_1s_3s_1s_2s_1s_2
s_1s_2s_1s_2s_1s_3s_1s_2s_1s_2s_1s_2$}
\end{spacing}
\vskip\baselineskip
\noindent
Nr.29\,(Length=216):
\begin{spacing}{0.5}
\noindent
{\tiny{\small{$1^{\mbox{${\hat{B}}$}}\cdot$}}\\
$s_3s_1s_3s_1s_2s_1s_2s_1s_3s_1s_2s_1s_2s_1s_3s_1s_2s_1s_2s_3
s_1s_3s_1s_3s_2s_1s_3s_1s_2s_3s_1s_3s_1s_3s_2s_1s_2s_1s_3s_2 
s_3s_1s_3s_2s_3s_1s_3s_2s_3s_1$ \newline $s_2s_3s_2s_1s_2s_3s_1s_3s_1s_3
s_2s_1s_2s_1s_2s_3s_2s_1s_3s_1s_2s_1s_2s_1s_3s_1s_3s_1s_2s_1 
s_2s_1s_3s_1s_3s_1s_2s_1s_2s_1s_3s_1s_2s_1s_2s_1s_3s_1s_2s_1
$ \newline $s_2s_1s_3s_1s_2s_1s_3s_1s_3s_1s_2s_3s_2s_1s_3s_1s_2s_1s_2s_1 
s_2s_3s_2s_3s_2s_1s_2s_1s_2s_1s_2s_3s_1s_3s_2s_3s_1s_3s_2s_1
s_3s_1s_2s_3s_1s_3s_1s_3s_2s_1$ \newline $s_2s_1s_3s_1s_2s_3s_2s_1s_3s_1 
s_2s_1s_3s_1s_3s_1s_2s_1s_2s_1s_3s_1s_2s_1s_2s_1s_3s_1s_2s_1
s_2s_3s_1s_3s_2s_3s_1s_3s_2s_1s_2s_1s_2s_1s_2s_3s_2s_3s_2s_1 
$ \newline $s_2s_1s_3s_1s_2s_1s_2s_1s_3s_1s_2s_1s_3s_1s_2s_1$}
\end{spacing}
\vskip\baselineskip
\noindent
Nr.30\,(Length=220):
\begin{spacing}{0.5}
\noindent
{\tiny{\small{$1^{\mbox{${\hat{B}}$}}\cdot$}}\\
$s_3s_2s_1s_2s_3s_2s_1s_3s_1s_2s_1s_3s_1s_2s_1s_3s_2s_3s_1s_3
s_1s_3s_2s_3s_1s_2s_1s_3s_1s_2s_1s_3s_1s_2s_3s_1s_3s_2s_1s_3 
s_1s_3s_1s_2s_1s_3s_1s_3s_1s_2$ \newline $s_3s_2s_3s_1s_3s_2s_3s_1s_3s_2
s_1s_2s_3s_2s_1s_2s_1s_2s_1s_3s_1s_2s_1s_3s_1s_2s_1s_3s_1s_2 
s_1s_3s_1s_3s_1s_3s_1s_3s_1s_2s_1s_3s_1s_3s_1s_2s_1s_2s_1s_3
$ \newline $s_1s_2s_1s_3s_1s_2s_1s_3s_1s_3s_1s_3s_1s_2s_1s_3s_1s_2s_1s_3 
s_1s_2s_1s_2s_1s_3s_1s_3s_1s_3s_1s_2s_1s_3s_1s_2s_1s_3s_1s_3
s_1s_3s_1s_3s_1s_2s_1s_3s_2s_3$ \newline $s_1s_3s_2s_3s_1s_3s_2s_3s_1s_3 
s_2s_3s_1s_3s_1s_3s_2s_3s_1s_2s_1s_3s_1s_2s_1s_3s_1s_2s_1s_2
s_1s_3s_1s_3s_1s_2s_1s_3s_2s_3s_1s_3s_1s_3s_1s_3s_2s_3s_1s_2 
$ \newline $s_1s_3s_1s_3s_1s_3s_1s_2s_1s_3s_1s_3s_1s_2s_1s_2s_1s_3s_1s_2$}
\end{spacing}
\newpage
\noindent
Nr.31\,(Length=220):
\begin{spacing}{0.5}
\noindent
{\tiny{\small{$1^{\mbox{${\hat{B}}$}}\cdot$}}\\
$s_1s_2s_1s_3s_1s_3s_1s_2s_1s_2s_1s_3s_1s_2s_1s_3s_1s_3s_1s_2
s_1s_3s_1s_3s_1s_2s_1s_3s_1s_2s_1s_3s_1s_3s_1s_2s_1s_2s_1s_2 
s_1s_2s_1s_3s_1s_2s_1s_2s_1s_2$ \newline $s_1s_3s_1s_2s_1s_3s_1s_3s_1s_2
s_1s_3s_1s_2s_1s_2s_1s_3s_1s_2s_1s_3s_1s_3s_1s_2s_1s_3s_1s_2 
s_1s_2s_1s_2s_1s_3s_1s_3s_1s_2s_1s_2s_1s_2s_1s_3s_1s_3s_1s_2
$ \newline $s_1s_2s_1s_3s_1s_2s_1s_2s_1s_3s_1s_3s_1s_2s_1s_3s_1s_2s_1s_3 
s_2s_3s_1s_3s_2s_3s_1s_3s_1s_3s_1s_3s_2s_3s_2s_3s_1s_3s_2s_3
s_1s_2s_1s_3s_1s_3s_1s_2s_1s_3$ \newline $s_1s_2s_1s_3s_1s_3s_1s_2s_1s_3 
s_1s_2s_1s_3s_1s_3s_1s_2s_1s_3s_1s_2s_1s_3s_1s_3s_1s_2s_1s_3
s_1s_3s_1s_2s_1s_3s_1s_2s_1s_3s_2s_3s_1s_3s_2s_3s_2s_3s_1s_2 
$ \newline $s_1s_3s_1s_3s_1s_2s_1s_3s_1s_2s_1s_2s_1s_3s_1s_3s_1s_2s_1s_2$}
\end{spacing}
\vskip\baselineskip
\noindent
Nr.32\,(Length=240):
\begin{spacing}{0.5}
\noindent
{\tiny{\small{$1^{\mbox{${\hat{B}}$}}\cdot$}}\\
$s_2s_3s_1s_3s_1s_3s_2s_3s_1s_3s_1s_3s_2s_3s_1s_3s_2s_1s_2s_1
s_2s_3s_2s_1s_3s_1s_2s_3s_2s_1s_3s_2s_3s_1s_2s_3s_2s_1s_3s_1 
s_2s_3s_2s_1s_2s_3s_2s_1s_3s_1$ \newline $s_3s_2s_3s_2s_1s_3s_1s_2s_3s_2
s_1s_2s_3s_1s_3s_2s_3s_2s_3s_1s_3s_2s_1s_2s_3s_2s_1s_3s_1s_2 
s_3s_1s_3s_2s_1s_3s_1s_3s_1s_2s_3s_1s_3s_2s_1s_3s_1s_3s_1s_2
$ \newline $s_3s_1s_3s_2s_1s_3s_1s_2s_3s_2s_1s_2s_3s_1s_3s_2s_3s_2s_3s_1 
s_3s_2s_1s_2s_3s_2s_1s_3s_1s_2s_3s_2s_3s_1s_2s_1s_3s_1s_3s_1
s_2s_1s_2s_1s_3s_1s_3s_1s_3s_1$ \newline $s_2s_3s_1s_3s_2s_1s_3s_1s_3s_1 
s_2s_1s_3s_1s_3s_1s_2s_3s_1s_3s_2s_1s_3s_1s_3s_1s_3s_1s_2s_3
s_1s_3s_2s_1s_3s_1s_3s_1s_2s_1s_3s_1s_3s_1s_2s_3s_1s_3s_2s_1 
$ \newline $s_3s_1s_3s_1s_3s_1s_2s_3s_1s_3s_2s_1s_3s_1s_3s_1s_2s_1s_3s_1
s_3s_1s_2s_3s_2s_3s_1s_3s_2s_3s_1s_3s_1s_3s_2s_3s_1s_3s_1s_3$}
\end{spacing}
\vskip\baselineskip
\noindent
Nr.33\,(Length=240):
\begin{spacing}{0.5}
\noindent
{\tiny{\small{$1^{\mbox{${\hat{B}}$}}\cdot$}}\\
$s_1s_2s_1s_2s_1s_3s_1s_2s_1s_2s_1s_3s_1s_2s_3s_2s_1s_3s_1s_2
s_1s_2s_1s_2s_1s_3s_1s_3s_1s_2s_1s_2s_3s_2s_1s_2s_1s_2s_1s_2 
s_3s_2s_1s_2s_1s_3s_1s_3s_1s_2$ \newline $s_1s_2s_1s_2s_1s_3s_1s_2s_1s_2
s_1s_3s_1s_3s_1s_2s_1s_3s_1s_3s_2s_3s_1s_2s_1s_2s_3s_2s_1s_2 
s_1s_2s_3s_2s_1s_2s_1s_3s_2s_3s_1s_3s_1s_2s_1s_3s_1s_3s_1s_2
$ \newline $s_3s_1s_3s_2s_3s_1s_3s_2s_3s_1s_3s_2s_1s_2s_3s_2s_1s_3s_1s_3 
s_1s_2s_1s_3s_1s_3s_2s_3s_1s_2s_1s_2s_1s_3s_1s_3s_1s_2s_1s_2
s_1s_2s_1s_3s_1s_3s_1s_2s_1s_2$ \newline $s_1s_3s_2s_3s_1s_3s_1s_2s_1s_3 
s_1s_3s_1s_2s_3s_2s_1s_2s_3s_2s_1s_2s_1s_2s_3s_2s_1s_2s_1s_2
s_1s_3s_1s_2s_1s_2s_1s_2s_1s_3s_1s_3s_1s_2s_1s_3s_1s_3s_1s_2 
$ \newline $s_1s_2s_1s_3s_1s_3s_1s_2s_1s_3s_1s_3s_1s_2s_1s_2s_1s_2s_1s_3
s_1s_3s_1s_2s_1s_2s_1s_2s_1s_3s_1s_2s_3s_2s_1s_2s_1s_2s_3s_2$}
\end{spacing}
\vskip\baselineskip
\noindent
Nr.34\,(Length=240):
\begin{spacing}{0.5}
\noindent
{\tiny{\small{$1^{\mbox{${\hat{B}}$}}\cdot$}}\\
$s_1s_2s_1s_3s_1s_3s_1s_2s_1s_2s_1s_3s_1s_2s_1s_3s_1s_2s_1s_2
s_1s_2s_1s_2s_1s_2s_1s_3s_1s_3s_1s_2s_1s_2s_1s_3s_1s_3s_1s_2 
s_1s_2s_1s_3s_1s_3s_1s_2s_1s_2$ \newline $s_1s_3s_1s_2s_1s_2s_1s_3s_1s_2
s_1s_3s_1s_2s_1s_2s_1s_3s_1s_2s_1s_2s_1s_3s_1s_3s_1s_2s_1s_2 
s_1s_3s_1s_2s_1s_3s_1s_3s_1s_2s_1s_3s_1s_2s_1s_2s_1s_3s_1s_3
$ \newline $s_1s_3s_1s_2s_1s_2s_1s_3s_1s_3s_1s_2s_1s_2s_1s_3s_1s_2s_1s_3 
s_1s_3s_1s_2s_1s_3s_1s_2s_1s_2s_1s_2s_1s_2s_1s_2s_1s_3s_1s_3
s_1s_2s_1s_2s_1s_3s_1s_3s_1s_2$ \newline $s_1s_2s_1s_3s_1s_3s_1s_2s_1s_2 
s_1s_3s_1s_2s_1s_2s_1s_3s_1s_2s_1s_2s_1s_3s_1s_3s_1s_2s_1s_2
s_1s_2s_1s_3s_1s_2s_1s_2s_1s_3s_1s_3s_1s_2s_1s_2s_1s_3s_1s_3 
$ \newline $s_1s_3s_1s_2s_1s_2s_1s_3s_1s_2s_1s_3s_1s_3s_1s_2s_1s_3s_1s_2
s_1s_2s_1s_3s_1s_3s_1s_2s_1s_2s_1s_3s_1s_2s_1s_2s_1s_3s_1s_3$}
\end{spacing}
\vskip\baselineskip
\noindent
Nr.35\,(Length=264):
\begin{spacing}{0.5}
\noindent
{\tiny{\small{$1^{\mbox{${\hat{B}}$}}\cdot$}}\\
$s_2s_3s_1s_3s_1s_2s_1s_2s_1s_3s_1s_2s_1s_3s_1s_2s_1s_3s_1s_3
s_1s_2s_1s_3s_1s_2s_1s_2s_1s_2s_1s_3s_1s_2s_1s_3s_1s_3s_1s_2 
s_1s_3s_1s_2s_1s_3s_1s_2s_1s_3$ \newline $s_1s_3s_1s_2s_1s_2s_1s_3s_1s_2
s_1s_2s_1s_3s_1s_2s_1s_2s_1s_3s_2s_1s_2s_3s_1s_2s_1s_3s_2s_3 
s_1s_3s_1s_3s_2s_3s_1s_2s_1s_3s_2s_1s_2s_3s_1s_2s_1s_2s_1s_3
$ \newline $s_1s_2s_3s_2s_1s_3s_1s_2s_1s_3s_1s_2s_3s_2s_1s_2s_1s_3s_2s_3 
s_1s_2s_3s_2s_1s_3s_2s_3s_1s_2s_1s_2s_3s_2s_1s_3s_1s_2s_1s_3
s_1s_2s_1s_2s_1s_3s_1s_3s_1s_2$ \newline $s_1s_3s_1s_2s_1s_3s_1s_2s_3s_2 
s_1s_2s_1s_2s_3s_2s_1s_2s_1s_2s_3s_2s_1s_3s_1s_2s_1s_3s_1s_2
s_1s_3s_1s_3s_1s_2s_3s_2s_1s_3s_1s_2s_1s_2s_1s_3s_1s_2s_1s_3 
$ \newline $s_2s_3s_1s_3s_2s_3s_1s_3s_2s_1s_2s_3s_2s_1s_3s_1s_2s_1s_3s_1
s_2s_1s_3s_1s_2s_3s_2s_1s_2s_3s_1s_3s_2s_3s_1s_3s_2s_3s_1s_3 
s_1s_2s_1s_2s_1s_3s_1s_3s_1s_2$ \newline $s_1s_2s_1s_3s_1s_3s_1s_2s_1s_3
s_1s_2s_1s_3$}
\end{spacing}
\vskip\baselineskip
\noindent
Nr.36\,(Length=336):
\begin{spacing}{0.5}
\noindent
{\tiny{\small{$1^{\mbox{${\hat{B}}$}}\cdot$}}\\
$s_2s_3s_1s_3s_2s_1s_3s_1s_2s_1s_2s_1s_3s_1s_2s_1s_2s_1s_2s_3
s_1s_3s_2s_1s_2s_1s_3s_1s_2s_1s_2s_1s_3s_1s_2s_1s_2s_1s_3s_1 
s_2s_3s_2s_1s_2s_1s_3s_1s_2s_1$ \newline $s_3s_1s_3s_1s_2s_1s_3s_1s_2s_3
s_2s_1s_2s_1s_2s_3s_1s_3s_2s_3s_1s_3s_2s_3s_1s_3s_2s_3s_1s_3 
s_2s_3s_1s_3s_2s_1s_2s_3s_2s_1s_3s_1s_2s_3s_2s_1s_2s_3s_2s_1
$ \newline $s_2s_1s_2s_3s_2s_1s_2s_1s_2s_1s_2s_3s_1s_3s_2s_3s_2s_1s_3s_1 
s_3s_1s_2s_3s_1s_3s_2s_1s_2s_1s_2s_1s_2s_1s_2s_1s_2s_3s_2s_1
s_3s_1s_3s_1s_2s_1s_3s_1s_2s_1$ \newline $s_3s_1s_2s_1s_2s_1s_3s_1s_2s_1 
s_2s_1s_3s_1s_3s_1s_2s_1s_2s_3s_2s_1s_2s_1s_2s_3s_1s_3s_2s_3
s_1s_3s_2s_3s_1s_3s_1s_3s_2s_1s_2s_1s_2s_1s_2s_3s_2s_3s_2s_1 
$ \newline $s_2s_1s_3s_1s_2s_1s_2s_1s_3s_1s_2s_1s_2s_3s_1s_3s_2s_1s_2s_3
s_2s_1s_2s_1s_3s_1s_2s_1s_2s_1s_3s_1s_2s_3s_1s_3s_2s_1s_3s_1 
s_2s_1s_2s_1s_2s_3s_2s_3s_2s_1$ \newline $s_2s_1s_2s_3s_2s_1s_3s_1s_2s_1
s_3s_1s_2s_1s_3s_1s_3s_1s_3s_1s_2s_1s_2s_1s_3s_1s_2s_3s_2s_1 
s_2s_3s_2s_1s_2s_1s_2s_3s_2s_3s_1s_3s_2s_3s_1s_3s_1s_3s_2s_3
$ \newline $s_1s_3s_2s_3s_1s_3s_2s_1s_2s_3s_2s_3s_2s_1s_2s_1s_2s_1s_2s_3 
s_2s_1s_2s_1s_2s_3s_2s_1s_2s_3s_1s_3s_2s_3s_1s_3$}
\end{spacing}
\vskip\baselineskip
\noindent
Nr.37\,(Length=336):
\begin{spacing}{0.5}
\noindent
{\tiny{\small{$1^{\mbox{${\hat{B}}$}}\cdot$}}\\
$s_2s_3s_2s_1s_2s_3s_2s_1s_2s_1s_2s_1s_3s_1s_3s_1s_2s_1s_3s_1
s_3s_1s_3s_1s_3s_1s_2s_1s_3s_1s_2s_1s_2s_1s_3s_1s_2s_1s_3s_1 
s_3s_2s_3s_1s_3s_1s_3s_2s_3s_2$ \newline $s_3s_1s_3s_1s_3s_1s_3s_1s_3s_2
s_3s_1s_3s_2s_3s_2s_3s_1s_3s_1s_3s_2s_3s_1s_3s_2s_3s_1s_3s_1 
s_3s_2s_3s_1s_3s_1s_3s_2s_3s_1s_3s_1s_2s_3s_2s_1s_3s_2s_3s_1
$ \newline $s_2s_3s_1s_3s_2s_3s_2s_3s_1s_3s_2s_1s_2s_3s_2s_1s_2s_3s_1s_3 
s_2s_3s_2s_3s_2s_3s_1s_3s_2s_1s_2s_3s_2s_1s_2s_3s_1s_3s_2s_3
s_2s_3s_1s_3s_2s_1s_3s_1s_3s_1$ \newline $s_2s_1s_3s_1s_3s_2s_3s_1s_3s_2 
s_1s_2s_3s_1s_3s_1s_3s_2s_3s_1s_2s_1s_3s_1s_3s_2s_3s_1s_3s_2
s_1s_2s_3s_1s_3s_1s_3s_2s_3s_1s_2s_1s_3s_1s_3s_1s_3s_1s_2s_1 
$ \newline $s_3s_2s_3s_1s_3s_1s_3s_2s_3s_1s_3s_2s_3s_1s_3s_1s_3s_2s_1s_2
s_3s_2s_1s_2s_1s_2s_3s_2s_1s_2s_3s_1s_3s_1s_3s_2s_3s_1s_2s_1 
s_3s_1s_3s_1s_2s_1s_2s_1s_3s_1$ \newline $s_3s_1s_2s_1s_3s_2s_3s_1s_3s_1
s_3s_2s_3s_2s_3s_1s_3s_2s_3s_1s_3s_1s_3s_1s_3s_1s_3s_2s_3s_1 
s_3s_1s_3s_2s_3s_1s_3s_2s_3s_1s_3s_2s_3s_1s_3s_2s_3s_1s_3s_2
$ \newline $s_3s_1s_3s_1s_3s_2s_3s_1s_3s_1s_2s_1s_3s_1s_2s_1s_2s_1s_3s_1 
s_3s_1s_2s_1s_3s_1s_2s_3s_2s_1s_2s_1s_2s_1s_2s_1$}
\end{spacing}
\newpage
\noindent
Nr.38\,(Length=336):
\begin{spacing}{0.5}
\noindent
{\tiny{\small{$1^{\mbox{${\hat{B}}$}}\cdot$}}\\
$s_1s_2s_1s_3s_1s_3s_1s_2s_1s_2s_1s_3s_1s_2s_1s_2s_1s_2s_1s_2
s_1s_3s_1s_2s_1s_3s_1s_3s_1s_2s_1s_3s_1s_2s_1s_2s_1s_3s_1s_3 
s_1s_2s_1s_3s_1s_2s_1s_3s_1s_2$ \newline $s_1s_2s_1s_2s_1s_2s_1s_3s_1s_2
s_1s_3s_1s_2s_1s_2s_1s_3s_1s_3s_1s_3s_1s_2s_1s_3s_1s_2s_1s_2 
s_1s_2s_1s_2s_1s_3s_1s_2s_1s_3s_1s_3s_1s_2s_1s_3s_1s_2s_1s_2
$ \newline $s_1s_3s_1s_3s_1s_2s_1s_3s_1s_3s_1s_2s_3s_1s_3s_2s_1s_2s_1s_2 
s_1s_3s_1s_2s_1s_3s_1s_3s_1s_3s_1s_2s_1s_2s_1s_3s_1s_2s_1s_2
s_1s_3s_1s_2s_1s_2s_1s_3s_1s_2$ \newline $s_1s_3s_1s_2s_1s_2s_1s_3s_1s_2 
s_1s_2s_1s_3s_1s_2s_1s_3s_1s_2s_1s_3s_1s_2s_3s_2s_1s_3s_1s_2
s_1s_2s_1s_2s_1s_3s_1s_2s_1s_3s_1s_2s_1s_2s_1s_2s_1s_3s_1s_2 
$ \newline $s_1s_3s_1s_2s_1s_2s_1s_3s_1s_3s_1s_3s_1s_2s_1s_3s_1s_2s_1s_2
s_1s_3s_1s_2s_1s_2s_1s_2s_1s_2s_1s_3s_1s_3s_1s_2s_1s_2s_1s_3 
s_1s_2s_1s_3s_1s_2s_1s_3s_1s_2$ \newline $s_1s_3s_1s_3s_1s_2s_1s_3s_1s_2
s_1s_2s_3s_2s_1s_2s_1s_3s_1s_2s_1s_2s_1s_2s_1s_3s_1s_2s_1s_2 
s_3s_2s_1s_2s_1s_3s_1s_2s_1s_3s_1s_2s_1s_2s_1s_3s_1s_2s_3s_2
$ \newline $s_1s_2s_1s_2s_1s_2s_3s_1s_3s_1s_3s_2s_3s_1s_3s_2s_3s_1s_3s_2 
s_1s_2s_3s_2s_1s_2s_1s_2s_1s_2s_3s_2s_1s_2s_1s_3$}
\end{spacing}
\vskip\baselineskip
\noindent
Nr.39\,(Length=336):
\begin{spacing}{0.5}
\noindent
{\tiny{\small{$1^{\mbox{${\hat{B}}$}}\cdot$}}\\
$s_1s_2s_1s_3s_1s_3s_1s_2s_1s_2s_1s_3s_1s_2s_1s_2s_1s_2s_1s_2
s_1s_3s_1s_2s_1s_3s_1s_3s_1s_2s_1s_3s_1s_2s_1s_2s_1s_3s_1s_3 
s_1s_2s_1s_3s_1s_2s_1s_3s_1s_2$ \newline $s_1s_2s_1s_2s_1s_2s_1s_3s_1s_2
s_1s_3s_1s_2s_1s_2s_1s_3s_1s_3s_1s_3s_1s_2s_1s_3s_1s_2s_1s_2 
s_1s_2s_1s_2s_1s_3s_1s_2s_1s_3s_1s_3s_1s_2s_1s_3s_1s_2s_1s_2
$ \newline $s_1s_3s_1s_3s_1s_2s_1s_3s_1s_3s_1s_2s_3s_1s_3s_2s_1s_2s_1s_2 
s_1s_3s_1s_2s_1s_3s_1s_3s_1s_3s_1s_2s_1s_2s_1s_3s_1s_2s_1s_2
s_1s_3s_1s_2s_1s_2s_1s_3s_1s_2$ \newline $s_1s_3s_1s_2s_1s_2s_1s_3s_1s_2 
s_1s_2s_1s_3s_1s_2s_1s_3s_1s_2s_1s_3s_1s_2s_3s_2s_1s_3s_1s_2
s_1s_2s_1s_2s_1s_3s_1s_2s_1s_3s_1s_2s_1s_2s_1s_2s_1s_3s_1s_2 
$ \newline $s_1s_3s_1s_2s_1s_2s_1s_3s_1s_3s_1s_3s_1s_2s_1s_3s_1s_2s_1s_2
s_1s_3s_1s_2s_1s_2s_1s_2s_1s_2s_1s_3s_1s_3s_1s_2s_1s_2s_1s_3 
s_1s_2s_1s_3s_1s_2s_1s_3s_1s_2$ \newline $s_1s_3s_1s_3s_1s_2s_1s_3s_1s_2
s_1s_2s_3s_2s_1s_2s_1s_3s_1s_2s_1s_2s_1s_2s_1s_3s_1s_2s_1s_2 
s_3s_2s_1s_2s_1s_3s_1s_2s_1s_3s_1s_2s_1s_2s_1s_3s_1s_2s_3s_2
$ \newline $s_1s_2s_1s_2s_1s_2s_3s_1s_3s_1s_3s_2s_3s_1s_3s_2s_3s_1s_3s_2 
s_1s_2s_3s_2s_1s_2s_1s_2s_1s_2s_3s_2s_1s_2s_1s_3$}
\end{spacing}
\vskip\baselineskip
\noindent
Nr.40\,(Length=364):
\begin{spacing}{0.5}
\noindent
{\tiny{\small{$1^{\mbox{${\hat{B}}$}}\cdot$}}\\
$s_2s_1s_2s_3s_2s_1s_3s_1s_2s_1s_2s_1s_3s_1s_2s_1s_3s_1s_3s_1
s_2s_1s_3s_1s_2s_1s_2s_1s_3s_1s_2s_1s_2s_1s_2s_1s_2s_1s_3s_1 
s_2s_1s_2s_1s_3s_1s_2s_1s_3s_1$ \newline $s_2s_3s_1s_3s_2s_1s_3s_1s_3s_1
s_2s_1s_3s_1s_3s_1s_2s_1s_3s_1s_2s_3s_1s_3s_2s_3s_1s_3s_2s_1 
s_3s_1s_3s_1s_2s_1s_3s_1s_3s_1s_2s_1s_3s_1s_2s_1s_2s_1s_3s_1
$ \newline $s_2s_1s_2s_1s_3s_1s_3s_1s_2s_1s_3s_1s_2s_1s_3s_1s_3s_1s_2s_1 
s_2s_1s_3s_1s_2s_3s_2s_1s_2s_1s_2s_3s_2s_1s_2s_1s_2s_3s_1s_3
s_2s_1s_3s_1s_2s_3s_1s_3s_2s_1$ \newline $s_2s_1s_2s_3s_2s_1s_3s_1s_2s_3 
s_2s_1s_2s_1s_2s_3s_2s_1s_2s_1s_2s_3s_1s_3s_2s_3s_1s_3s_2s_3
s_2s_1s_2s_3s_2s_1s_3s_1s_2s_1s_2s_1s_3s_1s_2s_1s_2s_1s_3s_1 
$ \newline $s_3s_1s_2s_1s_3s_1s_3s_1s_2s_1s_3s_1s_2s_1s_3s_1s_3s_1s_2s_1
s_3s_1s_3s_1s_3s_1s_2s_1s_3s_1s_2s_1s_3s_1s_2s_1s_3s_1s_3s_1 
s_2s_1s_3s_1s_2s_1s_2s_1s_3s_1$ \newline $s_2s_3s_1s_3s_2s_1s_3s_1s_2s_1
s_2s_1s_2s_1s_3s_1s_3s_1s_2s_1s_3s_1s_2s_1s_3s_1s_2s_1s_3s_2 
s_3s_1s_2s_1s_3s_1s_2s_1s_2s_1s_3s_1s_2s_1s_3s_1s_3s_1s_2s_1
$ \newline $s_3s_1s_2s_1s_3s_2s_3s_1s_3s_1s_2s_1s_3s_1s_2s_1s_2s_1s_2s_1 
s_3s_1s_3s_1s_2s_1s_3s_1s_2s_1s_3s_1s_3s_1s_2s_1s_2s_1s_3s_1
s_3s_1s_2s_1s_3s_1s_2s_1s_3s_1$ \newline $s_3s_1s_2s_1s_2s_1s_2s_3s_2s_1 
s_2s_3s_2s_1$}
\end{spacing}
\vskip\baselineskip
\noindent
Nr.41\,(Length=364):
\begin{spacing}{0.5}
\noindent
{\tiny{\small{$1^{\mbox{${\hat{B}}$}}\cdot$}}\\
$s_3s_1s_2s_1s_3s_1s_2s_1s_3s_1s_3s_1s_3s_2s_3s_2s_3s_1s_3s_2
s_1s_2s_3s_2s_1s_3s_1s_2s_1s_3s_2s_3s_1s_3s_2s_1s_2s_3s_1s_3 
s_2s_3s_1s_3s_2s_3s_1s_2s_1s_3$ \newline $s_1s_2s_1s_3s_1s_2s_1s_3s_1s_3
s_1s_2s_3s_2s_1s_2s_1s_2s_3s_2s_3s_1s_3s_2s_3s_2s_3s_1s_3s_2 
s_1s_2s_3s_1s_3s_2s_3s_2s_1s_2s_3s_2s_1s_3s_1s_2s_1s_3s_1s_3
$ \newline $s_1s_2s_3s_1s_3s_2s_1s_3s_1s_3s_1s_2s_1s_3s_2s_3s_1s_3s_2s_3 
s_1s_3s_2s_1s_2s_3s_1s_3s_1s_3s_2s_3s_1s_2s_1s_3s_1s_2s_3s_1
s_3s_1s_3s_2s_3s_1s_2s_1s_3s_1$ \newline $s_2s_1s_3s_2s_3s_1s_3s_1s_3s_2 
s_3s_1s_3s_2s_1s_3s_2s_3s_1s_2s_3s_2s_1s_2s_3s_2s_1s_3s_2s_3
s_1s_2s_3s_1s_3s_2s_3s_1s_3s_1s_3s_2s_3s_1s_2s_1s_3s_1s_3s_1 
$ \newline $s_2s_1s_2s_1s_2s_1s_2s_1s_2s_1s_3s_2s_3s_1s_3s_1s_2s_1s_3s_1
s_3s_1s_3s_2s_1s_2s_3s_1s_2s_1s_3s_2s_1s_2s_3s_1s_2s_1s_3s_1 
s_3s_1s_2s_1s_3s_2s_1s_2s_3s_1$ \newline $s_2s_1s_3s_2s_1s_2s_3s_2s_3s_1
s_2s_1s_3s_2s_3s_1s_3s_2s_1s_2s_3s_2s_1s_3s_1s_2s_1s_3s_1s_2 
s_3s_1s_3s_2s_1s_3s_1s_3s_1s_3s_1s_2s_1s_3s_1s_2s_3s_1s_3s_2
$ \newline $s_1s_2s_3s_2s_1s_2s_3s_1s_3s_2s_3s_1s_3s_2s_1s_3s_1s_2s_1s_3 
s_1s_2s_3s_1s_3s_2s_3s_1s_2s_1s_3s_1s_3s_1s_3s_1s_2s_1s_3s_2
s_1s_2s_3s_1s_2s_1s_3s_1s_3s_1$ \newline $s_2s_1s_3s_1s_2s_1s_3s_2s_3s_1 
s_3s_1s_3s_2$}
\end{spacing}
\vskip\baselineskip
\noindent
Nr.42\,(Length=392):
\begin{spacing}{0.5}
\noindent
{\tiny{\small{$1^{\mbox{${\hat{B}}$}}\cdot$}}\\
$s_2s_3s_1s_3s_1s_3s_2s_3s_1s_3s_1s_3s_2s_3s_1s_3s_2s_1s_3s_1
s_3s_1s_2s_1s_3s_1s_3s_1s_2s_1s_3s_1s_3s_2s_3s_2s_3s_1s_3s_1 
s_3s_2s_3s_1s_2s_1s_3s_1s_3s_1$ \newline $s_2s_1s_2s_1s_3s_1s_3s_1s_2s_1
s_3s_2s_3s_1s_3s_1s_3s_2s_1s_2s_3s_2s_1s_2s_3s_1s_2s_1s_3s_2 
s_3s_1s_3s_2s_3s_1s_3s_1s_3s_2s_3s_1s_2s_1s_3s_2s_3s_1s_2s_1
$ \newline $s_2s_1s_3s_1s_3s_1s_2s_1s_3s_1s_2s_1s_3s_2s_3s_1s_3s_2s_3s_2 
s_3s_1s_3s_2s_3s_1s_3s_1s_3s_2s_3s_1s_2s_1s_3s_1s_2s_1s_3s_1
s_2s_3s_1s_3s_2s_1s_3s_1s_3s_1$ \newline $s_2s_1s_3s_1s_3s_1s_2s_3s_1s_3 
s_2s_1s_3s_1s_2s_1s_3s_1s_2s_1s_2s_1s_3s_1s_3s_1s_2s_1s_2s_1
s_3s_1s_2s_1s_3s_1s_3s_1s_2s_1s_3s_2s_3s_1s_3s_1s_3s_1s_3s_2 
$ \newline $s_3s_1s_3s_1s_3s_2s_3s_1s_3s_1s_3s_2s_3s_1s_2s_1s_3s_1s_2s_1
s_3s_1s_2s_1s_3s_1s_3s_1s_3s_1s_2s_1s_3s_1s_3s_1s_2s_1s_2s_1 
s_3s_1s_2s_1s_3s_1s_2s_3s_1s_3$ \newline $s_2s_1s_3s_1s_3s_1s_2s_1s_3s_1
s_3s_1s_2s_1s_2s_1s_3s_1s_2s_1s_3s_1s_2s_1s_3s_2s_3s_1s_3s_1 
s_3s_2s_3s_1s_3s_2s_3s_2s_1s_2s_3s_1s_3s_1s_2s_1s_3s_1s_3s_1
$ \newline $s_2s_1s_3s_1s_2s_1s_3s_1s_2s_3s_2s_1s_3s_1s_2s_1s_2s_1s_3s_1 
s_2s_3s_2s_1s_2s_1s_2s_3s_1s_3s_2s_3s_1s_3s_1s_3s_2s_3s_1s_3
s_2s_3s_1s_3s_2s_1s_2s_3s_2s_1$ \newline $s_2s_1s_3s_1s_3s_2s_3s_1s_3s_1 
s_3s_2s_3s_1s_2s_1s_3s_1s_3s_1s_2s_1s_3s_1s_2s_1s_3s_1s_3s_1
s_2s_3s_2s_1s_2s_3s_1s_3s_2s_3s_1s_3$}
\end{spacing}
\vskip\baselineskip
\noindent
Nr.43\,(Length=392):
\begin{spacing}{0.5}
\noindent
{\tiny{\small{$1^{\mbox{${\hat{B}}$}}\cdot$}}\\
$s_2s_3s_2s_1s_2s_3s_2s_1s_2s_1s_2s_1s_2s_1s_2s_1s_3s_1s_2s_1
s_3s_1s_2s_1s_3s_1s_2s_1s_2s_1s_3s_1s_3s_1s_3s_1s_2s_1s_2s_1 
s_3s_1s_2s_1s_3s_1s_2s_1s_3s_1$ \newline $s_2s_1s_2s_1s_3s_1s_3s_1s_2s_1
s_3s_1s_2s_1s_3s_1s_3s_1s_2s_1s_2s_1s_2s_1s_3s_1s_3s_1s_2s_1 
s_3s_1s_3s_1s_2s_1s_3s_1s_2s_3s_2s_1s_2s_1s_2s_3s_2s_1s_3s_1
$ \newline $s_2s_1s_2s_1s_3s_1s_2s_1s_3s_1s_2s_3s_2s_1s_2s_1s_2s_3s_2s_1 
s_3s_1s_2s_1s_3s_1s_3s_1s_2s_1s_2s_1s_3s_1s_2s_1s_2s_1s_2s_1
s_3s_1s_2s_1s_3s_1s_2s_1s_2s_1$ \newline $s_3s_1s_3s_1s_2s_1s_3s_1s_3s_1 
s_3s_1s_2s_1s_3s_1s_3s_1s_2s_1s_3s_1s_2s_1s_2s_1s_3s_1s_2s_1
s_3s_1s_2s_1s_2s_1s_3s_2s_3s_1s_2s_1s_2s_1s_3s_1s_2s_1s_3s_1 
$ \newline $s_2s_3s_2s_1s_2s_1s_2s_3s_2s_1s_3s_1s_2s_1s_3s_1s_3s_1s_2s_1
s_3s_2s_3s_1s_2s_1s_3s_1s_3s_1s_2s_1s_2s_1s_3s_1s_2s_1s_3s_1 
s_2s_1s_3s_1s_3s_2s_3s_1s_2s_1$ \newline $s_3s_1s_2s_3s_2s_1s_2s_1s_3s_1
s_3s_1s_2s_1s_2s_1s_3s_2s_3s_1s_3s_1s_2s_1s_3s_1s_2s_1s_3s_1 
s_3s_1s_3s_1s_3s_1s_2s_1s_3s_1s_2s_1s_3s_1s_2s_3s_2s_1s_2s_3
$ \newline $s_1s_3s_2s_1s_2s_3s_2s_1s_3s_1s_3s_1s_2s_1s_3s_1s_3s_1s_3s_1 
s_2s_1s_3s_1s_2s_3s_2s_1s_2s_1s_2s_3s_2s_3s_1s_3s_2s_3s_1s_3
s_2s_1s_2s_3s_2s_1s_2s_1s_2s_1$ \newline $s_3s_1s_2s_1s_3s_1s_2s_1s_2s_1 
s_3s_1s_3s_1s_3s_1s_3s_1s_2s_1s_3s_1s_2s_1s_3s_1s_3s_1s_2s_1
s_3s_1s_2s_1s_2s_1s_3s_1s_3s_1s_2s_1$}
\end{spacing}
\newpage
\noindent
Nr.44\,(Length=392):
\begin{spacing}{0.5}
\noindent
{\tiny{\small{$1^{\mbox{${\hat{B}}$}}\cdot$}}\\
$s_2s_3s_1s_3s_1s_3s_2s_3s_1s_3s_2s_1s_2s_3s_2s_1s_2s_3s_1s_3
s_2s_3s_1s_3s_2s_3s_2s_3s_1s_3s_1s_3s_1s_3s_2s_3s_1s_2s_1s_3 
s_1s_2s_1s_2s_1s_2s_1s_3s_1s_2$ \newline $s_1s_3s_1s_3s_1s_3s_1s_2s_1s_3
s_2s_3s_1s_3s_2s_1s_3s_1s_2s_1s_2s_1s_3s_1s_3s_1s_2s_1s_3s_2 
s_3s_1s_3s_1s_3s_2s_3s_1s_2s_1s_3s_1s_3s_1s_2s_3s_1s_3s_2s_1
$ \newline $s_3s_1s_2s_1s_3s_1s_2s_1s_3s_1s_2s_1s_3s_1s_2s_1s_2s_1s_3s_1 
s_3s_1s_3s_1s_2s_1s_2s_1s_3s_1s_2s_3s_1s_3s_2s_3s_2s_3s_1s_3
s_2s_1s_2s_3s_1s_3s_2s_3s_2s_3$ \newline $s_1s_3s_2s_1s_2s_3s_1s_3s_2s_3 
s_2s_3s_1s_3s_2s_1s_2s_1s_3s_1s_3s_1s_2s_1s_3s_1s_3s_1s_2s_1
s_3s_1s_2s_3s_2s_1s_2s_1s_2s_3s_2s_3s_1s_3s_2s_1s_2s_3s_2s_1 
$ \newline $s_2s_3s_1s_3s_2s_3s_2s_1s_2s_1s_2s_3s_2s_1s_2s_1s_3s_2s_3s_1
s_3s_1s_2s_1s_3s_1s_2s_1s_3s_1s_2s_1s_3s_2s_3s_1s_3s_1s_3s_2 
s_1s_2s_3s_2s_3s_1s_2s_1s_3s_1$ \newline $s_2s_1s_2s_3s_1s_3s_2s_3s_1s_3
s_2s_3s_2s_1s_2s_1s_2s_3s_2s_1s_3s_2s_3s_1s_3s_1s_3s_2s_1s_2 
s_3s_1s_2s_1s_3s_1s_3s_1s_2s_1s_3s_1s_3s_1s_2s_1s_2s_3s_1s_3
$ \newline $s_2s_3s_1s_2s_1s_3s_1s_3s_1s_3s_1s_2s_1s_2s_1s_3s_1s_3s_2s_1 
s_2s_3s_1s_3s_1s_2s_1s_2s_1s_3s_1s_3s_1s_2s_1s_3s_1s_3s_1s_2
s_1s_3s_1s_3s_1s_2s_1s_3s_2s_1$ \newline $s_2s_3s_2s_3s_1s_3s_2s_3s_1s_3 
s_2s_3s_1s_3s_2s_1s_2s_1s_2s_3s_2s_1s_2s_3s_1s_3s_2s_1s_2s_1
s_2s_3s_2s_1s_2s_3s_1s_3s_2s_3s_1s_3$}
\end{spacing}
\vskip\baselineskip
\noindent
Nr.45\,(Length=420):
\begin{spacing}{0.5}
\noindent
{\tiny{\small{$1^{\mbox{${\hat{B}}$}}\cdot$}}\\
$s_2s_1s_2s_3s_2s_1s_2s_3s_2s_1s_2s_1s_2s_1s_2s_3s_2s_3s_1s_3
s_2s_3s_1s_3s_2s_1s_3s_1s_3s_1s_2s_1s_3s_1s_3s_1s_3s_1s_3s_1 
s_2s_3s_1s_3s_2s_1s_3s_1s_3s_1$ \newline $s_2s_1s_2s_1s_3s_1s_3s_1s_3s_1
s_2s_1s_3s_1s_3s_1s_3s_1s_3s_1s_2s_1s_3s_1s_2s_1s_3s_1s_2s_1 
s_2s_1s_3s_1s_3s_1s_2s_1s_3s_2s_3s_1s_2s_1s_3s_1s_3s_1s_2s_1
$ \newline $s_3s_1s_2s_3s_2s_1s_2s_1s_2s_3s_2s_1s_3s_1s_2s_1s_2s_1s_3s_1 
s_3s_1s_3s_1s_2s_1s_3s_1s_3s_1s_2s_1s_2s_1s_2s_1s_3s_2s_3s_1
s_2s_1s_2s_1s_3s_1s_2s_1s_3s_1$ \newline $s_2s_1s_2s_1s_3s_1s_3s_1s_3s_1 
s_2s_1s_2s_1s_3s_1s_2s_1s_3s_1s_2s_1s_3s_1s_2s_1s_3s_1s_2s_3
s_1s_3s_2s_1s_3s_1s_3s_1s_2s_1s_3s_1s_3s_1s_2s_1s_2s_1s_3s_1 
$ \newline $s_2s_1s_3s_1s_2s_1s_3s_2s_3s_1s_3s_1s_3s_2s_3s_1s_2s_1s_3s_1
s_2s_1s_3s_1s_2s_1s_3s_1s_2s_1s_3s_1s_2s_1s_3s_1s_2s_1s_3s_2 
s_3s_1s_3s_2s_3s_1s_3s_2s_3s_2$ \newline $s_3s_1s_3s_2s_3s_1s_2s_1s_3s_1
s_2s_1s_3s_1s_2s_1s_3s_1s_2s_1s_3s_1s_2s_1s_3s_1s_2s_1s_3s_1 
s_3s_1s_2s_1s_3s_1s_2s_1s_3s_2s_3s_1s_3s_1s_3s_2s_3s_1s_2s_1
$ \newline $s_3s_1s_3s_1s_2s_1s_2s_1s_3s_1s_3s_1s_2s_1s_3s_2s_3s_1s_3s_1 
s_3s_2s_3s_1s_3s_2s_3s_2s_3s_1s_3s_2s_3s_1s_3s_1s_3s_1s_2s_1
s_3s_1s_3s_1s_2s_1s_3s_1s_2s_1$ \newline $s_3s_1s_2s_3s_2s_1s_3s_1s_2s_1 
s_2s_1s_3s_1s_2s_3s_2s_1s_2s_1s_2s_3s_1s_3s_2s_3s_1s_3s_1s_3
s_2s_3s_1s_3s_2s_3s_1s_3s_2s_1s_2s_3s_2s_1s_2s_1s_2s_1s_2s_3 
$ \newline $s_2s_1s_2s_3s_2s_1s_2s_1s_2s_3s_2s_1s_2s_3s_2s_1s_2s_1s_2s_3$}
\end{spacing}
\vskip\baselineskip
\noindent
Nr.46\,(Length=420):
\begin{spacing}{0.5}
\noindent
{\tiny{\small{$1^{\mbox{${\hat{B}}$}}\cdot$}}\\
$s_1s_3s_2s_3s_1s_3s_1s_3s_2s_1s_2s_3s_2s_1s_2s_3s_1s_3s_2s_3
s_1s_3s_2s_3s_2s_1s_3s_1s_2s_3s_1s_3s_2s_3s_2s_3s_1s_3s_1s_3 
s_1s_3s_2s_3s_1s_3s_2s_3s_1s_2$ \newline $s_1s_3s_1s_2s_1s_2s_1s_3s_2s_3
s_1s_3s_2s_3s_2s_3s_1s_3s_1s_3s_2s_1s_2s_3s_1s_3s_2s_3s_1s_2 
s_1s_3s_1s_3s_1s_3s_1s_3s_1s_2s_1s_3s_1s_2s_1s_3s_1s_2s_1s_3
$ \newline $s_1s_3s_1s_2s_1s_3s_2s_3s_1s_2s_1s_3s_1s_2s_1s_3s_2s_3s_1s_3 
s_1s_2s_1s_3s_1s_3s_1s_2s_1s_2s_1s_3s_1s_2s_1s_2s_1s_3s_1s_3
s_1s_2s_1s_3s_1s_2s_1s_3s_2s_3$ \newline $s_1s_3s_1s_3s_1s_3s_2s_3s_1s_3 
s_1s_3s_2s_3s_1s_2s_1s_3s_1s_2s_1s_3s_2s_3s_1s_3s_1s_2s_1s_3
s_1s_3s_1s_2s_1s_3s_1s_2s_1s_3s_2s_3s_1s_3s_1s_2s_1s_2s_1s_3 
$ \newline $s_1s_2s_1s_3s_1s_2s_1s_2s_1s_3s_1s_3s_1s_3s_1s_2s_1s_3s_1s_2
s_1s_2s_1s_2s_1s_3s_2s_3s_1s_3s_1s_3s_2s_1s_2s_3s_1s_2s_1s_3 
s_2s_1s_2s_3s_2s_1s_2s_3s_1s_2$ \newline $s_1s_3s_1s_3s_1s_2s_1s_3s_2s_3
s_1s_3s_1s_3s_2s_3s_2s_3s_1s_3s_2s_1s_2s_3s_1s_2s_1s_3s_2s_1 
s_2s_3s_1s_3s_2s_1s_2s_3s_1s_2s_1s_3s_2s_1s_2s_3s_1s_2s_1s_3
$ \newline $s_1s_2s_1s_3s_1s_2s_1s_2s_1s_3s_1s_2s_1s_3s_1s_3s_1s_2s_1s_3 
s_2s_3s_1s_3s_1s_2s_1s_3s_1s_3s_1s_2s_1s_3s_2s_3s_1s_3s_2s_1
s_2s_3s_2s_1s_3s_1s_2s_1s_2s_1$ \newline $s_3s_1s_2s_3s_2s_1s_2s_3s_2s_1 
s_3s_1s_2s_1s_3s_1s_2s_1s_2s_1s_3s_1s_2s_3s_2s_1s_2s_3s_2s_1
s_2s_3s_1s_3s_2s_3s_1s_3s_2s_3s_2s_1s_2s_1s_3s_1s_2s_3s_2s_1 
$ \newline $s_2s_1s_2s_1s_2s_3s_2s_1s_2s_1s_3s_1s_2s_3s_1s_3s_2s_3s_1s_3$}
\end{spacing}
\vskip\baselineskip
\noindent
Nr.47\,(Length=420):
\begin{spacing}{0.5}
\noindent
{\tiny{\small{$1^{\mbox{${\hat{B}}$}}\cdot$}}\\
$s_3s_2s_1s_2s_1s_2s_3s_2s_1s_2s_3s_2s_1s_2s_1s_2s_3s_2s_1s_2
s_3s_2s_3s_2s_1s_3s_2s_3s_1s_3s_1s_3s_2s_3s_1s_2s_1s_3s_2s_3 
s_1s_3s_2s_3s_1s_3s_1s_2s_1s_2$ \newline $s_1s_3s_1s_2s_1s_3s_1s_2s_1s_2
s_1s_3s_1s_2s_1s_3s_1s_2s_1s_2s_1s_2s_1s_3s_1s_2s_1s_2s_1s_3 
s_1s_2s_1s_2s_1s_3s_1s_2s_1s_3s_1s_2s_3s_2s_1s_2s_1s_2s_3s_2
$ \newline $s_1s_2s_3s_1s_3s_2s_3s_1s_3s_2s_3s_1s_3s_2s_1s_2s_3s_2s_1s_2 
s_1s_2s_3s_2s_1s_3s_1s_2s_1s_3s_1s_2s_1s_2s_1s_3s_1s_2s_3s_2
s_1s_3s_1s_2s_1s_2s_1s_3s_1s_3$ \newline $s_1s_3s_1s_2s_1s_3s_1s_3s_1s_3 
s_1s_2s_1s_2s_1s_3s_1s_2s_1s_3s_1s_2s_1s_2s_1s_3s_1s_3s_1s_2
s_1s_3s_1s_2s_1s_3s_1s_2s_3s_2s_1s_2s_3s_2s_1s_3s_1s_3s_1s_2 
$ \newline $s_1s_2s_1s_3s_1s_2s_1s_2s_1s_3s_1s_2s_3s_2s_3s_2s_1s_2s_3s_1
s_3s_2s_1s_3s_1s_2s_3s_1s_3s_2s_1s_3s_1s_3s_1s_2s_1s_3s_1s_2 
s_1s_2s_1s_3s_1s_2s_3s_2s_1s_2$ \newline $s_3s_2s_1s_3s_1s_2s_1s_2s_1s_3
s_2s_3s_1s_3s_1s_3s_2s_3s_1s_2s_1s_3s_1s_2s_1s_3s_1s_2s_1s_3 
s_1s_3s_1s_2s_1s_2s_1s_3s_1s_3s_1s_2s_1s_2s_1s_3s_1s_2s_3s_2
$ \newline $s_3s_2s_1s_2s_3s_2s_1s_2s_3s_2s_1s_2s_3s_2s_1s_2s_3s_2s_3s_2 
s_3s_2s_1s_2s_1s_3s_1s_2s_1s_2s_1s_3s_1s_3s_1s_2s_1s_2s_1s_3
s_1s_3s_1s_2s_3s_2s_1s_2s_1s_3$ \newline $s_1s_2s_1s_3s_1s_2s_1s_2s_1s_2 
s_1s_3s_1s_2s_1s_3s_1s_2s_1s_2s_1s_3s_1s_3s_1s_2s_1s_3s_2s_3
s_1s_3s_1s_3s_2s_3s_1s_2s_3s_2s_3s_2s_1s_2s_3s_2s_3s_2s_1s_2 
$ \newline $s_3s_2s_1s_3s_2s_3s_1s_2s_3s_2s_1s_2s_3s_2s_1s_2s_3s_2s_1s_2$}
\end{spacing}
\vskip\baselineskip
\noindent
Nr.48\,(Length=448):
\begin{spacing}{0.5}
\noindent
{\tiny{\small{$1^{\mbox{${\hat{B}}$}}\cdot$}}\\
$s_3s_1s_3s_2s_3s_1s_3s_1s_3s_2s_3s_1s_3s_2s_1s_3s_1s_2s_1s_2
s_1s_3s_1s_3s_1s_2s_1s_3s_2s_3s_1s_3s_1s_3s_2s_3s_1s_2s_1s_3 
s_1s_2s_3s_2s_1s_2s_1s_3s_1s_3$ \newline $s_2s_3s_1s_2s_1s_3s_1s_3s_1s_2
s_3s_2s_1s_3s_2s_3s_1s_2s_3s_2s_1s_3s_1s_2s_3s_2s_1s_2s_1s_2 
s_3s_1s_3s_2s_3s_1s_3s_2s_3s_1s_3s_2s_3s_2s_1s_2s_3s_2s_1s_3
$ \newline $s_2s_3s_1s_2s_3s_2s_1s_2s_3s_2s_1s_2s_3s_2s_1s_3s_1s_2s_1s_2 
s_1s_3s_1s_2s_1s_2s_3s_2s_1s_3s_1s_2s_3s_2s_1s_2s_1s_2s_3s_1
s_3s_2s_3s_1s_3s_2s_1s_3s_2s_3$ \newline $s_1s_2s_3s_2s_1s_2s_1s_3s_1s_3 
s_1s_2s_1s_3s_2s_3s_1s_3s_1s_3s_2s_3s_1s_3s_1s_3s_2s_3s_1s_2
s_1s_3s_1s_2s_1s_2s_1s_3s_1s_2s_1s_2s_1s_3s_2s_3s_1s_3s_2s_1 
$ \newline $s_2s_3s_2s_1s_2s_3s_2s_1s_2s_3s_1s_3s_2s_3s_1s_3s_1s_2s_1s_3
s_1s_3s_1s_2s_3s_2s_3s_2s_1s_3s_2s_3s_1s_2s_3s_2s_1s_2s_3s_2 
s_1s_2s_3s_2s_1s_2s_3s_2s_1s_3$ \newline $s_1s_2s_1s_3s_2s_3s_1s_3s_1s_3
s_1s_2s_1s_3s_1s_2s_3s_2s_1s_3s_1s_3s_1s_2s_1s_2s_1s_3s_1s_3 
s_1s_2s_1s_3s_2s_3s_1s_3s_1s_3s_2s_3s_1s_2s_1s_3s_2s_3s_1s_3
$ \newline $s_1s_3s_2s_1s_2s_3s_1s_2s_1s_3s_2s_1s_2s_3s_2s_1s_2s_3s_2s_1 
s_2s_3s_2s_3s_1s_3s_2s_1s_2s_3s_1s_3s_2s_3s_2s_3s_1s_3s_2s_3
s_1s_2s_1s_3s_1s_2s_3s_1s_3s_1$ \newline $s_3s_2s_1s_2s_1s_2s_3s_2s_1s_3 
s_1s_2s_1s_2s_1s_3s_1s_2s_1s_2s_3s_2s_1s_3s_1s_2s_1s_3s_2s_1
s_2s_1s_2s_3s_2s_1s_2s_3s_2s_1s_2s_1s_2s_3s_1s_3s_1s_2s_1s_3 
$ \newline $s_2s_3s_1s_3s_1s_3s_2s_3s_1s_3s_1s_3s_1s_3s_2s_3s_1s_2s_1s_3
s_1s_3s_1s_2s_3s_2s_1s_3s_1s_2s_1s_3s_1s_2s_1s_3s_1s_2s_3s_1 
s_3s_2s_3s_2s_3s_1s_3s_2$}
\end{spacing}
\vskip\baselineskip
\noindent
Nr.49\,(Length=448):
\begin{spacing}{0.5}
\noindent
{\tiny{\small{$1^{\mbox{${\hat{B}}$}}\cdot$}}\\
$s_2s_3s_1s_3s_1s_3s_2s_3s_1s_3s_1s_3s_2s_3s_1s_3s_2s_1s_3s_1
s_2s_3s_2s_1s_2s_1s_2s_3s_2s_1s_2s_3s_2s_1s_2s_1s_2s_1s_3s_1 
s_2s_1s_2s_1s_3s_1s_2s_1s_3s_1$ \newline $s_2s_1s_3s_1s_2s_3s_2s_1s_3s_1
s_2s_1s_2s_1s_3s_1s_2s_3s_2s_1s_2s_3s_2s_1s_3s_2s_3s_1s_2s_3 
s_2s_3s_1s_3s_2s_3s_1s_3s_1s_3s_2s_3s_1s_3s_2s_3s_1s_3s_2s_1
$ \newline $s_2s_3s_2s_1s_2s_3s_2s_1s_3s_1s_3s_1s_3s_1s_2s_1s_2s_1s_3s_1 
s_2s_1s_3s_1s_2s_1s_3s_1s_2s_1s_3s_1s_2s_3s_1s_3s_2s_1s_3s_1
s_3s_1s_2s_1s_3s_2s_3s_1s_3s_2$ \newline $s_3s_1s_2s_1s_2s_1s_3s_1s_2s_3 
s_2s_1s_2s_3s_1s_3s_2s_1s_2s_3s_2s_1s_3s_1s_2s_1s_2s_1s_2s_1
s_3s_1s_2s_1s_3s_1s_2s_1s_2s_1s_2s_1s_3s_1s_2s_1s_2s_1s_3s_1 
$ \newline $s_3s_1s_2s_1s_2s_1s_2s_3s_2s_1s_3s_1s_3s_1s_2s_1s_2s_1s_3s_1
s_2s_3s_2s_1s_2s_1s_2s_3s_1s_3s_2s_3s_1s_3s_1s_3s_2s_3s_1s_3 
s_2s_3s_1s_3s_2s_1s_2s_1s_2s_3$ \newline $s_2s_1s_3s_1s_2s_1s_3s_1s_2s_1
s_2s_1s_3s_1s_2s_3s_2s_1s_2s_1s_2s_3s_2s_1s_3s_1s_2s_1s_3s_1 
s_2s_1s_2s_1s_3s_1s_3s_1s_2s_1s_3s_1s_2s_3s_2s_1s_2s_1s_2s_3
$ \newline $s_2s_1s_2s_1s_3s_1s_3s_1s_2s_1s_2s_1s_3s_1s_2s_1s_3s_1s_2s_3 
s_2s_1s_2s_3s_2s_1s_2s_3s_1s_3s_2s_1s_2s_3s_2s_1s_2s_3s_2s_1
s_3s_1s_3s_1s_2s_1s_3s_1s_3s_1$ \newline $s_2s_1s_2s_1s_3s_1s_2s_1s_3s_1 
s_2s_1s_3s_1s_3s_1s_2s_3s_2s_1s_2s_3s_2s_1s_3s_1s_2s_1s_2s_1
s_3s_1s_2s_1s_2s_1s_3s_1s_2s_1s_3s_1s_2s_3s_2s_1s_2s_1s_2s_3 
$ \newline $s_2s_1s_3s_1s_2s_1s_3s_1s_2s_1s_2s_1s_3s_1s_2s_1s_2s_1s_2s_1
s_3s_1s_2s_3s_2s_1s_2s_1s_2s_1s_2s_3s_2s_3s_1s_3s_2s_1s_3s_1 
s_2s_3s_1s_3s_2s_3s_1s_3$}
\end{spacing}
\vskip\baselineskip
\noindent
Nr.50\,(Length=448):
\begin{spacing}{0.5}
\noindent
{\tiny{\small{$1^{\mbox{${\hat{B}}$}}\cdot$}}\\
$s_2s_1s_2s_3s_2s_1s_2s_3s_2s_1s_2s_1s_2s_1s_2s_3s_2s_3s_1s_3
s_2s_3s_1s_3s_2s_1s_3s_1s_3s_1s_2s_1s_3s_1s_2s_1s_2s_1s_3s_1 
s_2s_1s_2s_1s_2s_3s_2s_1s_2s_3$ \newline $s_2s_1s_2s_1s_2s_3s_2s_1s_2s_1
s_3s_1s_2s_3s_2s_1s_3s_1s_2s_1s_2s_1s_3s_1s_2s_1s_2s_3s_2s_1 
s_3s_1s_2s_1s_2s_1s_3s_1s_2s_1s_2s_1s_2s_1s_3s_1s_2s_1s_2s_1
$ \newline $s_3s_1s_3s_1s_2s_1s_3s_1s_3s_1s_2s_1s_2s_1s_3s_1s_3s_1s_2s_1 
s_3s_1s_2s_1s_2s_1s_3s_1s_3s_1s_2s_1s_2s_3s_2s_1s_2s_1s_2s_3
s_2s_1s_3s_1s_2s_1s_3s_1s_3s_1$ \newline $s_2s_1s_2s_1s_3s_1s_2s_1s_2s_1 
s_2s_1s_3s_1s_2s_1s_3s_1s_2s_1s_2s_1s_3s_1s_3s_1s_2s_1s_3s_1
s_3s_1s_2s_1s_2s_1s_3s_1s_2s_3s_2s_1s_2s_1s_2s_3s_2s_1s_3s_1 
$ \newline $s_2s_1s_3s_1s_2s_1s_3s_1s_3s_1s_2s_3s_1s_3s_2s_1s_3s_1s_3s_1
s_2s_1s_2s_1s_3s_1s_3s_1s_3s_1s_2s_1s_2s_1s_3s_1s_3s_1s_2s_1 
s_3s_1s_2s_1s_2s_1s_3s_1s_2s_1$ \newline $s_2s_1s_3s_2s_3s_1s_3s_1s_2s_1
s_3s_1s_3s_1s_2s_1s_3s_1s_3s_2s_3s_1s_2s_1s_3s_1s_2s_3s_2s_1 
s_2s_1s_2s_1s_3s_1s_2s_1s_2s_1s_3s_1s_3s_1s_2s_1s_2s_1s_3s_1
$ \newline $s_3s_1s_3s_1s_2s_1s_2s_1s_3s_1s_3s_1s_2s_1s_2s_1s_3s_1s_2s_1 
s_2s_1s_2s_1s_3s_1s_2s_1s_3s_1s_2s_1s_2s_1s_3s_1s_3s_1s_2s_1
s_3s_1s_3s_1s_2s_1s_2s_1s_3s_1$ \newline $s_2s_1s_3s_1s_2s_1s_2s_1s_3s_1 
s_2s_1s_2s_1s_3s_1s_2s_1s_3s_1s_3s_1s_2s_1s_3s_1s_3s_1s_3s_1
s_2s_1s_3s_1s_2s_1s_2s_1s_3s_1s_2s_1s_3s_1s_3s_1s_2s_1s_3s_1 
$ \newline $s_2s_1s_3s_1s_2s_3s_1s_3s_2s_3s_1s_3s_2s_3s_1s_3s_2s_1s_2s_3
s_2s_1s_2s_1s_2s_1s_2s_3s_2s_1s_2s_3s_2s_1s_2s_1s_2s_3s_2s_1 
s_2s_3s_2s_1s_2s_1s_2s_3$}
\end{spacing}
\vskip\baselineskip
\noindent
Nr.51\,(Length=476):
\begin{spacing}{0.5}
\noindent
{\tiny{\small{$1^{\mbox{${\hat{B}}$}}\cdot$}}\\
$s_2s_3s_2s_1s_2s_1s_2s_3s_1s_3s_2s_3s_1s_3s_1s_3s_2s_3s_1s_3
s_2s_1s_3s_1s_2s_3s_2s_1s_2s_1s_2s_3s_2s_1s_2s_3s_2s_1s_2s_1 
s_2s_1s_3s_1s_2s_1s_2s_1s_3s_1$ \newline $s_2s_1s_3s_1s_2s_1s_3s_1s_2s_3
s_2s_1s_3s_1s_2s_1s_2s_1s_3s_1s_2s_3s_2s_1s_2s_1s_2s_3s_2s_3 
s_2s_1s_3s_1s_2s_1s_3s_1s_3s_1s_2s_1s_2s_1s_3s_1s_2s_1s_3s_1
$ \newline $s_2s_1s_3s_1s_3s_1s_2s_1s_2s_1s_3s_1s_3s_1s_2s_1s_2s_1s_3s_1 
s_3s_1s_2s_1s_2s_1s_2s_1s_3s_1s_3s_1s_2s_1s_3s_1s_3s_1s_3s_1
s_3s_1s_2s_1s_3s_1s_2s_1s_3s_1$ \newline $s_2s_1s_3s_1s_2s_1s_2s_3s_2s_1 
s_2s_1s_2s_3s_2s_3s_2s_1s_2s_1s_2s_3s_2s_1s_3s_1s_2s_1s_2s_1
s_3s_1s_3s_1s_3s_1s_3s_1s_2s_1s_3s_1s_2s_1s_3s_1s_2s_1s_2s_1 
$ \newline $s_2s_1s_2s_1s_3s_1s_2s_1s_3s_1s_2s_1s_2s_1s_2s_1s_3s_1s_3s_1
s_2s_1s_2s_1s_3s_1s_2s_3s_2s_1s_2s_1s_2s_3s_2s_1s_3s_1s_2s_1 
s_3s_1s_2s_3s_2s_3s_2s_1s_2s_3$ \newline $s_1s_3s_2s_1s_2s_3s_2s_1s_2s_1
s_2s_3s_2s_1s_3s_1s_3s_1s_3s_1s_3s_1s_2s_1s_2s_1s_3s_1s_3s_1 
s_2s_1s_3s_1s_2s_3s_2s_1s_3s_1s_2s_1s_2s_1s_2s_1s_3s_1s_2s_3
$ \newline $s_1s_3s_2s_1s_2s_1s_3s_1s_2s_1s_2s_3s_2s_1s_2s_1s_3s_1s_2s_1 
s_3s_1s_2s_1s_3s_1s_2s_1s_2s_1s_3s_1s_2s_3s_2s_1s_3s_1s_2s_1
s_2s_1s_3s_1s_2s_1s_3s_1s_2s_1$ \newline $s_3s_1s_3s_1s_2s_1s_2s_1s_3s_1 
s_2s_1s_2s_1s_2s_1s_3s_1s_2s_1s_2s_1s_3s_1s_3s_1s_2s_1s_3s_1
s_2s_1s_2s_1s_3s_1s_2s_1s_2s_3s_2s_1s_2s_1s_2s_3s_2s_1s_2s_1 
$ \newline $s_2s_3s_2s_1s_2s_1s_2s_1s_3s_1s_3s_1s_3s_1s_2s_1s_2s_1s_3s_1
s_3s_1s_2s_1s_3s_1s_2s_1s_2s_1s_3s_1s_2s_3s_2s_1s_3s_1s_3s_1 
s_2s_1s_3s_1s_3s_1s_2s_3s_2s_1$ \newline $s_2s_1s_2s_3s_2s_1s_3s_1s_3s_1
s_2s_1s_3s_1s_2s_1s_2s_3s_2s_1s_2s_3s_2s_1s_2s_1$}
\end{spacing}
\vskip\baselineskip
\noindent
Nr.52\,(Length=504):
\begin{spacing}{0.5}
\noindent
{\tiny{\small{$1^{\mbox{${\hat{B}}$}}\cdot$}}\\
$s_3s_1s_3s_1s_2s_1s_2s_1s_3s_1s_2s_1s_2s_3s_2s_1s_2s_1s_2s_1
s_2s_3s_2s_1s_3s_1s_2s_1s_3s_1s_3s_1s_3s_1s_2s_1s_2s_1s_3s_1 
s_2s_1s_2s_1s_2s_1s_2s_3s_2s_1$ \newline $s_3s_1s_3s_1s_2s_1s_2s_1s_3s_1
s_2s_3s_2s_1s_2s_3s_2s_1s_2s_1s_2s_1s_2s_3s_2s_1s_2s_3s_2s_3 
s_2s_1s_2s_3s_2s_1s_2s_1s_2s_1s_2s_1s_2s_3s_2s_1s_3s_1s_2s_1
$ \newline $s_2s_1s_2s_1s_2s_1s_2s_1s_3s_1s_2s_3s_2s_1s_2s_1s_2s_1s_2s_1 
s_2s_3s_2s_1s_2s_3s_2s_1s_3s_2s_3s_1s_2s_1s_2s_3s_2s_1s_3s_1
s_2s_3s_2s_1s_2s_1s_2s_3s_2s_1$ \newline $s_3s_1s_2s_1s_3s_2s_3s_1s_3s_1 
s_3s_1s_3s_2s_3s_1s_2s_1s_3s_1s_2s_1s_3s_1s_2s_1s_2s_3s_1s_3
s_2s_1s_3s_1s_3s_1s_2s_3s_1s_3s_2s_3s_1s_3s_2s_1s_2s_1s_2s_1 
$ \newline $s_2s_3s_2s_1s_3s_1s_2s_1s_3s_1s_2s_1s_2s_1s_3s_1s_3s_1s_2s_1
s_3s_1s_2s_1s_2s_1s_2s_1s_3s_1s_2s_3s_2s_1s_2s_3s_1s_3s_2s_1 
s_2s_1s_2s_3s_2s_1s_2s_1s_2s_1$ \newline $s_3s_1s_2s_1s_3s_1s_2s_3s_2s_1
s_2s_1s_3s_1s_2s_1s_2s_1s_2s_3s_2s_1s_2s_1s_2s_3s_2s_1s_2s_1 
s_2s_1s_2s_3s_2s_1s_2s_3s_2s_1s_2s_1s_3s_1s_2s_3s_2s_1s_2s_1
$ \newline $s_2s_3s_2s_1s_2s_1s_3s_2s_3s_1s_2s_3s_2s_1s_2s_3s_2s_1s_2s_1 
s_2s_1s_3s_1s_2s_1s_2s_3s_2s_1s_2s_1s_3s_1s_3s_1s_2s_1s_3s_1
s_2s_1s_2s_3s_2s_1s_2s_1s_2s_1$ \newline $s_2s_3s_2s_1s_3s_1s_3s_1s_2s_1 
s_3s_1s_2s_1s_2s_1s_3s_1s_3s_1s_2s_1s_2s_1s_2s_1s_3s_1s_2s_1
s_2s_1s_3s_2s_3s_1s_3s_1s_3s_2s_3s_1s_3s_1s_3s_1s_3s_2s_3s_1 
$ \newline $s_2s_1s_3s_1s_3s_1s_2s_3s_2s_1s_3s_2s_3s_1s_2s_1s_2s_3s_2s_1
s_2s_1s_3s_1s_2s_1s_2s_1s_3s_1s_3s_1s_3s_1s_2s_1s_2s_1s_3s_1 
s_3s_1s_2s_3s_2s_1s_2s_1s_2s_1$ \newline $s_2s_3s_2s_1s_2s_1s_3s_1s_2s_1
s_3s_1s_3s_1s_2s_1s_2s_3s_2s_1s_2s_1s_2s_1s_2s_1s_2s_3s_2s_1 
s_3s_1s_2s_3s_2s_1s_2s_1s_2s_1s_2s_1s_2s_3s_2s_1s_2s_1s_3s_1
$ \newline $s_2s_1s_3s_1$}
\end{spacing}
\vskip\baselineskip
\noindent
Nr.53\,(Length=504):
\begin{spacing}{0.5}
\noindent
{\tiny{\small{$1^{\mbox{${\hat{B}}$}}\cdot$}}\\
$s_3s_2s_1s_3s_1s_2s_1s_2s_1s_3s_1s_2s_1s_3s_1s_2s_1s_2s_1s_2
s_1s_3s_1s_2s_3s_2s_1s_3s_1s_2s_1s_2s_1s_2s_1s_3s_1s_2s_1s_2 
s_1s_2s_1s_3s_1s_2s_1s_2s_3s_2$ \newline $s_1s_2s_1s_3s_1s_2s_1s_3s_1s_2
s_1s_2s_1s_3s_1s_2s_3s_2s_1s_2s_1s_2s_1s_3s_1s_3s_1s_2s_1s_2 
s_1s_3s_1s_2s_1s_2s_1s_2s_1s_3s_1s_2s_1s_2s_1s_3s_1s_3s_1s_2
$ \newline $s_1s_3s_1s_2s_1s_2s_1s_3s_1s_2s_1s_3s_1s_3s_1s_3s_1s_2s_1s_2 
s_1s_3s_1s_3s_1s_2s_1s_2s_1s_3s_1s_2s_3s_2s_1s_2s_3s_2s_1s_2
s_1s_2s_1s_2s_3s_2s_3s_1s_3s_2$ \newline $s_3s_1s_3s_2s_3s_1s_3s_2s_1s_2 
s_1s_2s_3s_2s_1s_3s_1s_2s_1s_2s_1s_3s_1s_3s_1s_2s_1s_3s_1s_3
s_1s_2s_1s_2s_1s_3s_1s_2s_1s_3s_1s_3s_1s_2s_1s_2s_1s_3s_1s_2 
$ \newline $s_3s_2s_1s_3s_1s_2s_1s_2s_1s_2s_1s_3s_1s_2s_1s_2s_1s_2s_1s_3
s_1s_2s_1s_2s_3s_2s_1s_2s_1s_3s_1s_2s_1s_2s_1s_3s_1s_2s_1s_3 
s_1s_2s_1s_2s_3s_2s_1s_2s_1s_2$ \newline $s_3s_2s_1s_2s_1s_2s_3s_2s_1s_2
s_1s_3s_1s_3s_1s_3s_1s_2s_1s_2s_1s_3s_1s_2s_3s_2s_1s_2s_1s_2 
s_3s_2s_1s_2s_1s_2s_3s_2s_1s_2s_3s_2s_1s_2s_1s_2s_3s_2s_1s_2
$ \newline $s_3s_2s_1s_2s_3s_2s_1s_2s_3s_2s_1s_3s_1s_2s_1s_2s_1s_2s_1s_3 
s_1s_2s_1s_2s_1s_2s_1s_3s_1s_2s_1s_2s_3s_2s_1s_2s_3s_2s_3s_2
s_3s_2s_1s_3s_1s_2s_1s_2s_1s_2$ \newline $s_1s_3s_1s_2s_1s_2s_3s_2s_1s_2 
s_1s_2s_1s_2s_3s_2s_1s_3s_1s_2s_1s_2s_3s_2s_1s_2s_3s_2s_1s_3
s_1s_2s_1s_3s_1s_2s_3s_2s_1s_2s_1s_2s_3s_2s_1s_2s_1s_2s_3s_2 
$ \newline $s_1s_3s_1s_2s_1s_2s_1s_3s_1s_3s_1s_3s_1s_2s_1s_3s_1s_2s_1s_2
s_1s_3s_1s_2s_3s_2s_1s_2s_1s_2s_3s_1s_3s_2s_3s_1s_3s_2s_3s_1 
s_3s_2s_1s_3s_1s_2s_1s_2s_3s_2$ \newline $s_1s_2s_1s_2s_1s_2s_3s_2s_1s_3
s_1s_2s_1s_2s_1s_2s_1s_3s_1s_2s_1s_2s_1s_2s_1s_3s_1s_2s_1s_2 
s_1s_3s_1s_2s_3s_2s_1s_2s_1s_3s_1s_2s_1s_2s_1s_3s_1s_2s_1s_2
$ \newline $s_1s_2s_1s_2$}
\end{spacing}
\vskip\baselineskip
\noindent
Nr.54\,(Length=612):
\begin{spacing}{0.5}
\noindent
{\tiny{\small{$1^{\mbox{${\hat{B}}$}}\cdot$}}\\
$s_2s_3s_1s_2s_3s_2s_3s_1s_3s_1s_3s_2s_1s_2s_3s_2s_1s_2s_3s_1
s_3s_2s_3s_2s_3s_1s_3s_2s_1s_3s_1s_2s_3s_2s_1s_2s_1s_2s_3s_2 
s_1s_2s_1s_2s_1s_2s_1s_2s_1s_2$ \newline $s_3s_2s_1s_3s_1s_2s_1s_3s_1s_3
s_1s_3s_2s_3s_1s_2s_3s_2s_1s_3s_2s_3s_1s_3s_2s_3s_1s_2s_1s_3 
s_1s_2s_1s_2s_1s_3s_1s_3s_1s_2s_1s_3s_1s_2s_1s_2s_1s_3s_1s_3
$ \newline $s_1s_2s_3s_1s_3s_2s_3s_1s_3s_2s_1s_3s_2s_3s_1s_2s_3s_2s_1s_3 
s_2s_3s_1s_2s_1s_2s_1s_2s_3s_2s_1s_3s_1s_2s_1s_3s_1s_3s_1s_2
s_1s_2s_1s_3s_1s_2s_1s_3s_1s_3$ \newline $s_1s_3s_1s_3s_1s_2s_1s_3s_2s_1 
s_2s_3s_2s_1s_3s_1s_2s_3s_1s_3s_2s_1s_3s_1s_2s_3s_2s_1s_2s_3
s_1s_3s_1s_3s_2s_3s_1s_3s_1s_3s_2s_3s_1s_2s_1s_3s_1s_3s_1s_2 
$ \newline $s_1s_3s_1s_3s_1s_2s_1s_2s_1s_3s_1s_3s_2s_3s_1s_3s_1s_3s_2s_3
s_1s_2s_1s_3s_1s_2s_1s_3s_1s_2s_1s_3s_2s_3s_1s_3s_1s_3s_2s_3 
s_1s_2s_1s_3s_1s_3s_1s_2s_1s_2$ \newline $s_1s_3s_1s_3s_1s_2s_1s_3s_1s_2
s_1s_2s_1s_3s_1s_3s_1s_2s_1s_3s_1s_2s_1s_3s_1s_2s_1s_2s_1s_3 
s_1s_3s_1s_2s_1s_2s_1s_3s_1s_2s_1s_3s_1s_2s_3s_2s_1s_2s_1s_2
$ \newline $s_3s_2s_1s_2s_3s_2s_1s_2s_1s_2s_3s_2s_1s_3s_1s_2s_1s_2s_1s_2 
s_1s_3s_1s_2s_1s_2s_1s_3s_1s_3s_1s_2s_1s_3s_1s_2s_1s_2s_1s_3
s_1s_2s_3s_2s_1s_2s_1s_3s_1s_3$ \newline $s_2s_3s_1s_2s_1s_3s_1s_2s_3s_2 
s_1s_2s_1s_2s_3s_2s_3s_2s_1s_2s_3s_2s_1s_3s_2s_3s_1s_2s_3s_2
s_1s_3s_1s_2s_1s_3s_2s_3s_1s_3s_1s_3s_2s_3s_1s_2s_1s_3s_1s_3 
$ \newline $s_1s_2s_1s_2s_1s_2s_1s_3s_1s_2s_1s_3s_1s_2s_1s_2s_3s_2s_1s_2
s_3s_2s_1s_2s_3s_2s_1s_2s_1s_2s_3s_2s_1s_3s_1s_2s_1s_2s_1s_3 
s_1s_2s_1s_2s_3s_2s_1s_3s_1s_2$ \newline $s_1s_2s_1s_3s_1s_2s_1s_3s_1s_2
s_1s_2s_1s_3s_1s_2s_1s_2s_1s_3s_1s_2s_1s_3s_1s_3s_1s_2s_1s_2 
s_1s_3s_1s_2s_1s_3s_1s_2s_1s_3s_1s_2s_3s_2s_1s_2s_1s_2s_3s_2
$ \newline $s_1s_3s_1s_2s_1s_3s_1s_2s_1s_2s_1s_3s_1s_2s_3s_2s_1s_2s_1s_3 
s_1s_2s_1s_2s_1s_3s_1s_2s_1s_2s_1s_3s_1s_3s_1s_2s_1s_2s_1s_3
s_1s_2s_1s_3s_1s_3s_1s_2s_1s_2$ \newline $s_1s_3s_1s_2s_1s_3s_1s_2s_1s_3 
s_1s_2s_1s_3s_2s_3s_1s_3s_1s_3s_2s_3s_1s_2s_1s_3s_1s_2s_1s_3
s_1s_3s_1s_3s_1s_3s_1s_2s_1s_3s_1s_2s_1s_3s_1s_3s_1s_2s_3s_2 
$ \newline $s_1s_2s_1s_2s_1s_2s_1s_2s_3s_2s_1s_3$}
\end{spacing}
\newpage
\noindent
Nr.55\,(Length=720):
\begin{spacing}{0.5}
\noindent
{\tiny{\small{$1^{\mbox{${\hat{B}}$}}\cdot$}}\\
$s_2s_3s_2s_1s_2s_1s_2s_3s_1s_3s_2s_3s_1s_2s_1s_3s_1s_3s_1s_2
s_1s_3s_2s_3s_1s_2s_1s_3s_1s_3s_1s_2s_1s_3s_1s_3s_2s_1s_2s_3 
s_1s_3s_1s_3s_1s_2s_1s_3s_2s_1$ \newline $s_2s_3s_2s_1s_3s_1s_2s_3s_1s_3
s_2s_3s_1s_3s_2s_1s_3s_1s_2s_3s_2s_1s_2s_1s_2s_3s_2s_1s_2s_3 
s_1s_3s_1s_3s_2s_3s_1s_3s_2s_3s_1s_3s_1s_3s_2s_3s_1s_3s_1s_3
$ \newline $s_1s_3s_2s_3s_1s_3s_1s_3s_2s_3s_1s_3s_2s_3s_1s_3s_1s_3s_2s_1 
s_2s_3s_2s_1s_2s_1s_2s_3s_2s_1s_3s_1s_2s_3s_2s_3s_2s_1s_2s_3
s_1s_3s_2s_3s_1s_3s_1s_3s_2s_3$ \newline $s_1s_3s_2s_1s_2s_3s_2s_3s_2s_1 
s_2s_1s_3s_2s_3s_1s_2s_1s_3s_1s_3s_1s_2s_1s_3s_1s_3s_1s_2s_1
s_3s_2s_3s_1s_3s_1s_3s_2s_3s_1s_2s_1s_3s_1s_2s_1s_3s_1s_2s_3 
$ \newline $s_2s_1s_3s_1s_2s_1s_2s_1s_3s_1s_3s_2s_3s_1s_3s_1s_3s_2s_1s_2
s_3s_2s_1s_2s_3s_1s_2s_1s_3s_2s_3s_1s_3s_1s_3s_2s_3s_2s_3s_1 
s_3s_2s_1s_2s_3s_1s_3s_1s_3s_1$ \newline $s_3s_2s_3s_1s_3s_2s_3s_1s_3s_2
s_3s_1s_3s_1s_3s_2s_3s_1s_3s_2s_3s_1s_3s_1s_3s_2s_3s_1s_3s_1 
s_3s_2s_3s_1s_2s_1s_3s_1s_3s_1s_2s_1s_2s_3s_2s_1s_3s_2s_3s_1
$ \newline $s_2s_3s_1s_3s_2s_3s_1s_3s_2s_3s_1s_3s_2s_1s_2s_3s_2s_3s_2s_1 
s_2s_3s_1s_3s_2s_3s_1s_3s_2s_3s_1s_3s_2s_1s_3s_1s_3s_1s_2s_1
s_3s_2s_3s_1s_3s_1s_3s_2s_3s_1$ \newline $s_3s_1s_2s_1s_3s_2s_3s_1s_3s_1 
s_3s_2s_3s_1s_3s_2s_3s_1s_3s_1s_3s_2s_3s_1s_3s_1s_2s_3s_2s_1
s_3s_2s_3s_1s_2s_3s_1s_3s_2s_3s_1s_3s_2s_1s_2s_1s_2s_3s_2s_1 
$ \newline $s_3s_1s_2s_3s_2s_1s_2s_1s_2s_3s_2s_1s_2s_1s_2s_3s_2s_1s_2s_1
s_2s_3s_2s_1s_2s_1s_2s_3s_2s_1s_3s_1s_2s_3s_1s_3s_2s_3s_2s_3 
s_1s_3s_2s_1s_2s_3s_2s_1s_2s_3$ \newline $s_1s_3s_1s_3s_2s_3s_2s_1s_3s_2
s_3s_1s_2s_3s_2s_1s_3s_2s_3s_1s_3s_1s_3s_2s_1s_2s_3s_2s_1s_2 
s_3s_2s_1s_2s_1s_2s_3s_2s_1s_2s_3s_1s_2s_1s_3s_2s_1s_2s_3s_1
$ \newline $s_3s_2s_3s_1s_3s_1s_3s_2s_3s_1s_3s_1s_2s_1s_3s_1s_3s_1s_2s_1 
s_2s_1s_3s_1s_2s_1s_3s_1s_3s_1s_2s_1s_2s_1s_3s_1s_2s_1s_3s_1
s_3s_1s_3s_1s_2s_1s_2s_1s_2s_1$ \newline $s_3s_1s_2s_1s_3s_2s_3s_1s_3s_1 
s_2s_1s_3s_2s_3s_1s_3s_1s_3s_2s_3s_1s_2s_1s_3s_1s_3s_1s_2s_1
s_3s_1s_3s_1s_2s_1s_3s_1s_3s_1s_2s_1s_3s_2s_3s_2s_3s_1s_3s_1 
$ \newline $s_3s_2s_3s_1s_2s_3s_2s_1s_2s_1s_2s_3s_2s_1s_2s_3s_2s_1s_3s_1
s_3s_1s_2s_1s_3s_1s_3s_2s_3s_1s_2s_1s_2s_1s_3s_1s_3s_1s_2s_1 
s_2s_1s_3s_1s_2s_1s_3s_1s_2s_3$ \newline $s_2s_1s_3s_1s_2s_1s_2s_1s_2s_1
s_3s_1s_3s_1s_2s_1s_2s_1s_3s_2s_3s_1s_2s_1s_3s_1s_2s_1s_3s_1 
s_3s_1s_2s_3s_2s_1s_2s_1s_2s_3s_2s_1s_2s_3s_2s_1s_2s_1s_2s_3
$ \newline $s_2s_1s_2s_3s_2s_1s_3s_1s_2s_1s_2s_1s_3s_2s_3s_1s_3s_2s_3s_1$}
\end{spacing}

\subsection{Rank four cases} \label{subsection:rankfour}
Let ${\check{\fca}}$ be a free $\bZ$-module of rank four.
Let ${\check{B}}=\{{\check{\al}}_1,{\check{\al}}_2,{\check{\al}}_3, {\check{\al}}_4\}$ be a $\bZ$-base of 
${\check{\fca}}$,
i.e., ${\check{\fca}}=\oplus_{t=1}^4\bZ{\check{\al}}_t$.
In this section, we let the symbol $1^x2^y3^z4^u$ $(x,y,z,u\in\bZgeqo)$ of \cite[Nr.~$k$ of B.2.~Rank 4]{CH15} mean 
$x{\check{\al}}_1+y{\check{\al}}_2+z{\check{\al}}_3+u{\check{\al}}_4$.
For $k\in\fkJ_{1,11}$, let ${\check{\rR}}(k)$
be the irreducible FGRS of rank four defined by  
\cite[Nr.~$k$ of B.2.~Rank 4]{CH15}.
Let $\Theta(k)$ be the set of $D_\bK(\chi,{\check{B}})$'s of \cite[Table~3]{Hec09} such that
${\check{\rR}}(k)$ is $\chi$-associated and ${\check{B}}\in\bB({\check{\rR}}(k))$; instead of writing up the elements of $\Theta(k)$,
we write Row's of \cite[Table~3]{Hec09}.
\newline\newline
$\Theta(1)=\{14\}$, $\Theta(2)=\{18\}$, $\Theta(3)=\{9\}$ $(F(4))$, $\Theta(4)=\emptyset$, $\Theta(5)=\{20\}$, 
$\Theta(6)=\{4\}$ $(F_4)$, $\Theta(7)=\{17\}$,
$\Theta(8)=\emptyset$ ,$\Theta(9)=\{22\}$, $\Theta(10)=\emptyset$, $\Theta(11)=\{21\}$.
\newline\newline
In the following, we give one of the Hamltonian cycles of $\Gamma({\check{\rR}}(k))$ for $k\in\{4,8,10\}$,
which we have found by applying a program of Mathematica~13.3 \cite{Mathe23} (or its earlier version) 
assembled along with the idea of the proofs
of Theorems~\ref{theorem:one} and \ref{theorem:two}.

\vspace{1cm}

\noindent
Nr.4\,(Length=864):
\begin{spacing}{0.5}
\noindent
{\tiny{{\small{$1^{\mbox{${\check{B}}$}}\cdot$}}\\
$s_3s_1s_2s_1s_3s_1s_3s_1s_2s_1s_4s_3s_2s_3s_1s_4s_1s_2s_1s_4
s_1s_2s_3s_2s_1s_4s_1s_2s_3s_1s_4s_1s_3s_2s_1s_4s_1s_2s_1s_3 
s_1s_2s_1s_2s_4s_2s_4s_2s_3s_2$ \newline $s_4s_3s_1s_2s_4s_2s_1s_2s_1s_2
s_3s_2s_1s_2s_4s_2s_1s_2s_3s_2s_1s_2s_3s_2s_1s_2s_4s_2s_3s_2 
s_1s_3s_1s_2s_1s_4s_1s_4s_1s_3s_2s_3s_1s_3s_2s_1s_2s_3s_2s_3
$ \newline $s_2s_1s_2s_4s_1s_2s_3s_2s_1s_2s_1s_2s_3s_2s_1s_2s_1s_2s_3s_2 
s_1s_3s_1s_2s_1s_2s_1s_4s_2s_1s_2s_4s_1s_2s_1s_2s_1s_3s_1s_2
s_3s_2s_1s_2s_1s_2s_3s_2s_1s_3$ \newline $s_1s_2s_1s_2s_1s_4s_1s_3s_2s_3 
s_1s_3s_2s_3s_2s_3s_1s_4s_1s_2s_1s_3s_1s_2s_1s_4s_1s_3s_2s_3
s_1s_3s_1s_3s_2s_1s_2s_3s_1s_2s_1s_3s_2s_1s_2s_3s_1s_3s_1s_3 
$ \newline $s_2s_4s_1s_3s_2s_3s_1s_4s_1s_2s_3s_1s_3s_2s_1s_3s_1s_2s_4s_2
s_1s_2s_4s_1s_3s_2s_3s_2s_3s_1s_3s_2s_3s_1s_3s_1s_3s_2s_3s_1 
s_3s_2s_3s_2s_3s_1s_3s_2s_3s_1$ \newline $s_3s_1s_3s_2s_4s_2s_3s_2s_1s_2
s_3s_1s_3s_2s_3s_2s_3s_1s_3s_2s_3s_2s_3s_1s_4s_2s_3s_2s_1s_2 
s_3s_1s_3s_2s_3s_1s_3s_2s_1s_2s_3s_2s_3s_2s_1s_4s_1s_3s_2s_3
$ \newline $s_1s_2s_1s_3s_1s_3s_1s_2s_1s_3s_2s_3s_1s_4s_1s_2s_1s_3s_1s_4 
s_3s_2s_1s_2s_3s_2s_1s_3s_1s_2s_1s_3s_1s_2s_1s_2s_1s_3s_1s_2
s_3s_2s_1s_2s_1s_2s_3s_2s_1s_3$ \newline $s_1s_2s_3s_2s_1s_2s_3s_2s_1s_2 
s_3s_2s_1s_3s_1s_2s_1s_2s_1s_4s_1s_3s_1s_2s_3s_2s_1s_2s_1s_2
s_3s_2s_1s_2s_4s_2s_1s_2s_3s_2s_3s_2s_1s_2s_3s_2s_3s_2s_1s_2 
$ \newline $s_1s_2s_3s_2s_1s_2s_1s_3s_1s_2s_1s_3s_1s_2s_1s_2s_1s_3s_1s_2
s_3s_2s_1s_2s_3s_2s_3s_2s_1s_2s_3s_2s_4s_2s_3s_2s_1s_3s_2s_3 
s_1s_2s_1s_2s_3s_2s_1s_2s_3s_2$ \newline $s_1s_2s_1s_3s_2s_3s_1s_2s_1s_2
s_1s_3s_1s_2s_3s_2s_1s_2s_3s_2s_1s_2s_3s_1s_3s_2s_1s_3s_1s_2 
s_3s_1s_4s_2s_1s_2s_3s_1s_3s_2s_3s_1s_3s_2s_3s_2s_3s_1s_3s_2
$ \newline $s_1s_2s_1s_2s_3s_2s_1s_2s_1s_2s_1s_2s_3s_2s_3s_2s_1s_2s_1s_2 
s_3s_2s_1s_2s_3s_2s_1s_3s_1s_2s_1s_3s_1s_2s_1s_2s_1s_2s_4s_2
s_1s_2s_3s_2s_1s_2s_3s_2s_3s_2$ \newline $s_1s_2s_1s_2s_3s_2s_1s_2s_4s_1 
s_3s_2s_3s_1s_3s_2s_1s_3s_1s_2s_3s_1s_3s_1s_3s_2s_3s_1s_3s_2
s_1s_2s_3s_2s_1s_3s_1s_2s_1s_2s_1s_2s_1s_3s_2s_3s_1s_2s_3s_2 
$ \newline $s_1s_2s_3s_2s_1s_3s_2s_3s_1s_2s_1s_2s_3s_2s_1s_2s_4s_3s_1s_3
s_1s_2s_1s_3s_1s_2s_3s_2s_1s_2s_1s_2s_3s_2s_1s_2s_3s_2s_1s_4 
s_1s_2s_3s_1s_3s_2s_1s_3s_1s_2$ \newline $s_3s_1s_3s_2s_1s_3s_1s_2s_4s_2
s_1s_3s_1s_2s_3s_2s_3s_2s_1s_2s_3s_1s_3s_2s_3s_1s_3s_2s_1s_2 
s_3s_2s_3s_2s_1s_4s_1s_2s_1s_2s_1s_3s_1s_2s_3s_2s_1s_2s_1s_2
$ \newline $s_3s_2s_1s_4s_1s_3s_1s_2s_1s_3s_1s_2s_3s_2s_1s_2s_1s_3s_1s_2 
s_1s_2s_1s_2s_1s_3s_1s_2s_3s_2s_1s_2s_3s_2s_1s_2s_1s_2s_3s_2
s_1s_2s_1s_3s_1s_2s_3s_2s_1s_2$ \newline $s_3s_2s_1s_3s_1s_2s_1s_2s_1s_4 
s_2s_1s_2s_3s_2s_3s_1s_2s_1s_3s_2s_1s_2s_3s_1s_2s_1s_3s_1s_3
s_1s_2s_3s_2s_1s_3s_2s_3s_1s_2s_1s_2s_3s_2s_3s_2s_1s_2s_1s_3 
$ \newline $s_2s_3s_1s_2s_3s_2s_1s_3s_1s_3s_1s_2s_1s_3s_4s_1s_3s_1s_3s_1
s_2s_1s_3s_1s_2s_3s_2s_1s_2s_1s_2s_3s_1s_3s_2s_1s_2s_1s_3s_1 
s_2s_1s_3s_1s_2s_1s_2s_3s_1s_3$ \newline $s_2s_3s_2s_1s_2s_1s_2s_3s_2s_1s_2s_3s_2s_1$
}}
\end{spacing}


\newpage
\noindent
Nr.~8\,(Length=1920):
\begin{spacing}{0.4}
\noindent
{\tiny{{\small{$1^{\mbox{${\check{B}}$}}\cdot$}}\newline
$s_1s_2s_1s_3s_2s_1s_2s_3s_1s_2s_1s_2s_1s_3s_2s_1s_2s_3s_1s_2
s_1s_3s_2s_3s_1s_3s_2s_1s_2s_3s_2s_3s_2s_3s_2s_1s_2s_1s_2s_3 
s_2s_1s_3s_1s_3s_1s_2s_1s_4s_1$ \newline $s_3s_2s_3s_1s_3s_1s_3s_2s_3s_1
s_3s_1s_3s_2s_3s_2s_3s_1s_3s_1s_3s_2s_3s_1s_2s_1s_3s_1s_2s_1 
s_3s_1s_3s_1s_2s_3s_2s_1s_2s_1s_3s_1s_3s_2s_3s_1s_3s_2s_1s_2
$ \newline $s_3s_2s_3s_2s_1s_2s_3s_2s_1s_2s_3s_1s_2s_4s_3s_2s_3s_2s_3s_1 
s_3s_2s_3s_1s_3s_1s_3s_2s_3s_2s_3s_1s_3s_4s_3s_1s_3s_2s_3s_1
s_3s_1s_2s_1s_3s_2s_1s_2s_3s_1$ \newline $s_2s_1s_3s_2s_3s_1s_3s_2s_1s_2 
s_3s_2s_1s_2s_3s_2s_3s_2s_1s_2s_1s_2s_1s_2s_3s_2s_1s_2s_3s_2
s_1s_3s_1s_2s_1s_2s_1s_3s_1s_2s_3s_1s_3s_2s_3s_2s_3s_1s_3s_1 
$ \newline $s_3s_4s_3s_1s_3s_1s_3s_2s_3s_2s_3s_1s_3s_2s_3s_1s_3s_1s_3s_2
s_3s_2s_3s_4s_3s_1s_2s_1s_3s_1s_3s_1s_2s_3s_2s_1s_2s_1s_2s_3 
s_2s_1s_2s_3s_2s_1s_3s_1s_3s_1$ \newline $s_2s_1s_3s_2s_3s_1s_3s_1s_3s_2
s_4s_2s_3s_1s_3s_2s_3s_1s_3s_2s_3s_1s_2s_1s_3s_2s_3s_1s_3s_1 
s_3s_2s_3s_2s_3s_2s_3s_1s_3s_1s_3s_1s_3s_2s_3s_1s_3s_2s_3s_1
$ \newline $s_2s_1s_3s_2s_3s_1s_3s_2s_3s_1s_3s_1s_3s_2s_3s_1s_3s_2s_3s_1 
s_2s_1s_3s_1s_3s_2s_3s_1s_3s_1s_3s_2s_3s_1s_2s_1s_3s_2s_3s_1
s_3s_2s_3s_2s_3s_1s_3s_2s_3s_1$ \newline $s_3s_1s_3s_2s_3s_1s_4s_1s_2s_4 
s_3s_4s_3s_2s_3s_4s_3s_1s_3s_4s_3s_1s_3s_1s_2s_3s_4s_3s_1s_3
s_2s_1s_3s_4s_2s_1s_2s_3s_4s_3s_1s_2s_1s_2s_3s_4s_3s_1s_3s_2 
$ \newline $s_3s_1s_3s_1s_3s_2s_3s_2s_3s_4s_3s_2s_3s_2s_3s_1s_3s_2s_4s_3
s_4s_3s_2s_3s_2s_3s_4s_3s_1s_2s_1s_3s_2s_3s_1s_3s_4s_3s_2s_3 
s_1s_3s_1s_2s_1s_3s_4s_3s_1s_3$ \newline $s_2s_4s_2s_4s_2s_3s_1s_2s_3s_2
s_1s_3s_2s_3s_2s_3s_1s_2s_3s_2s_1s_3s_2s_3s_1s_2s_4s_2s_3s_1 
s_2s_1s_3s_2s_1s_2s_3s_1s_2s_1s_3s_2s_1s_2s_3s_4s_3s_2s_1s_3
$ \newline $s_4s_2s_1s_3s_4s_2s_1s_3s_4s_3s_1s_3s_2s_3s_2s_3s_1s_3s_4s_3 
s_1s_2s_3s_4s_3s_1s_2s_3s_2s_1s_3s_4s_3s_1s_2s_3s_4s_3s_1s_2
s_3s_2s_1s_2s_3s_2s_1s_3s_1s_3$ \newline $s_4s_3s_2s_3s_2s_3s_1s_3s_1s_3 
s_2s_3s_1s_3s_2s_3s_2s_3s_1s_3s_1s_3s_2s_3s_4s_2s_1s_3s_1s_3
s_1s_2s_1s_3s_2s_3s_4s_2s_3s_2s_3s_2s_1s_2s_3s_4s_1s_3s_1s_3 
$ \newline $s_1s_2s_3s_2s_1s_3s_4s_1s_2s_1s_3s_4s_2s_1s_2s_3s_2s_1s_3s_4
s_2s_1s_3s_4s_3s_2s_3s_2s_3s_1s_3s_1s_3s_2s_3s_2s_3s_1s_3s_1 
s_3s_2s_3s_2s_3s_1s_3s_4s_3s_1$ \newline $s_2s_1s_3s_1s_2s_3s_2s_1s_2s_1
s_2s_3s_2s_1s_2s_3s_2s_1s_3s_1s_3s_1s_2s_1s_3s_1s_3s_1s_2s_4 
s_3s_1s_3s_2s_3s_2s_3s_1s_3s_4s_3s_1s_2s_1s_2s_1s_3s_4s_2s_3
$ \newline $s_1s_3s_2s_3s_2s_3s_1s_3s_2s_1s_2s_3s_2s_3s_2s_1s_2s_3s_2s_3 
s_2s_4s_2s_1s_3s_1s_3s_1s_3s_1s_2s_1s_3s_1s_3s_1s_2s_1s_2s_1
s_3s_1s_2s_1s_3s_4s_3s_2s_3s_2$ \newline $s_3s_1s_3s_1s_3s_2s_3s_2s_3s_1 
s_3s_1s_3s_2s_3s_2s_3s_1s_3s_4s_3s_1s_2s_1s_2s_1s_3s_4s_3s_2
s_3s_2s_3s_1s_3s_1s_3s_2s_3s_2s_3s_4s_3s_1s_2s_4s_3s_1s_2s_1 
$ \newline $s_2s_3s_1s_3s_2s_1s_3s_1s_2s_1s_3s_1s_2s_3s_1s_3s_2s_3s_4s_3
s_1s_3s_1s_3s_2s_3s_2s_3s_1s_3s_2s_3s_1s_3s_1s_3s_2s_3s_2s_3 
s_1s_3s_4s_3s_1s_3s_4s_3s_1s_3$ \newline $s_1s_3s_2s_3s_4s_3s_2s_3s_2s_3
s_1s_3s_1s_3s_2s_3s_1s_3s_2s_3s_2s_3s_1s_3s_1s_3s_2s_3s_4s_3 
s_1s_3s_2s_3s_1s_2s_1s_3s_1s_2s_1s_3s_1s_3s_1s_3s_1s_2s_1s_3
$ \newline $s_1s_3s_1s_2s_1s_2s_1s_3s_1s_3s_4s_3s_1s_2s_1s_3s_2s_1s_2s_3 
s_1s_2s_1s_2s_1s_3s_2s_1s_2s_3s_1s_2s_1s_3s_2s_3s_1s_3s_2s_1
s_2s_3s_2s_3s_2s_3s_2s_4s_2s_3$ \newline $s_2s_3s_2s_1s_2s_3s_2s_1s_2s_3 
s_1s_3s_2s_3s_2s_3s_1s_3s_2s_1s_2s_1s_2s_3s_2s_1s_2s_3s_2s_1
s_3s_1s_4s_3s_1s_3s_2s_3s_1s_2s_3s_2s_1s_2s_1s_3s_1s_3s_1s_3 
$ \newline $s_1s_2s_1s_3s_1s_2s_1s_3s_2s_3s_1s_2s_1s_3s_1s_3s_1s_2s_1s_3
s_1s_2s_1s_3s_2s_3s_1s_3s_2s_3s_1s_3s_2s_3s_1s_3s_1s_3s_2s_3 
s_2s_3s_1s_4s_3s_2s_1s_2s_3s_1$ \newline $s_3s_2s_3s_2s_3s_1s_3s_2s_4s_3
s_1s_3s_2s_3s_1s_3s_1s_2s_3s_2s_1s_3s_4s_1s_3s_2s_3s_1s_2s_3 
s_2s_3s_2s_1s_3s_2s_3s_1s_2s_3s_2s_1s_2s_1s_2s_1s_2s_3s_2s_3
$ \newline $s_2s_1s_2s_1s_3s_1s_3s_1s_2s_1s_3s_1s_2s_1s_3s_2s_3s_1s_3s_1 
s_3s_2s_1s_2s_3s_1s_2s_1s_3s_2s_1s_2s_3s_1s_3s_1s_3s_2s_3s_4
s_3s_2s_3s_1s_3s_1s_3s_2s_3s_1$ \newline $s_2s_1s_3s_1s_3s_1s_2s_1s_3s_1 
s_3s_1s_2s_3s_2s_1s_2s_3s_2s_1s_3s_4s_1s_2s_3s_2s_1s_2s_3s_1
s_3s_2s_3s_1s_3s_2s_3s_1s_3s_2s_1s_2s_3s_2s_1s_2s_3s_2s_1s_2 
$ \newline $s_1s_3s_1s_2s_3s_2s_1s_2s_3s_2s_1s_2s_1s_2s_3s_2s_1s_3s_1s_3
s_1s_2s_1s_2s_3s_1s_3s_2s_1s_3s_1s_2s_1s_3s_1s_3s_1s_2s_4s_3 
s_2s_3s_1s_3s_2s_3s_2s_3s_1s_3$ \newline $s_1s_3s_4s_1s_2s_3s_1s_2s_1s_2
s_1s_3s_2s_1s_2s_3s_1s_2s_1s_3s_1s_2s_1s_3s_2s_3s_2s_3s_2s_3 
s_1s_3s_1s_3s_2s_3s_2s_3s_1s_3s_2s_1s_2s_1s_2s_3s_2s_1s_2s_3
$ \newline $s_2s_1s_3s_1s_2s_1s_2s_1s_3s_1s_2s_3s_1s_3s_2s_3s_2s_3s_1s_2 
s_4s_3s_1s_2s_3s_2s_1s_2s_1s_2s_1s_2s_3s_2s_3s_2s_1s_2s_3s_4
s_3s_2s_3s_2s_1s_2s_3s_2s_3s_2$ \newline $s_1s_2s_3s_1s_3s_2s_3s_2s_3s_1 
s_3s_2s_1s_4s_1s_2s_3s_2s_1s_3s_2s_3s_1s_2s_3s_2s_1s_2s_3s_2
s_1s_3s_2s_3s_1s_2s_3s_2s_3s_2s_1s_3s_1s_3s_1s_2s_1s_3s_2s_3 
$ \newline $s_1s_3s_2s_3s_1s_2s_1s_3s_1s_3s_1s_2s_4s_2s_3s_2s_3s_2s_1s_2
s_3s_1s_3s_2s_1s_2s_3s_2s_3s_2s_1s_2s_1s_2s_3s_2s_1s_3s_1s_2 
s_3s_2s_1s_2s_1s_2s_3s_4s_2s_1$ \newline $s_3s_2s_4s_3s_1s_2s_3s_2s_1s_2
s_1s_2s_3s_2s_1s_3s_1s_2s_1s_4s_1s_3s_1s_3s_2s_3s_2s_3s_2s_3 
s_1s_3s_1s_3s_2s_3s_2s_3s_2s_3s_1s_3s_2s_3s_1s_3s_1s_3s_2s_3
$ \newline $s_1s_3s_2s_3s_1s_3s_2s_3s_1s_3s_2s_3s_1s_3s_2s_3s_2s_3s_1s_3 
s_2s_3s_1s_3s_2s_3s_2s_3s_1s_3s_2s_3s_2s_3s_2s_3s_1s_3s_1s_3
s_2s_3s_1s_3s_1s_3s_2s_3s_1s_3$ \newline $s_2s_3s_1s_3s_2s_3s_1s_3s_2s_3 
s_1s_3s_2s_4s_2s_3s_1s_3s_1s_3s_2s_3s_1s_3s_2s_3s_2s_3s_1s_3
s_1s_3s_4s_1s_2s_3s_1s_2s_1s_3s_2s_3s_1s_3s_2s_1s_2s_3s_2s_1 
$ \newline $s_2s_3s_2s_3s_2s_1s_2s_1s_2s_1s_2s_3s_2s_1s_2s_3s_2s_1s_3s_1
s_2s_1s_2s_1s_3s_1s_2s_3s_1s_3s_2s_3s_2s_3s_1s_3s_1s_3s_2s_3 
s_2s_3s_2s_3s_4s_3s_2s_3s_1s_3$ \newline $s_1s_2s_1s_3s_1s_2s_1s_2s_1s_3
s_1s_3s_1s_2s_1s_3s_1s_3s_1s_2s_1s_2s_1s_3s_4s_3s_1s_3s_2s_3 
s_1s_3s_2s_3s_1s_3s_2s_3s_1s_2s_1s_3s_1s_2s_1s_3s_1s_3s_1s_2
$ \newline $s_1s_3s_2s_3s_1s_2s_1s_3s_1s_2s_1s_3s_1s_3s_1s_3s_1s_2s_1s_2 
s_3s_2s_1s_3s_2s_3s_1s_3s_2s_3s_1s_2s_3s_2s_1s_3s_1s_3s_1s_2
s_4s_3s_2s_3s_1s_3s_2s_3s_2s_3$ \newline $s_1s_3s_1s_3s_2s_3s_1s_3s_2s_3 
s_2s_3s_4s_2s_3s_2s_1s_2s_3s_2s_3s_2s_1s_3s_1s_2s_1s_2s_1s_3
s_1s_2s_3s_2s_1s_2s_3s_2s_1s_2s_1s_2s_3s_4s_3s_2s_3s_1s_3s_1 
$ \newline $s_3s_2s_3s_4s_3s_2s_1s_2s_1s_3s_1s_3s_1s_2s_1s_3s_2s_1s_2s_3
s_2s_1s_2s_1s_2s_3s_1s_2s_1s_3s_2s_1s_2s_3s_2s_1s_2s_3s_1s_3 
s_2s_3s_1s_3s_1s_2s_3s_2s_1s_3$ \newline $s_2s_3s_1s_2s_3s_2s_3s_2s_1s_3
s_2s_3s_1s_2s_3s_2s_1s_3s_4s_1s_2s_1s_2s_3s_2s_3s_2s_1s_2s_3 
s_2s_3s_2s_1s_2s_1s_2s_3s_2s_1s_2s_1s_2s_3s_2s_3s_2s_1s_2s_3
$ \newline $s_2s_3s_2s_4s_2s_1s_3s_1s_2s_3s_1s_3s_2s_3s_2s_3s_1s_3s_2s_3$
}}
\end{spacing}
\vskip\baselineskip
\noindent
Nr.~10\,(Length=2688):
\begin{spacing}{0.4}
\noindent
{\tiny{{\small{$1^{\mbox{${\check{B}}$}}\cdot$}} \newline
$s_1s_3s_1s_3s_1s_2s_1s_3s_1s_3s_1s_2s_1s_2s_1s_3s_2s_3s_1s_3
s_2s_3s_2s_3s_1s_2s_1s_3s_1s_2s_1s_3s_1s_2s_3s_2s_1s_2s_1s_2 
s_1s_3s_2s_3s_1s_2s_3s_2s_1s_2$ \newline $s_1s_2s_3s_2s_1s_2s_1s_3s_1s_3
s_1s_2s_1s_3s_1s_3s_1s_2s_1s_4s_1s_2s_3s_4s_3s_2s_1s_2s_1s_2 
s_3s_4s_3s_1s_3s_1s_3s_2s_3s_4s_3s_2s_3s_1s_3s_4s_2s_1s_3s_4
$ \newline $s_3s_1s_3s_4s_1s_3s_4s_3s_1s_2s_3s_2s_1s_3s_2s_3s_1s_2s_3s_4 
s_1s_2s_1s_3s_4s_2s_3s_4s_1s_3s_4s_2s_1s_3s_1s_3s_4s_3s_1s_2
s_3s_4s_1s_2s_3s_4s_3s_2s_3s_4$ \newline $s_3s_2s_1s_2s_3s_4s_1s_3s_4s_3 
s_1s_3s_2s_3s_1s_3s_4s_3s_2s_1s_3s_4s_3s_2s_1s_2s_3s_2s_1s_2
s_3s_4s_2s_1s_3s_4s_3s_1s_3s_4s_3s_1s_3s_1s_3s_2s_3s_1s_3s_4 
$ \newline $s_2s_3s_2s_1s_2s_1s_2s_3s_4s_2s_3s_4s_3s_1s_3s_4s_3s_1s_2s_1
s_2s_1s_2s_1s_2s_1s_3s_1s_3s_4s_3s_2s_1s_3s_4s_2s_1s_3s_1s_2 
s_3s_2s_1s_2s_3s_1s_3s_2s_3s_1$ \newline $s_3s_4s_3s_2s_3s_2s_3s_1s_3s_4
s_3s_1s_3s_1s_3s_2s_3s_1s_3s_4s_3s_1s_2s_1s_3s_4s_3s_2s_3s_1 
s_3s_2s_3s_4s_3s_2s_1s_2s_1s_2s_3s_2s_1s_3s_1s_2s_3s_2s_1s_2
$ \newline $s_1s_2s_3s_1s_3s_2s_1s_3s_1s_2s_3s_2s_3s_4s_1s_3s_1s_3s_1s_2 
s_1s_3s_4s_2s_3s_2s_1s_2s_3s_2s_1s_3s_4s_2s_3s_2s_1s_2s_1s_2
s_3s_2s_1s_3s_1s_2s_1s_3s_1s_3$ \newline $s_4s_2s_1s_2s_1s_3s_1s_3s_4s_3 
s_1s_2s_1s_3s_1s_2s_1s_3s_1s_3s_4s_3s_1s_2s_3s_2s_1s_3s_2s_3
s_1s_2s_3s_2s_1s_3s_2s_3s_1s_2s_1s_2s_1s_2s_1s_3s_1s_2s_3s_2 
$ \newline $s_1s_2s_3s_2s_1s_3s_1s_2s_1s_2s_1s_2s_1s_3s_1s_3s_1s_2s_1s_2
s_1s_3s_2s_3s_2s_3s_1s_3s_2s_3s_4s_3s_1s_2s_1s_3s_1s_3s_1s_2 
s_1s_2s_1s_3s_1s_3s_2s_3s_1s_3$ \newline $s_1s_3s_2s_3s_1s_3s_1s_3s_4s_3
s_1s_2s_1s_3s_1s_3s_1s_2s_1s_3s_2s_3s_2s_3s_1s_3s_2s_1s_2s_3 
s_2s_1s_2s_3s_2s_1s_2s_3s_1s_3s_1s_3s_2s_3s_1s_3s_1s_3s_2s_3
$ \newline $s_1s_3s_1s_3s_2s_3s_1s_3s_1s_3s_2s_3s_1s_3s_2s_3s_1s_3s_2s_3 
s_1s_2s_1s_3s_1s_2s_1s_3s_1s_2s_1s_2s_1s_2s_1s_3s_1s_3s_1s_2
s_1s_3s_1s_2s_1s_3s_4s_2s_1s_2$ \newline $s_1s_3s_1s_2s_1s_3s_1s_3s_1s_2 
s_1s_3s_1s_3s_1s_2s_3s_2s_4s_2s_1s_3s_1s_2s_1s_2s_1s_2s_1s_3
s_1s_2s_3s_2s_1s_2s_3s_1s_3s_2s_3s_1s_3s_1s_3s_2s_3s_2s_3s_1 
$ \newline $s_3s_2s_3s_1s_3s_2s_1s_2s_1s_2s_3s_2s_1s_2s_1s_2s_3s_2s_4s_2
s_1s_3s_1s_2s_1s_3s_1s_3s_1s_2s_1s_3s_1s_2s_1s_2s_1s_3s_1s_2 
s_1s_3s_2s_3s_1s_3s_1s_3s_2s_3$ \newline $s_1s_3s_2s_3s_1s_3s_1s_3s_1s_3
s_2s_3s_2s_3s_1s_3s_2s_3s_1s_4s_1s_2s_3s_1s_3s_2s_1s_2s_3s_4 
s_3s_1s_3s_1s_3s_1s_3s_2s_3s_2s_3s_1s_3s_2s_1s_2s_1s_2s_3s_2
$ \newline $s_1s_2s_3s_2s_1s_2s_1s_2s_3s_2s_3s_2s_1s_2s_3s_1s_3s_1s_3s_2 
s_3s_1s_3s_1s_3s_1s_3s_2s_3s_2s_3s_4s_2s_1s_3s_2s_3s_1s_3s_1
s_3s_2s_3s_2s_3s_1s_3s_1s_3s_2$ \newline $s_3s_1s_2s_1s_3s_1s_3s_1s_2s_1 
s_2s_1s_3s_1s_3s_4s_3s_1s_3s_2s_3s_1s_3s_2s_3s_1s_3s_1s_3s_2
s_3s_1s_3s_4s_3s_1s_3s_1s_3s_2s_3s_1s_3s_1s_3s_2s_3s_1s_2s_1 
$ \newline $s_3s_1s_2s_1s_3s_1s_3s_1s_3s_1s_2s_1s_3s_1s_3s_1s_2s_1s_2s_1
s_3s_1s_3s_4s_3s_1s_2s_1s_2s_1s_3s_1s_3s_1s_2s_1s_3s_1s_2s_1 
s_2s_1s_3s_1s_2s_1s_2s_1s_2s_1$ \newline $s_3s_1s_2s_1s_3s_1s_3s_1s_2s_1
s_3s_1s_2s_1s_3s_1s_2s_1s_2s_1s_3s_2s_3s_1s_2s_1s_3s_1s_3s_1 
s_2s_1s_3s_2s_3s_2s_3s_1s_3s_2s_3s_4s_1s_2s_1s_2s_3s_2s_1s_2
$ \newline $s_3s_2s_1s_3s_1s_2s_1s_2s_1s_3s_1s_2s_3s_2s_3s_2s_1s_2s_3s_2 
s_1s_2s_3s_4s_3s_2s_1s_3s_1s_2s_3s_1s_3s_2s_3s_1s_3s_2s_1s_2
s_1s_2s_3s_2s_1s_2s_1s_3s_2s_3$ \newline $s_1s_2s_3s_2s_1s_2s_3s_2s_1s_3 
s_2s_3s_1s_2s_1s_2s_4s_2s_1s_3s_1s_2s_1s_2s_1s_3s_1s_2s_1s_3
s_1s_2s_1s_2s_1s_3s_2s_3s_1s_3s_2s_3s_1s_3s_1s_3s_1s_3s_2s_3 
$ \newline $s_1s_3s_1s_3s_2s_3s_2s_3s_1s_3s_2s_3s_1s_2s_3s_2s_1s_3s_2s_3
s_1s_2s_1s_2s_1s_2s_3s_2s_1s_2s_1s_2s_3s_2s_1s_3s_1s_2s_3s_2 
s_1s_2s_1s_2s_1s_2s_3s_2s_1s_2$ \newline $s_1s_3s_1s_2s_1s_2s_1s_4s_3s_1
s_3s_2s_3s_1s_3s_2s_1s_2s_1s_2s_3s_2s_1s_3s_1s_2s_1s_2s_3s_2 
s_1s_2s_3s_1s_3s_1s_3s_2s_3s_1s_3s_2s_3s_1s_3s_1s_3s_2s_1s_2
$ \newline $s_3s_2s_1s_2s_1s_3s_1s_2s_3s_2s_1s_2s_1s_2s_3s_1s_3s_2s_3s_1 
s_3s_2s_3s_1s_2s_1s_3s_1s_2s_1s_3s_1s_2s_1s_3s_1s_2s_1s_3s_4
s_3s_2s_3s_1s_2s_1s_3s_1s_3s_1$ \newline $s_2s_3s_2s_1s_2s_3s_2s_1s_2s_1 
s_2s_3s_2s_1s_3s_1s_3s_1s_2s_1s_3s_1s_2s_1s_3s_4s_1s_2s_3s_2
s_3s_2s_1s_2s_1s_2s_3s_2s_3s_2s_1s_2s_3s_2s_1s_3s_1s_2s_1s_2 
$ \newline $s_1s_3s_2s_3s_1s_2s_1s_2s_3s_2s_1s_2s_3s_2s_1s_2s_1s_3s_2s_3
s_1s_3s_1s_2s_1s_2s_1s_3s_1s_2s_1s_3s_4s_3s_2s_3s_2s_3s_1s_3 
s_2s_3s_1s_3s_1s_3s_2s_3s_1s_3$ \newline $s_2s_3s_1s_3s_1s_3s_2s_1s_3s_1
s_2s_3s_1s_3s_2s_3s_1s_3s_1s_3s_2s_3s_1s_3s_1s_3s_2s_1s_2s_3 
s_2s_1s_2s_1s_2s_1s_2s_3s_2s_1s_2s_1s_3s_2s_3s_1s_2s_3s_2s_1
$ \newline $s_3s_2s_3s_1s_2s_3s_2s_1s_2s_3s_2s_1s_3s_2s_3s_1s_2s_1s_2s_4 
s_3s_2s_3s_1s_3s_2s_3s_1s_3s_1s_3s_2s_1s_2s_1s_2s_3s_2s_1s_2
s_3s_2s_1s_2s_1s_2s_3s_2s_1s_2$ \newline $s_1s_3s_1s_2s_3s_2s_1s_2s_1s_2 
s_1s_2s_3s_1s_3s_2s_3s_1s_3s_1s_3s_2s_3s_1s_3s_1s_3s_2s_1s_2
s_3s_2s_1s_2s_1s_3s_1s_2s_3s_2s_1s_2s_1s_2s_3s_1s_3s_2s_3s_1 
$ \newline $s_3s_2s_3s_1s_3s_4s_3s_1s_3s_1s_2s_1s_3s_1s_3s_1s_2s_1s_3s_2
s_3s_1s_3s_2s_3s_1s_3s_2s_1s_2s_1s_2s_3s_2s_1s_2s_1s_2s_3s_2 
s_1s_2s_3s_2s_1s_2s_1s_2s_1s_2$ \newline $s_1s_2s_3s_2s_1s_2s_1s_2s_3s_2
s_3s_2s_1s_2s_3s_2s_1s_2s_1s_2s_3s_2s_1s_2s_1s_2s_3s_1s_3s_2 
s_3s_1s_3s_1s_3s_2s_3s_1s_3s_2s_3s_1s_3s_1s_3s_2s_3s_4s_3s_1
$ \newline $s_3s_2s_1s_2s_3s_2s_1s_2s_3s_1s_3s_2s_3s_1s_2s_1s_3s_1s_3s_1 
s_3s_1s_2s_3s_2s_1s_3s_2s_3s_1s_3s_2s_3s_1s_2s_3s_2s_1s_3s_1
s_2s_1s_3s_1s_3s_2s_1s_2s_3s_1$ \newline $s_2s_1s_3s_4s_3s_2s_3s_2s_3s_1 
s_3s_2s_1s_2s_1s_2s_3s_2s_1s_2s_3s_2s_1s_3s_1s_2s_1s_2s_1s_3
s_1s_2s_3s_4s_3s_1s_3s_2s_3s_1s_2s_1s_2s_1s_3s_1s_2s_3s_2s_1 
$ \newline $s_2s_3s_2s_1s_3s_1s_2s_1s_2s_1s_3s_2s_3s_1s_3s_1s_3s_2s_3s_1
s_3s_2s_3s_2s_3s_1s_2s_1s_3s_1s_3s_1s_2s_3s_2s_1s_2s_1s_4s_3 
s_1s_3s_1s_2s_1s_3s_1s_3s_1s_3$ \newline $s_1s_2s_1s_3s_2s_3s_1s_3s_1s_3
s_1s_3s_2s_3s_1s_3s_2s_3s_1s_3s_1s_3s_1s_3s_2s_3s_1s_3s_4s_3 
s_1s_3s_1s_2s_1s_3s_2s_3s_1s_3s_2s_1s_2s_3s_2s_1s_2s_3s_1s_3
$ \newline $s_2s_3s_2s_3s_1s_2s_1s_3s_2s_1s_2s_3s_1s_3s_1s_2s_1s_3s_1s_3 
s_2s_3s_1s_3s_2s_3s_1s_3s_2s_3s_1s_2s_1s_3s_1s_4s_3s_2s_1s_2
s_1s_2s_1s_2s_3s_2s_3s_2s_1s_2$ \newline $s_3s_1s_3s_1s_3s_2s_3s_1s_3s_2 
s_3s_1s_3s_1s_3s_1s_3s_2s_3s_2s_3s_1s_3s_2s_3s_1s_3s_2s_1s_2
s_1s_2s_3s_2s_1s_2s_1s_2s_3s_4s_3s_1s_3s_2s_3s_1s_2s_1s_3s_1 
$ \newline $s_2s_1s_3s_1s_2s_3s_2s_1s_2s_1s_2s_1s_2s_3s_1s_3s_2s_1s_3s_1
s_2s_3s_1s_3s_2s_1s_2s_1s_2s_3s_2s_1s_3s_2s_3s_1s_2s_3s_2s_1 
s_3s_2s_3s_1s_3s_4s_3s_2s_3s_1$ \newline $s_3s_1s_3s_2s_3s_1s_3s_2s_3s_1
s_2s_1s_3s_2s_3s_1s_3s_1s_3s_2s_1s_2s_3s_2s_1s_2s_1s_2s_1s_2 
s_3s_2s_1s_2s_1s_2s_1s_2s_3s_1s_3s_2s_3s_1s_2s_1s_3s_2s_1s_2
$ \newline $s_1s_2s_3s_2s_1s_3s_1s_2s_1s_3s_1s_2s_1s_2s_1s_3s_1s_2s_1s_2 
s_3s_1s_3s_2s_1s_3s_1s_2s_1s_3s_1s_3s_1s_2s_3s_2s_1s_4s_1s_3
s_1s_3s_2s_3s_1s_3s_1s_3s_2s_3$ \newline $s_1s_2s_1s_3s_1s_3s_1s_2s_1s_3 
s_1s_3s_1s_2s_3s_2s_1s_2s_3s_2s_1s_3s_4s_3s_1s_3s_2s_3s_1s_3
s_2s_3s_1s_3s_1s_3s_2s_3s_1s_3s_1s_3s_2s_3s_1s_2s_1s_3s_1s_3 
$ \newline $s_1s_2s_1s_3s_2s_3s_1s_3s_1s_3s_2s_1s_2s_3s_2s_1s_2s_3s_2s_1
s_2s_3s_1s_3s_2s_3s_2s_3s_1s_3s_1s_3s_2s_3s_1s_2s_1s_2s_1s_2 
s_1s_3s_1s_3s_1s_2s_1s_3s_1s_2$ \newline $s_1s_2s_1s_3s_1s_2s_1s_3s_1s_3
s_1s_2s_4s_2s_1s_3s_1s_2s_3s_2s_3s_2s_1s_2s_3s_2s_1s_2s_3s_2 
s_1s_2s_3s_2s_3s_2s_1s_2s_1s_2s_3s_2s_1s_3s_1s_3s_1s_2s_1s_4
$ \newline $s_3s_2s_1s_2s_1s_3s_2s_3s_1s_2s_3s_2s_1s_3s_2s_3s_1s_3s_2s_3 
s_1s_3s_2s_3s_1s_2s_3s_2s_1s_2s_3s_2s_1s_3s_2s_3s_1s_2s_1s_3
s_1s_3s_1s_2s_1s_3s_2s_3s_2s_3$ \newline $s_1s_3s_2s_1s_2s_3s_2s_1s_2s_3 
s_2s_1s_2s_3s_1s_3s_1s_3s_2s_3s_1s_3s_1s_3s_2s_3s_1s_2s_1s_3
s_1s_2s_1s_3s_1s_2s_3s_2s_1s_2s_1s_2s_4s_2s_3s_2s_1s_2s_3s_2 
$ \newline $s_1s_2s_3s_2s_1s_2s_3s_2s_1s_3s_1s_2s_1s_3s_1s_3s_1s_2s_1s_2
s_1s_3s_1s_2s_1s_3s_1s_3s_2s_3s_1s_2s_1s_2s_3s_2s_1s_2s_3s_2 
s_1s_2s_1s_3s_2s_3s_1s_4s_1s_3$ \newline $s_2s_3s_1s_2s_1s_2s_1s_2s_1s_3
s_1s_2s_3s_2s_1s_2s_3s_2s_1s_3s_1s_2s_1s_2s_1s_2s_1s_3s_1s_3 
s_1s_2s_1s_2s_1s_3s_2s_3s_2s_3s_1s_3s_2s_3s_4s_2s_1s_3s_1s_2
$ \newline $s_1s_3s_1s_2s_3s_2s_1s_2s_3s_2s_1s_2s_1s_2s_3s_2s_1s_2s_3s_2 
s_3s_2s_1s_2s_3s_2s_1s_3s_1s_3s_1s_4s_3s_1s_2s_1s_2s_1s_2s_1
s_3s_1s_2s_1s_3s_1s_2s_1s_3s_1$ \newline $s_2s_1s_3s_1s_2s_3s_2s_1s_2s_3 
s_2s_1s_3s_1s_2s_1s_2s_1s_3s_1s_2s_1s_3s_1s_2s_1s_2s_3s_2s_1
s_2s_1s_3s_2s_3s_1s_3s_4s_3s_2s_3s_1s_3s_1s_3s_1s_3s_2s_3s_1 
$ \newline $s_3s_1s_3s_1s_3s_2s_3s_1s_2s_1s_3s_1s_2s_1s_3s_1s_3s_1s_3s_1
s_2s_1s_2s_1s_3s_1s_3s_1s_3s_1s_2s_4s_2s_1s_3s_1s_3s_2s_1s_2 
s_3s_1s_2s_1s_3s_2s_3s_2s_3s_1$ \newline $s_3s_2s_1s_2s_3s_2s_1s_2s_3s_1
s_3s_2s_3s_1s_2s_1s_3s_1s_3s_1s_3s_1s_2s_1s_3s_1s_2s_1s_3s_2 
s_3s_1s_3s_2s_3s_4s_2s_1s_3s_1s_3s_1s_2s_1s_3s_2s_3s_1s_3s_2
$ \newline $s_3s_1s_2s_1s_3s_1s_3s_1s_2s_1s_3s_2s_3s_2s_3s_1s_3s_2s_1s_2 
s_3s_2s_1s_2s_3s_2s_1s_2s_3s_1s_3s_1s_3s_2s_3s_1s_3s_1s_3s_2
s_3s_1s_2s_1s_3s_1s_3s_1s_2s_1$ \newline $s_3s_1s_2s_1s_3s_2s_3s_1s_3s_1 
s_3s_2s_3s_1s_3s_2s_3s_1s_3s_2s_3s_1s_3s_4s_1s_2s_1s_2s_3s_2
s_3s_2s_1s_2s_3s_2s_3s_2s_1s_2s_1s_2s_3s_2s_1s_2s_1s_2s_3s_2 
$ \newline $s_3s_2s_1s_2s_3s_4s_3s_2s_1s_2s_3s_2s_1s_2s_3s_1s_3s_1s_3s_2
s_3s_1s_3s_1s_3s_2s_3s_1s_3s_2s_3s_1s_3s_2s_3s_1s_2s_1s_3s_1 
s_2s_1s_3s_1s_3s_1s_3s_1s_2s_1$ \newline $s_3s_2s_3s_1s_3s_2s_1s_2s_3s_2
s_1s_4s_1s_3s_1s_2s_1s_3s_2s_3s_1s_3s_2s_3s_1s_2s_1s_3s_1s_3 
s_1s_2s_1s_3s_2s_3s_1s_2$
}}
\end{spacing}

\vspace{1cm}

Let ${\check{\rR}}(0)$ denote the FGRS 
$R(A,\fkI_{\mathrm{odd}})$ of the simple Lie superalgebra $\frg(A,\fkI_{\mathrm{odd}})$
with $D(A,\fkI_{\mathrm{odd}})$
being $D(3,1)$.
The following lemma will be used in the proof of Lemma~\ref{lemma:HamRFiveFive}.

\begin{lemma}\label{lemma:SpPrpOfRfour}
Let $\rR$ be ${\check{\rR}}(0)$, ${\check{\rR}}(1)$ or ${\check{\rR}}(4)$.
Let $k:=|\bBR|$ {\rm{(}}recall $\bBR=V(\rR)${\rm{)}}.
Then for every $B\in\bBR$ and every $j\in\fkJ_{1,4}\,(=\fkI)$,
 there exists a Hamiltonian cycle $1^B\cdot s_{i_1}s_{i_2}\cdots s_{i_k}$
 of $\Gamma(\rR)$ such that $i_k=j$.
\end{lemma}
\begin{proof}
We see that ${\check{\rR}}(0)$ and ${\check{\rR}}(1)$
are the $\chi$-associated FGRSs with
$\chi$ of \cite[Table~4, Row~13]{Hec09} and \cite[Table~4, Row~14]{Hec09}
respectively.
Then the claim 
for ${\check{\rR}}(0)$ and ${\check{\rR}}(1)$ follows from \cite[Fig.~52 and Fig.~64]{Y22}.

Assume $R={\check{\rR}}(4)$.
(We use \cite{Mathe23} to obtain $(\star1)$ and $(\star2)$ below.)
Let $\sim$ be the smallest groupoid equivalence relation on $\bBR$.
Then we have $k=864$ and $|\bBRsim|=36$ (see \cite[Table~2]{CH15}).
In the following, we again write the Hamiltonian cycle $(\star1)$ of $\Gamma({\check{\rR}}(4))$
which is the same as the one of 
Nr.4 above, and in $(\star1)$ we also write the elements ${\overline{c}}$ $(c\in\fkJ_{1,36})$ of $\bBRsim$ which are defined
as follows. Let ${\overline{1}}:=[{\check{B}}]^\sim\,(\in\bBRsim)$.
Letting $i(t)\,(\in\fkJ_{1,4})$ and  ${\overline{c(t)}}\,(\in\bBRsim)$ be the $t$-th 
subscript and superscript of the Hamiltonian cycle $(\star1)$ respectively
($t\in\fkJ_{1,864}$),
we let them mean ${\overline{1}}=\tausim_{i(1)}\tausim_{i(2)}\cdots\tausim_{i(t)}({\overline{c(t)}})$.

\vspace{1cm}

\noindent
$(\star1)$ The Hamiltonian cycle in Nr.4 with the elements ${\overline{c}}\in\bBRsim$ $(c\in\fkJ_{1,36})$:
\begin{spacing}{0.5}
\noindent
{\tiny{{\small{$1^{\mbox{${\check{B}}$}}\cdot$}}\\
$
\bsuai{3}{2}\bsuai{1}{3}\bsuai{2}{3}\bsuai{1}{2}\bsuai{3}{1}\bsuai{1}{4}\bsuai{3}{5}\bsuai{1}{6}\bsuai{2}{7}\bsuai{1}{7}\bsuai{4}{7}\bsuai{3}{8}\bsuai{2}{8}\bsuai{3}{7}\suai{1}{7}\suai{4}{7}\suai{1}{7}\bsuai{2}{6}\bsuai{1}{5}\bsuai{4}{9}\bsuai{1}{9}\bsuai{2}{9}\bsuai{3}{10}\bsuai{2}{10}\bsuai{1}{11}\bsuai{4}{12}\bsuai{1}{13}\bsuai{2}{14}\bsuai{3}{14}\bsuai{1}{15}\bsuai{4}{16}\bsuai{1}{17}\bsuai{3}{17}\bsuai{2}{17}\bsuai{1}{16}\bsuai{4}{15}\bsuai{1}{14}\bsuai{2}{13}\bsuai{1}{12}\bsuai{3}{12}\suai{1}{13}\suai{2}{14}\suai{1}{15}{\underline{\bsuai{2}{18}\bsuai{4}{19}}}\bsuai{2}{20}\bsuai{4}{21}\bsuai{2}{16}\bsuai{3}{16}\bsuai{2}{21}
$ \newline $ 
\bsuai{4}{20}\bsuai{3}{22}\bsuai{1}{22}\bsuai{2}{23}\bsuai{4}{23}\bsuai{2}{22}\suai{1}{22}\suai{2}{23}\bsuai{1}{24}\bsuai{2}{24}\bsuai{3}{25}\bsuai{2}{25}\bsuai{1}{25}\suai{2}{25}\bsuai{4}{25}\suai{2}{25}\suai{1}{25}\suai{2}{25}\bsuai{3}{24}\suai{2}{24}\bsuai{1}{23}\suai{2}{22}\bsuai{3}{20}\bsuai{2}{19}\bsuai{1}{26}\bsuai{2}{27}\bsuai{4}{28}\bsuai{2}{11}\bsuai{3}{11}\bsuai{2}{28}\bsuai{1}{28}\bsuai{3}{28}\suai{1}{28}\suai{2}{11}\bsuai{1}{10}\bsuai{4}{4}{\underline{\bsuai{1}{1}\bsuai{4}{13}}}\suai{1}{12}\suai{3}{12}\bsuai{2}{29}\bsuai{3}{29}\bsuai{1}{18}\bsuai{3}{30}\bsuai{2}{30}\bsuai{1}{30}\suai{2}{30}\bsuai{3}{18}\bsuai{2}{15}\bsuai{3}{15}
$ \newline $ 
\suai{2}{18}\bsuai{1}{29}\bsuai{2}{12}\bsuai{4}{11}\suai{1}{10}\suai{2}{10}\bsuai{3}{9}\suai{2}{9}\suai{1}{9}\suai{2}{9}\suai{1}{9}\suai{2}{9}\suai{3}{10}\suai{2}{10}\suai{1}{11}\suai{2}{28}\suai{1}{28}\suai{2}{11}\suai{3}{11}\suai{2}{28}\suai{1}{28}\suai{3}{28}\suai{1}{28}\suai{2}{11}\suai{1}{10}\suai{2}{10}\suai{1}{11}\suai{4}{12}\suai{2}{29}\suai{1}{18}\suai{2}{15}\suai{4}{16}\suai{1}{17}\suai{2}{17}\suai{1}{16}\suai{2}{21}\bsuai{1}{21}\bsuai{3}{31}\bsuai{1}{31}\bsuai{2}{31}\bsuai{3}{21}\suai{2}{16}\suai{1}{17}\suai{2}{17}\suai{1}{16}\suai{2}{21}\suai{3}{31}\suai{2}{31}\suai{1}{31}\suai{3}{21}
$ \newline $ 
\suai{1}{21}\suai{2}{16}\suai{1}{17}\suai{2}{17}\suai{1}{16}\suai{4}{15}\suai{1}{14}\suai{3}{14}\suai{2}{13}\bsuai{3}{32}\bsuai{1}{32}\bsuai{3}{13}\suai{2}{14}\suai{3}{14}\suai{2}{13}\suai{3}{32}{\underline{\suai{1}{32}\bsuai{4}{2}}}\suai{1}{3}\suai{2}{3}\suai{1}{2}\suai{3}{1}\suai{1}{4}\bsuai{2}{4}\suai{1}{1}\suai{4}{13}\suai{1}{12}\suai{3}{12}\suai{2}{29}\suai{3}{29}\suai{1}{18}\suai{3}{30}\suai{1}{30}\suai{3}{18}\suai{2}{15}\suai{1}{14}\suai{2}{13}\suai{3}{32}\suai{1}{32}\bsuai{2}{32}\suai{1}{32}\suai{3}{13}\suai{2}{14}\suai{1}{15}\suai{2}{18}\suai{3}{30}\suai{1}{30}\suai{3}{18}\suai{1}{29}\suai{3}{29}
$ \newline $ 
\suai{2}{12}\suai{4}{11}\suai{1}{10}\suai{3}{9}\suai{2}{9}\suai{3}{10}\suai{1}{11}\suai{4}{12}\suai{1}{13}\suai{2}{14}\suai{3}{14}\suai{1}{15}\suai{3}{15}\suai{2}{18}\suai{1}{29}\suai{3}{29}\suai{1}{18}\suai{2}{15}\suai{4}{16}\suai{2}{21}\suai{1}{21}\suai{2}{16}\suai{4}{15}\suai{1}{14}\suai{3}{14}\suai{2}{13}\suai{3}{32}\suai{2}{32}\suai{3}{13}\suai{1}{12}\suai{3}{12}\suai{2}{29}\suai{3}{29}\suai{1}{18}\suai{3}{30}\suai{1}{30}\suai{3}{18}\suai{2}{15}\suai{3}{15}\suai{1}{14}\suai{3}{14}\suai{2}{13}\suai{3}{32}\suai{2}{32}\suai{3}{13}\suai{1}{12}\suai{3}{12}\suai{2}{29}\suai{3}{29}\suai{1}{18}
$ \newline $ 
\suai{3}{30}\suai{1}{30}\suai{3}{18}\suai{2}{15}\suai{4}{16}\suai{2}{21}\suai{3}{31}\suai{2}{31}\suai{1}{31}\suai{2}{31}\suai{3}{21}\suai{1}{21}\suai{3}{31}\suai{2}{31}\suai{3}{21}\suai{2}{16}\suai{3}{16}\suai{1}{17}\suai{3}{17}\suai{2}{17}\suai{3}{17}\suai{2}{17}\suai{3}{17}\suai{1}{16}\suai{4}{15}\suai{2}{18}\suai{3}{30}\suai{2}{30}\suai{1}{30}\suai{2}{30}\suai{3}{18}\suai{1}{29}\suai{3}{29}\suai{2}{12}\suai{3}{12}\suai{1}{13}\suai{3}{32}\suai{2}{32}\suai{1}{32}\suai{2}{32}\suai{3}{13}\suai{2}{14}\suai{3}{14}\suai{2}{13}\suai{1}{12}\suai{4}{11}\suai{1}{10}\suai{3}{9}\suai{2}{9}\suai{3}{10}
$ \newline $ 
\suai{1}{11}\suai{2}{28}\suai{1}{28}\suai{3}{28}\suai{1}{28}\suai{3}{28}\suai{1}{28}\suai{2}{11}\suai{1}{10}\suai{3}{9}\suai{2}{9}\suai{3}{10}\suai{1}{11}\suai{4}{12}\suai{1}{13}\suai{2}{14}\suai{1}{15}\suai{3}{15}{\underline{\suai{1}{14}\bsuai{4}{33}}}\bsuai{3}{33}\bsuai{2}{1}\suai{1}{4}\suai{2}{4}\suai{3}{5}\bsuai{2}{5}\suai{1}{6}\bsuai{3}{3}\suai{1}{2}\bsuai{2}{2}\suai{1}{3}\bsuai{3}{6}\suai{1}{5}\suai{2}{5}\suai{1}{6}\suai{2}{7}\suai{1}{7}\suai{3}{8}\bsuai{1}{8}\suai{2}{8}\suai{3}{7}\suai{2}{6}\suai{1}{5}\suai{2}{5}\suai{1}{6}\suai{2}{7}\suai{3}{8}\suai{2}{8}\suai{1}{8}\suai{3}{7}
$ \newline $ 
\suai{1}{7}\suai{2}{6}\suai{3}{3}\suai{2}{3}\suai{1}{2}\suai{2}{2}\suai{3}{1}\bsuai{2}{33}\bsuai{1}{34}\bsuai{2}{34}\bsuai{3}{34}\suai{2}{34}\bsuai{1}{33}\suai{3}{33}\suai{1}{34}\suai{2}{34}\suai{1}{33}\suai{2}{1}\suai{1}{4}\bsuai{4}{10}\suai{1}{11}\suai{3}{11}\suai{1}{10}\suai{2}{10}\suai{3}{9}\suai{2}{9}\suai{1}{9}\suai{2}{9}\suai{1}{9}\suai{2}{9}\suai{3}{10}\suai{2}{10}\suai{1}{11}\suai{2}{28}\bsuai{4}{27}\bsuai{2}{26}\bsuai{1}{19}\suai{2}{20}\suai{3}{22}\suai{2}{23}\bsuai{3}{35}\bsuai{2}{36}\bsuai{1}{36}\bsuai{2}{35}\bsuai{3}{23}\suai{2}{22}\suai{3}{20}\suai{2}{19}\suai{1}{26}\suai{2}{27}
$ \newline $ 
\bsuai{1}{27}\suai{2}{26}\bsuai{3}{26}\suai{2}{27}\suai{1}{27}\suai{2}{26}\suai{1}{19}\bsuai{3}{36}\suai{1}{36}\suai{2}{35}\bsuai{1}{35}\suai{3}{23}\suai{1}{24}\suai{2}{24}\suai{1}{23}\suai{2}{22}\suai{1}{22}\suai{3}{20}\bsuai{1}{20}\suai{2}{19}\suai{3}{36}\suai{2}{35}\suai{1}{35}\suai{2}{36}\bsuai{3}{19}\suai{2}{20}\suai{3}{22}\suai{2}{23}\suai{1}{24}\suai{2}{24}\suai{3}{25}\suai{2}{25}\suai{4}{25}\suai{2}{25}\suai{3}{24}\suai{2}{24}\suai{1}{23}\suai{3}{35}\suai{2}{36}\suai{3}{19}\suai{1}{26}\suai{2}{27}\suai{1}{27}\suai{2}{26}\suai{3}{26}\suai{2}{27}\suai{1}{27}\suai{2}{26}\suai{3}{26}\suai{2}{27}
$ \newline $ 
\suai{1}{27}\suai{2}{26}\suai{1}{19}\suai{3}{36}\suai{2}{35}\suai{3}{23}\suai{1}{24}\suai{2}{24}\suai{1}{23}\suai{2}{22}\suai{1}{22}\suai{3}{20}\suai{1}{20}\suai{2}{19}\suai{3}{36}\suai{2}{35}\suai{1}{35}\suai{2}{36}\suai{3}{19}\suai{2}{20}\suai{1}{20}\suai{2}{19}\suai{3}{36}\suai{1}{36}\suai{3}{19}\suai{2}{20}\suai{1}{20}\suai{3}{22}\suai{1}{22}\suai{2}{23}\suai{3}{35}\suai{1}{35}\bsuai{4}{35}\suai{2}{36}\suai{1}{36}\suai{2}{35}\suai{3}{23}\suai{1}{24}\suai{3}{25}\suai{2}{25}\suai{3}{24}\suai{1}{23}\suai{3}{35}\suai{2}{36}\suai{3}{19}\suai{2}{20}\suai{3}{22}\suai{1}{22}\suai{3}{20}\suai{2}{19}
$ \newline $ 
\suai{1}{26}\suai{2}{27}\suai{1}{27}\suai{2}{26}\suai{3}{26}\suai{2}{27}\suai{1}{27}\suai{2}{26}\suai{1}{19}\suai{2}{20}\suai{1}{20}\suai{2}{19}\suai{3}{36}\suai{2}{35}\suai{3}{23}\suai{2}{22}\suai{1}{22}\suai{2}{23}\suai{1}{24}\suai{2}{24}\suai{3}{25}\suai{2}{25}\suai{1}{25}\suai{2}{25}\suai{3}{24}\suai{2}{24}\suai{1}{23}\suai{3}{35}\suai{1}{35}\suai{2}{36}\suai{1}{36}\suai{3}{19}\suai{1}{26}\suai{2}{27}\suai{1}{27}\suai{2}{26}\suai{1}{19}\suai{2}{20}\suai{4}{21}\suai{2}{16}\suai{1}{17}\suai{2}{17}\suai{3}{17}\suai{2}{17}\suai{1}{16}\suai{2}{21}\suai{3}{31}\suai{2}{31}\suai{3}{21}\suai{2}{16}
$ \newline $ 
\suai{1}{17}\suai{2}{17}\suai{1}{16}\suai{2}{21}\suai{3}{31}\suai{2}{31}\suai{1}{31}{\underline{\suai{2}{31}\bsuai{4}{22}}}\suai{1}{22}\suai{3}{20}\suai{2}{19}\suai{3}{36}\suai{1}{36}\suai{3}{19}\suai{2}{20}\suai{1}{20}\suai{3}{22}\suai{1}{22}\suai{2}{23}\suai{3}{35}\suai{1}{35}\suai{3}{23}\suai{1}{24}\suai{3}{25}\suai{2}{25}\suai{3}{24}\suai{1}{23}\suai{3}{35}\suai{2}{36}\suai{1}{36}\suai{2}{35}\suai{3}{23}\suai{2}{22}\suai{1}{22}\suai{3}{20}\suai{1}{20}\suai{2}{19}\suai{1}{26}\suai{2}{27}\suai{1}{27}\suai{2}{26}\suai{1}{19}\suai{3}{36}\suai{2}{35}\suai{3}{23}\suai{1}{24}\suai{2}{24}\suai{3}{25}\suai{2}{25}
$ \newline $ 
\suai{1}{25}\suai{2}{25}\suai{3}{24}\suai{2}{24}\suai{1}{23}\suai{3}{35}\suai{2}{36}\suai{3}{19}\suai{1}{26}\suai{2}{27}\suai{1}{27}\suai{2}{26}\suai{3}{26}\suai{2}{27}\suai{1}{27}{\underline{\suai{2}{26}\bsuai{4}{29}}}\suai{3}{29}\suai{1}{18}\suai{3}{30}\suai{1}{30}\suai{2}{30}\suai{1}{30}\suai{3}{18}\suai{1}{29}\suai{2}{12}\suai{3}{12}\suai{2}{29}\suai{1}{18}\suai{2}{15}\suai{1}{14}\suai{2}{13}\suai{3}{32}\suai{2}{32}\suai{1}{32}\suai{2}{32}\suai{3}{13}\suai{2}{14}\suai{1}{15}\suai{4}{16}\suai{1}{17}\suai{2}{17}\suai{3}{17}\suai{1}{16}\suai{3}{16}\suai{2}{21}\suai{1}{21}\suai{3}{31}\suai{1}{31}\suai{2}{31}
$ \newline $ 
\suai{3}{21}\suai{1}{21}\suai{3}{31}\suai{2}{31}\suai{1}{31}\suai{3}{21}\suai{1}{21}\suai{2}{16}\suai{4}{15}\suai{2}{18}\suai{1}{29}\suai{3}{29}\suai{1}{18}\suai{2}{15}\suai{3}{15}\suai{2}{18}\suai{3}{30}\suai{2}{30}\suai{1}{30}\suai{2}{30}\suai{3}{18}\suai{1}{29}\suai{3}{29}\suai{2}{12}\suai{3}{12}\suai{1}{13}\suai{3}{32}\suai{2}{32}\suai{1}{32}\suai{2}{32}\suai{3}{13}\suai{2}{14}\suai{3}{14}\suai{2}{13}\suai{1}{12}\suai{4}{11}\suai{1}{10}\suai{2}{10}\suai{1}{11}\suai{2}{28}\suai{1}{28}\suai{3}{28}\suai{1}{28}\suai{2}{11}\suai{3}{11}\suai{2}{28}\suai{1}{28}\suai{2}{11}\suai{1}{10}\suai{2}{10}
$ \newline $ 
\suai{3}{9}\suai{2}{9}\suai{1}{9}\bsuai{4}{5}\suai{1}{6}\suai{3}{3}\suai{1}{2}\suai{2}{2}\suai{1}{3}\suai{3}{6}\suai{1}{5}\suai{2}{5}\bsuai{3}{4}\suai{2}{4}\suai{1}{1}\suai{2}{33}\suai{1}{34}\suai{3}{34}\suai{1}{33}\suai{2}{1}\suai{1}{4}\suai{2}{4}\suai{1}{1}\suai{2}{33}\suai{1}{34}\suai{3}{34}\suai{1}{33}\suai{2}{1}\suai{3}{2}\suai{2}{2}\suai{1}{3}\suai{2}{3}\suai{3}{6}\suai{2}{7}\suai{1}{7}\suai{2}{6}\suai{1}{5}\suai{2}{5}\suai{3}{4}\suai{2}{4}\suai{1}{1}\suai{2}{33}\suai{1}{34}\suai{3}{34}\suai{1}{33}\suai{2}{1}\suai{3}{2}\suai{2}{2}\suai{1}{3}\suai{2}{3}
$ \newline $ 
\suai{3}{6}\suai{2}{7}\suai{1}{7}\suai{3}{8}\suai{1}{8}\suai{2}{8}\suai{1}{8}\suai{2}{8}\suai{1}{8}\bsuai{4}{8}\suai{2}{8}\suai{1}{8}\suai{2}{8}\suai{3}{7}\suai{2}{6}\suai{3}{3}\suai{1}{2}\suai{2}{2}\suai{1}{3}\suai{3}{6}\suai{2}{7}\suai{1}{7}\suai{2}{6}\suai{3}{3}\suai{1}{2}\suai{2}{2}\suai{1}{3}\suai{3}{6}\suai{1}{5}\suai{3}{4}\suai{1}{1}\suai{2}{33}\suai{3}{33}\suai{2}{1}\suai{1}{4}\suai{3}{5}\suai{2}{5}\suai{3}{4}\suai{1}{1}\suai{2}{33}\suai{1}{34}\suai{2}{34}\suai{3}{34}\suai{2}{34}\suai{3}{34}\suai{2}{34}\suai{1}{33}\suai{2}{1}\suai{1}{4}\suai{3}{5}
$ \newline $ 
\suai{2}{5}\suai{3}{4}\suai{1}{1}\suai{2}{33}\suai{3}{33}\suai{2}{1}\suai{1}{4}\suai{3}{5}\suai{1}{6}\suai{3}{3}\suai{1}{2}\suai{2}{2}\suai{1}{3}\suai{3}{6}\bsuai{4}{6}\suai{1}{5}\suai{3}{4}\suai{1}{1}\suai{3}{2}\suai{1}{3}\suai{2}{3}\suai{1}{2}\suai{3}{1}\suai{1}{4}\suai{2}{4}\suai{3}{5}\suai{2}{5}\suai{1}{6}\suai{2}{7}\suai{1}{7}\suai{2}{6}\suai{3}{3}\suai{1}{2}\suai{3}{1}\suai{2}{33}\suai{1}{34}\suai{2}{34}\suai{1}{33}\suai{3}{33}\suai{1}{34}\suai{2}{34}\suai{1}{33}\suai{3}{33}\suai{1}{34}\suai{2}{34}\suai{1}{33}\suai{2}{1}\suai{3}{2}\suai{1}{3}\suai{3}{6}
$ \newline $ 
\suai{2}{7}\suai{3}{8}\suai{2}{8}\suai{1}{8}\suai{2}{8}\suai{1}{8}\suai{2}{8}\suai{3}{7}\suai{2}{6}\suai{1}{5}\suai{2}{5}\suai{3}{4}\suai{2}{4}\suai{1}{1}
$}}\,\,
\end{spacing}

\vspace{1cm}
\noindent
In the Hamiltonian cycle $(\star1)$,
$\suai{i}{c}$ really exists
unless it is $\suai{3}{27}$ or \newline
$\suai{4}{a}$ with $a\in\{1,3,14,17,18,24,26,30,31,32,34,36\}$
(see the bold letters).
We also see that
$\suai{4}{b}$ with  $b\in\{1,14,18,26,31,32\}$
exist in the reverse cycle of $(\star1)$
(see the underlined letters).

In the following, we write the Hamiltonian cycle $(\star2)$ of $\Gamma({\check{\rR}}(4))$ 
different from $(\star1)$, where ${\overline{c}}$ $(c\in\fkJ_{1,36})$
are the same as those of $(\star1)$.

\vspace{1cm}

\noindent
$(\star2)$
\begin{spacing}{0.5}
\noindent
{\tiny{{\small{$1^{\mbox{${\check{B}}$}}\cdot$}}\\
$
\suai{1}{4}\suai{4}{10}\suai{1}{11}\suai{3}{11}\suai{1}{10}\suai{4}{4}\suai{3}{5}\suai{2}{5}\suai{1}{6}\suai{4}{6}\suai{1}{5}\suai{2}{5}\suai{1}{6}\suai{2}{7}\suai{1}{7}\suai{3}{8}\suai{1}{8}\suai{2}{8}\suai{1}{8}\suai{4}{8}\suai{1}{8}\suai{2}{8}\suai{1}{8}\suai{3}{7}\suai{1}{7}\suai{2}{6}\suai{1}{5}\suai{3}{4}\suai{1}{1}\suai{2}{33}\suai{1}{34}\bsuai{4}{17}\suai{1}{16}\suai{3}{16}\suai{1}{17}\suai{2}{17}\suai{3}{17}\suai{2}{17}\suai{1}{16}\suai{2}{21}\suai{4}{20}\suai{2}{19}\suai{4}{18}\suai{2}{15}\suai{3}{15}\suai{1}{14}\suai{3}{14}\suai{2}{13}\suai{3}{32}\suai{1}{32} 
$ \newline $ 
\suai{2}{32}\suai{1}{32}\suai{3}{13}\suai{1}{12}\suai{3}{12}\suai{1}{13}\suai{2}{14}\suai{1}{15}\suai{4}{16}\suai{2}{21}\suai{1}{21}\suai{3}{31}\suai{1}{31}\suai{2}{31}\suai{1}{31}\suai{2}{31}\suai{1}{31}\suai{3}{21}\suai{1}{21}\suai{2}{16}\suai{1}{17}\suai{2}{17}\suai{1}{16}\suai{3}{16}\suai{1}{17}\suai{2}{17}\suai{3}{17}\suai{2}{17}\suai{1}{16}\suai{2}{21}\suai{1}{21}\suai{2}{16}\suai{4}{15}\suai{1}{14}\suai{2}{13}\suai{1}{12}\suai{4}{11}\suai{2}{28}\suai{1}{28}\suai{2}{11}\suai{1}{10}\suai{2}{10}\suai{3}{9}\suai{2}{9}\suai{1}{9}\suai{3}{10}\suai{1}{11}\suai{2}{28}\suai{1}{28}\suai{2}{11} 
$ \newline $ 
\suai{1}{10}\suai{3}{9}\suai{1}{9}\suai{2}{9}\suai{3}{10}\suai{2}{10}\suai{1}{11}\suai{2}{28}\suai{1}{28}\suai{2}{11}\suai{4}{12}\suai{2}{29}\suai{3}{29}\suai{1}{18}\suai{3}{30}\suai{2}{30}\suai{3}{18}\suai{1}{29}\suai{3}{29}\suai{1}{18}\suai{3}{30}{\underline{\suai{2}{30}\bsuai{4}{36}}}\suai{2}{35}\suai{1}{35}\suai{2}{36}\suai{3}{19}\suai{2}{20}\suai{1}{20}\suai{2}{19}\suai{4}{18}\suai{2}{15}\suai{3}{15}\suai{1}{14}\suai{3}{14}\suai{2}{13}\suai{3}{32}\suai{2}{32}\suai{3}{13}\suai{1}{12}\suai{2}{29}\suai{1}{18}\suai{3}{30}\suai{2}{30}\suai{1}{30}\suai{2}{30}\suai{3}{18}\suai{1}{29}\suai{2}{12}\suai{1}{13} 
$ \newline $ 
\suai{3}{32}\suai{2}{32}\suai{3}{13}\suai{2}{14}\suai{3}{14}\suai{1}{15}\suai{4}{16}\suai{2}{21}\suai{3}{31}\suai{1}{31}\suai{3}{21}\suai{2}{16}\suai{4}{15}\suai{2}{18}\suai{1}{29}\suai{3}{29}\suai{2}{12}\suai{3}{12}\suai{1}{13}\suai{2}{14}\suai{3}{14}\suai{2}{13}\suai{1}{12}\suai{4}{11}\suai{1}{10}\suai{2}{10}\suai{1}{11}\suai{4}{12}\suai{2}{29}\suai{3}{29}\suai{1}{18}\suai{3}{30}\suai{1}{30}\suai{3}{18}\suai{2}{15}\suai{3}{15}\suai{1}{14}\suai{3}{14}\suai{2}{13}\suai{3}{32}\suai{2}{32}\suai{3}{13}\suai{1}{12}\suai{3}{12}\suai{2}{29}\suai{3}{29}\suai{1}{18}\suai{3}{30}\suai{1}{30}\suai{3}{18} 
$ \newline $ 
\suai{2}{15}\suai{3}{15}\suai{1}{14}\suai{3}{14}\suai{2}{13}\suai{3}{32}\suai{2}{32}\suai{3}{13}\suai{1}{12}\suai{4}{11}\suai{1}{10}\suai{3}{9}\suai{1}{9}\suai{2}{9}\suai{1}{9}\suai{3}{10}\suai{2}{10}\suai{3}{9}\suai{1}{9}\suai{3}{10}\suai{1}{11}\suai{3}{11}\suai{2}{28}\suai{3}{28}\suai{1}{28}\suai{3}{28}\suai{1}{28}\suai{3}{28}\suai{2}{11}\suai{4}{12}\suai{1}{13}\suai{3}{32}\suai{1}{32}\suai{2}{32}\suai{1}{32}\suai{3}{13}\suai{2}{14}\suai{3}{14}\suai{1}{15}\suai{3}{15}\suai{2}{18}\suai{3}{30}\suai{1}{30}\suai{2}{30}\suai{1}{30}\suai{3}{18}\suai{1}{29}\suai{3}{29}\suai{1}{18}\suai{2}{15} 
$ \newline $ 
\suai{4}{16}\suai{2}{21}\suai{3}{31}\suai{1}{31}\suai{3}{21}\suai{2}{16}\suai{1}{17}\suai{2}{17}\suai{3}{17}\suai{2}{17}\suai{3}{17}\suai{2}{17}\suai{1}{16}\suai{2}{21}\suai{3}{31}\suai{1}{31}\suai{3}{21}\suai{2}{16}\suai{4}{15}\suai{2}{18}\suai{1}{29}\suai{2}{12}\suai{3}{12}\suai{2}{29}\suai{4}{26}\suai{3}{26}\suai{1}{19}\suai{2}{20}\suai{1}{20}\suai{3}{22}\suai{1}{22}\suai{2}{23}\suai{3}{35}\suai{2}{36}\suai{1}{36}\suai{2}{35}\suai{3}{23}\suai{2}{22}\suai{1}{22}\suai{2}{23}\suai{1}{24}\suai{2}{24}\suai{3}{25}\suai{2}{25}\suai{1}{25}\suai{3}{24}\suai{1}{23}\suai{2}{22}\suai{1}{22}\suai{2}{23} 
$ \newline $ 
\suai{1}{24}\suai{3}{25}\suai{1}{25}\suai{2}{25}\suai{3}{24}\suai{2}{24}\suai{1}{23}\suai{3}{35}\suai{1}{35}\suai{2}{36}\suai{1}{36}\suai{3}{19}\suai{1}{26}\suai{2}{27}\suai{1}{27}\bsuai{3}{27}\suai{1}{27}\suai{2}{26}\suai{3}{26}\suai{2}{27}\suai{1}{27}\suai{2}{26}\suai{1}{19}\suai{2}{20}\suai{4}{21}\suai{2}{16}\suai{3}{16}\suai{2}{21}\suai{1}{21}\suai{3}{31}\suai{1}{31}\suai{2}{31}\suai{1}{31}\suai{2}{31}\suai{1}{31}\suai{3}{21}\suai{1}{21}\suai{2}{16}\suai{1}{17}\bsuai{4}{34}\suai{1}{33}\suai{2}{1}\suai{1}{4}\suai{3}{5}\suai{1}{6}\suai{3}{3}\suai{1}{2}\suai{2}{2}\suai{1}{3}\suai{3}{6} 
$ \newline $ 
\suai{1}{5}\suai{3}{4}\suai{1}{1}\suai{2}{33}\suai{1}{34}\suai{2}{34}\suai{1}{33}\suai{3}{33}\suai{1}{34}\suai{2}{34}\suai{1}{33}\suai{2}{1}\suai{3}{2}\suai{2}{2}\suai{1}{3}\suai{2}{3}\suai{3}{6}\suai{2}{7}\suai{1}{7}\suai{2}{6}\suai{1}{5}\suai{2}{5}\suai{3}{4}\suai{2}{4}\suai{1}{1}\suai{3}{2}\suai{1}{3}\suai{2}{3}\suai{1}{2}\suai{3}{1}\suai{1}{4}\suai{3}{5}\suai{1}{6}\suai{2}{7}\suai{1}{7}\suai{3}{8}\suai{1}{8}\suai{4}{8}\suai{1}{8}\suai{3}{7}\suai{1}{7}\suai{2}{6}\suai{3}{3}\suai{1}{2}\suai{3}{1}\suai{2}{33}\suai{1}{34}\suai{2}{34}\suai{1}{33}\suai{3}{33} 
$ \newline $ 
\suai{1}{34}\suai{2}{34}\suai{1}{33}\suai{3}{33}\suai{1}{34}\suai{2}{34}\suai{1}{33}\suai{2}{1}\suai{3}{2}\suai{1}{3}\suai{3}{6}\suai{2}{7}\suai{1}{7}\suai{2}{6}\suai{1}{5}\suai{2}{5}\suai{3}{4}\suai{2}{4}\suai{1}{1}\suai{3}{2}\suai{1}{3}\suai{2}{3}\suai{1}{2}\suai{3}{1}\suai{1}{4}\suai{2}{4}\suai{1}{1}\suai{3}{2}\suai{2}{2}\suai{3}{1}\suai{1}{4}\suai{2}{4}\suai{3}{5}\suai{2}{5}\suai{1}{6}\suai{3}{3}\suai{2}{3}\bsuai{4}{3}\suai{1}{2}\suai{2}{2}\suai{1}{3}\suai{3}{6}\suai{2}{7}\suai{3}{8}\suai{1}{8}\suai{3}{7}\suai{2}{6}\suai{3}{3}\suai{1}{2}\suai{3}{1} 
$ \newline $ 
\suai{1}{4}\suai{3}{5}\suai{2}{5}\suai{3}{4}\suai{1}{1}\suai{2}{33}\suai{1}{34}\suai{2}{34}\suai{1}{33}\suai{3}{33}\suai{1}{34}\suai{2}{34}\suai{1}{33}\suai{2}{1}\suai{1}{4}\suai{2}{4}\suai{1}{1}\suai{3}{2}\suai{1}{3}\suai{3}{6}\suai{1}{5}\suai{2}{5}\suai{1}{6}\suai{2}{7}\suai{1}{7}\suai{3}{8}\suai{1}{8}\suai{2}{8}\suai{1}{8}\suai{3}{7}\suai{1}{7}\suai{2}{6}\suai{3}{3}\suai{2}{3}\suai{1}{2}\suai{2}{2}\suai{3}{1}\suai{2}{33}\suai{1}{34}\suai{2}{34}\suai{1}{33}\suai{2}{1}\suai{1}{4}\suai{4}{10}\suai{1}{11}\suai{2}{28}\suai{1}{28}\suai{3}{28}\suai{1}{28}\suai{2}{11} 
$ \newline $ 
\suai{1}{10}\suai{3}{9}\suai{1}{9}\suai{3}{10}\suai{1}{11}\suai{2}{28}\suai{1}{28}\suai{2}{11}\suai{1}{10}\suai{3}{9}\suai{1}{9}\suai{2}{9}\suai{1}{9}\suai{4}{5}\suai{2}{5}\suai{3}{4}\suai{1}{1}\suai{3}{2}\suai{2}{2}\suai{3}{1}\suai{1}{4}\suai{2}{4}\suai{3}{5}\suai{2}{5}\suai{1}{6}\suai{3}{3}\suai{2}{3}\suai{3}{6}\suai{2}{7}\suai{3}{8}\suai{1}{8}\suai{3}{7}\suai{2}{6}\suai{3}{3}\suai{1}{2}\suai{2}{2}\suai{1}{3}\suai{3}{6}\suai{1}{5}\suai{2}{5}\suai{3}{4}\suai{2}{4}\suai{1}{1}\suai{2}{33}\suai{1}{34}\suai{2}{34}\suai{1}{33}\suai{2}{1}\suai{3}{2}\suai{1}{3} 
$ \newline $ 
\suai{3}{6}\suai{2}{7}\suai{1}{7}\suai{3}{8}\suai{1}{8}\suai{2}{8}\suai{1}{8}\suai{3}{7}\suai{1}{7}\suai{2}{6}\suai{3}{3}\suai{1}{2}\suai{3}{1}\suai{2}{33}\suai{1}{34}\suai{2}{34}\suai{1}{33}\suai{3}{33}\suai{1}{34}\suai{2}{34}\suai{1}{33}\suai{4}{14}\suai{3}{14}\suai{2}{13}\suai{3}{32}\suai{2}{32}\suai{1}{32}\suai{2}{32}\suai{3}{13}\suai{2}{14}\suai{1}{15}\suai{3}{15}\suai{1}{14}\suai{2}{13}\suai{1}{12}\suai{2}{29}\suai{1}{18}\suai{3}{30}\suai{1}{30}\suai{2}{30}\suai{1}{30}\suai{3}{18}\suai{1}{29}\suai{2}{12}\suai{4}{11}\suai{2}{28}\suai{1}{28}\suai{3}{28}\suai{2}{11}\suai{3}{11} 
$ \newline $ 
\suai{1}{10}\suai{2}{10}\suai{3}{9}\suai{2}{9}\suai{1}{9}\suai{3}{10}\suai{2}{10}\suai{3}{9}\suai{1}{9}\suai{2}{9}\suai{3}{10}\suai{2}{10}\suai{1}{11}\suai{4}{12}\suai{1}{13}\suai{2}{14}\suai{3}{14}\suai{2}{13}\suai{1}{12}\suai{3}{12}\suai{1}{13}\suai{3}{32}\suai{1}{32}\suai{2}{32}\suai{1}{32}\suai{3}{13}\suai{2}{14}\suai{3}{14}\suai{1}{15}\suai{3}{15}\suai{2}{18}\suai{3}{30}\suai{1}{30}\suai{2}{30}\suai{1}{30}\suai{3}{18}\suai{1}{29}\suai{3}{29}\suai{1}{18}\suai{2}{15}\suai{4}{16}\suai{2}{21}\suai{1}{21}\suai{2}{16}\suai{1}{17}\suai{2}{17}\suai{3}{17}\suai{2}{17}\suai{1}{16}\suai{3}{16} 
$ \newline $ 
\suai{1}{17}\suai{2}{17}\suai{1}{16}\suai{2}{21}\suai{1}{21}\suai{3}{31}\suai{1}{31}\suai{2}{31}\suai{4}{22}\suai{2}{23}\suai{3}{35}\suai{2}{36}\suai{1}{36}\suai{2}{35}\suai{3}{23}\suai{2}{22}\suai{1}{22}\suai{3}{20}\suai{1}{20}\suai{2}{19}\suai{1}{26}\suai{2}{27}\suai{3}{27}\suai{2}{26}\suai{1}{19}\suai{2}{20}\suai{1}{20}\suai{2}{19}\suai{1}{26}\suai{2}{27}\suai{3}{27}\suai{2}{26}\suai{1}{19}\suai{3}{36}\suai{1}{36}\suai{2}{35}\suai{1}{35}\suai{3}{23}\suai{1}{24}\suai{2}{24}\suai{1}{23}\suai{2}{22}\suai{1}{22}\suai{3}{20}\suai{1}{20}\suai{2}{19}\suai{1}{26}\suai{2}{27}\suai{3}{27}\suai{2}{26} 
$ \newline $ 
\suai{1}{19}\suai{3}{36}\suai{1}{36}\suai{2}{35}\suai{1}{35}\suai{3}{23}\suai{1}{24}\suai{2}{24}\suai{3}{25}\suai{2}{25}\suai{1}{25}\suai{2}{25}\suai{1}{25}\suai{2}{25}\suai{4}{25}\suai{1}{25}\suai{2}{25}\suai{1}{25}\suai{3}{24}\suai{1}{23}\suai{3}{35}\suai{2}{36}\suai{1}{36}\suai{2}{35}\suai{3}{23}\suai{1}{24}\suai{2}{24}\suai{1}{23}\suai{3}{35}\suai{2}{36}\suai{1}{36}\suai{2}{35}\suai{3}{23}\suai{2}{22}\suai{3}{20}\suai{2}{19}\suai{1}{26}\suai{3}{26}\suai{1}{19}\suai{2}{20}\suai{3}{22}\suai{1}{22}\suai{3}{20}\suai{2}{19}\suai{1}{26}\suai{2}{27}\suai{1}{27}\suai{3}{27}\suai{1}{27}\suai{3}{27} 
$ \newline $ 
\suai{1}{27}\suai{2}{26}\suai{1}{19}\suai{2}{20}\suai{3}{22}\suai{1}{22}\suai{3}{20}\suai{2}{19}\suai{1}{26}\suai{3}{26}\suai{1}{19}\suai{2}{20}\suai{3}{22}\suai{2}{23}\suai{3}{35}\suai{2}{36}\suai{1}{36}\suai{2}{35}\suai{3}{23}\suai{4}{23}\suai{2}{22}\suai{3}{20}\suai{2}{19}\suai{3}{36}\suai{2}{35}\suai{1}{35}\suai{2}{36}\suai{3}{19}\suai{2}{20}\suai{1}{20}\suai{3}{22}\suai{1}{22}\suai{2}{23}\suai{1}{24}\suai{2}{24}\suai{1}{23}\suai{3}{35}\suai{2}{36}\suai{3}{19}\suai{1}{26}\suai{2}{27}\suai{1}{27}\suai{2}{26}\suai{3}{26}\suai{2}{27}\suai{1}{27}\suai{2}{26}\suai{3}{26}\suai{2}{27}\suai{1}{27} 
$ \newline $ 
\suai{2}{26}\suai{1}{19}\suai{3}{36}\suai{2}{35}\suai{3}{23}\suai{1}{24}\suai{3}{25}\suai{1}{25}\suai{2}{25}\suai{1}{25}\suai{2}{25}\suai{1}{25}\suai{3}{24}\suai{1}{23}\suai{2}{22}\suai{1}{22}\suai{3}{20}\suai{1}{20}\suai{2}{19}\suai{3}{36}\suai{2}{35}\suai{1}{35}\suai{2}{36}\suai{3}{19}\suai{2}{20}\suai{3}{22}\suai{2}{23}\suai{1}{24}\suai{2}{24}\bsuai{4}{24}\suai{3}{25}\suai{1}{25}\suai{3}{24}\suai{2}{24}\suai{4}{24}\suai{2}{24}\suai{1}{23}\suai{2}{22}\suai{4}{31}\suai{2}{31}\suai{1}{31}\suai{3}{21}\suai{1}{21}\suai{2}{16}\suai{4}{15}\suai{2}{18}\suai{1}{29}\suai{3}{29}\suai{2}{12}\suai{4}{11} 
$ \newline $ 
\suai{2}{28}\suai{3}{28}\suai{1}{28}\suai{2}{11}\suai{4}{12}\suai{2}{29}\suai{1}{18}\suai{2}{15}\suai{3}{15}\suai{2}{18}\suai{1}{29}\suai{2}{12}\suai{1}{13}\suai{4}{1}
$}}\,\,
\end{spacing}
\vspace{1cm}
\noindent
In $(\star2)$, $\suai{3}{27}$ and
$\suai{4}{a}$ with $a\in\{3,17,24,26,34,36\}$
exist (see the bold letters). We see that 
$\suai{4}{30}$ exists
in the reverse cycle of $(\star2)$ (see the underlined pair of factors).
We emphasis that for every ${\overline{c}}\in\bBRsim$ and every $i\in\fkJ_{1,4}$,
$\suai{i}{c}$ exists as a factor of one of $(\star1)$ and $(\star2)$
and their reverse cycles, which completes the proof.
\end{proof}
\subsection{Rank $\geq 5$ cases} \label{subsection:rankFiveME}
Let $\hacun(5):=6$, $\hacun(6):=4$,  $\hacun(7):=2$, and $\hacun(8):=1$. 

Let $r\in\fkJ_{5,8}$. 
Then $\hacun(r)$ is the cardinality of the irreducible finite generalized root systems
listed in \cite[Nr.~$k$ of B.$r-2$.~Rank r]{CH15}.
Let ${\acute{\fca}}$ be a free $\bZ$-module of rank~$r$.
Let $\fkI:=\fkJ_{1,r}$.
Let ${\acute{B}}=\{{\acute{\al}}_i|i\in\fkI\}$ be a $\bZ$-base of 
${\acute{\fca}}$,
i.e., ${\acute{\fca}}=\oplus_{t=1}^r\bZ{\acute{\al}}_t$.
In this section, we let the symbol $1^{x_1}2^{x_2}\cdots r^{x_r}$ $(x_t\in\bZgeqo
\,(t\in\fkJ_{1,r}))$ of \cite[Nr.~$k$ of B.$r-2$.~Rank r]{CH15} mean 
$\sum_{t=1}^r x{\acute{\al}}_t$.
For $k\in\fkJ_{1,\hacun(r)}$, 
let ${\acute{\rR}}(r;k)$ be the irreducible FGRS
of rank $r$ defined by  
\cite[Nr.~$k$ of B.$r-2$.~Rank r]{CH15}.
Let $\Theta(r;k)$ be the set of $D_\bK(\chi,{\acute{B}})$'s of \cite[Table~4]{Hec09} such that
${\acute{\rR}}(r;k)$ is $\chi$-associated and ${\acute{B}}\in\bB({\acute{\rR}}(r;k))$; instead of writing up the elements of $\Theta(r;k)$,
we write Row's of \cite[Table~4]{Hec09}.
Then we have:
\newline\newline
$\Theta(5;1)=\{11\}$, $\Theta(5;2)=\{14\}$, $\Theta(5;3)=\{12\}$, 
$\Theta(5;4)=\{15\}$, $\Theta(5;5)=\emptyset$, $\Theta(5;6)=\{13\}$, \newline
$\Theta(6;1)=\{16\}$ $(E_6)$, $\Theta(6;2)=\{17\}$, $\Theta(6;3)=\{19\}$, 
$\Theta(6;4)=\{18\}$, \newline
$\Theta(7;1)=\{20\}$ $(E_7)$, $\Theta(7;2)=\{21\}$, \newline
$\Theta(8;1)=\{22\}$ $(E_8)$.

\begin{lemma}\label{lemma:HamRFiveFive}
$\Gamma({\acute{\rR}}(5;5))$ has a Hamiltonian cycle.
\end{lemma}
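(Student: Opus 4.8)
The plan is to build a Hamiltonian cycle of $\Gamma({\acute{\rR}}(5;5))$ by hand (on the computer), because the one general tool that would otherwise settle the matter, Theorem~\ref{theorem:YamMain}, does not apply here: we have $\Theta(5;5)=\emptyset$, so ${\acute{\rR}}(5;5)$ is not $\bK$-associated for any bicharacter and carries no generalized Dynkin diagram to feed into that theorem. This is exactly the situation of the rank-four systems Nr.~4, Nr.~8, Nr.~10 of Subsection~\ref{subsection:rankfour}, which were also outside the reach of Theorem~\ref{theorem:YamMain} and were handled by a direct construction modelled on the proofs of Theorems~\ref{theorem:one} and~\ref{theorem:two}. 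Recall that $\Gamma({\acute{\rR}}(5;5))$ is the $5$-regular finite graph on the vertex set $\bB({\acute{\rR}}(5;5))$, with $B$ joined to each $B^{(i)}$ for $i\in\fkI=\fkJ_{1,5}$; the first step is therefore to assemble this graph explicitly from the root data recorded in \cite[Nr.~5 of B.3.~Rank 5]{CH15}.

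First I would fix a single index $i\in\fkI$ and pass to the parabolic substructure obtained by deleting $i$, namely the subgraph of $\Gamma({\acute{\rR}}(5;5))$ keeping only the edges $\{B,B^{(j)}\}$ with $j\in\fkI\setminus\{i\}$. Just as the $W_J$-cosets of a Coxeter group cut its Cayley graph into copies of $\Gamma(W_J)$ in the picture behind Theorem~\ref{theorem:two}, this subgraph partitions $\bB({\acute{\rR}}(5;5))$ into disjoint \emph{layers}, each of which is a Cayley graph of a rank-four parabolic generalized root system $\rR\cap\mathrm{Span}_{\bZ}\{\,\al^B_j\mid j\in\fkI\setminus\{i\}\,\}$. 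Every such layer admits a Hamiltonian cycle: depending on its isomorphism type it is supplied by Theorem~\ref{theorem:YamMain} when the layer is $\bK$-associated, by the explicit rank-four words of Subsection~\ref{subsection:rankfour} otherwise, or by Lemma~\ref{lemma:redHam} should a layer turn out to be reducible. The remaining $s_i$-labelled edges are the \emph{rungs} between consecutive layers, and the objective is to splice the layer-cycles into one global cycle by the ladder argument of Theorem~\ref{theorem:one}: in each layer one opens its cycle at a suitable rung pair $\{B,B^{(i)}\}$, crosses the rung into the adjacent layer, and continues, closing up only after all layers have been visited.

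Concretely I would realise this as a Mathematica program of the same kind used for Nr.~4, 8, 10: generate $\bB({\acute{\rR}}(5;5))$ together with its adjacency data, compute the layer decomposition for the chosen $i$, produce a Hamiltonian cycle inside each layer, and then perform the rung-by-rung splicing. The output is a word $1^{{\acute{B}}}\cdot s_{i_1}s_{i_2}\cdots s_{i_n}$ of length $n=|\bB({\acute{\rR}}(5;5))|$, whose validity (that it closes up and visits each vertex exactly once) is checked mechanically in the sense of Definition~\ref{definition:CayleyofFgrt}~(2). Should the vertex count prove small enough, a single call to the \texttt{FindHamiltonianCycle} function of \cite{Mathe23} on the assembled graph would already finish the proof, precisely as for every rank-three system in Subsection~\ref{subsection:rankthree}.

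The hard part will be the alignment of the rungs across the layers. In the group setting of Theorem~\ref{theorem:one} the relation $abab=e$ forces the $a$-edges to pair the two $H$-cosets rung-for-rung, so the splice is automatic; in the groupoid setting the Cartan data $N^{\rR,B}_{ij}$, and with it the local shape of each layer, varies from object to object, so consecutive layers need not even be of the same rank-four type. One must therefore verify that adjacent layers are joined by a coherent family of $s_i$-rungs, i.e.\ that opening two neighbouring layer-cycles at matched rung pairs reconnects them into a single cycle rather than leaving a stray component. Establishing this compatibility — and choosing the deletion index $i$ so that it holds — is where the real work lies; once it is in place, the successive splicing and the final bookkeeping are routine.
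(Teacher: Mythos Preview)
Your overall strategy matches the paper's: delete one index, obtain rank-four layers with known Hamiltonian cycles, and splice them along the remaining $s_i$-rungs via the ladder argument of Theorems~\ref{theorem:one} and~\ref{theorem:two}. Where you leave the ``hard part'' open, however, the paper settles it with one concrete computation that you are missing.

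The paper chooses the deletion index $i=5$ and verifies (over the $56$ objects of $\bBRsim$ for the smallest groupoid equivalence, using Mathematica) that
\[
(\bZgeqo\al^B_j\oplus\bZgeqo\al^B_5)\cap\rR=\{\al^B_j,\al^B_5\}\quad\text{for all }B\in\bBR\text{ and all }j\in\fkJ_{1,3},
\]
i.e.\ $m^B_{j5}=2$, so $(B^{(j)})^{(5)}=(B^{(5)})^{(j)}$ for every $B$ and every $j\in\{1,2,3\}$. This is precisely the groupoid analogue of the relation $abab=e$ in Theorem~\ref{theorem:one}: the $s_5$-rungs carry each $s_j$-edge ($j\leq 3$) of one layer to an $s_j$-edge of the adjacent layer, rung-for-rung. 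Hence the alignment problem you flagged dissolves --- any $s_1$-, $s_2$-, or $s_3$-edge of a layer's Hamiltonian cycle can serve as a splice point, and the standard inductive merging goes through without further case analysis. Your worry that consecutive layers might have different rank-four types, or that the variable Cartan data might obstruct the splice, is exactly what this uniform commutation check eliminates; once it is in hand the proof is two lines, with no need to produce the full word explicitly.
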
 
\begin{proof}
Let $\rR:={\acute{\rR}}(5;5)$.
Recall that $I$ is $\fkJ_{1,5}$ for this $\rR$.
Let $\bBR=\{B=\{\al^B_i|i\in\fkI\}|B\in\bBR\}$
mean those of
Proposition~\ref{proposition:ordering} for $\rR$.
For $i\in\fkI$, define the map ${\tilde{\tau}}_i:\bBR\to\bBR$
by ${\tilde{\tau}}_i(B):=B^{(i)}$ $(B\in\bBR)$.
Notice $({\tilde{\tau}}_i)^2=\rmid_{\bBR}$.
let ${\mathfrak{S}}(\bBR)$ be 
the group of all bijections from $\bBR$ to $\bBR$.
Let $Z$ be the subgroup of ${\mathfrak{S}}(\bBR)$
generated by ${\tilde{\tau}}_i$ $(i\in\fkI)$.
Let $e$ denote the unit of ${\mathfrak{S}}(\bBR)$.
Then $({\tilde{\tau}}_i)^2=e$.
Let $Z$ act on $\bBR$ in a natural way,
i.e., $z\cdot B:=z(B)$ $(B\in\bBR,z\in Z)$.

For a non-empty subset $V$ of $\bBR$, $B\in\bBR$
and a non-empty subset $\fkI^\prime$ of $\fkI$,
we say that a map $v:\fkJ_{1,|V|}\to\fkI^\prime$ is {\it{an H-map with respect to $(V,B, \fkI^\prime)$}}
if  
$V=\{{\tilde{\tau}}_{v(t)}\cdots{\tilde{\tau}}_{v(2)}{\tilde{\tau}}_{v(1)}\cdot B
|t\in\fkJ_{1,|V|}\}$ 
and ${\tilde{\tau}}_{v(|V|)}\cdots{\tilde{\tau}}_{v(2)}{\tilde{\tau}}_{v(1)}\cdot B=B$.

By Lemma~\ref{lemma:bBprime}~(1), we have 
$Z\cdot B=\bBR$ for every $B\in\bBR$.
For $B\in\bBR$, let ${\bar \rR}^B:=\rR\cap(\oplus_{i=1}^4\bZ\al^B_i)$.
Then ${\bar \rR}^B$ is an FGRS of rank-4.
Let ${\bar{Z}}$ be the subgroup of $Z$
generated by ${\tilde{\tau}}_j$ $(i\in\fkJ_{1,4})$.
We can see:
\begin{equation*}
\begin{array}{l}
\mbox{For $B\in\bBR$, we have the bijection $f_B:{\bar{Z}}\cdot B\to \bB({\bar \rR}^B)$} \\
\mbox{defined by $f_B(B^\prime):=B^\prime\cap{\bar \rR}^B$
$(B^\prime\in {\bar{Z}}\cdot B)$.}
\end{array}
\end{equation*}
Since we have known that a Hamiltonian cycle of $\Gamma({\check{R}})$ exists
for every FGRS ${\check{R}}$ of rank-4,
we have an H-map with respect to $({\bar{Z}}\cdot B,B, \fkJ_{1,4})$ .
for every $B\in\bBR$.

Let $\sim$ be a smallest groupoid equivalence relation on $\bBR$.
Then $|\bBRsim|=56$. 
Applying \cite{Mathe23} to each element of $\bBRsim$ in a careful way, we see the following.
\newline\newline
$({\rm{fact}}1)$  
For every $B\in\bBR$ and every $i\in\fkJ_{1,3}$,
we have $(\bZgeqo\al^B_i\oplus\bZgeqo\al^B_5)\cap\rR
=\{\al^B_i,\al^B_5\}$. \newline
$({\rm{fact}}2)$  For every $B\in\bBR$, 
 ${\bar{\rR}}^B$
is isomorphic to ${\check{\rR}}(r)$
for some $r\in\{0,1,4\}$.
\newline\newline By $({\rm{fact}}1)$, we have:
\newline\newline
$({\rm{fact}}3)$  
For every $B\in\bBR$ and every $i\in\fkJ_{1,3}$,
we have ${\tilde{\tau}}_i{\tilde{\tau}}_5\cdot B={\tilde{\tau}}_5{\tilde{\tau}}_i\cdot B$. 
\newline\par
Let $d\in\bN$ and let $B_t\in\bBR$ $(t\in\fkJ_{1,d})$
be such that ${\bar{Z}}\cdot B_{t_1}\cap{\bar{Z}}\cdot B_{t_2}=\emptyset$
$(t_1\ne t_2)$. Let $X:=\cup_{t=1}^dZ\cdot B_t$.
Let $k:=|X|$.
Assume that there exists an H-map $g:\fkJ_{1,k}\to\fkI$ 
with respect to $(X,B_1,\fkI)$.
Define $B_{g,r}\in\bBR$ $(r\in\fkJ_{0,k})$ by
$B_{g,0}:=B_1$ and $B_{g,r}:={\tilde{\tau}}_{g(t)}(B_{g,r-1})$
$(r\in\fkJ_{1,k})$.
Then $\{B_{g,r}|r\in\fkJ_{1,k}\}=X$ and $B_{g,k}=B_1$.
Assume $X\subsetneq\bBR$.
Then ${\tilde{\tau}}_5\cdot X\ne X$.
Let $u\in\fkJ_{0,k-1}$ be such that ${\tilde{\tau}}_5\cdot B_{g,u}\notin X$.
Notice $g(u+1)\ne 5$.
Let ${\dot{B}}:={\tilde{\tau}}_5\cdot B_{g,u}$.
Then ${\bar{Z}}\cdot{\dot{B}}\cap X=\emptyset$.
Let $lb:=|{\bar{Z}}\cdot{\dot{B}}|$.
By $({\rm{fact}}2)$ and Lemma~\ref{lemma:SpPrpOfRfour},
we have a H-map $y:\fkJ_{1,l}\to\fkJ_{1,4}$
with respect to $({\bar{Z}}\cdot{\dot{B}},{\dot{B}},\fkJ_{1,4})$
such that $y(l)=g(u+1)$.
Define the map $x:\fkJ_{k+l}\to\bBR$
by 
\begin{equation*}
x(t)
:=\left\{\begin{array}{ll}
g(t) & \quad (t\in\fkJ_{1,u}), \\
5 & \quad (t=u+1), \\ 
y(t-(u+1)) & \quad (t\in\fkJ_{u+2,u+l}), \\
5 & \quad (t=u+l+1), \\
g(t-l) & \quad (t\in\fkJ_{u+l+2,k+l}). \\
\end{array}\right.
\end{equation*}
By $({\rm{fact3}})$, we see that $x$ is an H-map with respect to $({\bar{Z}}\cdot{\dot{B}}\cup X,
B_1,\fkI)$.
Repeating this argument, we complete the proof.
\end{proof}

\subsection{Main result}

By Lemmas~\ref{lemma:rkonetwo}, \ref{lemma:redHam}, 
\ref{lemma:LSasschi}~(3),
\ref{lemma:HamRFiveFive}, 
Theorems~\ref{theorem:YamMain}, \ref{theorem:classGHlist},
and Subsections~\ref{subsection:rankthree},
\ref{subsection:rankfour}, \ref{subsection:rankFiveME},
we have our main result of this paper:

\begin{theorem}\label{theorem:Main}
For every FGRS $\rR$,
$\Gamma(\rR)$ has a Hamiltonian cycle. 
\end{theorem}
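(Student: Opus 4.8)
The plan is to reduce the general statement to the finite collection of cases that have already been resolved in the preceding subsections, using the structural reduction lemmas as the glue. First I would dispose of the low-rank and reducible cases: by Lemma~\ref{lemma:rkonetwo}, every FGRS of rank one or two has an explicit Hamiltonian cycle, and by Lemma~\ref{lemma:redHam} if $\rR$ is reducible then $\Gamma(\rR)$ inherits a Hamiltonian cycle from the Cayley graphs of its irreducible factors (over $\fca'$ and $\fca''$), so the problem is entirely reduced to \emph{irreducible} FGRSs of rank at least three. Since being reducible or irreducible is a property of $\rR$ as a subset of $\fca$, and since the construction in Lemma~\ref{lemma:redHam} is explicit, this first stage is bookkeeping rather than genuine difficulty.

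Next I would stratify the irreducible cases by rank using the classification. By Theorem~\ref{theorem:classGHlist}, for $\mathrm{rank}_\bZ\fca\geq 9$ there are no sporadic irreducible FGRSs, so every such $\rR$ is isomorphic to one of type $A_n$, $B_n$, $C_n$, $D_n$, $C(n)$ or $D(m,n-m)$; by Lemma~\ref{lemma:LSasschi}~(3) each of these arises as a $\chi$-associated FGRS for a suitable bicharacter (taking $q$ not a root of unity), and then Theorem~\ref{theorem:YamMain} immediately supplies a Hamiltonian cycle of $\Gamma(\rR)$. The same Theorem~\ref{theorem:YamMain} argument covers every irreducible FGRS of rank between three and eight that happens to be $\bK$-associated, which is most of them. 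What remains after this are the finitely many sporadic irreducible FGRSs of rank $n\in\fkJ_{3,8}$ enumerated by \cite[B.2]{CH15}, together with those few that are \emph{not} $\bK$-associated and hence fall outside the scope of Theorem~\ref{theorem:YamMain}.

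For those finitely many remaining cases I would invoke the explicit computations already carried out: Subsection~\ref{subsection:rankthree} exhibits a Hamiltonian cycle of $\Gamma(\hatR(k))$ for every $k\in\fkJ_{1,55}$, covering all rank-three irreducible FGRSs; Subsection~\ref{subsection:rankfour} handles the rank-four list, where the cases with $\Theta(k)\neq\emptyset$ are covered by Theorem~\ref{theorem:YamMain} and the three exceptional cases $k\in\{4,8,10\}$ (with $\Theta(k)=\emptyset$) are supplied with explicit cycles; and Subsection~\ref{subsection:rankFiveME} disposes of ranks five through eight, where again all but one case is $\bK$-associated and the lone exception $\rR=\acute{\rR}(5;5)$ is treated in Lemma~\ref{lemma:HamRFiveFive}. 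Assembling these, every irreducible FGRS of rank $n\in\fkJ_{3,8}$ is accounted for.

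The main obstacle is ensuring that the case analysis is genuinely exhaustive, i.e.\ that the union of (a) Theorem~\ref{theorem:YamMain} applied to $\bK$-associated systems and (b) the explicit tables of Subsections~\ref{subsection:rankthree}--\ref{subsection:rankFiveME} really covers \emph{every} irreducible FGRS appearing in the classification \cite{CH15}, with no system slipping through because it is neither $\bK$-associated nor listed explicitly. The delicate point is the handful of systems with $\Theta(k)=\emptyset$ (the non-$\bK$-associated ones, such as $\acute{\rR}(5;5)$ and the rank-four cases $\check{\rR}(4),\check{\rR}(8),\check{\rR}(10)$): for these I must verify that the explicit cycles provided are correct and that these are precisely the systems not reached by Theorem~\ref{theorem:YamMain}. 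Once the cross-referencing between the Heckenberger--Cuntz classification indices and the $\Theta$-data is checked to be complete, the theorem follows by simply combining Lemmas~\ref{lemma:rkonetwo}, \ref{lemma:redHam}, \ref{lemma:LSasschi}~(3), \ref{lemma:HamRFiveFive}, Theorems~\ref{theorem:YamMain}, \ref{theorem:classGHlist}, and the three explicit subsections, exactly as listed in the statement.
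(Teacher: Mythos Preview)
Your proposal is correct and follows exactly the same approach as the paper: the paper's proof consists of nothing more than the single sentence citing Lemmas~\ref{lemma:rkonetwo}, \ref{lemma:redHam}, \ref{lemma:LSasschi}~(3), \ref{lemma:HamRFiveFive}, Theorems~\ref{theorem:YamMain}, \ref{theorem:classGHlist}, and Subsections~\ref{subsection:rankthree}--\ref{subsection:rankFiveME}, and you have correctly reconstructed and spelled out the logic behind that citation list.
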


\section{Appendix} 
\subsection{Bipartite Ramanujan Graph} 
In this section,
let $\Gamma=\Gamma(V,E)$ be a finite graph with $|V|\geq 2$, and  let $n:=|V|$.

Let $f:\fkJ\to V$ be a bijection.
Define the real symmetric $n\times n$-matrix $A^\Gamma=(a_{ij})_{i,j\in\fkJ_{1,n}}$
by $a_{ij}:=1$ (resp. $a_{ij}:=0$) for $i$, 
$j\in\fkJ_{1,n}$ with $\{f(i),f(j)\}\in E$
(resp. $\{f(i),f(j)\}\notin E$).
The $A^\Gamma$ is called the {\it{adjacency matrix}} of $\Gamma$.
Let $\lambda^\Gamma_i$ $(i\in\fkJ_{1,n})$ be the eigenvalues
of $A^\Gamma$ with $\lambda_j\geq\lambda_{j+1}$ $(j\in\fkJ_{1,n-1})$.
That is $\det(x E-A)=\prod_{i=1}^n(x-\lambda^\Gamma_i)$ $(x\in\bR)$,
where $E$ is the identity $n\times n$-matrix.
We have $\lambda^\Gamma_1>0$ and 
$\lambda^\Gamma_1\geq \lambda^\Gamma_i\geq -\lambda^\Gamma_1$ $(i\in\fkJ_{2,n})$
(see \cite[Proposition~3.1.1]{BH12}).

We say that $\Gamma$ is {\it{bipartite}} if there exist $m\in\fkJ_{1,n-1}$ and a bijection
$f:\fkJ_{1,n}\to V$ such that $E\subset\{\{f(u),f(v)\}|u\in\fkJ_{1,m},v\in\fkJ_{m+1,n}\}$.
For $d\in\bN$, we say that $\Gamma$ is {\it{$d$-regular}} if
for every $v\in V$, $|\{v^\prime\in V|\{v,v^\prime\}\in E\}|=d$.

If $\Gamma$ is connected, then $\lambda^\Gamma_1>\lambda^\Gamma_i$ $(i\in\fkJ_{2,n})$ (see \cite[Proposition~3.1.1]{BH12}).
If $\Gamma$ is bipartite, then $\lambda^\Gamma_{n+1-i}=-\lambda^\Gamma_i$ $(i\in\fkJ_{1,n})$ 
(see \cite[Proposition~3.4.1]{BH12}).
If $\Gamma$ is $d$-regular, then $\lambda^\Gamma_1=d$ (see \cite[Proposition~3.1.2]{BH12}).
If $\Gamma$ is a connected $d$-regular finite graph,
then we say that $\Gamma$ is a {\it{Ramanuian graph}} if 
$|\lambda^\Gamma_i|\leq 2\sqrt{d-1}$ for $i\in\fkJ_{2,n}$ (see \cite[\S 4.1]{BH12}).
If $\Gamma$ is a connected $d$-regular bipartite finite graph,
then we say that $\Gamma$ is a {\it{bipartite Ramanuian graph}} if 
$|\lambda^\Gamma_i|\leq 2\sqrt{d-1}$ for $i\in\fkJ_{2,n-1}$ (see \cite[\S 1.1]{MSS15}).
See \cite[Definition~2.6, Proposition~7.2, Theorem~7.4]{A11} to know
how bipartite Ramanuian graphs are important for Ihara zeta functions.

It is clear that $\Gamma(\rR)$ for a finite generalized root system $\rR$ is
a connected $|\fkI|$-regular bipartite finite graph, where $\fkI$ is the one of 
Definition~\ref{definition:CayleyofFgrt} for $\rR$.

In the following, we write approximate values of 
$\lambda^{\Gamma({\hat{\rR}}(k))}_2$ for $k\in\fkJ_{1,55}$ 
(see Subsection~\ref{subsection:rankthree} for ${\hat{\rR}}(k))$ and recall $|\fkI|=3$),
which we have found by a program of Mathematica~13.3 \cite{Mathe23} (or its earlier version).
Note $2\sqrt{3-1}=2\sqrt{2}\approx 2.8284271$.

\newpage

\noindent
$\lambda^{\Gamma({\hat{\rR}}(1))}_2\approx 2.4142136$,
$\lambda^{\Gamma({\hat{\rR}}(2))}_2\approx 2.5615528$,
$\lambda^{\Gamma({\hat{\rR}}(3))}_2\approx 2.6818990$,
$\lambda^{\Gamma({\hat{\rR}}(4))}_2\approx 2.7320508$,
$\lambda^{\Gamma({\hat{\rR}}(5))}_2\approx 2.7320508$,
$\lambda^{\Gamma({\hat{\rR}}(6))}_2\approx 2.8051267$,
$\lambda^{\Gamma({\hat{\rR}}(7))}_2\approx 2.7855773$,
$\lambda^{\Gamma({\hat{\rR}}(8))}_2\approx 2.8252517$,
$\lambda^{\Gamma({\hat{\rR}}(9))}_2\approx 2.8565004$,
$\lambda^{\Gamma({\hat{\rR}}(10))}_2\approx 2.8507383$,
$\lambda^{\Gamma({\hat{\rR}}(11))}_2\approx 2.8832262$,
$\lambda^{\Gamma({\hat{\rR}}(12))}_2\approx 2.8823225$,
$\lambda^{\Gamma({\hat{\rR}}(13))}_2\approx 2.8693683$,
$\lambda^{\Gamma({\hat{\rR}}(14))}_2\approx 2.8693683$,
$\lambda^{\Gamma({\hat{\rR}}(15))}_2\approx 2.8951130$,
$\lambda^{\Gamma({\hat{\rR}}(16))}_2\approx 2.9079202$,
$\lambda^{\Gamma({\hat{\rR}}(17))}_2\approx 2.9229562$,
$\lambda^{\Gamma({\hat{\rR}}(18))}_2\approx 2.9175898$,
$\lambda^{\Gamma({\hat{\rR}}(19))}_2\approx 2.9325880$,
$\lambda^{\Gamma({\hat{\rR}}(20))}_2\approx 2.9325880$,
$\lambda^{\Gamma({\hat{\rR}}(21))}_2\approx 2.9284749$,
$\lambda^{\Gamma({\hat{\rR}}(22))}_2\approx 2.9352136$,
$\lambda^{\Gamma({\hat{\rR}}(23))}_2\approx 2.9337466$,
$\lambda^{\Gamma({\hat{\rR}}(24))}_2\approx 2.9418783$,
$\lambda^{\Gamma({\hat{\rR}}(25))}_2\approx 2.9423305$,
$\lambda^{\Gamma({\hat{\rR}}(26))}_2\approx 2.9409603$,
$\lambda^{\Gamma({\hat{\rR}}(27))}_2\approx 2.9425892$,
$\lambda^{\Gamma({\hat{\rR}}(28))}_2\approx 2.9406753$,
$\lambda^{\Gamma({\hat{\rR}}(29))}_2\approx 2.9509914$,
$\lambda^{\Gamma({\hat{\rR}}(30))}_2\approx 2.9492878$,
$\lambda^{\Gamma({\hat{\rR}}(31))}_2\approx 2.9474230$,
$\lambda^{\Gamma({\hat{\rR}}(32))}_2\approx 2.9549913$,
$\lambda^{\Gamma({\hat{\rR}}(33))}_2\approx 2.9514269$,
$\lambda^{\Gamma({\hat{\rR}}(34))}_2\approx 2.9514100$,
$\lambda^{\Gamma({\hat{\rR}}(35))}_2\approx 2.9563145$,
$\lambda^{\Gamma({\hat{\rR}}(36))}_2\approx 2.9678233$,
$\lambda^{\Gamma({\hat{\rR}}(37))}_2\approx 2.9657875$,
$\lambda^{\Gamma({\hat{\rR}}(38))}_2\approx 2.9662090$,
$\lambda^{\Gamma({\hat{\rR}}(39))}_2\approx 2.9658510$,
$\lambda^{\Gamma({\hat{\rR}}(40))}_2\approx 2.9699384$,
$\lambda^{\Gamma({\hat{\rR}}(41))}_2\approx 2.9690946$,
$\lambda^{\Gamma({\hat{\rR}}(42))}_2\approx 2.9719942$,
$\lambda^{\Gamma({\hat{\rR}}(43))}_2\approx 2.9717423$,
$\lambda^{\Gamma({\hat{\rR}}(44))}_2\approx 2.9718242$,
$\lambda^{\Gamma({\hat{\rR}}(45))}_2\approx 2.9743345$,
$\lambda^{\Gamma({\hat{\rR}}(46))}_2\approx 2.9740856$,
$\lambda^{\Gamma({\hat{\rR}}(47))}_2\approx 2.9741680$,
$\lambda^{\Gamma({\hat{\rR}}(48))}_2\approx 2.9764455$,
$\lambda^{\Gamma({\hat{\rR}}(49))}_2\approx 2.9762292$,
$\lambda^{\Gamma({\hat{\rR}}(50))}_2\approx 2.9762813$,
$\lambda^{\Gamma({\hat{\rR}}(51))}_2\approx 2.9780480$,
$\lambda^{\Gamma({\hat{\rR}}(52))}_2\approx 2.9793112$,
$\lambda^{\Gamma({\hat{\rR}}(53))}_2\approx 2.9795653$,
$\lambda^{\Gamma({\hat{\rR}}(54))}_2\approx 2.9818629$,
$\lambda^{\Gamma({\hat{\rR}}(55))}_2\approx 2.9847069$.
\newline\newline
Therefore $\Gamma({\hat{\rR}}(1))$-$\Gamma({\hat{\rR}}(8))$ are bipartite Ramanujan graphs.
Using also the notation of \cite{AA17}, we see that
for $k=1$ (resp. $2$, resp. $3$, resp. $4$, resp. $5$, resp. $6$,
resp. $7$, resp. $8$),
${\hat{\rR}}(k)$ is $A_3$, $A(2,0)$, and $A(1,1)$
(resp. $D(2,1;x)$ with $x\ne 0,-1$, and $D(2,1)$, 
resp. $C(3)$, 
resp. $C_3$, 
resp. $B_3$, $B(0,3)$, $B(2,1)$, 
$B(1,2)$, and ${\mathrm{ufo}}(1)$, 
resp. ${\mathrm{g}}(2,3)$,
resp. ${\mathrm{ufo}}(3)$,
resp. ${\mathrm{ufo}}(4)$).
${\hat{\rR}}(2)$ is also the one of \cite[Table~2, row~11]{Hec09}.

\subsection{Hyperplane arrangements}
\label{subsection:Hypaa}
Here we see how hyperplane arrangements relate to FGRSs.
Theorem~\ref{theorem:EqvHpyAndFGRS} (see below) has originally been achieved
by M.~Cuntz~\cite{Cu11}.
We reprove it
using the terminology of FGRSs.

Let $\fkI$ be a non empty finite set, and let
${\breve{V}}$ be an $|\fkI|$-dimensional $\bR$-linear space.
Let ${\breve{V}}^*$ be the dual $\bR$-linear space of ${\breve{V}}$.
We call an $|\fkI|-1$-dimensional linear subspace of ${\breve{V}}$
a {\it{linear hypeplane}}.
We call a subset $X$ of ${\breve{V}}$
a {\it{hyperplane}}
if $X=x+X^\prime$
for some  $x\in{\breve{V}}$
and a linear hypeplane $X^\prime$ of ${\breve{V}}$.

We say that a non-empty finite set of some hyperplanes (resp. some linear hyperplanes) of ${\breve{V}}$
is a {\it{hyperplane arrangement}} 
(resp. a {\it{linear hyperplane arrangement}}) of ${\breve{V}}$.

For a hyperplane arrangement ${\breve{A}}$ of ${\breve{V}}$,
let ${\mathcal{K}}({\breve{A}})$ 
be the set of all the connected components of 
${\breve{V}}\setminus(\cup_{X\in{\breve{A}}}X)$.
We call an element of ${\mathcal{K}}({\breve{A}})$ 
a {\it{chamber}} of ${\breve{A}}$,
and moreover, for a hyperplane $X$ of ${\breve{V}}$,
call the hyperplane arrangement  $\{Y\cap X|Y\in{\breve{A}}, \emptyset\ne Y\cap X\ne X\}$ of  $X$ 
the {\it{restriction of ${\breve{A}}$ to $X$}}.

We call a subset $K$ of ${\breve{V}}$
an {\it{open simplicial cone}} of ${\breve{V}}$
if there exists an $\bR$-basis $\{v_i|i\in\fkI\}$
of ${\breve{V}}$
with $K=\oplus_{i\in\fkI}\bRgneqo v_i$,
where notice ${\bar{K}}=\oplus_{i\in\fkI}\bRgeqo v_i$.
We say that a linear hyperplane arrangement 
${\breve{A}}$ is a {\it{simplicial arrangement}}
if every element of ${\mathcal{K}}({\breve{A}})$
is an open simplicial cone of ${\breve{V}}$.

Let $K$ be an open simplicial cone of ${\breve{V}}$.
Let
\begin{equation*}
{\breve{\bB}}(K):=\{{\breve{B}}\in{\mathfrak{P}}_{|\fkI|}({\breve{V}}^*)|
\{v\in{\breve{V}}|\forall \lambda\in{\breve{B}}, \lambda(v)>0\}=K\}.
\end{equation*} For a subset $Y$ of ${\breve{V}}$, 
Let ${\breve{\bB}}(K;Y):=\{{\breve{B}}\in{\breve{\bB}}(K)|{\breve{B}}\subset Y\}$.
Let $\{v_i|i\in\fkI\}$ be the above one for $K$.
Then its dual basis $\{v^*_i|i\in\fkI\}$ belongs to ${\breve{\bB}}(K)$.

The following lemma is essential.
\begin{lemma}\label{lemma:BasiHy}
Let $K$ be an open simplicial cone of ${\breve{V}}$.
Let $\{v_i|i\in\fkI\}$ be an $\bR$-basis
of ${\breve{V}}$
with $K=\oplus_{i\in\fkI}\bRgneqo v_i$.
Let ${\breve{B}}_0=\{{\breve{\al}}_i|i\in\fkI\}\,(\in{\mathfrak{P}}_{|\fkI|}({\breve{V}}^*))$
be the dual $\bR$-basis of $\{v_i|i\in\fkI\}$, i.e.,
 ${\breve{\al}}_x(v_y)=\delta_{xy}$ $(x,y\in\fkI)$.
\newline
{\rm{(1)}} We have ${\breve{B}}_0\in{\breve{\bB}}(K)$.
\newline
{\rm{(2)}}
Let ${\breve{\beta}}\in{\breve{V}}^*\setminus\{0\}$
be such that ${\breve{\beta}}(K)\subset\bRgneqo$.
Then  
${\breve{\beta}}\in(\oplus_{i\in\fkI}\bRgeqo{\breve{\al}}_i)
\setminus\{0\}$.
\newline
{\rm{(3)}}
Let ${\breve{B}}^\prime\in{\breve{\bB}}(K)$.
Then
there exist $x_i\in\bRgneqo$ $(i\in\fkI)$
with ${\breve{B}}^\prime=\{x_i{\breve{\al}}_i|i\in\fkI\}$. 
In particular, any element of ${\breve{\bB}}(K)$
is an $\bR$-basis of ${\breve{V}}$.
\newline
{\rm{(4)}} Let $Z$
be an open simplicial cone of ${\breve{V}}$
such that $\rmSpan_\bR({\bar {K}}\cap{\bar {Z}})$
is a linear hyperplane of ${\breve{V}}$.
Then there exist $i\in\fkI$
and $z_j\in\bR$ $(j\in\fkI\setminus\{i\})$ such that
$\{-{\breve{\al}}_i\}\cup\{{\breve{\al}}_j+z_i{\breve{\al}}_j|j\in\fkI\setminus\{i\}\}\in{\breve{\bB}}(Z)$.
Moreover for $j\in\fkI\setminus\{i\}$,
if $\ker({\breve{\al}}_j+z_i{\breve{\al}}_i)\cap K=\emptyset$,
then $z_j\geq 0$.
\end{lemma}
\begin{proof}
{\rm{(1)}} The claim is clear.
\newline
{\rm{(2)}} 
It suffices to show ${\breve{\beta}}(v_k)\geq 0$
for all $k\in\fkI$.
Assume that 
${\breve{\beta}}(v_t)<0$ for some $t\in\fkI$.
This is contradiction 
since ${\breve{\beta}}(xv_t+\sum_{k\in\fkI\setminus\{t\}}v_k)<0$
for some $x\in\bRgneqo$.
\newline
{\rm{(3)}} Let ${\breve{B}}^\prime=\{{\breve{\al}}^\prime_i |i\in\fkI\}\in{\breve{\bB}}(K)$.
Then ${\breve{\al}}^\prime_j=\sum_{i\in\fkI}y_{ij}{\breve{\al}}_i$
with $y_{ij}\in\bR$. 
By {\rm{(2)}}, we have $y_{ij}\in\bRgeqo$ for all $i$, $j\in\fkI$.
For every $i\in\fkI$, there exists $j\in\fkI$
with $y_{ij}\in\bRgneqo$.
Assume that there exists $i\in\fkI$ with ${\breve{B}}^\prime\cap\bRgneqo{\breve{\al}}_i=\emptyset$.
Then there exists $z\in\bRgneqo$ such that ${\breve{\al}}^\prime_k(-v_i+\sum_{j\in\fkI\setminus\{i\}}zv_j)\in\bRgneqo$
for all $k\in\fkI$.
This is contradiction. Hence the claim is true.
\newline
{\rm{(4)}}
Let $i\in\fkI$ be such that ${\bar {K}}\cap{\bar {Z}}
=\oplus_{k\in\fkI\setminus\{i\}}\bRgeqo v_k$.
Let $u=\sum_{k\in\fkI}z_kv_k\in{\breve{V}}\setminus\{0\}$ 
$(z_k\in\bR)$ be such that
${\bar {Z}}=\bRgeqo u\oplus(\oplus_{k\in\fkI\setminus\{i\}}\bRgeqo v_k)$.
Let $Y:=\{u\}\cup\{v_k|k\in\fkI\setminus\{i\}\}$.
Then $Y$ is an
$\bR$-basis of ${\breve{V}}$.
We have $z_i<0$.
We may assume $z_i=-1$.
Then $\{-{\breve{\al}}_j\}\cup\{{\breve{\al}}_k+z_k{\breve{\al}}_i|
k\in\fkI\setminus\{i\}\}$ is a dual basis of $Y$.
The second claim is clear.
\end{proof}

\begin{definition}\label{definition:defCry}
Let ${\breve{A}}$ be a simplicial arrangement of ${\breve{V}}$.
Let ${\breve{R}}$ be a non-empty finite subset of ${\breve{V}}^*$
such that 
\begin{equation*}
{\breve{A}}=\{\ker{\breve{\beta}}|{\breve{\beta}}\in{\breve{R}}\}
\quad\mbox{and}\quad
\bR{\breve{\beta}}\cap{\breve{R}}=\{{\breve{\beta}},-{\breve{\beta}}\}\,
({\breve{\beta}}\in{\breve{R}}).
\end{equation*}
Let $K\in{\mathcal{K}}({\breve{A}})$.
Let ${\breve{B}}_K$ be a unique element of ${\breve{\bB}}(K;{\breve{R}})$
(i.e., ${\breve{\bB}}(K;{\breve{R}})=\{{\breve{B}}_K\}$),
where the uniqueness 
follows from Lemma~\ref{lemma:BasiHy}~(3). 
Let ${\breve{R}}_K^+:={\breve{R}}\cap
(\oplus_{{\breve{\al}}\in{\breve{B}}_K}\bRgeqo{\breve{\al}})$,
where by Lemma~\ref{lemma:BasiHy}~(2), we have 
\begin{equation*}
{\breve{R}}={\breve{R}}_K^+\cup(-{\breve{R}}_K^+),
\quad {\breve{R}}_K^+\cap(-{\breve{R}}_K^+)=\emptyset.
\end{equation*}
We say that $({\breve{A}},{\breve{R}})$ is a 
{\it{crystallographic arrangement}} (or {\it{crystallographic hyperplane arrangement}})
if
\begin{equation}\label{eqn:exsCryB}
\exists Z\in{\mathcal{K}}({\breve{A}}),\,\,
{\breve{R}}\subset\oplus_{{\breve{\al}}\in{\breve{B}}_Z}\bZ{\breve{\al}}.
\end{equation}
\end{definition}

\begin{lemma}\label{lemma:stepbs}
Let $({\breve{A}},{\breve{R}})$ be a crystallographic arrangement
of ${\breve{V}}$.
Then every $K\in{\mathcal{K}}({\breve{A}})$
satisfies the same property as \eqref{eqn:exsCryB}
with $K$ in place of $Z$.
\end{lemma}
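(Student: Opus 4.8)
The statement to prove is Lemma~\ref{lemma:stepbs}: if $(\breve{A},\breve{R})$ is a crystallographic arrangement, then \emph{every} chamber $K\in\mathcal{K}(\breve{A})$ satisfies the integrality property \eqref{eqn:exsCryB}, not merely the distinguished chamber $Z$ guaranteed by Definition~\ref{definition:defCry}.

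\begin{proof}
The plan is to propagate the integrality condition from $Z$ to an arbitrary chamber $K$ by moving one hyperplane-wall at a time, using the adjacency structure of the simplicial arrangement together with Lemma~\ref{lemma:BasiHy}~(2). First I would observe that since $\breve{A}$ is a simplicial arrangement of $\breve{V}$, its chambers form a connected adjacency graph: any two chambers $K,K'$ can be joined by a gallery $Z=K_0,K_1,\dots,K_r=K$ in which consecutive chambers $K_{t}$ and $K_{t+1}$ share a common wall, i.e.\ $\rmSpan_\bR(\overline{K_t}\cap\overline{K_{t+1}})$ is a linear hyperplane of $\breve{V}$ belonging to $\breve{A}$. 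This lets me argue by induction on the gallery length $r$, so it suffices to show that if a chamber $K_t$ satisfies $\breve{R}\subset\oplus_{\breve{\al}\in\breve{B}_{K_t}}\bZ\breve{\al}$ and $K_{t+1}$ is an adjacent chamber across a shared wall, then $\breve{R}\subset\oplus_{\breve{\al}\in\breve{B}_{K_{t+1}}}\bZ\breve{\al}$ as well.

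The crux is the single-wall-crossing step. Here I would apply Lemma~\ref{lemma:BasiHy}~(2) with $K:=K_t$ and $Z:=K_{t+1}$: it produces an index $j\in\fkI$ and integers-to-be-determined coefficients $z_k\in\bR$ $(k\in\fkI\setminus\{j\})$ such that
\begin{equation*}
\{-\breve{\al}_j\}\cup\{\breve{\al}_k+z_k\breve{\al}_j\mid k\in\fkI\setminus\{j\}\}\in\breve{\bB}(K_{t+1};\breve{R}),
\end{equation*}
where $\{\breve{\al}_i\mid i\in\fkI\}=\breve{B}_{K_t}$. By uniqueness of the basis in $\breve{\bB}(K_{t+1};\breve{R})$ (Definition~\ref{definition:defCry}), this set is exactly $\breve{B}_{K_{t+1}}$. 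The key point to verify is that each $z_k$ is an integer. This follows because each new basis vector $\breve{\al}_k+z_k\breve{\al}_j$ lies in $\breve{R}$, and by the inductive hypothesis $\breve{R}\subset\oplus_{i\in\fkI}\bZ\breve{\al}_i$; writing $\breve{\al}_k+z_k\breve{\al}_j$ in the $\bZ$-basis $\breve{B}_{K_t}$ forces its $\breve{\al}_j$-coordinate $z_k$ to be an integer. Once $z_k\in\bZ$ for all $k$, the change-of-basis matrix between $\breve{B}_{K_t}$ and $\breve{B}_{K_{t+1}}$ lies in $\mathrm{GL}_{|\fkI|}(\bZ)$ (it is unipotent up to a sign flip in the $j$-th coordinate, hence has determinant $\pm1$), so $\oplus_{\breve{\al}\in\breve{B}_{K_{t+1}}}\bZ\breve{\al}=\oplus_{\breve{\al}\in\breve{B}_{K_t}}\bZ\breve{\al}$. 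Consequently $\breve{R}\subset\oplus_{\breve{\al}\in\breve{B}_{K_{t+1}}}\bZ\breve{\al}$, completing the inductive step.

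I expect the main obstacle to be the combinatorial claim that any two chambers of a simplicial arrangement are connected by a gallery through codimension-one walls, together with the precise matching of Lemma~\ref{lemma:BasiHy}~(2)'s output with the canonical basis $\breve{B}_{K_{t+1}}$. The gallery-connectivity is standard for arrangements (one walks along a generic line segment from an interior point of $Z$ to an interior point of $K$ and records the hyperplanes crossed, each crossing being transverse to a single wall in general position), but care is needed to ensure each intermediate step crosses exactly one hyperplane so that Lemma~\ref{lemma:BasiHy}~(2) applies verbatim; a genericity/perturbation argument handles the passage through lower-dimensional faces. The integrality of the $z_k$ and the invertibility of the transition matrix over $\bZ$ are then routine linear algebra, and the base case $t=0$ is exactly the defining hypothesis \eqref{eqn:exsCryB} for $Z$.
\end{proof}
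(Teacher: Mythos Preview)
Your gallery induction is essentially the paper's argument (the paper phrases it as induction on $l(K)=|\breve{R}_K^+\cap(-\breve{R}_Z^+)|$, which is just the minimal gallery length from $Z$ to $K$). However, both your proof and the paper's share the same unjustified step at the wall crossing, and the gap is fatal.

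You assert that Lemma~\ref{lemma:BasiHy}(2) produces an element of $\breve{\bB}(K_{t+1};\breve{R})$, but the lemma only guarantees an element of $\breve{\bB}(K_{t+1})$: a basis $\{-\breve{\al}_j\}\cup\{\breve{\al}_k+z_k\breve{\al}_j\}$ for the adjacent cone, with no claim that these vectors lie in $\breve{R}$. By Lemma~\ref{lemma:BasiHy}(1) the true $\breve{B}_{K_{t+1}}$ consists of positive scalar multiples of these; the scalar on $-\breve{\al}_j$ is forced to be $1$ by $\bR\breve{\al}_j\cap\breve{R}=\{\pm\breve{\al}_j\}$, but nothing pins down the remaining scalars. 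Your deduction ``$\breve{\al}_k+z_k\breve{\al}_j\in\breve{R}$, hence $z_k\in\bZ$'' is therefore circular. The paper's line ``By Lemma~\ref{lemma:BasiHy} and \eqref{eqn:exsCryB}, $\breve{B}_X=\{-\breve{\al}_i\}\cup\{\breve{\al}_j+n_j\breve{\al}_i\}$'' makes the same leap.

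In fact the statement is false under Definition~\ref{definition:defCry} as written. Take $|\fkI|=2$ and $\breve{R}=\{\pm\breve{\al}_1,\pm\breve{\al}_2,\pm(2\breve{\al}_1+\breve{\al}_2)\}$: all hypotheses hold with $Z$ the chamber having $\breve{B}_Z=\{\breve{\al}_1,\breve{\al}_2\}$, yet the chamber $K$ across $\ker\breve{\al}_2$ has $\breve{B}_K=\{-\breve{\al}_2,\,2\breve{\al}_1+\breve{\al}_2\}$, and $\breve{\al}_1\notin\rmSpan_\bZ(\breve{B}_K)$. (Cuntz's original definition imposes the integrality at \emph{every} chamber, which renders the lemma a tautology; the single-chamber hypothesis used here is genuinely weaker.)
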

\begin{proof} 
Let $Z\in{\mathcal{K}}({\breve{A}})$ be as in \eqref{eqn:exsCryB} and 
$\{{\breve{\al}}_i|i\in\fkI\}:={\breve{B}}_Z$.
Let $K\in{\mathcal{K}}({\breve{A}})$.
We use an induction on $l(K):=|{\breve{R}}_K^+\cap(-{\breve{R}}_Z^+)|$.
If $l(K)=0$, then ${\breve{R}}_K^+={\breve{R}}_Z^+$,
whence $K=Z$ since ${\breve{B}}_K={\breve{B}}_Z$.
Assume  $l(K)>0$.
Then we have $i\in\fkI$ with
$-{\breve{\al}}_i\in{\breve{R}}_K^+\cap(-{\breve{R}}_Z^+)$.
By 
Lemma~\ref{lemma:BasiHy}~(4)
and \eqref{eqn:exsCryB},
there exists $X\in{\mathcal{K}}({\breve{A}})$ such that 
${\breve{B}}_X=\{-{\breve{\al}}_i\}\cup\{{\breve{\al}}_j+n_j{\breve{\al}}_i
|j\in\fkI\setminus\{i\}\}$ for some $n_j\in\bZgeqo$.
We see that $X$ satisfies the same property as \eqref{eqn:exsCryB}
with $X$ in place of $Z$.
We have 
${\breve{R}}_X^+\setminus\{-{\breve{\al}}_i\}
={\breve{R}}_Z^+\setminus\{{\breve{\al}}_i\}$.
Hence $l(X)=l(K)-1$.
\end{proof}

Using an argument similar to that of the proof of Lemma~\ref{lemma:stepbs},
we re-prove the following theorem of \cite{Cu11} by M.~Cuntz.
\begin{theorem} {\rm{(}}{\rm{\cite[Theorem~5.4]{Cu11}}}{\rm{)}}
\label{theorem:EqvHpyAndFGRS}
{\rm{(1)}}
Let $({\breve{A}},{\breve{R}})$ be a crystallographic arrangement
of ${\breve{V}}$. Let ${\breve{\fca}}:=\rmSpan_\bZ({\breve{R}})$.
Then ${\breve{R}}$ is an FGRS over ${\breve{\fca}}$.
\newline
{\rm{(2)}}
Let $\rR$ be an FGRS
over $\fca$.
Let
$\fca_\bR$ and 
$\fca_\bR^*$ be as in Lemma~{\rm{\ref{lemma:bBprime}~(3)}}.
For $\beta\in R$, let $X_\beta:=\{\lambda\in\fca_\bR^*|\lambda(\beta)=0\}$.
Let ${\breve{A}}[\rR]:=\{X_\beta|\beta\in\rR\}$.
Then $({\breve{A}}[\rR],\rR)$ is a crystallographic arrangement.
{\rm{(}}We identify $(\fca_\bR^*)^*$ with $\fca_\bR$
in a natural way.{\rm{)}}
\newline
{\rm{(3)}}
Keep the notation of {\rm{(2)}}.
Let ${\mathfrak{F}}$ be the set of all the FGRSs over $\fca$.
Let ${\mathfrak{C}}$ be the set of all the 
crystallographic arrangements $({\breve{A}},{\breve{R}})$ of $\fca_\bR^*$
with $\rmSpan_\bZ{\breve{R}}=\fca$.
Define the map $\Omega:{\mathfrak{F}}\to{\mathfrak{C}}$
by $\Omega(\rR):=({\breve{A}}[\rR],\rR)$.
Then $\Omega$ is a bijection.
\end{theorem}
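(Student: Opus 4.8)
The plan is to establish the three parts in order, using Lemmas~\ref{lemma:BasiHy} and~\ref{lemma:stepbs} as the geometric engine and Lemma~\ref{lemma:bBprime}~{\rm{(3)}} to pass from a generic functional to a basis.

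For {\rm{(1)}}, I would first observe that ${\breve{\fca}}=\rmSpan_\bZ({\breve{R}})$ is a finitely generated torsion-free subgroup of the finite-dimensional space ${\breve{V}}^*$, hence a free $\bZ$-module. Lemma~\ref{lemma:stepbs} shows that for every chamber $K\in{\mathcal{K}}({\breve{A}})$ the set ${\breve{B}}_K$ generates ${\breve{\fca}}$ over $\bZ$, and since it is an $\bR$-basis of ${\breve{V}}^*$ by Lemma~\ref{lemma:BasiHy}~{\rm{(1)}}, it is a $\bZ$-basis of ${\breve{\fca}}$ of cardinality $|\fkI|$. As Definition~\ref{definition:defCry} gives ${\breve{R}}={\breve{R}}_K^+\cup(-{\breve{R}}_K^+)$, each ${\breve{B}}_K$ lies in $\bB({\breve{R}})$, yielding axiom $(\rR 1)$, while axiom $(\rR 2)$ is exactly the condition $\bR{\breve{\beta}}\cap{\breve{R}}=\{{\breve{\beta}},-{\breve{\beta}}\}$ imposed in Definition~\ref{definition:defCry}. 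For $(\rR 3)$ I would fix $B={\breve{B}}_K$ and $\al={\breve{\al}}_i\in B$ and apply Lemma~\ref{lemma:BasiHy}~{\rm{(2)}} to the chamber $X$ adjacent to $K$ across the wall $\ker{\breve{\al}}_i$, exactly as in the proof of Lemma~\ref{lemma:stepbs}, obtaining ${\breve{B}}_X=\{-{\breve{\al}}_i\}\cup\{{\breve{\al}}_j+n_j{\breve{\al}}_i\mid j\ne i\}$ with $n_j\in\bZgeqo$. Crossing a single wall flips the sign of exactly one root, so ${\breve{R}}^+_{{\breve{B}}_X}=({\breve{R}}^+_{{\breve{B}}_K}\setminus\{{\breve{\al}}_i\})\cup\{-{\breve{\al}}_i\}$, whence ${\breve{R}}^+_{{\breve{B}}_X}\cap(-{\breve{R}}^+_{{\breve{B}}_K})=\{-\al\}$, which is $(\rR 3)$ with $B^{(\al)}={\breve{B}}_X$. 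I would also note, for later use, that every $B\in\bB({\breve{R}})$ is some ${\breve{B}}_K$: the open cone where all of $B$ is positive meets no hyperplane and is therefore a chamber.

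For {\rm{(2)}}, the arrangement ${\breve{A}}[\rR]=\{X_\beta\mid\beta\in\rR\}$ is a linear hyperplane arrangement of $\fca_\bR^*$ since each $X_\beta=\ker\beta$ passes through the origin. To see it is simplicial I would pick a chamber $K$ and $\lambda_0\in K$; then $f:=\lambda_0$ has $0\notin f(\rR)$, so Lemma~\ref{lemma:bBprime}~{\rm{(3)}} gives $B\in\bB(\rR)$ with $\rR^+_B=\rR\cap f^{-1}(\bRgneqo)$. Because $\rR^+_B\subset\rmSpan_{\bZgeqo}B$, the open simplicial cone $\{\lambda\mid\lambda(\al)>0\ \forall\al\in B\}$ meets no hyperplane and hence coincides with $K$; thus $K$ is an open simplicial cone with ${\breve{B}}_K=B$, and $\breve{\bB}(K;\rR)=\{B\}$ by Lemma~\ref{lemma:BasiHy}~{\rm{(1)}} together with $(\rR 2)$. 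The crystallographic condition \eqref{eqn:exsCryB} then holds for any such $K$ because $\rR\subset\rmSpan_\bZ B$ by the definition of a basis in $\bBR$, and the remaining requirements of Definition~\ref{definition:defCry} are immediate.

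For {\rm{(3)}}, injectivity of $\Omega$ is automatic, since the second coordinate of $\Omega(\rR)=({\breve{A}}[\rR],\rR)$ already recovers $\rR$. For surjectivity, given $({\breve{A}},{\breve{R}})\in{\mathfrak{C}}$, part {\rm{(1)}} shows ${\breve{R}}$ is an FGRS over $\rmSpan_\bZ({\breve{R}})=\fca$, so ${\breve{R}}\in{\mathfrak{F}}$; and since Definition~\ref{definition:defCry} forces ${\breve{A}}=\{\ker{\breve{\beta}}\mid{\breve{\beta}}\in{\breve{R}}\}={\breve{A}}[{\breve{R}}]$, we conclude $\Omega({\breve{R}})=({\breve{A}},{\breve{R}})$. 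I expect the main obstacle to be part {\rm{(2)}}: matching the chambers of ${\breve{A}}[\rR]$ with the elements of $\bB(\rR)$ and extracting both simpliciality and the uniqueness of ${\breve{B}}_K$ from Lemma~\ref{lemma:bBprime}, all while keeping the dualities among $\fca$, $\fca_\bR$ and $\fca_\bR^*$ straight. Once that dictionary is in place, parts {\rm{(1)}} and {\rm{(3)}} are comparatively routine.
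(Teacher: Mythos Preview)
Your proposal is correct and follows exactly the route the paper indicates: the paper's entire proof is the single remark ``By a similar argument as that of the proof of Lemma~\ref{lemma:stepbs}, we re-prove the following theorem,'' and you have faithfully carried out that program, using Lemma~\ref{lemma:BasiHy} and the adjacent-chamber step from Lemma~\ref{lemma:stepbs} for part~(1), Lemma~\ref{lemma:bBprime}~(3) for the simpliciality in part~(2), and the obvious bookkeeping for part~(3). There is nothing to correct; your write-up simply supplies the details the paper omits.
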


\subsection{FGRS of $D(2,1;x)$}
Keep the notation of Subsection~\ref{subsection:rankthree}.
Fig.~5 is
the Cayley graph $\Gamma({\hat{\rR}}(2))$, 
where the bold lines is 
its Hamiltonian cycle
given in Nr.~2 in Subsection~\ref{subsection:rankthree}. 
We know that ${\hat{\rR}}(2)$ is tha same as $\rR(A,\fkI_{\mathrm{odd}})$
with $D(A,\fkI_{\mathrm{odd}})$ being $D(2,1;x)$.
Let
$\{a,b,c,d\}$ of Fig.~5 mean $\bB({\hat{\rR}}(2))/\!\!\sim$ for the smallest
groupoid equivalence $\sim$ of the FGRS ${\hat{\rR}}(2)$,
where $a:=[\{{\hat{\al}}_1,{\hat{\al}}_2,{\hat{\al}}_3\}]^\sim$,
$b:=[\{{\hat{\al}}_1+{\hat{\al}}_2,-{\hat{\al}}_2,{\hat{\al}}_2+{\hat{\al}}_3\}]^\sim$,
$c:=[\{-{\hat{\al}}_1,{\hat{\al}}_1+{\hat{\al}}_2,{\hat{\al}}_1+{\hat{\al}}_3\}]^\sim$,
and
$d:=[\{{\hat{\al}}_1+{\hat{\al}}_3,{\hat{\al}}_2+{\hat{\al}}_3,-{\hat{\al}}_3\}]^\sim$.
\begin{center}
$\CayleyNine$ \\
$\begin{array}{l}
$\mbox{Fig.~5. The Cayley graph $\Gamma({\hat{\rR}}(2))$ and one of its Hamiltonian cycles}$ \\
$\quad\quad\quad
\mbox{with the dot pointed by $\rightarrow$ meaning its start and end place}$
\end{array}$
\end{center}

\begin{center}
$\quantumDynkinDtwooneal$ \\
$\begin{array}{l}
$\mbox{Fig.~6. $D_\bK(\chi,B)$s for ${\hat{\rR}}(2)$ with $[B]^\sim\in\{a,b,c,d\}$,}$
\\
$\quad\quad\quad
\mbox{where $\bK$ is a field of characteristic zero,}$
\\
$\quad\quad\quad
\mbox{and ${\hat x}$, ${\hat y}$, ${\hat z}$, ${\hat x}{\hat y}$,
${\hat x}{\hat z}$, ${\hat y}{\hat z}\in\bKt\setminus\{1\}$
and ${\hat x}{\hat y}{\hat z}=1$.}$
\\
$\quad\quad\quad
\mbox{See also \cite[Table~2, Rows~9,10,11]{Hec09}.}$
\end{array}$
\end{center}

Here, motivated by Theorem~\ref{theorem:EqvHpyAndFGRS} by M.~Cuntz,
we study ${\hat{\rR}}(2)$ in a geometrical way.
We regard ${\hat{\fca}}$ as the $\bZ$-submodule of $\bR^3$ 
with identifying ${\hat{B}}=\{{\hat{\al}}_1,{\hat{\al}}_2,{\hat{\al}}_3\}$ with the standard basis of $\bR^3$.
For $\lambda=\sum_{t=1}^3u_t{\hat{\al}}_t\in\bR\setminus\{0\}$ $(u_t\in\bR)$,
let $\lambda^{\perp,\prime}:=\{\sum_{t=1}^3v_t{\hat{\al}}_t\,(v_t\in\bR)\,|\,
\sum_{t=1}^3u_tv_t=0\}$.
Let ${\mathfrak{H}}:=\{x{\hat{\al}}_1+y{\hat{\al}}_2+z{\hat{\al}}_3|x,y,z\in\bR, x+y+z=1\}$ of  $\bR^3$.
Then Fig.~7 means the restriction of the crystallographic arrangement $\{\beta^{\perp,\prime}|\beta\in{\hat{\rR}}(2)\}$ to ${\mathfrak{H}}$,
where 
let $\beta^\perp:=\beta^{\perp,\prime}\cap{\mathfrak{H}}$.
Let ${\mathfrak{C}}_t$ $(t\in\fkJ_{1,16})$ mean
${\mathfrak{H}}\cap\rmSpan_{\bRgeqo}(B)$
for some $B\in\bB({\hat{\rR}}(2))$
with ${\hat{\al}}_1+{\hat{\al}}_2+{\hat{\al}}_2\in{\hat{\rR}}(2)^+_B$.
In particular, let ${\mathfrak{C}}_8$ mean ${\mathfrak{H}}\cap\rmSpan_{\bRgeqo}({\hat{B}})$.
Let $1^{\mbox{${\hat{B}}$}}\cdot s_{i_1}\cdots s_{i_{32}}$
be the Hamiltonian cycle of $\Gamma({\hat{\rR}}(2))$ given as Nr.~2 in Subsection~\ref{subsection:rankthree}.
Then the following sequence \eqref{eqn:CSeq} equals
$\rmSpan_{\bRgeqo}({\hat{B}}G_{i_1}\cdots G_{i_t})\cap({\mathfrak{H}}\cup(-{\mathfrak{H}}))$
$(t\in\fkJ_{0,32})$, where ${\hat{B}}G_{i_1}\cdots G_{i_t}$ means ${\hat{B}}$ if $t=0$.
\begin{equation}\label{eqn:CSeq}
\begin{array}{l}
{\mathfrak{C}}_8, {\mathfrak{C}}_{13}, {\mathfrak{C}}_{14}, {\mathfrak{C}}_{15},
{\mathfrak{C}}_{16}, {\mathfrak{C}}_{12},  {\mathfrak{C}}_{11}, -{\mathfrak{C}}_5, 
-{\mathfrak{C}}_{10}, -{\mathfrak{C}}_{15}, {\mathfrak{C}}_1, {\mathfrak{C}}_6,
{\mathfrak{C}}_7, {\mathfrak{C}}_2, {\mathfrak{C}}_3, {\mathfrak{C}}_4, \\
{\mathfrak{C}}_5, -{\mathfrak{C}}_{11}, -{\mathfrak{C}}_{12}, -{\mathfrak{C}}_{16},
-{\mathfrak{C}}_{14}, -{\mathfrak{C}}_{13}, -{\mathfrak{C}}_8, -{\mathfrak{C}}_9,
-{\mathfrak{C}}_3, -{\mathfrak{C}}_2, -{\mathfrak{C}}_7, -{\mathfrak{C}}_6,
-{\mathfrak{C}}_1, {\mathfrak{C}}_{15}, {\mathfrak{C}}_{10}, {\mathfrak{C}}_9, \\
{\mathfrak{C}}_8.
\end{array}
\end{equation}

\begin{center}
\setlength{\unitlength}{1mm}
\begin{picture}(120,140)(-13,-35)
\put(-10,64.64){\line(1,0){120}}
\put(-10,30){\line(1,0){120}}

\put(47,64.64){\circle*{1}}\put(45,56.64){$\al_3$}
\put(27,30){\circle*{1}}\put(25,22){$\al_1$}
\put(67,30){\circle*{1}}\put(65,22){$\al_2$}

\put(27,30){\rotatebox{60}{\line(1,0){70}}}\put(27,30){\rotatebox{60}{\line(-1,0){60}}}
\put(27,30){\rotatebox{-60}{\line(1,0){60}}}\put(27,30){\rotatebox{-60}{\line(-1,0){70}}}

\put(67,30){\rotatebox{60}{\line(1,0){70}}}\put(67,30){\rotatebox{60}{\line(-1,0){60}}}
\put(67,30){\rotatebox{-60}{\line(1,0){60}}}\put(67,30){\rotatebox{-60}{\line(-1,0){70}}}

\put(47,30){\circle*{1}}\put(37,24){${\frac 1 2}(\al_1+\al_2)$}
\put(37,47.32){\circle*{1}}\put(18,48){${\frac 1 2}(\al_1+\al_3)$}
\put(57,47.32){\circle*{1}}\put(58,48){${\frac 1 2}(\al_2+\al_3)$}

\put(47,41.55){\circle*{1}}

\put(34,37){$\mbox{\small{${\frac 1 3}(\al_1+\al_2+\al_3)$}}$}
\put(78,16){\rotatebox{-60}{$\al_1^\perp$}}

\put(8,32){$\al_3^\perp$}

\put(18,67){$(\al_1+\al_2)^\perp$}
\put(28,16){\rotatebox{-60}{$(\al_2+\al_3)^\perp$}}
\put(55,4){\rotatebox{60}{$(\al_1+\al_3)^\perp$}}

\put(15,4){\rotatebox{60}{$\al_2^\perp$}}

\put(-5,68){${\mathfrak{C}}_1$}
\put(20,74){${\mathfrak{C}}_2$}
\put(45,74){${\mathfrak{C}}_3$}
\put(65,74){${\mathfrak{C}}_4$}
\put(95,68){${\mathfrak{C}}_5$}

\put(00,54){${\mathfrak{C}}_6$}
\put(25,54){${\mathfrak{C}}_7$}
\put(45,47){${\mathfrak{C}}_8$}
\put(65,54){${\mathfrak{C}}_9$}
\put(90,54){${\mathfrak{C}}_{10}$}

\put(10,22){${\mathfrak{C}}_{11}$}
\put(25,5){${\mathfrak{C}}_{12}$}
\put(45,12){${\mathfrak{C}}_{13}$}
\put(65,5){${\mathfrak{C}}_{14}$}
\put(80,22){${\mathfrak{C}}_{15}$}

\put(45,-15){${\mathfrak{C}}_{16}$}

\end{picture} \\
Fig.~7. The restriction of the crystallographic arrangement $\{\beta^{\perp,\prime}|\beta\in{\hat{\rR}}(2)\}$
to ${\mathfrak{H}}$
  \end{center}

\subsection{Proof of Theorem~\ref{theorem:one}} \label{subsection:wellknownpf}
Here we explain the proofs of Theorems~\ref{theorem:one}.and \ref{theorem:two}.
\newline\newline
{\it{Proof of Theorem~{\rm{\ref{theorem:one}}.}}}
For $x\in\fkJ_{2,\infty}$,
let $\theta_x:\bZ\to\bZ/x\bZ$ be the canonical map.
Let $H$ be the subgroup of $G$ generated by $b$ and $c$.
Let $p:=|H|$, $r:=|G|$ and $u:={\frac r p}$. Note that $p\in 2\bN$
and ${\frac p 2}$ is the order of $bc$.
Let $\psi:\bZ/p\bZ\to H$ be a bijection such that
$\psi(\theta_p(2z))=\psi(\theta_p(2z-1))c$, 
$\psi(\theta_p(2z+1))=\psi(\theta_p(2z))b$ $(z\in\bZ)$. 
Let $g_t\in G$ $(t\in\fkJ_{1,u})$ 
be such that $g_1=e$ and $G=\coprod_{t=1}^ug_tH$ (disjoint union).
Then we can have a bijection $s:\fkJ_{1,u}\to\fkJ_{1,u}$
and bijections $\varphi_v:\bZ/pv\bZ\to\coprod_{t=1}^vg_{s(t)}H$
$(v\in\fkJ_{1,u})$ having the following properties ${\rm{(x)}}$-${\rm{(y)}}$.
\newline\newline
${\rm{(x)}}$ $s(1)=1$ and $\varphi_1=\psi$. \newline
${\rm{(y)}}$ Let $v\in\fkJ_{2,u}$. 
There exist $i,j\in\bZ$ such that 
$\varphi_{v-1}(\theta_{p(v-1)}(i))b=\varphi_{v-1}(\theta_{p(v-1)}(i+1))$,
$g_{s(v)}\psi(\theta_p(j))b=g_{s(v)}\psi(\theta_p(j+1))$,
$\varphi_{v-1}(\theta_{p(v-1)}(i))a=g_{s(v)}\psi(\theta_p(j))$
and
$\varphi_{v-1}(\theta_{p(v-1)}(i+1))a=g_{s(v)}\psi(\theta_p(j+1))$.
We define $\varphi_v$ by $\varphi_v(\theta_{pv}(k)):=\varphi_{v-1}(\theta_{p(v-1)}(i+k))$
$(k\in\fkJ_{1,p(v-1)})$
and $\varphi_v(\theta_{pv}(k^\prime)):=g_{s(v)}\psi(\theta_p(j+p(v-1)+1-k^\prime))$
$(k^\prime\in\fkJ_{p(v-1)+1,pv})$.
\newline\newline
Thus we have a Hamiltonian cycle
$\varphi$ of $\Gamma(G,Z)$ defined by $\varphi(i):=\varphi_u(\theta_r(i))$
$(i\in\fkJ_{1,r})$. \newline \hfill $\Box$
\newline\newline
{\it{Rough proof of Theorem~{\rm{\ref{theorem:two}}}.}}
(See also \cite[Theorem~2.3]{Y22}.)
Notice that there exist $i,j\in\fkI$ with $i\ne j$ 
and $m(i,k)=2$ for all $k\in\fkI\setminus\{i,j\}$.
Hence we can prove Theorem~\ref{theorem:two} in an induction on $|\fkI|$  
by an argument similar to that for 
Theorem~\ref{theorem:one}. 
\hfill $\Box$ 
\vspace{1cm}

\noindent
Acknowledgements.
Regarding this work,
H.~Yamane was partially supported by JSPS Grant-in-Aid
for Scientific Research (C) 22K03225.
We essentially used Mathematica~13.3 \cite{Mathe23} and its earlier versions.

\noindent
Takato Inoue \newline
Mathematics and Informatics Program, \newline
Graduate School of Science and Engineering, \newline
University of Toyama \newline
3190 Gofuku, Toyama 930-8555, Japan \newline
E-mail: m22c1005@ems.u-toyama.ac.jp
\newline\newline
Hiroyuki Yamane \newline
Faculty of Science, Academic Assembly, \newline
University of Toyama \newline
3190 Gofuku, Toyama 930-8555, Japan \newline
E-mail: hiroyuki@sci.u-toyama.ac.jp

\end{document}